\documentclass[11pt]{article}

\newcounter{colt}
\setcounter{colt}{0}

\linespread{1.06}
\usepackage[sc]{mathpazo}

\usepackage[margin=1in]{geometry}
\usepackage[english]{babel}
\usepackage[utf8x]{inputenc}
\usepackage[compact]{titlesec}

\usepackage{cmap}
\usepackage[T1]{fontenc}
\usepackage{bm}

\usepackage{amsmath}
\usepackage{amsfonts}
\usepackage{amssymb}
\usepackage{amsbsy}
\usepackage{amsthm}
\usepackage{dsfont}

\usepackage{mathtools}
\usepackage{xspace}

\usepackage{graphicx, ucs}

\usepackage{subcaption}
\usepackage{rotating}
\usepackage{float}
\usepackage{tikz}

\usepackage[ruled,linesnumbered]{algorithm2e}

\usepackage{listings}

\usepackage{enumitem}
\usepackage{hyperref}
\hypersetup{
  colorlinks = true,
  urlcolor = {blueGrotto},
  linkcolor = {royalBlue},
  citecolor = {navyBlueP}
}
\usepackage{multirow}
\usepackage{array}

\usepackage{accents}

\usepackage[outline]{contour}
\usepackage{xcolor}

\usepackage[linewidth=2pt,
linecolor=gray,
middlelinecolor= black,
middlelinewidth=0.4pt,
roundcorner=1pt,
topline = false,
rightline = false,
bottomline = false,
rightmargin=0pt,
skipabove=0pt,
skipbelow=0pt,
leftmargin=0pt,
innerleftmargin=4pt,
innerrightmargin=0pt,
innertopmargin=0pt,
innerbottommargin=0pt,
]{mdframed}

\usepackage{chngcntr}
\usepackage{soul}
\usepackage{arydshln}

\newcommand{\R}{\ensuremath{\mathbb{R}}}

 \renewcommand{\vec}[1]{\boldsymbol{#1}}

\newcommand{\vecw}{\ensuremath{\boldsymbol{w}}}
\newcommand{\vecx}{\ensuremath{\boldsymbol{x}}}
\newcommand{\vecy}{\ensuremath{\boldsymbol{y}}}
\newcommand{\vecz}{\ensuremath{\boldsymbol{z}}}

 \ifnum \value{colt} < 1{
\theoremstyle{plain}            \newtheorem{theorem}{Theorem}[section]
\newtheorem{lemma}[theorem]{Lemma}
\newtheorem{corollary}[theorem]{Corollary}
}
\fi
\newtheorem{claim}[theorem]{Claim}

\ifnum \value{colt} < 1{
\theoremstyle{definition}       \newtheorem{definition}[theorem]{Definition}
}
\fi
\newtheorem{assumption}[theorem]{Assumption}

\ifnum \value{colt} < 1{
\theoremstyle{remark}           
}
\fi

\numberwithin{equation}{section}

\contourlength{0.1pt}
\contournumber{10}

\newif\ifnotes\notestrue

\ifnotes
\usepackage{color}
\definecolor{mygrey}{gray}{0.50}
\newcommand{\notename}[2]{{\textcolor{red}{\footnotesize{\bf (#1:} {#2}{\bf
) }}}}

\else

\newcommand{\notename}[2]{{}}

\fi

\DeclareMathOperator*{\Exp}{{\mathbb{E}}}
\DeclareMathOperator*{\Prob}{{\mathbb{P}}}

\DeclareMathOperator*{\Var}{\mathrm{Var}}

 \newcommand{\calA}{\ensuremath{\mathcal{A}}}
\newcommand{\calB}{\ensuremath{\mathcal{B}}}

\newcommand{\calD}{\ensuremath{\mathcal{D}}}
\newcommand{\calE}{\ensuremath{\mathcal{E}}}
\newcommand{\calF}{\ensuremath{\mathcal{F}}}
\newcommand{\calG}{\ensuremath{\mathcal{G}}}
\newcommand{\calH}{\ensuremath{\mathcal{H}}}
\newcommand{\calI}{\ensuremath{\mathcal{I}}}

\newcommand{\calK}{\ensuremath{\mathcal{K}}}
\newcommand{\calL}{\ensuremath{\mathcal{L}}}

\newcommand{\calO}{\ensuremath{\mathcal{O}}}
\newcommand{\calP}{\ensuremath{\mathcal{P}}}
\newcommand{\calQ}{\ensuremath{\mathcal{Q}}}
\newcommand{\calR}{\ensuremath{\mathcal{R}}}
\newcommand{\calS}{\ensuremath{\mathcal{S}}}
\newcommand{\calT}{\ensuremath{\mathcal{T}}}
\newcommand{\calU}{\ensuremath{\mathcal{U}}}
\newcommand{\calV}{\ensuremath{\mathcal{V}}}

\newcommand{\calX}{\ensuremath{\mathcal{X}}}
\newcommand{\calY}{\ensuremath{\mathcal{Y}}}
\newcommand{\calZ}{\ensuremath{\mathcal{Z}}}

\mathchardef\mdash="2D
\newcommand{\eps}{\varepsilon}

\renewcommand{\epsilon}{\varepsilon}

\def\compactify{\itemsep=0pt \topsep=0pt \partopsep=0pt \parsep=0pt}
\let\latexusecounter=\usecounter

\newenvironment{Enumerate}
  {\def\usecounter{\compactify\latexusecounter}
   \begin{enumerate}}
  {\end{enumerate}\let\usecounter=\latexusecounter}

\DeclareMathOperator*{\argmax}{argmax}
\DeclareMathOperator*{\argmin}{argmin}

\def\floor#1{\mathop{\left\lfloor#1\right\rfloor}}

\def\abs#1{\left|#1\right|}
\def\p#1{\left(#1\right)}
\def\b#1{\left[#1\right]}
\def\set#1{\left\{#1\right\}}

\newcommand{\paragr}[1]{\noindent \textbf{#1}}

\def\norm#1{\left\|#1\right\|}

\definecolor{secinhead}{RGB}{249,196,95}
\definecolor{niceRed}{RGB}{190,38,38}
\definecolor{blueGrotto}{HTML}{059DC0}
\definecolor{royalBlue}{HTML}{057DCD}
\definecolor{navyBlueP}{HTML}{0B579C}
\definecolor{limeGreen}{HTML}{81B622}

\newcommand{\chara}{\mathds{1}}

\newcommand{\queue}{\mathbf{queue}}
\newcommand{\sh}{\text{star}}
\newcommand{\train}{\mathrm{D}}
\renewcommand{\bar}{\overline}

\newcommand{\level}{\ensuremath{\mathrm{level}}}
\newcommand{\nodes}{\ensuremath{\mathrm{nodes}}}

\begin{document}

\title{Estimation and Inference with Trees and Forests \\ in High Dimensions}
\author{ Vasilis Syrgkanis \\
 Microsoft Research \\
 \url{vasy@microsoft.com}
 \and
 Manolis Zampetakis\\
 MIT \\
 \url{mzampet@mit.edu}
}
\date{}

\maketitle
\begin{abstract}  We analyze the finite sample mean squared error (MSE) performance of regression trees and forests in the high dimensional regime with binary features, under a sparsity constraint. We prove that if only $r$ of the $d$ features are relevant for the mean outcome function, then shallow trees built greedily via the CART empirical MSE criterion achieve MSE rates that depend only logarithmically on the ambient dimension $d$. We prove upper bounds, whose exact dependence on the number relevant variables $r$ depends on the correlation among the features and on the degree of relevance. For strongly relevant features, we also show that fully grown honest forests achieve fast MSE rates and their predictions are also asymptotically normal, enabling asymptotically valid inference that adapts to the sparsity of the regression function.\end{abstract}

\section{Introduction} \label{sec:intro}

Regression Trees \cite{breiman1984classification} and their ensemble counterparts, Random Forests \cite{Breiman01}, are one of the most widely used estimation methods by machine learning practitioners. Despite their widespread use, their theoretical underpinnings are far from being fully understood. Early works established sample complexity bounds of decision trees and other data-adaptive partitioning estimators \cite{nobel1996,lugosi1996,Mansour2000}. However, sample complexity bounds do not address the computational aspect of how to choose the best tree in the space. In practice, trees and forests are constructed in a greedy fashion, typically identifying the most empirically informative split at east step; an approach pioneered by \cite{breiman1984classification,Breiman01}. The consistency and estimation rates of such greedily built trees has proven notoriously more difficult to analyze.

Recent breakthrough advances has shown that such greedily built trees are asymptotically consistent \cite{Biau2010,Denil2014,scornet2015} in the low dimensional regime, where the number of features is a constant independent of the sample size. In another line of work, \cite{Mentch2016,WagerA18} provide asymptotic normality results for honest versions of Random Forests, where each tree is constructed using a random sub-sample of the original data and further each tree construction algorithm sub-divides the sub-sample into a random half-sample that is used for construction of the tree structure and a separate half-sample used for the leaf estimates. However, these results are typically asymptotic or their finite sample guarantees scale exponentially with the number of features. Random Forests are used in practice to address estimation with high-dimensional features. Hence, these works, though of immense theoretical importance in our understanding of adaptively built trees and forests, they do not provide theoretical foundations of the finite sample superiority of these algorithms in practice.

In this work, we analyze the performance of regression trees and forests in the high-dimensional regime, where the number of features can grow exponentially with the number of samples. To focus on the high-dimensionality of the features (as opposed to the ability of forests to sub-divide continuous variable spaces), we constrain our analysis to the case when all features are binary.

We show that trees and forests built greedily based on the original CART criterion, provably adapt to sparsity: when only a subset $R$, of size $r$, of the features are relevant, then the mean squared error of appropriately shallow trees, or fully grown honest forests, scales exponentially only with the number of relevant features and depends only logarithmically on the overall number of features. We analyze two variants of greedy tree algorithms: in the level-split variant, the same variable is chosen at all the nodes of each level of the tree and is greedily chosen so as to maximize the overall variance reduction. In the second variant, which is the most popular in practice, the choice of the next variable to split on is locally decided at each node of the tree.

We identify three regimes, each providing different dependence on the number of relevant features. When the relevant variables are ``weakly'' relevant (in the sense that there is not strong separation between the relevant and irrelevant variables in terms of their ability to reduce variance), then shallow trees achieve ``slow rates'' on the mean squared error of the order of $2^r/\sqrt{n}$, when variables are independent, and $1/n^{1/(r+2)}$, when variables are dependent. When the relevant variables are ``strongly'' relevant, in that there is a separation in their ability to reduce variance as compared to the irrelevant ones, by a constant $\beta_{\min}$, then we show that greedily built shallow trees and fully grown honest forests can achieve fast parametric mean squared error rates of the order of $2^r/(\beta_{\min}\, n)$.

When variables are strongly relevant, we also show that the predictions of sub-sampled honest forests have an asymptotically normal distribution centered around their true values and whose variance scales at most as $O(2^r \log(n)/(\beta_{\min}\, n))$. Thus sub-sampled honest forests are provably a data-adaptive method for non-parametric inference, that adapts to the latent sparsity dimension of the data generating distribution, as opposed to classical non-parametric regression approaches, whose variance would deteriorate drastically with the overall number of features. Our results show that, at least for the case of binary features, forest based algorithms can offer immense improvement on the statistical power of non-parametric hypothesis tests in high-dimensional regimes.

The main crux of our analysis is showing bounds on the decay of the bias of decision trees, constructed via the mean-squared-error criterion. In particular, we show that either a relevant variable leads to a large decrease in the mean squared error, in which case we prove that with high probability it is chosen in the first few levels of the tree or if not then it's impact on the mean squared due to the fact that the algorithm failed to choose it can be controlled. For achieving the fast rates of $1/n$ for shallow trees, we also develop a new localized Rademacher analysis \cite{Bartlett2002,wainwright_2019} for adaptive partitioning estimators \cite{gyorfi2006distribution} to provide fast rates for the ``variance'' part of the MSE. Our results on honest forests, utilize recent work on the concentration and asymptotic normality of sub-sampled estimators \cite{Mentch2016,WagerA18,Fan2018} and combine it with our proof of the bias decay, which for the case of strongly relevant features, as we show, is exponential in the number of samples.

Several theoretical aspects of variants of CART trees and forests have been analyzed in the recent years \cite{Lin02randomforests,Meinshausen2006,Arlot2014,Breiman04consistencyfor,Scornet2016}. The majority of these works deal with the low dimensional regime and with few exceptions, these results deal with trees built with random splitting criteria or make no use of the fact that splits are chosen to minimize the CART mean-squared-error criterion. Arguably, closest to our work is that of \cite{Wager2015}, who consider a high dimensional regime with continuous variables, distributed according to a distribution with continuous density and uniformly upper and lower bounded. The main focus of this work is proving a uniform concentration bound on the mean squared error objective locally at every node of the adaptively constructed tree and for this reason makes several assumptions not present in our work: e.g. minimum leaf size constraints, approximately balanced splits. Crucially, their results on random forest consistency require an analogue of our $\beta_{\min}$ condition, and do not offer results without strong relevance. Moreover, their results on the consistency of forests requires a strong modification of the CART algorithm: split variables are selected based on initial median splits, subject to a lower bound on the decrease in variance, and then only the chosen variables are used in subsequent splits, not based on a CART criterion, but rather simply choosing random median splits; invoking an analysis of such random median trees in low dimensions by \cite{Duroux2016}. Moreover, they provide no results on asymptotic normality.

  Apart from the literature related to Random Forests and the CART criterion, there has
been a great amount of work on the sparse non-parametric regression problem that we
consider in this paper. A lot of heuristic methods have been proposed such as: $C_p$
and AIC for additive models \cite{HastieTF09}, MARS \cite{Friedman91}, Bayesian methods
\cite{GeorgeM97} and Gaussian Processes \cite{SmolaB01}. All these methods are
very successful in many practical scenarios but our theoretical understanding of their
performance is limited. Our work is closer to the works of
\cite{LaffertyW08, LiuC09, CommingesD12, YangT15} and citations therein, where they
propose and theoretically analyze greedy algorithms that exploit the sparsity of the
input regression function and hence they provide a way to overcome the curse of
dimensionality of high dimensional data in non-parametric regression. The main
difference of this line of work with our paper is that we do not propose a new
algorithm but instead our goal is to analyze the performance of the heuristically
proposed CART trees in the setting of sparse high-dimensional non-parametric regression
with binary features.

\section{Model and Preliminaries} \label{sec:model}

  In this work we consider the non-parametric regression model with binary
features. More precisely, we assume that we have access to a training set
$\train_n = \{(\vecx^{(1)}, y^{(1)}), \dots, (\vecx^{(n)}, y^{(n)})\}$, which
consists of $n$ i.i.d. samples of the form $(\vecx^{(i)}, y^{(i)})$, sampled
independently from a common distribution $\calD$. Each sample is generated according to the
following steps:
\begin{Enumerate}
  \item $\vecx^{(i)}$ is sampled from a distribution $\calD_x$ with support
  $\{0, 1\}^d$,
  \item $\eps^{(i)}$ is sampled from a zero mean error distribution $\calE$ with
  support $[-1/2, 1/2]$, i.e. $\Exp_{\eps \sim \calE}\b{\eps} = 0$ and
  $\eps^{(i)} \in [-1/2, 1/2]$,
  \item $y^{(i)} = m(\vecx^{(i)}) + \eps^{(i)}$, where
  $m : \{0, 1\}^d \to [-1/2, 1/2]$.
\end{Enumerate}
\noindent The goal of the regression task is to estimate the
\textit{target function} $m$. Observe from the definition of the non-parametric
regression model we have that $y^{(i)} \in [-1, 1]$. Our results apply
to any case where both the error distribution and the values of the target
function are bounded, i.e. $\abs{\eps^{(i)}} \le H$ and $\abs{m(\vecx)} \le H$.
In this case the sample complexity bounds and the rates should be multiplied by
$H$. For simplicity we present the result for the case $\abs{y^{(i)}} \le 1$.

For any vector $\vecx \in \{0, 1\}^d$ we define the vector $\vecx_S$ as the
sub-vector of $\vecx$, where we keep only the coordinates with indices in
$S \subseteq [d]$. Additionally, we define $\calD_{x, S}$ as the marginal
distribution of $\calD_x$ in the coordinates $S$. Also, let $\vecx^{(K)}$ be an
arbitrary $\vecx^{j}$ such that $j \in K$. For any training set $\train_n$ we
define the set $\train_{n, x} = \{\vecx^{(1)}, \dots, \vecx^{(n)}\}$. For any set
$S \subseteq [d]$ and an index $i \in [d]$ we sometimes use the notation
$S \cup i$ to refer to $S \cup \{i\}$.

  All of the results that we present in this paper are in the ``high-dimensional''
regime, where the number of features is very big but the number of
\textit{relevant features} is small. When this is true we say that the function $m$
is \textit{sparse} as we explain in the following definition.

\begin{definition}[\textsc{Sparsity}]
    We say that the target function $m : \{0, 1\}^d \to \R$ is $r$-\textit{sparse} if and
  only if there exists a set $R \subseteq [d]$, with $\abs{R} = r$ and a function
  $h : \{0, 1\}^r \to \R$ such that for every $\vecz \in \{0, 1\}^d$ it holds that
  $m(\vecz) = h(\vecz_R)$. The set $R$ is called the set of relevant features.
\end{definition}

  Some of the results that we have in this paper significantly improve if we
make the assumption that the feature vector distribution $\calD_x$ is a product
distribution. For this reason we define the ``independence of features''
assumption.

\begin{assumption}[\textsc{Independence of Features}] \label{asp:independentFeature}
    We assume that there exist Bernoulli distributions $\calB_1, \dots, \calB_d$
 such that $x^{(i)}_j$ is distributed independently according to $\calB_j$.
\end{assumption}

Now we give some definitions, related to the structure of a binary regression tree, that are important for the presentation of our results.

\begin{definition}[\textsc{Partitions, Cells and Subcells}]
    A \textit{partition} $\calP$ of $\{0, 1\}^d$ is a family of sets
  $\{A_1, \dots, A_s\}$ such that $A_j \subseteq \{0, 1\}^d$,
  $A_j \cap A_k = \emptyset$ for all $j, k \in [s]$, and
  $\bigcup_{j = 1}^s A_j = \{0, 1\}^d$.

    Let $\calP$ be a partition of $\{0, 1\}^d$. Every element $A$ of
  $\calP$ is called a \textit{cell of $\calP$} or just \textit{cell},
  if $\calP$ is clear from the context. Every cell $A$ has two
  \textit{subcells} $A^i_0$, $A^i_1$ with respect to any direction $i$,
  which are defined as $A^i_0 = \{\vecx \in A \mid x_i = 0\}$ and
  $A^i_1 = \{\vecx \in A \mid x_i = 1\}$.

    For any $\vecx \in \{0, 1\}^d$ and any partition $\calP$, we define
  $\calP(\vecx) \in \calP$ as the cell of $\calP$ that contains
  $\vecx$.
\end{definition}

\begin{definition}[\textsc{Split Operator and Refinement}] \label{def:splitOperator}
    For any partition $\calP$ of $\{0, 1\}^d$, any cell
  $A \in \calP$ and any $i \in [d]$ we define the
  \textit{split operator} $\calS(\calP, A, i)$ that outputs a
  partition with the cell $A$ split with respect to direction
  $i$. More formally
  $\calS(\calP, A, i) = \p{\calP \setminus \{A\}} \cup \{A^i_0, A^i_1\}$.
  We can also extend the definition of the split operator to splits over
  sets of dimensions $I \subseteq [d]$, inductively as follows: if
  $i \in I$ then
  $\calS(\calP, A, I) = \calS(\calS(\calS(\calP, A, i), A^i_0, I \setminus \{i\}), A^i_1, I \setminus \{i\})$.

    A partition $\calP'$ is a \textit{refinement} of a partition $\calP$
  if every element of $\calP'$ is a subset of an element of $\calP$. Then
  we say that $\calP'$ is \textit{finer} than $\calP$ and $\calP$ is
  \textit{coarser} than $\calP'$ and we use the notation
  $\calP' \sqsubseteq \calP$.
\end{definition}

\section{Consistency of Level-Splits Algorithm for Sparse Functions} \label{sec:levelSplitShallow}

  In this section we present our analysis for the case when we run
\textit{a level split} greedy algorithm to build a tree or a forest that
approximates the target function $m$. We start with the necessary definitions
to present the algorithm that we use. Then in Section
\ref{sec:app:population:levelSplitShallow} we present an analysis of the
population version of the algorithm, that is useful to gain intuition on the
finite sample proof that we present in full detail in the Appendix.
\smallskip

  Given a set of splits $S$, we define the \textit{expected mean squared error}
of $S$ as follows:
\begin{align} \label{eq:MSEPopulationBinaryLevelSplit}
  \bar{L}(S) & = \Exp_{\vecx \sim \calD_x}\b{\p{m(\vecx) - \Exp_{\vecw \sim \calD_x}\b{m(\vecw) \mid \vecw_S = \vecx_S}}^2} \\
  & = \Exp_{\vecx \sim \calD_x}\b{m^2(\vecx)} - \Exp_{\vecz_S \sim \calD_{x, S}} \b{\p{\Exp_{\vecw \sim \calD_x}\b{m(\vecw) \mid \vecw_S = \vecz_S}}^2} \nonumber \\
  & \triangleq \Exp_{\vecx \sim \calD_x}\b{m^2(\vecx)} - \bar{V}(S). \label{eq:MSEPopulationBinaryLevelSplit:2}
\end{align}

It is easy to see that $\bar{L}$ is a monotone decreasing function of $S$ and
hence $\bar{V}$ is a monotone increasing function of $S$. $\bar{V}$ can be viewed as a \emph{measure of heterogeneity} of the within-leaf mean values of the target function $m$, from the leafs created by split $S$.

  We present results based on either one of two main assumption about $\bar{V}$: \textit{approximate submodularity} of $\bar{V}$ or \textit{strong sparsity}. These assumptions play a crucial role in the analysis
of the performance of the random forest algorithm, both in the finite sample
regime and the population regime (presented in Section
\ref{sec:app:population:levelSplitShallow}). It is not difficult to see that without any assumption, no meaningful result about the consistency of greedily grown trees in the high dimensional setting is
possible, as we illustrate in Appendix \ref{sec:app:lowerBound:submodularity}.

\begin{assumption}[\textsc{Approximate Submodularity}] \label{asp:submodularityLevelSplit}
    Let $C \ge 1$, we say that the function $\bar{V}$ is
  $C$-\textit{approximate submodular}, if and only if for any
  $T, S \subseteq [d]$, such that $S \subseteq T$, and any $i \in [d]$, it holds
  that
  $\bar{V}(T \cup \{i\}) - \bar{V}(T) \le C \cdot (\bar{V}(S \cup \{i\}) - \bar{V}(S))$.
\end{assumption}

\begin{assumption}[\textsc{Strong Sparsity}] \label{asp:strongSparsityLevelSplit}
    A target function $m : \{0, 1\}^d \to [-1/2, 1/2]$
  is \textit{$(\beta, r)$-strongly sparse} if $m$ is $r$-sparse with relevant
  features $R$ and the function $\bar{V}$ satisfies:
  $\bar{V}(T \cup \{j\}) - \bar{V}(T) + \beta \le \bar{V}(T \cup \{i\}) - \bar{V}(T)$,
  for all $i \in \calR$, $j \in [d] \setminus \calR$ and
  $T \subset [d] \setminus \{i\}$.
\end{assumption}

  We next need to define the estimator that is produced by a level-split tree
with a set of splits $S$. Given a set of splits $S$, a training set $\train_n$ and
an input $\vecx$ we can define the estimate $m(\vecx; S, \train_n)$ as follows (for simplicity, we use $m_n(\cdot; \cdot)$, $N_n(\cdot; \cdot)$, and $\calT_n(\cdot, \cdot)$ instead of $m(\cdot; \cdot, \train_n)$,
$N(\cdot; \cdot, \train_n)$ and $\calT(\cdot, \cdot; \train_n)$):
\begin{align} \label{eq:estimationGivenSplitsLevelSplitFiniteSample}
  m_n(\vecx; S) = \frac{1}{N_n(\vecx; \calT_n(S, \vecx))} \sum_{j \in [n]} \chara\{\vecx_{\calT_n(S, \vecx)}^{(j)}=\vecx_{\calT_n(S, \vecx)}\} \cdot y^{(j)}
\end{align}
\noindent where $N_n(\cdot; \cdot)$,
$\calT_n(\cdot, \cdot)$ are defined as follows
\begin{align*}   N_n(\vecx; T) = \sum_{j \in [n]} \chara\{\vecx_T^{(j)}=\vecx_T\}, ~~
  \calT_n(S, \vecx) = \argmax_{T \subseteq S, ~ N_n(\vecx; T) > 0} \abs{T}.
\end{align*}

\noindent In words, the function $\calT_n(S, \vecx)$ returns the subset $T$ of the
splits $S$ that we used to create the leaf of the tree that contains $\vecx$, until the leaf that corresponds to $T$ contains at least one training point. The function $N_n(\vecx; T)$ is the number of training points in the leaf that contains $\vecx$, when we split across the coordinates $T$.

  For the presentation of the algorithm we also need the definition of the
\textit{empirical mean square error}, given as set of splits $S$, as follows:
\begin{align} \label{eq:MSEFiniteSampleBinaryLevelSplit}
  L_n(S) & = \frac{1}{n} \sum_{j \in [n]} \p{y^{(j)} - m_n(\vecx^{(j)}; S)}^2 = \frac{1}{n} \sum_{j \in [n]} \p{y^{(j)}}^2 - \frac{1}{n} \sum_{j \in [n]} m^2_n(\vecx^{(j)}; S) \\
  & \triangleq \frac{1}{n} \sum_{j \in [n]} \p{y^{(j)}}^2 - V_n(S). \label{eq:MSEFiniteSampleBinaryLevelSplit:2}
\end{align}
It is easy to see that $V_n$ is a monotone increasing function and $L_n$ is a
monotone decreasing function. We are now ready to present the level-split
algorithm both with and without honesty. For this we use the honesty flag $h$,
where $h = 1$ means we use honesty and $h = 0$ means we don't.
\smallskip

\begin{algorithm}[H]
\caption{Level Split Algorithm} \label{alg:mainLevelSplitFiniteSample}
  \KwIn{maximum number of splits $\log(t)$, a training data set $\train_n$, honesty flag $h$.}
  \KwOut{tree approximation of $m$.}

  $\calV \leftarrow D_{n, x}$

  \lIf{$h = 1$}{Split randomly $\train_n$ in half, $\train_{n/2}$, $\train'_{n/2}$, set $n \leftarrow n/2$, set $\calV \leftarrow D'_{n ,x}$}

  Set $\calP_0 = \{\{0, 1\}^d\}$ the partition associated with the root of the tree.

  For all $1 \leq \ell \leq n$, set $\calP_{\ell} = \varnothing$.

  $\level \leftarrow -1$, $S \leftarrow \emptyset$.

  \While{$\level < \log(t)$}{
    $\level \leftarrow \level + 1$, $\calP_{\level+1}=\emptyset$.

    Select $i \in [d]$ that maximizes $V_n(S \cup \{i\})$ (\textit{see \eqref{eq:MSEPopulationBinaryLevelSplit:2}}) with ties broken randomly.

    \For{\emph{\textbf{all}} $A \in \calP_{\level}$}{
      Cut the cell $A$ to cells
      $A^i_k = \{\vecx \mid \vecx \in A \wedge x_i = k\}$, $k = 0, 1$.

      \uIf{$\abs{\calV \cap A^i_0} >= 1$ \textbf{and} $\abs{\calV \cap A^i_1} >= 1$}{
        $\calP_{\level + 1} \leftarrow \calP_{\level + 1} \cup \{A^i_0, A^i_1\}$
      }
      \Else{
        $\calP_{\level + 1} \leftarrow \calP_{\level + 1} \cup \{A\}$
      }
    }
    $S \leftarrow S \cup \{i\}$
  }
  \Return $(\calP_n, m_n) = \p{\calP_{\level+1}, \vecx \mapsto m_n(\vecx; S)}$ ~ [\textit{see \eqref{eq:estimationGivenSplitsLevelSplitFiniteSample}}].
\end{algorithm}
\smallskip

  Now we are ready to state our main result for the consistency of
shallow trees with level splits as described in Algorithm~\ref{alg:mainLevelSplitFiniteSample}. The proof of this theorem can be found in Appendix~\ref{sec:app:proofs:levelSplitsShallow}.

\begin{theorem} \label{thm:finalRecoverySubmodularIndependentBalancedProperLevelSplitFiniteSample}
    Let $\train_n$ be i.i.d. samples from the non-parametric regression model
  $y = m(\vecx) + \eps$, where $m(\vecx) \in [-1/2, 1/2]$, $\eps \sim \calE$,
  $\Exp_{\eps \sim \calE}[\eps] = 0$ and $\eps \in [-1/2, 1/2]$. Let also
  $S_n$ be the set of splits chosen by Algorithm~\ref{alg:mainLevelSplitFiniteSample}, with input $h = 0$. Then the
  following statements hold.
  \begin{Enumerate}
    \item Under the submodularity Assumption \ref{asp:submodularityLevelSplit},
    assuming that $m$ is $r$-sparse and if we set as input the number of splits to
    be $\log(t) = \frac{C \cdot r}{C \cdot r + 2}\p{\log(n) - \log(\log(d/\delta))}$, then it
    holds that
    \begin{align*}
      \Prob_{\train_n \sim \calD^n}\p{\Exp_{\vecx \sim \calD_x} \b{\p{m(\vecx) - m_n(\vecx; S_n)}^2} > \tilde{\Omega}\p{C \cdot r \cdot \sqrt[C \cdot r + 2]{\frac{\log(d/\delta)}{n}}}} \le \delta.
    \end{align*}
    \item Under the submodularity Assumption \ref{asp:submodularityLevelSplit},
    the independence of features Assumption \ref{asp:independentFeature} and
    assuming that $m$ is $r$-sparse, if $\log(t) = r$ then it holds that
    \begin{align*}
      \Prob_{\train_n \sim \calD^n}\p{\Exp_{\vecx \sim \calD_x} \b{\p{m(\vecx) - m_n(\vecx; S_n)}^2} > \tilde{\Omega}\p{C \cdot \sqrt{\frac{2^r \cdot \log(d/\delta))}{n}}}} \le \delta.
    \end{align*}
    \item If $m$ is $(\beta, r)$-strongly sparse as per Assumption
    \ref{asp:strongSparsityLevelSplit}, and
    $n \ge \tilde{\Omega}\p{\frac{2^r\, \log(d/\delta)}{\beta^2}}$, and
    we set $\log(t) = r$, then we have
    \begin{align*}
      \Prob_{\train_n \sim \calD^n}\p{\Exp_{\vecx \sim \calD_x} \b{\p{m(\vecx) - m_n(\vecx; S_n)}^2} > \tilde{\Omega}\p{\frac{2^r \log(d/\delta) \log(n)}{n}}} \le \delta.
    \end{align*}
  \end{Enumerate}
\end{theorem}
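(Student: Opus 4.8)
The plan is to use, for every fixed split set $S$ with $\abs S \le \log t$, the exact bias--variance identity
\[
  \Exp_{\vecx \sim \calD_x}\!\big[(m(\vecx) - m_n(\vecx;S))^2\big] \;=\; \bar L(S) \;+\; \Exp_{\vecx \sim \calD_x}\!\big[(m_n(\vecx;S) - \bar m(\vecx;S))^2\big],
\]
where $\bar m(\vecx;S) = \Exp_{\vecw \sim \calD_x}[m(\vecw)\mid \vecw_S = \vecx_S]$; conditionally on $\train_n$ the estimate $m_n(\cdot;S)$ is constant on each cell and $\bar m(\vecx;S)$ is exactly the within-cell mean of $m$, so the cross term vanishes. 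Since the realized split set $S_n$ is a deterministic function of $\train_n$, I will bound the two pieces uniformly over all split sequences of length at most $\log t$ (there are at most $d^{\log t}$ of them, and $\log(d^{\log t}) = \tilde O(\log(d/\delta))$ after a union bound), and then specialize the choice of $\log t$ to each regime.

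For the \emph{variance} term, for a fixed $S$ I partition the cells into those containing at least $c'\log(d/\delta)$ sample points and those containing fewer. On a ``large'' cell $c$, conditioning on which samples land in it, the labels are i.i.d.\ with mean $\bar m(c)$ and range $[-1,1]$, so a Bernstein/Hoeffding bound gives $(m_n(c)-\bar m(c))^2 = \tilde O(\log(d/\delta)/N_n(c))$ and hence an aggregate contribution $\tilde O(2^{\abs S}\log(d/\delta)/n)$; the ``small'' cells carry total $\calD_x$-mass $\tilde O(2^{\abs S}\log(d/\delta)/n)$ with high probability by a multiplicative Chernoff bound, and each contributes at most $O(1)$, so the whole variance term is $\tilde O(2^{\abs S}\log(d/\delta)/n)$. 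Taking a union bound over split sequences and invoking the localized Rademacher analysis for adaptive partitioning estimators to handle the data-dependence cleanly yields, uniformly over $\abs S \le \log t$, the fast variance bound $\tilde O(2^{\log t}\log(d/\delta)\log n/n)$; for Parts 1 and 2 even the cruder $\tilde O(\sqrt{2^{\log t}\log(d/\delta)/n})$ suffices since there the bias dominates.

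For the \emph{bias} term $\bar L(S_n)$, the first step is a uniform concentration bound $\sup_{\abs S \le \log t}\abs{V_n(S) - \bar V(S)} \le \gamma$ with $\gamma = \tilde O(\sqrt{2^{\log t}\log(d/\delta)/n})$, noting that $V_n$ overshoots $\bar V$ by $O(2^{\log t}/n)$ due to noise inflation of the empirical leaf means, a term lower order in all three regimes. The second step is a ``noisy greedy'' recursion: since $\bar V$ is $C$-approximate submodular and $\bar V(R) = \Exp[m^2]$ is its maximum (because $m(\vecx)=h(\vecx_R)$ gives $\bar L(R)=0$), at each node the empirically chosen coordinate has population marginal gain at least $\frac{1}{Cr}(\bar V(R)-\bar V(S_\ell)) - 2\gamma$, so $\bar V(R) - \bar V(S_{\ell+1}) \le (1-\frac{1}{Cr})(\bar V(R)-\bar V(S_\ell)) + 2\gamma$, which unrolls to $\bar L(S_n) \le e^{-\log t/(Cr)}\Exp[m^2] + 2Cr\,\gamma$. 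For Part 1, plugging $\log t = \frac{Cr}{Cr+2}(\log n - \log\log(d/\delta))$ makes $e^{-\log t/(Cr)} = (\log(d/\delta)/n)^{1/(Cr+2)}$ and makes $2Cr\gamma$ of the same order, giving MSE $\tilde O\big(Cr(\log(d/\delta)/n)^{1/(Cr+2)}\big)$. For Part 2 I add Assumption~\ref{asp:independentFeature}: a product $\calD_x$ gives $\bar V(S) = \bar V(S\cap R)$, so splitting on an irrelevant coordinate has zero population marginal value; hence at each of the first $r$ steps either the greedy picks a fresh relevant coordinate, or every relevant coordinate has marginal gain $\le 2\gamma$, which via the $C$-approximate-submodular inequality already forces $\bar V(R)-\bar V(S_\ell)\le 2Cr\gamma$; with $\log t = r$ this gives $\bar L(S_n) \le 2Cr\gamma = \tilde O(C\sqrt{2^r\log(d/\delta)/n})$, dominating the variance. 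For Part 3 I use Assumption~\ref{asp:strongSparsityLevelSplit}: the $\beta$-gap between relevant and both irrelevant and already-chosen coordinates' marginal values implies that once $n \ge \tilde\Omega(2^r\log(d/\delta)/\beta^2)$, so that $\gamma < \beta/2$, the empirically best split at each of the first $r$ levels is a fresh relevant coordinate; hence $S_n = R$ exactly, $\bar L(S_n)=0$, and the MSE equals the variance term, bounded by $\tilde O(2^r\log(d/\delta)\log n/n)$.

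The step I expect to be the main obstacle is the variance analysis under data-dependence. The bias side is a fairly standard ``noisy greedy for approximately submodular functions'' recursion together with concentration of the CART criterion $V_n$; the delicate work is controlling $\Exp_\vecx[(m_n(\vecx;S_n)-\bar m(\vecx;S_n))^2]$ uniformly over the data-chosen $S_n$, in particular (i) neutralizing the influence of cells with very few sample points --- which cannot be excluded a priori, since the level-split rule may leave a cell unsplit --- by showing their aggregate $\calD_x$-mass is negligible, and (ii) upgrading the slow $\sqrt{\#\text{cells}/n}$ rate to the fast $\#\text{cells}/n$ rate needed for Part~3, which is exactly where the localized Rademacher machinery for adaptive partitions is required and must be combined with the exact-recovery guarantee $S_n = R$.
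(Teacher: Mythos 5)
Your proposal is correct and, on the bias side, matches the paper's argument closely; the overall decomposition and the variance analysis take a genuinely different route. The paper decomposes via Lemma~\ref{lem:bias-variance} (a localized-Rademacher / critical-radius bound for adaptive partitioning estimators), which gives $\Exp[(m_n - m)^2] \le C\p{\delta_n^2 + \Exp[(\tilde m_n - m)^2]}$ with $\tilde m_n$ the projection of $m$ onto the returned partition $\calP_n$; the ``variance'' $\delta_n^2$ is then controlled through the VC-subgraph dimension of the class of $t$-leaf trees (Corollary~\ref{cor:criticalradiusShallow}), and the bias $\bar L(\calP_n)$ is related back to $\bar L(S_n)$ via a separate empty-cell claim (Claim~\ref{clm:proof:levelSplitsBias:partitionMSEsetMSE}). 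You instead use the exact within-cell orthogonality identity $\Exp[(m - m_n(\cdot\,; S_n))^2] = \bar L(S_n) + \Exp[(m_n(\cdot\,; S_n) - \bar m(\cdot\,; S_n))^2]$, with $\bar m(\cdot\,; S)$ the population projection onto the \emph{full} cross of $S$, and bound the second term by a union bound over split sequences plus per-cell Hoeffding together with a multiplicative Chernoff bound on the small-cell mass, so the empty-cell contribution is absorbed into the variance term rather than the bias. Both routes yield the same rate $\tilde O(2^{\log t}\log(d/\delta)\log(n)/n)$; your exact identity is more elementary and avoids the separate empty-cell claim, while the paper's Lemma~\ref{lem:bias-variance} is a reusable general framework. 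One inconsistency worth flagging: you invoke both the union bound over split sequences \emph{and} ``localized Rademacher machinery'' to get the fast variance rate, and later assert the latter ``is required'' for Part~3 -- in fact your direct per-cell argument with the union bound already delivers the fast rate, and the localized Rademacher route (what the paper uses) is an alternative, not an additional necessity. The bias-side ingredients -- uniform concentration of $V_n$ around $\bar V$ as in Lemma~\ref{lem:concentrationOfVarianceFunctionLevelSplit}, the noisy greedy recursion under approximate submodularity, and the ``pick a fresh relevant coordinate or be stuck'' dichotomies for Parts~2 and~3 -- coincide with the paper's Theorems~\ref{thm:recoverySubmodularIndependentBalancedProperLevelSplitFiniteSample} and~\ref{thm:biasStronglySparseLevelSplitFiniteSample}.
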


  As we can see the rates, naturally, are better as we make our assumptions stronger.
The fastest rate is achievable when the $(\beta, r)$ strong sparsity holds (even without the submodularity or the independence condition), the
second fastest rate when the features are independent but only the submodularity
holds, and we have the slowest rate when only the submodularity holds and there is
arbitrary correlation between the features.

\subsection{Fully Grown Honest Forests with Level Splits}
\label{sec:levelSplitsFullyGrown}

  In this section we consider the case of fully grown honest trees. For this case
it is necessary to consider forest estimators instead of trees because a fully
grown tree has very high variance. For this reason we use the
\textit{subsampling technique} and \textit{honesty}. For any subset
$\train_s$ of size $s$ of the set of samples $\train_n$, we build one tree estimator
$m(\cdot; \train_s)$ according to Algorithm \ref{alg:mainLevelSplitFiniteSample}
with inputs, $\log(t)$ large enough so that every leaf has two or three samples (fully grown tree),
training set $\train_s$ and $h = 1$. Then our final estimator $m_{n, s}$ can be
computed as follows
\begin{equation} \label{eq:levelSplitFullyGrownSubsampled}
  m_{n, s}(\vecx) = \frac{1}{\binom{n}{s}} \sum_{\train_s \subseteq \train_n, \abs{\train_s} = s} \Exp_{\omega}[m(\vecx; \train_s)].
\end{equation}

 Where $\omega$ is any internal randomness to the tree building algorithm (e.g. the sample splitting). We note that even though we phrase results for the latter estimator, where all sub-samples are being averaged over and the expectation over the randomness $\omega$ is computed, our results carry over to the monte-carlo approximation of this estimator, where only $B$ trees are created, each on a randomly drawn sub-sample and for a random draw of the randomness (see e.g. \cite{WagerA18,oprescu2019orthogonal}), assuming $B$ is large enough (for our guarantees to hold it suffices to take $B=\Theta(d\, n^2)$).

 For the estimator $m_{n, s}$ and under the strong sparsity Assumption
\ref{asp:strongSparsityLevelSplit} we have the following consistency and asymptotic normality theorems. The
proofs of the theorems are presented in Appendices~\ref{sec:app:proofs:levelSplits:variance:fullyGrown} and \ref{app:normality}.

\begin{theorem}
  \label{thm:finalRecoverySubmodularIndependentBalancedProperLevelSplitFiniteSampleFullyGrown}
    Let $\train_n$ be i.i.d. samples from the non-parametric regression model
  $y = m(\vecx) + \eps$, where $m(\vecx) \in [-1/2, 1/2]$, $\eps \sim \calE$,
  $\Exp_{\eps \sim \calE}[\eps] = 0$ and $\eps \in [-1/2, 1/2]$. Let $m_{n, s}$
  be the forest estimator that is built with sub-sampling of size $s$ from the
  training set and where every tree $m(\vecx; \train_s)$ is built using
  Algorithm~\ref{alg:mainLevelSplitFiniteSample}, with inputs: $\log(t)$ large
  enough so that every leaf has two or three samples and $h = 1$.
  Under the Assumption \ref{asp:strongSparsityLevelSplit}, if $R$ is the
  set of relevant features and for every $\vecw \in \{0, 1\}^r$ it holds for
  the marginal probability that
  $\Prob_{\vecz \sim \calD_x}\p{\vecz_R = \vecw} \not\in (0, \zeta/2^r)$ and if
  $s = \tilde{\Theta}\p{\frac{2^r(\log(d/\delta))}{\beta^2} + \frac{2^r \log(1/\delta)}{\zeta}}$
  then it holds that
  \begin{align*}
    \Prob_{\train_n \sim \calD^n}\p{\Exp_{\vecx \sim \calD_x}[(m_{n, s}(\vecx) - m(\vecx))^2] \ge \tilde{\Omega}\p{\frac{2^r \log(1/\delta)}{n}\p{\frac{\log(d)}{\beta^2} + \frac{1}{\zeta}}}} \le \delta.
  \end{align*}
\end{theorem}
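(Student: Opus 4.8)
The plan is a bias--variance decomposition of the forest's risk. Write $\bar{m}_s(\vecx) = \Exp_{\train_s,\omega}[m(\vecx;\train_s)]$ for the expected prediction of a single fully-grown honest level-split tree trained on a fresh sub-sample of size $s$, so that
\[
\Exp_{\vecx\sim\calD_x}\b{\p{m_{n,s}(\vecx) - m(\vecx)}^2} \;\le\; 2\,\Exp_{\vecx}\b{\p{m_{n,s}(\vecx) - \bar{m}_s(\vecx)}^2} \;+\; 2\,\Exp_{\vecx}\b{\p{\bar{m}_s(\vecx) - m(\vecx)}^2}.
\]
The second term (bias) is deterministic, and I would argue it is polynomially small in $n$, hence dominated by the target rate; the first term (forest variance) I would control with probability at least $1-\delta$ over $\train_n$.

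\emph{Bias.} This is where the strong-sparsity recovery argument behind Theorem~\ref{thm:finalRecoverySubmodularIndependentBalancedProperLevelSplitFiniteSample}(3) enters, now applied on a sub-sample and for a fully grown tree. Let $G_{\train_s}$ be the event that the tree built on $\train_s$ splits, during its first $r$ levels, exactly on the relevant coordinates $R$ (in some order). On $G_{\train_s}$ every cell of the grown tree is contained in a slice $\{\vecz:\vecz_R=\vecw\}$, on which $m\equiv h(\vecw)$; hence by honesty each leaf value is, conditionally on the tree structure, an average of labels whose mean equals that leaf value, so $\Exp_\omega[m(\vecx;\train_s)\mid\train_s]=m(\vecx)$ on $G_{\train_s}$, and therefore $|\bar{m}_s(\vecx)-m(\vecx)|\le 2\,\Prob_{\train_s}(G_{\train_s}^c)$. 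To bound $\Prob(G_{\train_s}^c)$ I would reuse the empirical-to-population estimates behind Theorem~\ref{thm:finalRecoverySubmodularIndependentBalancedProperLevelSplitFiniteSample}(3): inducting over the $\le r$ levels and union-bounding over the $\le 2^r$ possible prefixes $S\subseteq R$, the $d$ candidate directions, and the cells, the $(\beta,r)$-gap makes the correct direction win at every level provided the empirical variance-reduction functional stays within $\beta/2$ of its population value, which holds once $s\gtrsim\tilde{O}(2^r\log(d/\delta)/\beta^2)$; and the assumption $\Prob_{\vecz}(\vecz_R=\vecw)\notin(0,\zeta/2^r)$ together with a Chernoff bound over the $\le 2^r$ non-degenerate slices guarantees each keeps $\gtrsim\log(1/\delta)$ sub-sample points, so that the tree is grown deep enough to actually split on all of $R$ before any leaf bottoms out at two or three samples, once $s\gtrsim\tilde{O}(2^r\log(1/\delta)/\zeta)$. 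The stated $s$ is the sum of these two requirements; absorbing an extra $\log n$ factor into $\tilde{\Theta}$ drives $\Prob(G_{\train_s}^c)\le n^{-2}$, so the bias contributes $O(n^{-2})$.

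\emph{Variance.} The classical U-statistic variance bound gives $\Var_{\train_n}(m_{n,s}(\vecx)) \le \tfrac{s}{n}\,\Var_{\train_s,\omega}(m(\vecx;\train_s)) \le \tfrac{s}{n}$, since tree outputs lie in $[-1,1]$; integrating over $\vecx$ shows $\Exp_{\train_n}\Exp_{\vecx}[(m_{n,s}(\vecx)-\bar{m}_s(\vecx))^2] \le s/n$. To upgrade this to a high-probability statement with the logarithmic $\delta$-dependence, I would use the sub-Gaussian concentration of honest sub-sampled predictions --- either Hoeffding's inequality for U-statistics applied pointwise in $\vecx$, or the sharper results of \cite{WagerA18,Fan2018} --- and combine the pointwise sub-Gaussian tails through the moment-generating function (via $\Exp_{\train_n}[e^{\lambda\Exp_{\vecx}[\cdot]}]\le\Exp_{\vecx}\Exp_{\train_n}[e^{\lambda\,(\cdot)}]$, giving a $\chi^2$-type tail with scale $O(s/n)$) to conclude $\Exp_{\vecx}[(m_{n,s}(\vecx)-\bar{m}_s(\vecx))^2]\le\tilde{O}(s/n)$ with probability $\ge 1-\delta$. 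Adding the bias term and substituting $s=\tilde{\Theta}(2^r\log(d/\delta)/\beta^2+2^r\log(1/\delta)/\zeta)$ yields the claimed bound $\tilde{\Omega}\p{\tfrac{2^r\log(1/\delta)}{n}\p{\tfrac{\log d}{\beta^2}+\tfrac1\zeta}}$.

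\emph{Main obstacle.} The delicate part is the bias step: proving, uniformly over the $d$ directions and $\le r$ levels, that the empirical variance-reduction functional concentrates at resolution $\beta/2$ around its population value when the relevant cells carry only $\Theta(s\zeta/2^r)$ sub-sample points, and in particular decoupling this ``which direction wins'' concentration (which produces the $2^r\log d/\beta^2$ term) from the ``cells stay populated'' counting (which produces the $2^r/\zeta$ term), so that the two sample-size demands add rather than multiply --- exactly the content of the Theorem~\ref{thm:finalRecoverySubmodularIndependentBalancedProperLevelSplitFiniteSample}(3) analysis, which must be carried over to the sub-sampled, fully-grown regime. A secondary point is that the variance concentration must be variance-sensitive (sub-Gaussian/Bernstein rather than plain bounded differences) to avoid losing a $\sqrt{n}$ factor relative to the claimed rate.
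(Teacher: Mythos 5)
Your proposal follows essentially the same route as the paper: a bias--variance decomposition into $(m_{n,s}-\bar m_s)$ and $(\bar m_s - m)$, the variance controlled by U-statistic concentration at scale $s/n$ with a pointwise-to-$\Exp_\vecx$ upgrade, and the bias controlled by honesty together with the strong-sparsity split-recovery argument showing that with probability $1-\delta$ the sub-sampled tree splits only on $R$ in its first $r$ levels. The paper packages this as Lemma~\ref{lem:biasVarianceDeepHonestForests} (which bounds the bias by the expected value-diameter $\Exp[\Delta_m(\calP_{s/2})]$ and the variance by $O(s\log(n/\delta)/n)$ via a Markov-type argument rather than your MGF step) together with part 2 of Theorem~\ref{thm:biasStronglySparseLevelSplitFiniteSample} (which proves $\Delta_m(\calP_n)=0$ w.h.p.\ under the stated $s$), but your conditional-unbiasedness bound $|\bar m_s(\vecx)-m(\vecx)|\le 2\Prob(G^c_{\train_s})$ is the same idea phrased differently; the ``main obstacle'' you flag is precisely the content of Lemma~\ref{lem:concentrationOfVarianceFunctionLevelSplit} and Lemma~\ref{lem:splitRelevantFiniteSampleLevelSplitStrongSparsity}, which the paper proves.
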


  Our next goal is to prove the asymptotic normality of the estimate $m_{n, s}$. To do so we need
that our estimation algorithm treats samples, a-priori symmetrically (i.e. the estimate is
invariant to permutations of the sample indices). Since for simplicity, we have presented
$m_{n, s}$ based on a deterministic algorithm, this might be violated. For this reason, for the
normality result, before computing the $m_{n, s}$ we apply a random permutation $\tau \in S_n$ in
the training set $\train_n$. The permutation $\tau$ is part of the internal randomness $\omega$ of
the algorithm. Given the permutation $\tau$ we denote estimate that we compute by $m_{n, s, \tau}$.
Ideally we would like to compute the expected value of $m_{n, s, \tau}$ over a uniform
choice of $\tau$ which we denote by $\bar{m}_{n, s}$. However this is computationally very
expensive since we need to repeat the estimate for all the $n!$ permutations. Similarly, it might be computationally intensive to compute an average over all possible sub-samples of size $s$. Instead we compute
a Monte Carlo approximation of $\bar{m}_{n, s}$ by sampling $B$ permutations from $S_n$ and sub-samples of size $s$ (where each sub-sample is drawn uniformly from the set of all possible sub-samples of size $s$) and then taking the empirical average of those. We denote this estimator as $m_{n, s, B}$.

\begin{theorem}\label{thm:normality-level}
    Under the same conditions of Theorem
  \ref{thm:finalRecoverySubmodularIndependentBalancedProperLevelSplitFiniteSampleFullyGrown} and
  with the further assumption that:
  $\sigma^2(\vecx) = \Var(y^{(i)}\mid \vecx^{(i)} = \vecx) \geq \sigma^2 > 0$ and that
  for an a priori fixed $\vecx$ it holds $\Prob_{\vecz \sim \calD_x}\p{\vecz_R = \vecx_R} \ge \zeta/2^r$, if we set: $\tilde{\Theta}\left(\frac{2^r (\log(d) + \log(n))}{\beta^2} + \frac{2^r \log(n)}{\zeta}\right) \leq s \leq o(\sqrt{n})$, then for $\sigma_n^2(\vecx) = O\left(\frac{s^2}{n}\right)$, it holds that:
  \begin{equation}
    \sigma_n^{-1}(\vecx)\, (m_{n, s, B}(\vecx) - m(\vecx)) \rightarrow_d N(0, 1)
  \end{equation}
  where $B \ge n^2 \log(n)$.
\end{theorem}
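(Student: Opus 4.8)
The plan is to derive asymptotic normality of the Monte Carlo forest estimate $m_{n,s,B}(\vecx)$ by combining three ingredients: (i) the bias decay already established in Theorem \ref{thm:finalRecoverySubmodularIndependentBalancedProperLevelSplitFiniteSampleFullyGrown}, which under strong sparsity must be sharpened to the statement that for $s$ in the prescribed range the tree, with probability $1 - \poly(1/n)$ over the sub-sample, splits on \emph{exactly} the relevant set $R$ (so the bias is \emph{zero} on the event that all $2^r$ cells of the $R$-partition are nonempty, and exponentially small in $s$ otherwise); (ii) the classical theory of infinite-order $U$-statistics / subsampled estimators of \cite{Mentch2016,WagerA18,Fan2018}, which says that if the "first-stage" Hájek projection dominates, then $\bar m_{n,s}(\vecx)$ is asymptotically normal with variance $\frac{s^2}{n}\zeta_{1,s}$ where $\zeta_{1,s} = \Var(\Exp[m(\vecx;\train_s)\mid (\vecx^{(1)},y^{(1)})])$; and (iii) a Monte Carlo error bound showing that with $B \ge n^2\log n$ permutation/sub-sample draws, $m_{n,s,B}(\vecx) - \bar m_{n,s}(\vecx)$ is $o_P(s/\sqrt n)$ and hence negligible relative to the target scale, so the CLT transfers from $\bar m_{n,s}$ to $m_{n,s,B}$.

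I would carry out the steps in this order. \emph{Step 1 (exact recovery of $R$).} Strengthen the high-probability analysis behind Theorem \ref{thm:finalRecoverySubmodularIndependentBalancedProperLevelSplitFiniteSampleFullyGrown}: because the gap $\beta$ separates relevant from irrelevant variables in the \emph{population} $\bar V$, and because $s = \tilde\Theta(2^r\log(dn)/\beta^2 + 2^r\log n/\zeta)$ makes the empirical $V_s$ uniformly $\beta/2$-close to $\bar V$ over all $O(d\cdot 2^r)$ candidate split sequences (Hoeffding plus a union bound, using that each leaf has $\gtrsim \zeta s/2^r \gtrsim \log(dn)/\beta^2$ points by the marginal lower bound $\Prob(\vecz_R=\vecw)\ge \zeta/2^r$), the greedy level-split algorithm picks a relevant coordinate at each of the first $r$ levels, and then has no further relevant coordinate to pick, so the fully grown tree's partition on the $\vecx_R$-marginal is the full product partition. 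On this event $G_s$ (which holds with probability $\ge 1 - 1/\poly(n)$), $m(\vecx;\train_s) = \widehat h(\vecx_R)$, the within-cell empirical mean of $y$ over the honest half-sample, so the tree is conditionally unbiased given $\vecx$. \emph{Step 2 (Hájek projection / variance lower bound).} Write $\bar m_{n,s}(\vecx)$ as an infinite-order $U$-statistic with symmetric kernel (symmetry is what the random permutation $\tau$ buys us) and compute its Hájek projection. The leading term is $\frac{s}{n}\sum_{i=1}^n \big(\Exp[m(\vecx;\train_s)\mid (\vecx^{(i)},y^{(i)})] - m(\vecx)\big)$; one shows $\zeta_{1,s} = \Theta(\sigma^2\zeta/(2^r s))$ — it is $\Theta(1/s)$ because a single data point influences the estimate only when it lands in the $\vecx$-cell, an event of probability $\Theta(\zeta/2^r)\cdot\Theta(1/1)$ weighted appropriately — so the projection has variance $\Theta\!\big(\frac{s^2}{n}\cdot\frac{1}{s}\big)=\Theta(s/n)$, which is nonvanishing after rescaling and, crucially, dominates the higher-order ($\zeta_{k,s}$, $k\ge 2$) terms precisely because $s = o(\sqrt n)$ (this is the standard "$s/n\to 0$ but $s/\sqrt n$ controls the remainder" regime of \cite{WagerA18}). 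Apply the Lindeberg CLT to the projection using $\sigma^2(\vecx)\ge\sigma^2>0$ and boundedness of $y$.

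\emph{Step 3 (Monte Carlo approximation).} Condition on $\train_n$; over the $B$ i.i.d. draws of $(\tau,\train_s)$, the estimate $m(\vecx;\train_s)$ is bounded in $[-1,1]$, so $\Var(m_{n,s,B}(\vecx)\mid\train_n)\le 1/B$, and a Bernstein/Hoeffding bound gives $|m_{n,s,B}(\vecx)-\bar m_{n,s}(\vecx)| = O_P(\sqrt{\log n / B}) = O_P(1/\sqrt{n^2}) = o_P(\sqrt{s/n})$ since $s\ge 1$; hence Slutsky lets us replace $\bar m_{n,s}$ by $m_{n,s,B}$ in the CLT with the same normalizing $\sigma_n(\vecx)$. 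Finally, set $\sigma_n^2(\vecx) := \frac{s^2}{n}\zeta_{1,s} = \Theta(s/n) = O(s^2/n)$ as claimed, and note the bias from the bad event $G_s^c$ contributes at most $\Prob(G_s^c)\cdot 1 = 1/\poly(n) = o(\sigma_n(\vecx))$, so it does not shift the limit.

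\emph{Main obstacle.} The delicate step is Step 2: establishing that the Hájek projection genuinely \emph{dominates} — i.e. bounding the second- and higher-order Hoeffding components $\zeta_{k,s}$ and showing the remainder of the Hoeffding decomposition is $o_P(\sigma_n(\vecx))$ uniformly — because the kernel here is only \emph{conditionally} of the clean form $\widehat h(\vecx_R)$ (namely on $G_s$), and off $G_s$ it is an arbitrary bounded quantity; one must verify that this rare-event contamination does not inflate $\zeta_{k,s}$ beyond the $o(\zeta_{1,s}\cdot n/s^2\cdot\text{stuff})$ threshold. Handling this requires carefully carrying the event $G_s$ through the $U$-statistic variance bookkeeping of \cite{Fan2018}, and verifying the scaling of $s$ ($s\ge \tilde\Theta(2^r(\log d+\log n)/\beta^2 + 2^r\log n/\zeta)$ for $\Prob(G_s^c)$ small, $s = o(\sqrt n)$ for remainder control) is simultaneously consistent, which is exactly the window stated in the theorem.
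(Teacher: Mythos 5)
Your three-term decomposition (Monte Carlo error + CLT for the averaged sub-sampled estimate + bias of the expected tree) is exactly the decomposition the paper uses, and the ingredients you identify are the right ones: part~3 of Theorem~\ref{thm:biasStronglySparseLevelSplitFiniteSample} (which you would re-derive in your Step~1, rather than cite) for the bias, the infinite-order $U$-statistics CLT of \cite{Fan2018} for the middle term, a Hoeffding bound plus $B \ge n^2 \log n$ for the Monte Carlo term, and Slutsky to conclude. So in structure this is the same proof.

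Two quantitative points deserve correction. First, your claimed scaling $\zeta_{1,s} = \Theta\p{\sigma^2 \zeta / (2^r s)}$ is off: a single sample lands in the $\vecx$-cell with probability $\Theta(\zeta/2^r)$ and then carries weight $\Theta(2^r/(\zeta s))$, so the Hájek kernel variance scales as $\Theta(2^r \sigma^2 / (\zeta s^2))$, not $\Theta(1/s)$; you have the power of $s$ wrong and the $\zeta$, $2^r$ factors inverted. This would make $\sigma_n^2(\vecx) = \Theta(2^r/(\zeta n))$, not $\Theta(s/n)$. The slip is not fatal because all the argument needs is a lower bound $\sigma_n^2 = \Omega(1/n)$, which the paper obtains far more simply by Jensen's inequality together with exchangeability (from the random permutation $\tau$): $\Exp[w^{(1)}(\vecx)^2] \geq \Exp[w^{(1)}(\vecx)]^2 = \Theta(1/s^2)$, hence $\sigma_n^2 \ge \sigma^2/(4n)$, with no need to reason about cell probabilities at all. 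Second, your flagged ``main obstacle'' --- bookkeeping the higher-order Hoeffding terms $\zeta_{k,s}$ through the rare bad event $G_s^c$ --- is something the paper does not carry out: Theorem~2 of \cite{Fan2018} already delivers asymptotic normality for the complete $U$-statistic $\bar m_{n,s}$ under the $s = o(\sqrt n)$ and variance-nondegeneracy conditions, so once $\sigma_n^2 = \Omega(1/n)$ is in hand and the bias (bounded pointwise for every fixed permutation, with $\delta = 1/n^2$ so it is $o(n^{-1/2})$) and Monte Carlo error are negligible, there is nothing left to control by hand. Your plan would work, but it re-proves machinery the cited result already packages.
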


  The latter asymptotic normality theorem enables the construction of
asymptotically valid normal based intervals, using estimates of the variance of
the prediction. These estimates can be constructed either via the bootstrap or
via methods particular to random forests such as the infinitesimal jacknife
proposed in \cite{Wager14}.

\subsection{Level-Splits Algorithm in the Population Model} \label{sec:app:population:levelSplitShallow}

  In this section we present the population versions of the level-splits
algorithm together its convergence analysis. We believe that this section is a
good step before presenting the full proofs of the aforementioned theorems in
the Appendix.

  We start with the presentation of the level-splits algorithm in the population
model.

\begin{algorithm}[H]
\caption{Level Split Algorithm -- Population Model} \label{alg:mainLevelSplitPopulation}
  \KwIn{maximum number of splits $\log(t)$.}

  \KwOut{tree approximation of $m$.}

  Set $\calP_0 = \{\{0, 1\}^d\}$ the partition associated with the root of the tree.

  $\level \leftarrow -1$, $S \leftarrow \emptyset$.

  \While{$\level < \log(t)$}{
    $\level \leftarrow \level + 1$, $\calP_{\level+1}=\emptyset$.

    Select $i \in [d]$ that maximizes $\bar{V}(S \cup \{i\})$ (\textit{see \eqref{eq:MSEPopulationBinaryLevelSplit:2}}) with ties broken randomly.

    \For{\emph{\textbf{all}} $A \in \calP_{\level}$}{
      Cut the cell $A$ to cells
      $A^i_k = \{\vecx \mid \vecx \in A \wedge x_i = k\}$, $k = 0, 1$.

      $\calP_{\level + 1} \leftarrow \calP_{\level + 1} \cup \{A^i_0, A^i_1\}$
    }
    $S \leftarrow S \cup \{i\}$
  }
  \Return $(\bar{\calP}, \bar{m}) = \p{\calP_{\mathrm{level}}, \vecx \mapsto \Exp_{(\vecz, y) \sim \calD}\b{y \mid \vecz \in \bar{\calP}(\vecx)}}$
\end{algorithm}

\begin{definition}[\textsc{Relevant Variables}]
\label{def:relevantVariablesForCellLevelSplit}
    Given a set $S$, we define the set of remaining relevant features
  $\calR(S) = \{i \in [d] \mid \bar{V}(S \cup \{i\}) > \bar{V}(S)\}$.
\end{definition}

\begin{lemma} \label{lem:optimalityWithoutSplitsPopulationLevelSplit}
    For every set $S \subseteq [d]$, under the Assumption
  \ref{asp:submodularityLevelSplit}, if $\calR(S) = \emptyset$, then
  for any $\vecx, \vecx' \in \{0, 1\}^d$ such that
  $\vecx_S = \vecx_{S'}$ it holds that $m(\vecx) = m(\vecx')$.
\end{lemma}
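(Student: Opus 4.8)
The plan is to use approximate submodularity to upgrade the local hypothesis $\calR(S)=\emptyset$ — meaning no single additional split increases $\bar{V}$ above $S$ — into the global statement $\bar{V}(S)=\bar{V}([d])$, and then to read off the conclusion from a bias--variance decomposition of $\bar{V}$. First I would show that $\calR(T)=\emptyset$ for every $T\supseteq S$: for such a $T$ and any $i\in[d]$, Assumption~\ref{asp:submodularityLevelSplit} gives $\bar{V}(T\cup\{i\})-\bar{V}(T)\le C\,(\bar{V}(S\cup\{i\})-\bar{V}(S))=0$, while monotonicity of $\bar{V}$ gives $\bar{V}(T\cup\{i\})-\bar{V}(T)\ge 0$; hence the increment is exactly $0$. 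Enumerating $[d]\setminus S=\{i_1,\dots,i_k\}$ and telescoping along $S\subset S\cup\{i_1\}\subset\cdots\subset[d]$ — each intermediate set contains $S$, so each successive increment vanishes — then yields $\bar{V}(S)=\bar{V}([d])$.

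Next I would evaluate both ends via \eqref{eq:MSEPopulationBinaryLevelSplit:2}. Writing $\mu_S(\vecz_S)=\Exp_{\vecw\sim\calD_x}\b{m(\vecw)\mid\vecw_S=\vecz_S}$, we have $\bar{V}(S)=\Exp_{\vecz_S\sim\calD_{x,S}}\b{\mu_S(\vecz_S)^2}$, and $\bar{V}([d])=\Exp_{\vecx\sim\calD_x}\b{m^2(\vecx)}$ because conditioning on all $d$ coordinates returns $m$ itself. The law of total variance gives $\Exp_{\vecx\sim\calD_x}\b{m^2(\vecx)} = \Exp_{\vecz_S\sim\calD_{x,S}}\b{\mu_S(\vecz_S)^2} + \Exp_{\vecz_S\sim\calD_{x,S}}\b{\Var_{\vecw\sim\calD_x}\p{m(\vecw)\mid\vecw_S=\vecz_S}}$, so $\bar{V}(S)=\bar{V}([d])$ forces $\Exp_{\vecz_S\sim\calD_{x,S}}\b{\Var_{\vecw\sim\calD_x}\p{m(\vecw)\mid\vecw_S=\vecz_S}}=0$. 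As this is an average of non-negative terms, the conditional variance is $0$ for $\calD_{x,S}$-almost every $\vecz_S$, i.e.\ $m$ is constant on each fiber $\{\vecw:\vecw_S=\vecz_S\}$ that meets the support of $\calD_x$; specializing to $\vecz_S=\vecx_S=\vecx'_S$ gives $m(\vecx)=m(\vecx')$.

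The only step that genuinely exploits the assumptions is the first one, and I expect it to be the crux: without approximate submodularity, $\calR(S)=\emptyset$ is entirely compatible with, say, $m$ being a parity over coordinates outside $S$, where every single split above $S$ is useless but splitting on all of them recovers $m$ — precisely the obstruction behind the lower bound of Appendix~\ref{sec:app:lowerBound:submodularity}. The remaining pieces — the telescoping and the total-variance identity — are routine. One small technical point is fibers $\vecz_S$ with zero marginal probability, on which $\mu_S$ and the conditional variance are undefined: these can be disregarded, since the population analysis never evaluates $\bar{m}$ on empty cells (equivalently, one may assume $\calD_x$ has full support, in which case the conclusion holds verbatim for all $\vecx,\vecx'$).
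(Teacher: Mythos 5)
Your proof is correct and essentially matches the paper's argument: both deduce from $\calR(S)=\emptyset$ together with approximate submodularity and monotonicity that $\bar V(S)=\bar V([d])$ (the paper by locating a first positive increment along an enumeration of $[d]\setminus S$ and contradicting $\calR(S)=\emptyset$, you by directly forcing every increment above $S$ to zero and telescoping), and both then conclude that $m$ is constant on positive-mass fibers because $\bar L(S)=\bar L([d])=0$. Your explicit handling of zero-mass fibers is a precision the paper glosses over: the paper's step from ``there exists $\tilde\vecx$ with $m(\tilde\vecx)\neq\Exp[m(\vecw)\mid\vecw_S=\vecx_S]$'' to ``$\bar L(S)>0$'' silently requires $\tilde\vecx$ to carry positive probability, so the lemma as literally stated (for \emph{any} $\vecx,\vecx'$) only holds on the support of $\calD_x$, exactly the caveat you flag.
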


\begin{proof}
    We prove this by contradiction. If there exist
  $\vecx, \vecx' \in \{0, 1\}^d$ such that $\vecx_S = \vecx'_S$ and
  $m(\vecx) \neq m(\vecx')$ then obviously there exists an
  $\tilde{\vecx} \in A$ such that
    \( m(\tilde{\vecx}) \neq \Exp_{\vecx_{-S}}\b{m((\vecx_S, \vecx_{-S}))} \), where $A$
  is the cell of the input space that contains all vectors $\vecz$ with $\vecz_S= \vecx_S$.
  Therefore it holds that $\bar{L}(S) > \bar{L}([d]) = 0$ and hence
  $\bar{V}([d]) > \bar{V}(S)$. Now let's assume an arbitrary
  enumeration $\{i_1, \dots, i_k\}$ of the set
  $S^c = [d] \setminus S$. Because the function
  $\bar{V}$ is monotone and $\bar{V}([d]) > \bar{V}(S)$, there has to
  be a number $j \in [k]$ such that
  $\bar{V}(S \cup \{i_1, \dots, i_j\}) > \bar{V}(S \cup \{i_1, \dots, i_{j - 1}\})$.
  But because of the approximate submodularity of $\bar{V}$ it holds that
  \( \bar{V}(S \cup \{i_1, \dots, i_j\}) - \bar{V}(S \cup \{i_1, \dots, i_{j - 1}\}) \le C \cdot (\bar{V}(S \cup \{i_j\}) - \bar{V}(S)) \),
  which implies that
  $\bar{V}(S \cup \{i_j\}) > \bar{V}(S)$ and this
  contradicts our assumption that $\calR(S) = \emptyset$.
\end{proof}

\begin{theorem} \label{thm:recoverySubmodularIndependentBalancedProperLevelSplits:population}
    Consider the non-parametric regression model $y = m(\vecx) + \eps$, where
  $m : \{0, 1\}^d \to \b{-\frac{1}{2}, \frac{1}{2}}$ is a $r$-sparse function
  and $\eps \sim \calE$, with $\eps \in \b{-\frac{1}{2}, \frac{1}{2}}$ and
  $\Exp\b{\eps} = 0$. Let $\bar{m}$ be the function that the Algorithm
  \ref{alg:mainLevelSplitPopulation} returns with input
  $t \ge \p{1/\eta}^{C \cdot r}$, then under the Assumption
  \ref{asp:submodularityLevelSplit} it holds that
  \[ \Exp_{\vecx}\b{\p{m(\vecx) - \bar{m}(\vecx)}^2} \le \eta. \]
  Moreover under the Independence of Features Assumption
  \ref{asp:independentFeature} if $\log(t) \ge r$ then $\bar{m} = m$.
\end{theorem}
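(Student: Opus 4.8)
The plan is to reduce the whole statement to controlling the ``optimality gap'' $\bar V(R) - \bar V(S)$, where $S$ is the set of split directions produced by Algorithm~\ref{alg:mainLevelSplitPopulation} on termination. First I would record three elementary facts. (i) Since $\eps$ has mean zero, the returned function is $\bar m(\vecx) = \Exp_{\vecw \sim \calD_x}[m(\vecw) \mid \vecw_S = \vecx_S]$, so by \eqref{eq:MSEPopulationBinaryLevelSplit}--\eqref{eq:MSEPopulationBinaryLevelSplit:2}, $\Exp_{\vecx}[(m(\vecx) - \bar m(\vecx))^2] = \bar L(S) = \Exp_{\vecx}[m^2(\vecx)] - \bar V(S)$. (ii) Since $m$ is $r$-sparse, conditioning on $\vecw_R$ pins down $m(\vecw)$, so $\bar V(R) = \Exp_{\vecx}[m^2(\vecx)]$. (iii) Since $\bar L(\cdot) \ge 0$ pointwise, $\bar V(R) = \Exp_{\vecx}[m^2(\vecx)]$ is in fact the maximum of $\bar V$ over all subsets, so the gap is always non-negative. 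Hence the first claim is equivalent to $\bar V(R) - \bar V(S) \le \eta$ and the ``moreover'' to $\bar V(R) - \bar V(S) = 0$.

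For the submodular case, the core is a standard ``greedy covers a submodular objective'' estimate, adapted to the $C$-approximation. At any intermediate split set $S'$ reached by the algorithm I would enumerate $R \setminus S' = \{i_1, \dots, i_k\}$ with $k \le r$, telescope $\bar V(R) - \bar V(S') \le \bar V(R \cup S') - \bar V(S') = \sum_{j=1}^{k} \big(\bar V(S' \cup \{i_1, \dots, i_j\}) - \bar V(S' \cup \{i_1, \dots, i_{j-1}\})\big)$ using monotonicity, and then apply Assumption~\ref{asp:submodularityLevelSplit} to each summand with the nesting $S' \subseteq S' \cup \{i_1, \dots, i_{j-1}\}$ to bound the $j$-th term by $C\big(\bar V(S' \cup \{i_j\}) - \bar V(S')\big)$. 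This gives $\bar V(R) - \bar V(S') \le C r \cdot \max_i \big(\bar V(S' \cup \{i\}) - \bar V(S')\big)$, i.e. the direction the algorithm greedily picks shrinks the gap by at least a $\tfrac{1}{Cr}$ fraction. Writing $\Delta_\ell$ for the gap after $\ell$ levels, $\Delta_{\ell+1} \le (1 - \tfrac{1}{Cr})\Delta_\ell$, hence $\Delta_\ell \le \Delta_0\, e^{-\ell/(Cr)} \le \tfrac14 e^{-\ell/(Cr)}$ using $\abs{m} \le \tfrac12$; plugging in the number of levels $\abs{S} \ge \log(t) \ge C r \log(1/\eta)$, which is exactly what $t \ge (1/\eta)^{Cr}$ buys, yields $\bar L(S) = \Delta_{\abs{S}} \le \eta$.

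For the independence refinement I would first prove that irrelevant directions are worthless: if $j \in [d] \setminus R$ then $\bar V(S' \cup \{j\}) = \bar V(S')$ for every $S'$. This is because under a product $\calD_x$ the coordinate $\vecw_j$ stays independent of $\vecw_R$ even after conditioning on $\vecw_{S'}$, so $\Exp[m(\vecw) \mid \vecw_{S' \cup \{j\}} = \vecz_{S' \cup \{j\}}] = \Exp[m(\vecw) \mid \vecw_{S'} = \vecz_{S'}]$ for all $\vecz$, and the within-cell means — hence $\bar V$ — are unchanged. Since a direction already in $S'$ also contributes zero marginal value, at any level with $\calR(S') \ne \emptyset$ the greedy maximizer has strictly positive gain and must therefore be a \emph{new} relevant feature. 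As $\abs{R} = r$, after at most $r$ levels we have either $R \subseteq S'$ (so $\bar V(S') = \bar V(R)$ and $\calR(S') = \emptyset$) or $\calR(S') = \emptyset$ outright; invoking Lemma~\ref{lem:optimalityWithoutSplitsPopulationLevelSplit}, $\calR(S') = \emptyset$ forces $m$ to be constant on each cell, i.e. $\bar L(S') = 0$. Monotonicity of $\bar L$ together with $\log(t) \ge r$ then give $\bar L(S) = 0$, i.e. $\bar m = m$ on the support of $\calD_x$.

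The main obstacle is the greedy-progress inequality: everything hinges on correctly matching the pattern $\bar V(T \cup \{i\}) - \bar V(T) \le C(\bar V(S \cup \{i\}) - \bar V(S))$ of Assumption~\ref{asp:submodularityLevelSplit} — with $S$ the current greedy set and $T$ a larger set obtained by adjoining a prefix of $R \setminus S$ — and on the fact that $\bar V(R)$ is a global maximum, which is what makes the gap non-negative and contract multiplicatively. The rest is bookkeeping: the geometric-sum bound, the off-by-one in how many levels the loop actually performs, and, for the ``moreover'', the short independence computation for irrelevant coordinates plus the appeal to Lemma~\ref{lem:optimalityWithoutSplitsPopulationLevelSplit}.
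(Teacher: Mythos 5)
Your proposal is correct and follows essentially the same approach as the paper's proof: both telescope $\bar V$ along a prefix enumeration of the relevant set, apply the $C$-approximate submodularity to compare each telescoping increment against the single-coordinate increment above the current split set, deduce the greedy step contracts the gap by a factor $(1 - \tfrac{1}{Cr})$, iterate for $\log(t)$ levels, and for the ``moreover'' use that under independence irrelevant coordinates have zero marginal gain so the greedy selects only relevant ones, then invoke Lemma~\ref{lem:optimalityWithoutSplitsPopulationLevelSplit}. The only cosmetic difference is that you work with $\bar V(R) - \bar V(S)$ directly while the paper uses $\bar V(S\cup R) - \bar V(S)$; these coincide since $\bar V(R)$ is the global maximum, as you observe.
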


\begin{proof}
    Let $R \subseteq [d]$ be the set of size $\abs{R} = r$ of the relevant
  features of the target function $m$. Let $S$ the set of splits that Algorithm
  \ref{alg:mainLevelSplitPopulation} chooses. Observe that it holds that
  $\bar{L}(S \cup R) = 0$ and hence $\bar{V}(S \cup R) \triangleq V^*$ is
  maximized. Since $m(\vecx) \in [-1/2, 1/2]$, the maximum value of $\bar{V}$
  is $1$.

    For the first part of the theorem, let $\{i_1, \dots, i_r\}$ and be an
  arbitrary enumeration of $R$ and let $R_j = \{i_1, \dots, i_j\}$ then by
  adding and subtracting terms of the form $\bar{V}(S \cup R_j)$ we have the
  following equality
  \[ \p{\bar{V}(S \cup R) - \bar{V}(S \cup R_{r - 1})} + \cdots + \p{\bar{V}(S \cup R_2) - \bar{V}(S \cup \{i_1\})} + \bar{V}(S \cup \{i_1\}) = V^*. \]
  \noindent From the approximate submodularity of $\bar{V}$ we hence have that
  \[ \p{\bar{V}(S \cup \{i_r\}) - \bar{V}(S)} + \cdots + \p{\bar{V}(S \cup \{i_2\}) - \bar{V}(S)} + \p{\bar{V}(S \cup \{i_1\}) - \bar{V}(S)} \ge \frac{V^* - \bar{V}(S)}{C} \]
  \noindent which implies
  \[ \max_{j \in [r]} \p{\bar{V}(S \cup \{i_j\}) - \bar{V}(S)} \ge \frac{V^* - \bar{V}(S)}{C \cdot r}. \]
  \noindent Let $i^{\level}$ be the coordinate that the algorithm chose to
  split at level $\level$. Now from the greedy criterion that Algorithm
  \ref{alg:mainLevelSplitPopulation} uses we get that the coordinate
  $i^{\level}$ that we picked to split was at least as good as the best of the
  coordinates in $R$, hence it holds that
  \[ \bar{V}\p{S \cup \{i^{\level}\}} \ge \bar{V}(S) + \frac{V^* - \bar{V}(S)}{C \cdot r} \]
  which in turn using $L^* \triangleq \bar{L}(S \cup R) = 0$ implies that
  \begin{equation} \label{eq:potentialProgressPopulationLevelSplit}
    \bar{L}\p{S \cup \{i^{\level}\}} \le \bar{L}(S) \p{1 - \frac{1}{C \cdot r}}.
  \end{equation}
  \noindent Again we fix $S_{\level}$ to be the set of splits after the step
  $\level$ of Algorithm \ref{alg:mainLevelSplitPopulation}, it holds that
  \[ \bar{L}\p{S_{\level + 1}} \le \bar{L}\p{S_{\level}} \p{1 - \frac{1}{C \cdot r}}. \]
  \noindent Inductively and using the fact that $m(\vecx) \in [-1/2, 1/2]$
  implies that
  \begin{equation} \label{eq:potentialProgressPopulationLevel}
    \bar{L}\p{S_{\level}} \le \bar{L}\p{\emptyset} \p{1 - \frac{1}{C \cdot r}}^{\level} \le \p{1 - \frac{1}{C \cdot r}}^{\level}.
  \end{equation}
  \noindent Finally from the choice of $t$ we have that for
  $\level = C \cdot r \ln\p{1/\eta}$ it holds $\bar{L}(S_{\level}) \le \eta$ and
  since $\bar{L}(S)$ is a decreasing function of $S$ the first part of the
  theorem follows.

     For the second part, we observe that for any coordinate
  $i \in [d] \setminus R$ and for any $S \subseteq [d]$ it holds that
  $\bar{V}(S \cup \{i\}) - \bar{V}(S) = 0$ and hence the Algorithm
  \ref{alg:mainLevelSplitPopulation} will pick a coordinate in $[d] \setminus R$
  only after it picks all the coordinates in $R$. Hence for $\log(t) \ge r$ we
  have that $\calR(S) = \emptyset$ and from Lemma
  \ref{lem:optimalityWithoutSplitsPopulationLevelSplit} the second part of the
  theorem follows.
\end{proof}

\section{Consistency of Breiman's Algorithm for Sparse Functions} \label{sec:Breiman}

  In this section we present our analysis for the case when the tree construction
algorithm, at every iteration, chooses a different direction to split on at every
cell of the current partition. We start with the necessary definitions to present
the algorithm that we use. We start with the necessary definitions
to present the algorithm that we use. Then, in Section
\ref{sec:app:population:Breiman} we present an analysis of the
population version of the algorithm, that is useful to gain intuition on the
finite sample proof that we present in full detail in the Appendix.

We define the \textit{total expected mean squared error} that is achieved by a
partition $\calP$ of $\{0, 1\}^d$, in the population model, as follows:
\begin{align} \label{eq:MSEPopulationBinaryBreiman}
  \bar{L}(\calP) & \triangleq \Exp_{\vecx \sim \calD_x}\b{\p{m(\vecx) - \Exp_{\vecz \sim \calD_x}\b{m(\vecz) \mid \vecz \in \calP(\vecx)}}^2} \\
  & = \Exp_{\vecx}\b{m^2(\vecx)} - \Exp_{\vecx \sim \calD_x}\b{\p{\Exp_{\vecz \sim \calD_x}\b{m(\vecz) \mid \vecz \in \calP(\vecx)}}^2} \nonumber \\
  & \triangleq \Exp_{\vecx}\b{m^2(\vecx)} - \bar{V}(\calP). \label{eq:MSEPopulationBinaryBreiman:2}
\end{align}
For simplicity, we use the shorthand notation $\bar{V}(\calP, A, i)$ and
$\bar{L}(\calP, A, i)$ to denote $\bar{V}(\calS(\calP, A, i))$ and
$\bar{L}(\calS(\calP, A, i))$. Observe that the function $\bar{L}$ is \textit{increasing} with respect to $\calP$ is the sense that if $\calP' \sqsubseteq \calP$ then
$\bar{L}(\calP') \le \bar{L}(\calP)$ and hence $\bar{V}$ is \textit{decreasing} with
respect to $\calP$.

In order to define the splitting criterion of our algorithm, we will need to define a local version of the mean squared error, locally at a cell $A$, as follows:
\begin{align} \label{eq:cellMSEPopulationBinaryBreiman}
  \bar{L}_{\ell}(A, \calP) & \triangleq \Exp_{\vecx \sim \calD_x} \b{\p{m(\vecx) - \Exp_{\vecz \sim \calD_x}\b{m(\vecz) \mid \vecz \in \calP(x)}}^2 \mid \vecx \in A} \\
  & = \Exp_{\vecx \sim \calD_x}\b{m^2(\vecx) \mid \vecx \in A} - \Exp_{\vecx}\b{\Exp_{\vecz}\b{m(\vecz) \mid \vecz \in \calP(x)}^2 \mid \vecx \in A} \nonumber \\
  & \triangleq \Exp_{\vecx}\b{m^2(\vecx) \mid \vecx \in A} - \bar{V}_{\ell}(A, \calP). \label{eq:cellMSEPopulationBinaryBreiman:2}
\end{align}
For shorthand notation, for any $\calP$ that contains $A$, we will use $\bar{V}_{\ell}(A, I) = \bar{V}_{\ell}(A, \calS(\calP, A, I))$, for any set of directions
$I \subseteq [d]$ (observe that the quantity is independent of the choice of $\calP$, as long as it contains $A$). Similarly, we will use the shorthand notation $\bar{L}_\ell(A, I)$. Finally, we will use the shorthand notation: $\bar{L}_{\ell}(A)$, $\bar{V}_{\ell}(A)$ for $\bar{L}_{\ell}(A, \emptyset)$ and $\bar{V}_{\ell}(A, \emptyset)$ correspondingly.
\smallskip

We now need to define the corresponding property of submodularity and strong sparsity in this more
complicated setting. Inspired by the economics literature we call the analogue of submodularity for this setting the
\textit{diminishing returns} property. Moreover, we call the analogue of strong sparsity, \text{strong partition sparsity}.

\begin{assumption}[\textsc{Approximate Diminishing Returns}] \label{asp:diminishingReturnsBreimanFull}
    For $C \ge 1$, we say that the function $\bar{V}$ has the
  \textit{approximate diminishing returns} property if for any cells $A$, $A'$, any $i \in [d]$
  and any $T \subseteq [d]$ such that $A' \subseteq A$ it holds that
    $\bar{V}_{\ell}(A', T \cup \{i\}) - \bar{V}_{\ell}(A', T) \le C \cdot (\bar{V}_{\ell}(A, i) - \bar{V}_{\ell}(A))$.
\end{assumption}

\begin{assumption}[\textsc{Strong Partition Sparsity}] \label{asp:strongPartitionSparsity}
     A target function $m : \{0, 1\}^d \to [-1/2, 1/2]$
  is \textit{$(\beta, r)$-strongly partition sparse} if $m$ is $r$-sparse with relevant
  features $R$ and the function $\bar{V}$ satisfies:
  $\bar{V}_{\ell}(A, T\cup j) - \bar{V}_{\ell}(A, T) + \beta \le \bar{V}_{\ell}(A, T\cup i) - \bar{V}_{\ell}(A, T)$,
  for all possible cells $A$ and for all $i \in \calR$, $j \in [d] \setminus \calR$.
\end{assumption}

  For some of the results in this section we need to assume that the density or the
marginal density with respect to $\vecx$ is lower bounded by some constant. The reason that we
need this assumption is that Algorithm
\ref{alg:mainBreimanFiniteSample} makes a greedy split decision separately at every leaf of the tree. Therefore we need to
make sure that, at least in the first few important splits, every leaf has enough samples to choose
the correct greedy option. For this reason we define the following assumption on the lower
bound of the marginal density.
\begin{assumption}[\textsc{Marginal Density Lower Bound}] \label{asp:densityLowerBound}
    We say that the density $\calD_x$ is $(\zeta, q)$-lower bounded, if for every set
  $Q \subset [d]$ with size $\abs{Q} = q$ then for every $\vecw \in \{0, 1\}^q$ it holds that
  $\Prob_{\vecx \sim \calD_x}\p{\vecx_Q = \vecw} \ge \zeta/2^q$.
\end{assumption}
Importantly, this assumption will be required only for a subset of our results
that provide faster rates and whenever required, it will be for the case of
$q=r$. Hence, it will for instance be satisfied with $\zeta=(1-\epsilon)^r$
(which is a constant, independent of the dimension $d$) if each coordinate of
$x$ is independent and takes each of the values $0, 1$, with probability at
least $(1-\epsilon)/2$.

    We next need to define the estimator that is defined by a tree that produces
a partition $\calP$ of the space $\{0,1\}^d$. Given a training set $\train_n$
and a cell $A\in \calP$, we define:
\begin{align} \label{eq:estimationGivenPartitionBreimanFiniteSampleCell}
  g_n(A) = \frac{1}{N_n(A)} \sum_{j \in [n]} y^{(j)} \cdot \chara\{\vecx^{(j)} \in A\} = \sum_{j \in [n]} W_n(\vecx^{(j)}; A) \cdot y^{(j)},
\end{align}
\noindent where in both the aforementioned definition $N_n(\cdot)$ and
$W_n(\cdot; \cdot)$ are defined as follows
\begin{align} \label{eq:estimationGivenPartitionBreimanFiniteSample:1}
  N_n(A) = \sum_{j \in [n]} \chara\{\vecx^{(j)} \in A\}, ~~ W_n(\vecx; A) = \frac{\chara\{\vecx \in A\}}{N_n(A)}
\end{align}
\noindent In words, the function $N_n(A)$ is the number of training point in the cell $A$
and $W_n(\vecx; A)$ is the coefficient of the training points that lie in the cell $A$,
when computing the local estimate at $A$. We also define the set $\calZ_n(A)$, as the
subset of the training set $\calZ_n(A) = \{j \mid \vecx^{(j)} \in A\}$. Based on this we
also define the partition $\calU_n(\calP)$ of the training set $\train_n$ as
$\calU_n(\calP) = \{\calZ_n(A) \mid A \in \calP\}$.
Given an input $\vecx$ we define the estimate $m(\vecx; \calP, \train_n)$ as follows (for simplicity, we use $m_n(\cdot; \cdot)$, $N_n(\cdot)$ and
$W_n(\cdot; \cdot)$ instead of $m(\cdot; \cdot, \train_n)$, $N(\cdot; \train_n)$ and
$W(\cdot; \cdot, \train_n)$):
\begin{align} \label{eq:estimationGivenPartitionBreimanFiniteSampleFull}
  m_n(\vecx; \calP) = g_n(\calP(x)),
\end{align}
\smallskip

  For the presentation of the algorithm we also need the definition of the
\textit{empirical mean squared error}, conditional on a cell $A$ and a potential split direction $i$, as
follows.
\begin{align} \label{eq:MSEFiniteSampleBinaryBreimanCellWithSplit}
  L^{\ell}_n(A, i) & \triangleq \sum_{z \in \{0, 1\}} \frac{N_n(A^i_{z})}{N_n(A)} \sum_{j \in \calZ_n(A^i_{z})} \frac{1}{N_n(A^i_z)} \p{y^{(j)} - m_n\p{\vecx^{(j)}; \calP\p{\vecx^{(j)}}}}^2 \\
  & = \frac{1}{N_n(A)} \sum_{j \in Z_n(A)} \p{y^{(j)}}^2 - \sum_{z \in \{0, 1\}} \frac{N_n(A^i_{z})}{N_n(A)} \p{g_n(A^i_z)}^2 \nonumber \\
  & \triangleq \frac{1}{N_n(A)} \sum_{j \in Z_n(A)} \p{y^{(j)}}^2 - V^{\ell}_n(A, i), \label{eq:MSEFiniteSampleBinaryBreimanCellWithSplit:2}
\end{align}

  We are now ready to present the Breiman's tree construction algorithm both with
and without honesty (we use the honesty flag $h$, where $h = 1$ means we use
honesty).

\smallskip

\begin{algorithm}[H]
\caption{Breiman's Tree Construction Algorithm} \label{alg:mainBreimanFiniteSample}
  \KwIn{maximum number of nodes $t$, a training data set $\train_n$, honesty flag $h$.}

  \KwOut{tree approximation of $m$.}

  $\calV \leftarrow D_{n, x}$

  \lIf{$h = 1$}{Split randomly $\train_n$ in half, $\train_{n/2}$, $\train'_{n/2}$, set $n \leftarrow n/2$, set $\calV \leftarrow D'_{n ,x}$}

  Set $\calP_0 = \{\{0, 1\}^d\}$ the partition associated with the root of the tree.

  For all $1 \leq \ell \leq t$, set $\calP_{\ell} = \varnothing$.

  $\level \leftarrow 0$, $n_{\nodes} \leftarrow 1$, $\queue \leftarrow \calP_0$.

  \While{$n_{\nodes}$ < t}{
    \uIf{$\queue = \emptyset$}{
      $\level \leftarrow \level + 1$, $\queue\leftarrow \calP_{\level}$
    }

      Pick $A$ the first element in $\queue$

      \uIf{$\abs{\calV \cap A} \le 1$}{
        $\queue \leftarrow \queue \setminus \{A\}$,
        $\calP_{\level + 1} \leftarrow \calP_{\level + 1} \cup \{A\}$
      }
      \Else{
        Select $i \in [d]$ that maximizes $V^{\ell}_{n}(A, i)$ (\textit{see \eqref{eq:MSEFiniteSampleBinaryBreimanCellWithSplit:2}}) with ties broken randomly

        Cut the cell $A$ to cells
        $A^{i}_k = \{\vecx \mid \vecx \in A \wedge x_i = k\}$, $k = 0, 1$

        $\queue \leftarrow \queue \setminus \{A\}$,
        $\calP_{\level + 1} \leftarrow \calP_{\level + 1} \cup \{A^i_0, A^i_1\}$
      }
  }

  $\calP_{\level+1} \leftarrow \calP_{\level+1} \cup \queue$

  \Return $(\calP_n, m_n) = \p{\calP_{\level+1}, \vecx \mapsto m_n(\vecx; \calP_{\level+1})}$ ~ [\emph{\textit{see \eqref{eq:estimationGivenPartitionBreimanFiniteSampleFull}}}]
\end{algorithm}

\smallskip

We can now state  our main result for the consistency of shallow trees with Breiman's splits as
described in Algorithm \ref{alg:mainBreimanFiniteSample}. The proof of this theorem can be found in
the Appendix \ref{sec:app:proofs:Breiman}. As we can see in Theorem
\ref{thm:finalRecoverySubmodularIndependentBalancedProperBreimanFiniteSample} the
rates are better as we make our assumptions stronger similar to the results
for the level-split algorithm. The main difference between the results in this
section and the results for the level-split algorithm is that for the analysis of
Breiman's algorithm we need to assume that the probability mass function of the
distribution $\calD_x$ is lower bounded by $\zeta/2^d$.

\begin{theorem} \label{thm:finalRecoverySubmodularIndependentBalancedProperBreimanFiniteSample}
    Let $\train_n$ be i.i.d. samples from the non-parametric regression model
  $y = m(\vecx) + \eps$, where $m(\vecx) \in [-1/2, 1/2]$, $\eps \sim \calE$,
  $\Exp_{\eps \sim \calE}[\eps] = 0$ and $\eps \in [-1/2, 1/2]$ with $m$ an $r$-sparse
  function. Let also $\calP_n$ be the partition that the Algorithm
  \ref{alg:mainBreimanFiniteSample} returns with input $h = 0$. Then the following statements
  hold.
  \begin{Enumerate}
    \item Let $q = \frac{C \cdot r}{C \cdot r + 3}\p{\log(n) - \log(\log(d/\delta))}$ and
    assume that the approximate diminishing returns Assumption
    \ref{asp:diminishingReturnsBreimanFull} holds. Moreover if we set the number of nodes $t$
    such that $\log(t) \ge q$, and if we have number of samples
    $n \ge \tilde{\Omega}\p{\log(d/\delta)}$ then it holds that
    \begin{align*}
      \Prob_{\train_n \sim \calD^n}\p{\Exp_{\vecx \sim \calD_x} \b{\p{m(\vecx) - m_n(\vecx; \calP_n)}^2} > \tilde{\Omega}\p{C \cdot r \cdot \sqrt[C \cdot r + 3]{\frac{\log(d/\delta)}{n}}}} \le \delta.
    \end{align*}
    \item Suppose that the distribution $\calD_x$ is a product distribution
    (see Assumption~\ref{asp:independentFeature}) and that Assumption~\ref{asp:diminishingReturnsBreimanFull} holds. Moreover if $\log(t) \ge r$, then it holds
    that
    \begin{align*}
      \Prob_{\train_n \sim \calD^n}\p{\Exp_{\vecx \sim \calD_x} \b{\p{m(\vecx) - m_n(\vecx, \calP_n)}^2} > \tilde{\Omega}\p{\sqrt[3]{C^2 \cdot \frac{2^r \cdot \log(d/\delta))}{n}}}} \le \delta.
    \end{align*}
    \item Suppose that the distribution $\calD_x$ is a product distribution
    (see Assumption~\ref{asp:independentFeature}), that is also $(\zeta, r)$-lower bounded
    (see Assumption~\ref{asp:densityLowerBound}) and that Assumption~\ref{asp:diminishingReturnsBreimanFull} holds. Moreover if $\log(t) \ge r$, then it holds
    that
    \begin{align*}
      \Prob_{\train_n \sim \calD^n}\p{\Exp_{\vecx \sim \calD_x} \b{\p{m(\vecx) - m_n(\vecx, \calP_n)}^2} > \tilde{\Omega}\p{C \cdot \sqrt{\frac{2^r \cdot \log(d/\delta))}{\zeta \cdot n}}}} \le \delta.
    \end{align*}
    \item Suppose that $m$ is $(\beta, r)$-strongly sparse (see Assumption
    \ref{asp:strongPartitionSparsity}) and that $\calD_x$ is $(\zeta, r)$-lower bounded
    (see Assumption \ref{asp:densityLowerBound}). If
    $n \ge \tilde{\Omega}\p{\frac{2^r(\log(d/\delta))}{\zeta \cdot \beta^2}}$, and
    $\log(t) \ge r$, then we have
    \begin{align*}
      \Prob_{\train_n \sim \calD^n}\p{\Exp_{\vecx \sim \calD_x} \b{\p{m(\vecx) - m_n(\vecx, \calP_n)}^2} > \tilde{\Omega}\p{\frac{2^r \log(d/\delta) \log(n)}{n}}} \le \delta.
    \end{align*}
  \end{Enumerate}
\end{theorem}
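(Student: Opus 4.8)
All four parts follow the same two–step template, instantiated with progressively stronger hypotheses. For the (random, data–adaptive) output partition $\calP_n$ set $\bar m_{\calP_n}(\vecx)\triangleq\Exp_{\vecz\sim\calD_x}[m(\vecz)\mid\vecz\in\calP_n(\vecx)]$ and decompose
\[
  \Exp_{\vecx}\b{(m(\vecx)-m_n(\vecx;\calP_n))^2}\;\le\;2\,\bar L(\calP_n)\;+\;2\,\Exp_{\vecx}\b{(m_n(\vecx;\calP_n)-\bar m_{\calP_n}(\vecx))^2},
\]
where the first term is the \emph{approximation error} of the tree the algorithm actually grows (see \eqref{eq:MSEPopulationBinaryBreiman:2}) and the second is the \emph{estimation (leaf–averaging) error}. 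The bulk of the work is bounding the first, i.e.\ controlling the bias of the greedily built partition.

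For the approximation error I would mimic the population analysis of Breiman's algorithm (the partition analogue of Theorem~\ref{thm:recoverySubmodularIndependentBalancedProperLevelSplits:population}) but run it on the empirical criterion. The engine is a \emph{cell–local uniform concentration} bound: any cell $A$ reachable within the first $q=\log(t)$ levels is pinned down by at most $q$ split directions together with their bit values, so there are at most $(2d)^{q}$ such cells; when $\calD_x$ is $(\zeta,r)$–lower bounded (Assumption~\ref{asp:densityLowerBound}) each of them carries $\gtrsim\zeta n/2^{r}$ samples (a Chernoff / balls–in–bins estimate), and conditional on that a Bernstein bound plus a union bound over these cells and all $d$ directions gives $|V^{\ell}_n(A,i)-\bar V_{\ell}(A,i)|=O(\sqrt{2^{r}\log(d/\delta)/(\zeta n)})$ simultaneously (see \eqref{eq:MSEFiniteSampleBinaryBreimanCellWithSplit:2} and \eqref{eq:cellMSEPopulationBinaryBreiman:2}). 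On this good event one argues inductively, level by level, exactly as in the population proof. Under approximate diminishing returns (Assumption~\ref{asp:diminishingReturnsBreimanFull}) a cell with residual local MSE $\bar L_{\ell}(A)$ admits a split of gain at least $\bar L_{\ell}(A)/(Cr)$, the greedy rule picks one at least that good up to the uniform error, and iterating over $q$ levels forces $\bar L(\calP_n)$ to decay geometrically; balancing this decay against the $q$–dependent estimation error of a tree with $\le t$ nodes yields part~(1), with the exponent $Cr+3$ (one more than in the level–split analysis, because of the extra cell–local union bound). Adding the product assumption (Assumption~\ref{asp:independentFeature}), irrelevant coordinates have exactly zero population gain in every cell, so only relevant coordinates are ever favoured; without the density lower bound the reachable cells can still be badly under–sampled and a coarser analysis only yields the $\sqrt[3]{\cdot}$ rate of part~(2), whereas the density lower bound guarantees every reachable cell is well sampled and sharpens this to the $\sqrt{\cdot}$ rate of part~(3). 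Finally, under strong partition sparsity (Assumption~\ref{asp:strongPartitionSparsity}) there is a $\beta$–gap between the best relevant and the best irrelevant split in each cell; once the uniform deviation above is driven below $\beta/2$, which is exactly the requirement $n\ge\tilde\Omega(2^{r}\log(d/\delta)/(\zeta\beta^{2}))$, the empirical argmax at each of the first $r$ levels in every reached cell is a relevant coordinate, so along every root–to–leaf path the algorithm splits on all of $R$ and $\bar L(\calP_n)=0$ deterministically on the good event --- this is part~(4).

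For the estimation error, on that good event the leaf structure is (essentially) the partition by the $r$ relevant bits, with $\Theta(2^{r})$ leaves each of size $\Theta(\zeta n/2^{r})$ of bounded outcomes, and the leaf means are unbiased for $\bar m_{\calP_n}=m$ because any additional splits on irrelevant coordinates do not change the conditional mean of $m$. A localized Rademacher / peeling argument for adaptive partitioning estimators (of the kind alluded to in Section~\ref{sec:intro}) then bounds $\Exp_{\vecx}[(m_n(\vecx;\calP_n)-m(\vecx))^2]$ by $\tilde O(2^{r}/n)$, the $\log(d/\delta)$ and $\log n$ in the stated rate being the usual artifacts of the union bound over the data–adaptive partition; combining with the vanishing bias and absorbing lower–order factors into $\tilde\Omega(\cdot)$ gives part~(4), while the analogous (slower) estimation–error bounds finish parts~(1)--(3).

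The genuine obstacle is this cell–local uniform control and, for part~(4), upgrading it from ``the empirical best split is within $\beta/2$ of optimal'' to ``the empirical best split is exactly a relevant coordinate'' --- simultaneously over all $\le(2d)^{r}$ reachable cells and propagated through the inductive level–by–level construction without the failure probabilities compounding. One must show jointly for every reachable cell that (i) it actually received $\gtrsim\zeta n/2^{r}$ samples (here Assumption~\ref{asp:densityLowerBound} and the $2^{r}$ in the sample size are indispensable) and (ii) the empirical gains of all $d$ directions are accurate to $o(\beta)$. This is precisely where the $2^{r}$, $1/\zeta$, $1/\beta^{2}$ and $\log(d/\delta)$ factors of the sample–complexity hypothesis are pinned down, and where Breiman's per–leaf greedy rule genuinely departs from the level–split variant analyzed earlier.
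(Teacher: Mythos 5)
Your sketch mirrors the paper's own proof: a bias-variance decomposition where the bias $\bar L(\calP_n)$ is controlled by the level-by-level potential argument driven by the cell-local uniform concentration bound (Lemma~\ref{lem:concentrationOfVarianceFunctionBreiman}, with the union bound over cells in $\calA(q,\zeta)$ and the separate handling of small-mass cells), and the estimation error by the localized Rademacher / critical-radius bound of Lemma~\ref{lem:bias-variance} and Corollary~\ref{cor:criticalradiusShallow}. One small inaccuracy worth noting: the $Cr+3$ exponent in part~(1) does not come from ``the extra cell-local union bound'' but from the additional parameter $\zeta$ that enters the per-cell concentration error as $\xi\sim\sqrt{2^q/(\zeta n)}$ and also appears as an additive $\zeta$ contribution per level from under-sampled cells; balancing $\zeta\sim\xi$ then replaces the equation $\eta^{Cr+2}\sim 1/n$ of the level-split case by $\zeta^{Cr+3}\sim 1/n$.
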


\subsection{Fully Grown Honest Forests with Breiman's Algorithm}
\label{sec:BreimanFullyGrown}

  In this section we consider the case of fully grown honest trees. As in the case
of level splits we are going to use the subsampling technique and honesty. That is,
for any subset $\train_s$ of size $s$ of the set of samples $\train_n$, we build one
tree estimator $m(\cdot; \train_s)$ according to Algorithm
\ref{alg:mainBreimanFiniteSample} with inputs, $\log(t)$ large enough so that every
leaf has two or three samples, training set $\train_s$ and $h = 1$. Then our final
estimator $m_{n, s}$ can be computed as follows
\begin{equation} \label{eq:BreimanFullyGrownSubsampled}
  m_{n, s}(\vecx) = \frac{1}{\binom{n}{s}} \sum_{\train_s \subseteq \train_n, \abs{\train_s} = s} \Exp_{\omega}[m(\vecx; \train_s)].
\end{equation}

 Where $\omega$ is the internal randomness of the tree building algorithm. For this estimator $m_{n, s}$ and under the strong partition sparsity Assumption
\ref{asp:strongPartitionSparsity} we have the following consistency and asymptotic normality theorems. The proof of the following theorems is presented in Appendices~\ref{sec:app:proofs:Breiman:fullyGrown} and \ref{app:normality}.

\begin{theorem}
  \label{thm:finalRecoverySubmodularIndependentBalancedProperBreimanFiniteSampleFullyGrown}
    Let $\train_n$ be i.i.d. samples from the non-parametric regression model
  $y = m(\vecx) + \eps$, where $m(\vecx) \in [-1/2, 1/2]$, $\eps \sim \calE$,
  $\Exp_{\eps \sim \calE}[\eps] = 0$ and $\eps \in [-1/2, 1/2]$. Suppose that $\calD_x$ is
  $(\zeta, r)$-lower bounded (see Assumption~\ref{asp:densityLowerBound}). Let $m_{n, s}$ be the
  forest estimator that is built with sub-sampling of size $s$ from the training set and where
  every tree $m(\vecx; \train_s)$ is built using the Algorithm \ref{alg:mainBreimanFiniteSample},
  with inputs: $\log(t)$ large enough so that every leaf has two or three samples, training set
  $\train_s$ and $h = 1$. Then using
  $s = \tilde{\Theta}\p{\frac{2^r(\log(d/\delta))}{\zeta \cdot \beta^2}}$ and under Assumption~\ref{asp:strongPartitionSparsity}:
  \begin{align*}
    \Prob_{\train_n \sim \calD^n}\p{\Exp_{\vecx \sim \calD_x}[(m(\vecx) - m_{n, s}(\vecx))^2] > \tilde{\Omega}\left( \frac{2^r \log(d/\delta)}{n \cdot \zeta \cdot \beta^2}\right)} \le \delta.
  \end{align*}
\end{theorem}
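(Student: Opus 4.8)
The plan is to mirror the argument behind the fully-grown level-split result, Theorem~\ref{thm:finalRecoverySubmodularIndependentBalancedProperLevelSplitFiniteSampleFullyGrown}, replacing the level-split recovery analysis by the per-cell recovery analysis already developed for Breiman's shallow trees (Theorem~\ref{thm:finalRecoverySubmodularIndependentBalancedProperBreimanFiniteSample}, part~4). Writing $\mu(\vecx) \triangleq \Exp_{\train_n}[m_{n,s}(\vecx)]$ and using that a uniformly random size-$s$ sub-sample of an i.i.d.\ sample is itself i.i.d., so that $\mu(\vecx) = \Exp_{\train_s \sim \calD^s,\,\omega}[m(\vecx;\train_s)]$, I would start from the bias--variance split $\Exp_{\vecx}[(m_{n,s}(\vecx)-m(\vecx))^2] \le 2\,\Exp_{\vecx}[(\mu(\vecx)-m(\vecx))^2] + 2\,\Exp_{\vecx}[(m_{n,s}(\vecx)-\mu(\vecx))^2]$ and bound the two terms separately.

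\emph{Bias.} First I would show that a single honest Breiman tree trained on $s$ samples splits only on coordinates in $R$ throughout its first $r$ levels, except with probability $\delta'$. The ingredients are: (i) by the $(\zeta,r)$-marginal-density lower bound (Assumption~\ref{asp:densityLowerBound}) and a Chernoff bound, every cell obtained by fixing at most $r$ coordinates receives $\Omega(s\zeta/2^r)$ of the structure samples; (ii) conditioned on that, the empirical splitting score $V^{\ell}_n(A,i)$ of \eqref{eq:MSEFiniteSampleBinaryBreimanCellWithSplit:2} lies within $\beta/3$ of its population counterpart $\bar V_{\ell}(A,i)$ simultaneously over all $i\in[d]$ (Hoeffding, since it is an average of $O(1)$-bounded quantities over $\Omega(s\zeta/2^r)$ points), so by strong partition sparsity (Assumption~\ref{asp:strongPartitionSparsity}) the greedy choice at $A$ is a relevant coordinate; (iii) a union bound over the $\le r$ levels, the $\le 2^r$ such cells and the $d$ directions gives $\delta' \le r\,2^r d\cdot e^{-\Omega(s\zeta\beta^2/2^r)}$, which is at most $\delta/n$ for $s = \tilde{\Theta}(2^r\log(d/\delta)/(\zeta\beta^2))$ (the extra $\log n$ being absorbed by $\tilde{\Theta}$). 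On this recovery event every leaf is contained in a cell on which $m$ is constant, so honesty makes the leaf estimate conditionally unbiased given the partition and the per-tree bias at any $\vecx$ vanishes there; off the event the prediction is bounded by $2$ in absolute value. Hence $|\mu(\vecx)-m(\vecx)|\le 2\delta'$ for all $\vecx$, and the squared-bias term is $O(\delta'^2)$, negligible against the target $2^r\log(d/\delta)/(n\zeta\beta^2)$.

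\emph{Variance.} For a fixed $\vecx$, $m_{n,s}(\vecx)$ is a complete U-statistic of order $s$ over the i.i.d.\ sample $\train_n$ with kernel bounded in $[-1,1]$. Hoeffding's variance bound for U-statistics gives $\Var_{\train_n}(m_{n,s}(\vecx)) \le (s/n)\,\Var(m(\vecx;\train_s)) \le s/n$, and Hoeffding's exponential inequality for U-statistics gives $\Prob_{\train_n}(|m_{n,s}(\vecx)-\mu(\vecx)|>t) \le 2e^{-\Omega(nt^2/s)}$ (this is the step where one may instead invoke the sub-sampled-estimator concentration of \cite{WagerA18,Fan2018}). The point, exactly as in the level-split proof, is that on the recovery event the forest needs to be controlled only at the $2^r$ distinct values of $\vecx_R$: the dependence of $m_{n,s}(\vecx)$ on the irrelevant coordinates is again a U-statistic whose mean is $O(\delta')$, so it is absorbed just as in the bias step. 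A union bound over those $2^r$ points (costing an extra $r\log 2$, absorbed by $\tilde{\Omega}$) with $t = \Theta(\sqrt{s(r+\log(1/\delta))/n})$ then yields, with probability at least $1-\delta$, $\Exp_{\vecx}[(m_{n,s}(\vecx)-\mu(\vecx))^2] \le \tilde{O}(s/n) = \tilde{O}(2^r\log(d/\delta)/(n\zeta\beta^2))$. Adding the two contributions proves the theorem.

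The main obstacle is the bias step: one must certify that, with failure probability small enough both to survive the union bound and to be negligible next to $2^r\log(d/\delta)/(n\zeta\beta^2)$, essentially every tree in the forest commits only to relevant splits in its first $r$ levels. This forces one to propagate the marginal-density lower bound through the \emph{data-adaptive} partition --- so that each of the (up to $2^r$) potentially relevant cells still has enough samples to evaluate the splitting criterion reliably --- and to control $V^{\ell}_n(A,i) - \bar V_{\ell}(A,i)$ uniformly over all $d$ candidate directions at each such cell. A secondary subtlety, which must be checked in both the bias and the variance estimates, is that $m_{n,s}(\vecx)$ is a function of $\vecx_R$ alone only on the recovery event, so one has to verify that the residual dependence on the $d-r$ irrelevant coordinates contributes only at the $O(\delta')$ scale and hence does not inflate the $2^r$-point union bound.
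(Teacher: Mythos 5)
Your high-level plan matches the paper's: decompose the forest MSE into bias and variance, kill the bias via a per-tree recovery event plus honesty, and control the variance via U-statistic concentration. The bias half is essentially correct, and your pointwise bound $|\mu(\vecx)-m(\vecx)|\le 2\delta'$ is a slightly tighter form of what the paper obtains by combining Lemma~\ref{lem:biasVarianceDeepHonestForests} (bias bounded by expected value-diameter) with part 1 of Theorem~\ref{thm:biasStronglyPartitionSparseBreimanFiniteSample} (diameter is zero w.h.p.). The density lower bound and the concentration of $V^{\ell}_n$ uniformly over cells are indeed the crux of the recovery event, and the paper's Lemmas~\ref{lem:concentrationOfVarianceFunctionBreiman} and~\ref{lem:splitRelevantFiniteSampleBreimanStrongPartitionSparsity} do exactly what you sketch.

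The variance step, however, has a genuine gap, and it is precisely at the place you flag as a ``secondary subtlety.'' You want to union bound the U-statistic fluctuation $|m_{n,s}(\vecx)-\mu(\vecx)|$ over the $2^r$ values of $\vecx_R$ and then claim the residual dependence on irrelevant coordinates is ``absorbed.'' But on the recovery event the tree is \emph{fully grown}: after the first $r$ levels it keeps splitting on irrelevant coordinates, so the leaf containing $\vecx$, and hence the random variable $m_{n,s}(\vecx)-\mu(\vecx)$, genuinely depends on all of $\vecx$, not on $\vecx_R$ alone. Two points $\vecx,\vecx'$ with $\vecx_R=\vecx'_R$ land in different leaves, and $m_{n,s}(\vecx)-m_{n,s}(\vecx')$ is a fresh U-statistic with its own $\Theta(\sqrt{s/n})$ fluctuation; its small \emph{mean} does not control its deviation, so the dependence is not at the $O(\delta')$ scale. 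A union bound over the $2^r$ cells therefore does not give you a bound on $\Exp_{\vecx}[(m_{n,s}(\vecx)-\mu(\vecx))^2]$ with high probability. (You also attribute this $2^r$-union-bound step to the level-split proof, but the level-split proof does not do it either.) The paper sidesteps this entirely: Lemma~\ref{lem:biasVarianceDeepHonestForests} uses the pointwise U-statistic tail bound for an arbitrary fixed $\vecx$ and then a Markov-style integration argument (the ``suppose with probability more than $n\delta$ over $\train_n$ \ldots'' step) to convert $\Prob_{\vecx,\train_n}(T(\vecx)\ge\epsilon)\le\delta$ into a high-probability bound on $\Exp_{\vecx}[T(\vecx)]$ over $\train_n$, at the cost of only a $\log(n/\delta)$ factor. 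That argument makes no use of the sparsity structure and is the right tool here; you should replace your $2^r$ union bound with it.
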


  Our next goal is to prove the asymptotic normality of the estimate $m_{n, s}$. As we have already
discussed for the level-splits algorithm, to prove the asymptotic normality we need that our
estimation algorithm treats samples, a priori symmetrically (i.e. the estimate is invariant to
permutations of the sample indices). Since for simplicity, we have presented  $m_{n, s}$ based on a
deterministic algorithm, this might be violated. For this reason, for the normality result, before
computing the $m_{n, s}$ we apply a random permutation $\tau \in S_n$ in the training set
$\train_n$. The permutation $\tau$ is part of the internal randomness $\omega$ of the algorithm.
Given the permutation $\tau$ we denote estimate that we compute by $m_{n, s, \tau}$. Ideally we
would like to compute the expected value of $m_{n, s, \tau}$ over a uniform choice of $\tau$ which
we denote by $\bar{m}_{n, s}$. However this is computationally very expensive since we need to
repeat the estimate for all the $n!$ permutations. Instead we compute a Monte Carlo approximation
of $\bar{m}_{n, s}$ by sampling $B$ permutations from $S_n$ and sub-samples of
size $s$ (where each sub-sample is drawn uniformly from the set of all possible
sub-samples of size $s$) and then taking the empirical average
of those. We denote this estimator as $m_{n, s, B}$.

\begin{theorem}\label{thm:normality-breiman}
    Under the same conditions of
  Theorem~\ref{thm:finalRecoverySubmodularIndependentBalancedProperBreimanFiniteSampleFullyGrown}
  and with the further assumption that:
  $\sigma^2(\vecx) = \Var(y^{(i)}\mid \vecx^{(i)} = \vecx)\geq \sigma^2 > 0$ and that for an a priori fixed
  $\vecx$ it holds $\Prob_{\vecz \sim \calD_x}\p{\vecz_R = \vecx_R} \ge \zeta/2^r$, if we set:
  $\tilde{\Theta}\p{\frac{2^r(\log(d\, n))}{\zeta \cdot \beta^2}} \leq s \leq o(\sqrt{n})$, then
  for $\sigma_n^2(\vecx) = O\left(\frac{s^2}{n}\right)$ it hold that
\begin{equation}
    \sigma_n^{-1}(\vecx)\, (m_{n, s, B}(\vecx) - m(\vecx)) \rightarrow_d N(0, 1).
\end{equation}
  where $B \ge n^2 \log(n)$.
\end{theorem}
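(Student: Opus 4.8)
The plan is to recognize the idealized, permutation–symmetrized estimator $\bar{m}_{n,s}(\vecx)=\Exp_\tau[m_{n,s,\tau}(\vecx)]$ as an infinite‑order, incomplete, randomized $U$‑statistic whose kernel is the honest Breiman tree prediction $m(\vecx;\train_s)$ (this is exactly why the random permutation $\tau$ was introduced: it makes the kernel symmetric in the sample indices), and then to invoke the central limit theorems for subsampled estimators of \cite{Mentch2016,WagerA18,Fan2018} in the regime $s=o(\sqrt n)$. Concretely, I would decompose
\[
  \frac{m_{n,s,B}(\vecx)-m(\vecx)}{\sigma_n(\vecx)}
  = \underbrace{\frac{m_{n,s,B}(\vecx)-\bar{m}_{n,s}(\vecx)}{\sigma_n(\vecx)}}_{\text{(I): Monte-Carlo error}}
  + \underbrace{\frac{\bar{m}_{n,s}(\vecx)-\Exp[\bar{m}_{n,s}(\vecx)]}{\sigma_n(\vecx)}}_{\text{(II): centered U-statistic}}
  + \underbrace{\frac{\Exp[\bar{m}_{n,s}(\vecx)]-m(\vecx)}{\sigma_n(\vecx)}}_{\text{(III): bias}},
\]
and show (II)$\,\to_d N(0,1)$ while (I) and (III) are $o_p(1)$, so that Slutsky's theorem gives the claim. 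The argument runs in parallel to the level‑split normality result (Theorem~\ref{thm:normality-level}), whose proof is deferred to the same appendix.

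For (II), the Hoeffding/H\'ajek decomposition of a bounded symmetric‑kernel $U$‑statistic gives $\bar{m}_{n,s}(\vecx)-\Exp[\bar{m}_{n,s}(\vecx)]=\frac{s}{n}\sum_{i=1}^n T_1(\vecx^{(i)},y^{(i)})+(\text{higher-order projections})$, where $T_1$ is the first‑order projection of the tree kernel. Since each kernel lies in $[-1/2,1/2]$, the $k$‑th order projection variance is $O(\sigma_{1,s}^2)$ uniformly in $k$, so for $s=o(\sqrt n)$ the linear term dominates and a Lindeberg CLT applies to $\frac{s}{n}\sum_i T_1$; this is precisely the hypothesis set of \cite{WagerA18,Fan2018}. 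Two quantitative ingredients are needed. First, the upper bound $\Var(\bar{m}_{n,s}(\vecx))=O(s^2/n)$, from $|m(\vecx;\train_s)|\le 1/2$, $|\eps|\le 1/2$, and $\Var\le\frac{s^2}{n}\sigma_{1,s}^2+\binom{n}{s}^{-1}\Var(\text{kernel})$; this is the claimed $\sigma_n^2(\vecx)$. Second, a matching lower bound $\sigma_{1,s}^2=\Omega(\sigma^2\zeta/(2^r s))$, obtained from honesty plus the density hypotheses: conditioned on the tree structure the leaf value at $\vecx$ is a plain average of the estimation‑half outcomes falling in $\calP(\vecx)$; by Assumption~\ref{asp:densityLowerBound} and $\Prob_{\vecz}(\vecz_R=\vecx_R)\ge\zeta/2^r$, with high probability this leaf contains $\Theta(\zeta s/2^r)$ such samples, each entering with weight $\Theta(2^r/(\zeta s))$, and since $\sigma^2(\vecx)\ge\sigma^2>0$ a single observation landing there perturbs the prediction by a quantity of variance $\Omega(\sigma^2\zeta/(2^r s))$. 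Hence $\sigma_n^2(\vecx)=\Omega(s/n)=\Omega(1/n)$ up to problem constants, which also guarantees nondegeneracy.

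For (III), the bias, I would reuse the exponential bias decay that underlies Theorem~\ref{thm:finalRecoverySubmodularIndependentBalancedProperBreimanFiniteSampleFullyGrown}: under strong partition sparsity (Assumption~\ref{asp:strongPartitionSparsity}) and the $(\zeta,r)$‑density lower bound, the probability over $\train_s$ and the algorithm's randomness that the fully grown honest Breiman tree fails to split on every relevant coordinate in $R$ before splitting on any irrelevant one is at most $\exp(-\Omega(s\zeta\beta^2/2^r))$. On the complementary event the cell $\calP(\vecx)$ lies inside a region on which $m$ is constant and equal to $m(\vecx)$, so by honesty $\Exp[m(\vecx;\train_s)\mid\text{good event}]=m(\vecx)$; bounding the bad event crudely by $|m|\le 1/2$ gives $|\Exp[\bar{m}_{n,s}(\vecx)]-m(\vecx)|\le\exp(-\Omega(s\zeta\beta^2/2^r))$. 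The lower bound $s=\tilde\Theta(2^r\log(dn)/(\zeta\beta^2))$ is chosen precisely so that this is $\le\poly(1/(dn))$ — the $\log n$ (rather than $\log(1/\delta)$) is exactly what makes the bias polynomially small in $n$ — hence (III) $\le\poly(1/(dn))/\sigma_n(\vecx)=o(1)$ using $\sigma_n(\vecx)=\Omega(1/\sqrt n)$ from step (II). Finally for (I): since $m_{n,s,B}$ averages $B$ independent draws of (permutation, subsample, internal randomness), $\Exp[(m_{n,s,B}(\vecx)-\bar{m}_{n,s}(\vecx))^2\mid\train_n]\le\Var(\text{kernel})/B=O(1/B)$, so with $B\ge n^2\log n$ one gets $m_{n,s,B}(\vecx)-\bar{m}_{n,s}(\vecx)=O_p(1/(n\sqrt{\log n}))$, i.e.\ (I) $=o_p(1)$; this is the argument of \cite{WagerA18,oprescu2019orthogonal}.

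The main obstacle is the lower bound on $\sigma_{1,s}^2$ in step (II): one must control the leverage of a single observation on the forest prediction \emph{after} averaging over the adaptively chosen (random) partition. Honesty is what makes this feasible — it linearizes the leaf estimate in the fresh outcomes and decouples the partition from the leaf noise — but one still has to argue that, in expectation over the random structure, a fixed observation retains non‑negligible and not‑too‑erratic influence on $\bar{m}_{n,s}(\vecx)$; here the density lower bound together with the strong‑sparsity‑driven ``correct recovery with high probability'' are both essential, since without the latter the correct leaf might not even localize $\vecx$'s conditional mean. A secondary but necessary technicality is verifying the uniform‑in‑$k$ bound on the higher‑order projection variances that licenses the generic $U$‑statistic CLT at $s=o(\sqrt n)$; boundedness of the kernel makes this routine, but it must be stated carefully so that the H\'ajek projection is asymptotically exact.
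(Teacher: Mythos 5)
Your proposal takes essentially the same route as the paper: the same three-term decomposition (Monte Carlo error, centered $U$-statistic, bias), the same appeal to the CLT for subsampled incomplete $U$-statistics (the paper cites Theorem~2 of \cite{Fan2018}), the same bias control via the ``correct recovery with high probability'' result (the paper explicitly says to reuse part 2 of Theorem~\ref{thm:biasStronglyPartitionSparseBreimanFiniteSample} with $\delta=\text{poly}(1/n)$), and Slutsky to finish. The one place where your route differs materially is the nondegeneracy lower bound on $\sigma_n^2(\vecx)$: you argue $\sigma_{1,s}^2=\Omega(\sigma^2\zeta/(2^r s))$ by controlling the typical leaf size ($\Theta(\zeta s/2^r)$ estimation samples landing in $\calP(\vecx)$) and a single observation's weight, which requires the density lower bound plus the high-probability correct-recovery event. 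The paper sidesteps this entirely with a shorter argument: honesty plus the law of total variance gives $\sigma_n^2(\vecx)\ge \frac{s^2\sigma^2}{n}\Exp\left[\Exp[w^{(1)}(\vecx)\mid\vecx^{(1)}]^2\right]$, Jensen gives $\Exp[\Exp[w^{(1)}\mid\vecx^{(1)}]^2]\ge(\Exp[w^{(1)}])^2$, and exchangeability after the random permutation gives $\Exp[w^{(1)}(\vecx)]=1/(2s)$, yielding $\sigma_n^2(\vecx)\ge\sigma^2/(4n)$ without any leaf-size analysis. Your bound is potentially tighter (it scales like $\Omega(s\zeta/(2^r n))$ rather than $\Omega(1/n)$), but at the cost of reproving leaf concentration, while the paper's $\Omega(1/n)$ is already enough for the $o_p(1)$ conclusions since the bias after applying the theorem with $\delta=\text{poly}(1/n)$ is $O(1/n)$ and the Monte-Carlo error with $B\ge n^2\log n$ is $O(1/n)$.
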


\subsection{Population Algorithm of Breiman's Algorithm} \label{sec:app:population:Breiman}

  In this section we present the population versions of the Breiman's
algorithm together its convergence analysis. We believe that this section is a
good step before presenting the full proofs of the aforementioned theorems in
the Appendix.

  We first present the Breiman's algorithm in the population model.

\begin{algorithm}
\caption{Breiman's Tree Construction Algorithm -- Population Model} \label{alg:mainBreimanPopulation}
  \KwIn{maximum number of nodes $t$.}

  \KwOut{tree approximation of $m$.}

  Set $\calP_0 = \{\{0, 1\}^d\}$ the partition associated with the root of the tree.

  $\level \leftarrow 0$, $n_{\nodes} \leftarrow 1$, $\queue \leftarrow \calP_0$.

  \While{$n_{\nodes}$ < t}{
    \If{$\queue = \emptyset$}{
      $\level \leftarrow \level + 1$, $\queue \leftarrow \calP_{\level}$
     }
      Pick $A$ the first element in $\queue$

      Select $i \in [d]$ that maximizes $\bar{V}_{\ell}(A, i)$ (\textit{see \eqref{eq:cellMSEPopulationBinaryBreiman:2}}) with ties broken randomly

      Cut the cell $A$ to cells
      $A^{i}_k = \{\vecx \mid \vecx \in A \wedge x_i = k\}$, $k = 0, 1$

      $\queue \leftarrow \queue \setminus \{A\}$,
      $\calP_{\level + 1} \leftarrow \calP_{\level + 1} \cup \{A^{i}_0, A^{i}_1\}$}

  $\calP_{\level+1} \leftarrow \calP_{\level+1} \cup \queue$.

  \Return $(\bar{\calP}, \bar{m}) = \p{\calP_{\level+1}, \vecx \mapsto \Exp_{(\vecz, y) \sim \calD}\b{y | \vecz \in \calP_{\level+1}(\vecx)}}$
\end{algorithm}

  We now prove some important properties of the functions $\bar{V}$,
$\bar{V}_{\ell}$, $\bar{L}$ and $\bar{L}_{\ell}$ as presented in Equations~\eqref{eq:MSEPopulationBinaryLevelSplit}, \eqref{eq:MSEPopulationBinaryLevelSplit:2}, \eqref{eq:cellMSEPopulationBinaryBreiman} and \eqref{eq:cellMSEPopulationBinaryBreiman:2}.

\begin{lemma} \label{lem:varianceVsLeafVariance}
    For any partition $\calP$ and any cell $A \in \calP$ the following
  hold
  \begin{Enumerate}
    \item $\bar{V}(\calP) = \sum_{A \in \calP} \Prob_{\vecx}\p{\vecx \in A} \cdot \bar{V}_{\ell}(A)$,
    \item $\bar{L}(\calP) = \sum_{A \in \calP} \Prob_{\vecx}\p{\vecx \in A} \cdot \bar{L}_{\ell}(A)$,
    \item $\bar{V}(\calP, A, i) - \bar{V}(\calP) = \Prob_{\vecx}\p{\vecx \in A} \cdot \p{\bar{V}_{\ell}(A, i) - \bar{V}_{\ell}(A)}$,
    \item Under Assumption~\ref{asp:diminishingReturnsBreimanFull} for any two
    partitions $\calP' \sqsubseteq \calP$ and any cells $A$, $A'$, such that
    $A' \subseteq A$ and $A' \in \calP'$, $A \in \calP$, it holds that
    $\bar{V}(\calP', A', i) - \bar{V}(\calP') \le C \cdot (\bar{V}(\calP, A, i) - \bar{V}(\calP))$.
    \item Under Assumption~\ref{asp:diminishingReturnsBreimanFull} for any two
    partitions $\calP' \sqsubseteq \calP$ and any cells $A$, $A'$, such that
    $A' \subseteq A$ and $A' \in \calP'$, $A \in \calP$ and for any
    $T \subseteq [d]$, $i \in [d]$ it holds that
    $\bar{V}(\calP', A', T \cup \{i\}) - \bar{V}(\calP', A', T) \le C \cdot (\bar{V}(\calP, A, i) - \bar{V}(\calP))$.
  \end{Enumerate}
\end{lemma}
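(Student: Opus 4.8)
The plan is to reduce all five statements to the tower property of conditional expectation together with a single ``one-split'' identity — part~3 — which then propagates to parts~4 and~5. I would first dispatch parts~1 and~2. If $A\in\calP$ and $\vecx\in A$ then $\calP(\vecx)=A$, so on the event $\{\vecx\in A\}$ the inner conditional expectation $\Exp_{\vecz}[m(\vecz)\mid \vecz\in\calP(\vecx)]$ is the constant $\mu_A := \Exp_{\vecz}[m(\vecz)\mid\vecz\in A]$; hence $\bar V_\ell(A)=\mu_A^2$ and $\bar L_\ell(A)=\Exp_{\vecx}[(m(\vecx)-\mu_A)^2\mid\vecx\in A]$, and both are in particular independent of the enclosing partition. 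Writing $\Exp_{\vecx}[\,\cdot\,]=\sum_{A\in\calP}\Prob(\vecx\in A)\,\Exp_{\vecx}[\,\cdot\mid\vecx\in A\,]$ in \eqref{eq:MSEPopulationBinaryBreiman:2} and \eqref{eq:cellMSEPopulationBinaryBreiman:2} and substituting these expressions gives parts~1 and~2 at once.

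Next, for part~3, I would apply part~1 to $\calP$ and to $\calS(\calP,A,i)$: the two partitions agree except that $A$ is replaced by $A^i_0,A^i_1$, so $\bar V(\calP,A,i)-\bar V(\calP)=\Prob(\vecx\in A^i_0)\bar V_\ell(A^i_0)+\Prob(\vecx\in A^i_1)\bar V_\ell(A^i_1)-\Prob(\vecx\in A)\bar V_\ell(A)$. Conditioning the definition of $\bar V_\ell(A,i)$ on the value of $x_i$ yields the split-averaging identity $\Prob(\vecx\in A)\,\bar V_\ell(A,i)=\Prob(\vecx\in A^i_0)\bar V_\ell(A^i_0)+\Prob(\vecx\in A^i_1)\bar V_\ell(A^i_1)$, and substituting proves part~3. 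Iterating part~3 along an enumeration $i_1,\dots,i_k$ of a direction set $I$ — at stage $j$ splitting each of the $2^{j-1}$ current subcells of $A$ along $i_j$, then recombining via one more use of the split-averaging identity — makes the change in $\bar V$ at stage $j$ equal $\Prob(\vecx\in A)\,\bigl(\bar V_\ell(A,\{i_1,\dots,i_j\})-\bar V_\ell(A,\{i_1,\dots,i_{j-1}\})\bigr)$, so the telescoping sum gives the extended identity
\[\bar V(\calS(\calP,A,I))-\bar V(\calP)=\Prob(\vecx\in A)\,\bigl(\bar V_\ell(A,I)-\bar V_\ell(A)\bigr)\]
for every $\calP$, every $A\in\calP$, and every $I\subseteq[d]$.

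Parts~4 and~5 then follow quickly: part~4 is the case $T=\emptyset$ of part~5 (since $\calS(\calP',A',\emptyset)=\calP'$), so it suffices to treat part~5. Applying the extended identity to $A'\in\calP'$ with direction sets $T\cup\{i\}$ and $T$ and subtracting gives $\bar V(\calP',A',T\cup\{i\})-\bar V(\calP',A',T)=\Prob(\vecx\in A')\,\bigl(\bar V_\ell(A',T\cup\{i\})-\bar V_\ell(A',T)\bigr)$, while applying it to $A\in\calP$ with $I=\{i\}$ gives $\bar V(\calP,A,i)-\bar V(\calP)=\Prob(\vecx\in A)\,\bigl(\bar V_\ell(A,i)-\bar V_\ell(A)\bigr)$. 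Then I would combine three facts: Assumption~\ref{asp:diminishingReturnsBreimanFull}, applicable because $A'\subseteq A$, gives $\bar V_\ell(A',T\cup\{i\})-\bar V_\ell(A',T)\le C\,\bigl(\bar V_\ell(A,i)-\bar V_\ell(A)\bigr)$; the nesting $A'\subseteq A$ gives $\Prob(\vecx\in A')\le\Prob(\vecx\in A)$; and a conditional Jensen inequality — the average over $x_i$ of the squared subcell means of $A$ dominates the square of their average, which is $\mu_A^2=\bar V_\ell(A)$ — gives $\bar V_\ell(A,i)-\bar V_\ell(A)\ge0$. Multiplying these three yields $\bar V(\calP',A',T\cup\{i\})-\bar V(\calP',A',T)\le C\,\bigl(\bar V(\calP,A,i)-\bar V(\calP)\bigr)$, i.e.\ part~5, and hence part~4.

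The main obstacle, such as it is, is purely the bookkeeping behind the extended identity: one must check that $\calS(\calP,A,I)$ from Definition~\ref{def:splitOperator} produces exactly the family of subcells of $A$ indexed by $\{0,1\}^{|I|}$, and that each stage of the iteration is a bona fide single-cell refinement of the previous partition (in part~5 this is also where one uses $i\notin T$, so that $\calS(\calP',A',T\cup\{i\})$ refines $\calS(\calP',A',T)$). Cells $A$ with $\Prob(\vecx\in A)=0$ are dealt with by the usual convention that they contribute $0$ and carry no conditional expectation.
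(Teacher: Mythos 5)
Your proof is correct and follows essentially the same route as the paper's: the same tower-law computation for parts~1--3, and the same combination of Assumption~\ref{asp:diminishingReturnsBreimanFull}, the monotonicity $\Prob(\vecx\in A')\le\Prob(\vecx\in A)$, and a Jensen-type nonnegativity of variance gain for parts~4--5. The two small differences are organizational rather than substantive. First, you promote part~3 to an ``extended identity'' $\bar V(\calS(\calP,A,I))-\bar V(\calP)=\Prob(\vecx\in A)\bigl(\bar V_\ell(A,I)-\bar V_\ell(A)\bigr)$ and then deduce 4 as the $T=\emptyset$ case of 5 --- a cleaner unification than the paper's ``(5) can be proven in an identical manner to (4).'' Second, when passing from probability over $A$ to probability over $A'$ you apply the inequality $\Prob(\vecx\in A')\le\Prob(\vecx\in A)$ on the \emph{right-hand side}, which requires $\bar V_\ell(A,i)-\bar V_\ell(A)\ge 0$; the paper applies it on the \emph{left-hand side}, which requires $\bar V_\ell(A',i)-\bar V_\ell(A')\ge 0$. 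Both nonnegativity facts are true (by the conditional Jensen / monotonicity of $\bar V$ in the splits), and both paper and proposal leave this step a little implicit. Your remark about the ``$i\notin T$'' bookkeeping is not actually needed for correctness: if $i\in T$ the left side of part~5 is zero and the inequality is trivial from the same nonnegativity.
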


\begin{proof}
    The equations (1.), (2.) follow from the definitions of $\bar{V}$,
  $\bar{V}_{\ell}$, $\bar{L}$, $\bar{L}_{\ell}$. For equation 3. we
  have
  \begin{align*}
    \bar{V}(\calP, A, i) - \bar{V}(\calP) & =  \Prob_{\vecx}\p{\vecx \in A^i_0} \cdot \bar{V}_{\ell}(A^i_0) + \Prob_{\vecx}\p{\vecx \in A^i_1} \cdot \bar{V}_{\ell}(A^i_1) - \Prob_{\vecx}\p{\vecx \in A} \cdot \bar{V}_{\ell}(A) \\
    \bar{V}_{\ell}(A, i) - \bar{V}_{\ell}(A) & =  \Prob_{\vecx}\p{\vecx \in A^i_0 \mid \vecx \in A} \cdot \bar{V}_{\ell}(A^i_0) + \Prob_{\vecx}\p{\vecx \in A^i_1 \mid \vecx \in A} \cdot \bar{V}_{\ell}(A^i_1) - \bar{V}_{\ell}(A)
    \intertext{and therefore we have that}
    \bar{V}(\calP, A, i) - \bar{V}(\calP) & = \Prob_{\vecx}\p{\vecx \in A} \cdot \p{\bar{V}_{\ell}(A, i) - \bar{V}_{\ell}(A)}
  \end{align*}
  \noindent and equation (3.) follows. Now from Assumption
  \ref{asp:diminishingReturnsBreimanFull} we have that
  \[ \bar{V}_{\ell}(A', i) - \bar{V}_{\ell}(A') \le C \cdot \p{\bar{V}_{\ell}(A, i) - \bar{V}_{\ell}(A)} \implies \]
  \[ \Prob_{\vecx}\p{\vecx \in A} \cdot \p{\bar{V}_{\ell}(A', i) - \bar{V}_{\ell}(A')} \le \Prob_{\vecx}\p{\vecx \in A} \cdot C \cdot \p{\bar{V}_{\ell}(A, i) - \bar{V}_{\ell}(A)} \]
  \noindent but now since $A' \subseteq A$ this implies
  $\Prob_{\vecx}\p{\vecx \in A'} \le \Prob_{\vecx}\p{\vecx \in A}$ and hence
  \[ \Prob_{\vecx}\p{\vecx \in A'} \cdot \p{\bar{V}_{\ell}(A', i) - \bar{V}_{\ell}(A')} \le \Prob_{\vecx}\p{\vecx \in A} \cdot C \cdot \p{\bar{V}_{\ell}(A, i) - \bar{V}_{\ell}(A)} \]
  \noindent combining the last inequality with equation 3. we get a proof of
  equation (4.). The statement in (5.) can be proven in an identical manner to (4.).
\end{proof}

\begin{definition} \label{def:relevantVariablesForCellBreiman}
    Given a cell $A$, we define the set
  $\calR(A) = \{i \in [d] \mid \bar{V}_{\ell}(A, i) > \bar{V}_{\ell}(A)\}$.
  We also define the set $\calI(A) = \{i \in [d] \mid A^i_0 \subset A\}$
  and $\calO(A) = [d] \setminus \calI(A)$.
\end{definition}

\begin{lemma} \label{lem:optimalityWithoutSplitsPopulationBreiman}
    For every partition $\calP$, under the Assumption
  \ref{asp:diminishingReturnsBreimanFull}, if for every $A \in \calP$ it holds
  that $\calR(A) = \emptyset$, then for any $B \in \calP$, with $\Prob_{x}(x\in B)>0$,
  and $\vecx, \vecx' \in B$ it holds that $m(\vecx) = m(\vecx')$.
\end{lemma}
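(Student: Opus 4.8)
The plan is to mirror the proof of the level-split analogue, Lemma~\ref{lem:optimalityWithoutSplitsPopulationLevelSplit}, but to carry it out with the \emph{local} quantities $\bar{V}_{\ell}(B,\cdot)$ at a single fixed cell rather than with the global $\bar{V}(\cdot)$. I would argue by contradiction: fix a cell $B\in\calP$ with $\Prob_{\vecx}(\vecx\in B)>0$ and suppose there are $\vecx,\vecx'\in B$ with $m(\vecx)\neq m(\vecx')$; the goal is to exhibit a direction $i$ with $\bar{V}_{\ell}(B,i)>\bar{V}_{\ell}(B)$, contradicting $\calR(B)=\emptyset$. Note that only $\calR(B)=\emptyset$ (not the hypothesis for all cells) is needed for a given $B$, and ranging over $B$ yields the lemma.

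First I would isolate the relevant directions that genuinely vary inside $B$: set $F=\{i\in R:\exists\,\vecz,\vecz'\in B \text{ with } z_i\neq z'_i\}$, where $R$ is the relevant set of the $r$-sparse function $m$. Since $m(\vecz)=h(\vecz_R)$ and $m$ is non-constant on $B$, we have $F\neq\emptyset$. Moreover, after splitting $B$ along all of $F$, every nonempty subcell fixes \emph{all} coordinates in $R$ — those in $F$ because the split fixes them, those in $R\setminus F$ because they are already constant on $B$ — so $m$ is constant on each such subcell, giving $\bar{L}_{\ell}(B,F)=0$, i.e. $\bar{V}_{\ell}(B,F)=\Exp_{\vecx}[m^2(\vecx)\mid\vecx\in B]$. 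On the other hand $\bar{V}_{\ell}(B)=\big(\Exp_{\vecx}[m(\vecx)\mid\vecx\in B]\big)^2$. Because $\calD_x$ is supported on all of $\{0,1\}^d$, the points $\vecx,\vecx'$ carry positive conditional mass in $B$, so $m$ is non-constant on the support of $\calD_x$ restricted to $B$, whence $\bar{V}_{\ell}(B,F)>\bar{V}_{\ell}(B)$.

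Next I would enumerate $F=\{i_1,\dots,i_k\}$ and telescope along $F_j=\{i_1,\dots,i_j\}$ (with $F_0=\emptyset$). Monotonicity of $\bar{V}_{\ell}(B,\cdot)$ under adding split directions — equivalently, that $\bar{L}_{\ell}(B,\cdot)$ is non-increasing under refinement, by the tower property, exactly as for the global $\bar{L}$ — together with $\bar{V}_{\ell}(B,F_k)>\bar{V}_{\ell}(B,F_0)$ produces an index $j$ with $\bar{V}_{\ell}(B,F_j)>\bar{V}_{\ell}(B,F_{j-1})$. Applying the approximate diminishing returns Assumption~\ref{asp:diminishingReturnsBreimanFull} with $A=A'=B$, $T=F_{j-1}$, and $i=i_j$ gives
\[
0<\bar{V}_{\ell}(B,F_{j-1}\cup\{i_j\})-\bar{V}_{\ell}(B,F_{j-1})\le C\cdot\big(\bar{V}_{\ell}(B,i_j)-\bar{V}_{\ell}(B)\big),
\]
so $\bar{V}_{\ell}(B,i_j)>\bar{V}_{\ell}(B)$, i.e. $i_j\in\calR(B)$, contradicting $\calR(B)=\emptyset$.

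I expect the one genuinely delicate point to be the claim in the second paragraph that splitting $B$ only along $F$ — the relevant directions that actually vary on $B$ — already drives $\bar{L}_{\ell}(B,F)$ to exactly zero; this needs the observation that coordinates of $R$ lying outside $F$ are automatically constant on $B$, which matters because a generic cell of an arbitrary partition need not be an axis-aligned box, so one cannot simply invoke "fix all of $R$''. Everything else (the monotonicity, the telescoping argument, and the single mechanical application of the diminishing-returns inequality) is routine and parallels the level-split case.
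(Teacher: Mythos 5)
Your proof is correct. It takes the same high-level contradiction/telescoping route as the paper's proof, but carries it out entirely at the level of the local quantity $\bar{V}_{\ell}(B,\cdot)$ at the single cell $B$, whereas the paper telescopes the \emph{global} $\bar{V}(\calP,B,\cdot)$ along an enumeration of $\calI(B)$ and then translates back to $\bar{V}_{\ell}$ via items~(3.) and~(5.) of Lemma~\ref{lem:varianceVsLeafVariance}. Your version is slightly more self-contained: it never invokes Lemma~\ref{lem:varianceVsLeafVariance}, it uses a single application of Assumption~\ref{asp:diminishingReturnsBreimanFull} with $A=A'=B$ (which is the pure submodularity-in-$T$ special case the assumption permits), and it restricts the telescoping set to $F\subseteq R$ rather than all of $\calI(B)$ or $[d]$. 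The cost is that you must verify the extra structural fact $\bar{L}_{\ell}(B,F)=0$; the observation that coordinates of $R\setminus F$ are automatically constant on $B$ handles that correctly, and the paper sidesteps it by splitting along the larger set $[d]$ so that $\bar{L}(\calP,B,[d])=\bar{L}([d])=0$ for free. Two points worth making explicit, which both proofs share: (i) the argument uses that every point of $\{0,1\}^d$ has positive $\calD_x$-mass (as the model statement ``support $\{0,1\}^d$'' asserts), which is needed so that $m(\vecx)\neq m(\vecx')$ on two atoms of $B$ really forces $\bar{V}_{\ell}(B,F)>\bar{V}_{\ell}(B)$; and (ii) only $\calR(B)=\emptyset$ for the fixed cell $B$ is used, so your proof (like the paper's) in fact proves the slightly stronger per-cell statement.
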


\begin{proof}
    We prove this by contradiction. Let $B \in \calP$, if there exist
  $\vecx, \vecx' \in B$ such that $m(\vecx) \neq m(\vecx')$ then obviously
  there exists a $\tilde{\vecx} \in B$ such that
  $ m(\tilde{\vecx}) \neq \Exp_{\vecx}\b{m(\vecx) | \vecx \in B}.$
  Therefore it holds that $\bar{L}(\calP) > \bar{L}(\calP, B, [d])$ and
  hence $\bar{V}(\calP, B, [d]) > \bar{V}(\calP)$. Now let's assume an
  arbitrary enumeration $\{i_1, \dots, i_k\}$ of the set
  $\calI(B)$. Because the function $\bar{V}$ is decreasing with
  respect to $\calP$ and $\bar{V}(\calP, B, [d]) > \bar{V}(\calP)$, there
  has to be a number $j \in [k]$ such that
  $\bar{V}(\calP, B, \{i_1, \dots, i_j\}) > \bar{V}(\calP, B, \{i_1, \dots, i_{j - 1}\})$.
  But because of Assumption \ref{asp:diminishingReturnsBreimanFull} of
  $\bar{V}_{\ell}$ and Lemma \ref{lem:varianceVsLeafVariance} it holds that
  \[ 0 < \bar{V}(\calP, B, \{i_1, \dots, i_j\}) - \bar{V}(\calP, B, \{i_1, \dots, i_{j - 1}\}) \le C \cdot \p{\bar{V}(\calP, B, i_j) - \bar{V}(\calP, B)}, \]
  by Lemma \ref{lem:varianceVsLeafVariance} we have that
  $\bar{V}_{\ell}(B, i_j) > \bar{V}_{\ell}(B)$ and together with
  $\Prob_{\vecx \sim \calD_x}(\vecx \in B) > 0$ these contradict the assumption
  $\calR(B) = \emptyset$.
\end{proof}

\begin{theorem} \label{thm:recoverySubmodularIndependentBalancedProperBreiman}
    Consider the non-parametric regression model $y = m(\vecx) + \eps$, where
  $m : \{0, 1\}^d \to \b{-\frac{1}{2}, \frac{1}{2}}$ is a $r$-sparse function and
  $\eps \sim \calE$, with $\eps \in \b{-\frac{1}{2}, \frac{1}{2}}$ and $\Exp\b{\eps} = 0$.
  Let $\bar{m}$ be the function that the Algorithm \ref{alg:mainBreimanPopulation} returns
  with input $t \ge \p{1/\eta}^{C \cdot r}$, then under the Assumption
  \ref{asp:diminishingReturnsBreimanFull} it holds that
  \[ \Exp_{\vecx}\b{\p{m(\vecx) - \bar{m}(\vecx)}^2} \le \eta. \]
  Also, under the Independence of Features Assumption \ref{asp:independentFeature} if
  $t \ge 2^r$ then $\Prob_{\vecx \sim \calD_x}\p{\bar{m}(\vecx) = m(\vecx)} = 1$.
\end{theorem}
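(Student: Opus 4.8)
The plan is to replay the level-split population argument (Theorem~\ref{thm:recoverySubmodularIndependentBalancedProperLevelSplits:population}), the difference being that greedy progress is now made one cell at a time and must be reassembled over the whole partition using the variance decomposition in Lemma~\ref{lem:varianceVsLeafVariance}. Let $\calP_\ell$ be the partition produced once Algorithm~\ref{alg:mainBreimanPopulation} has finished level $\ell$ of its breadth-first construction; since it proceeds level by level, one checks that for $t\ge(1/\eta)^{C\cdot r}$ the returned partition $\bar\calP$ is a refinement of $\calP_L$ with $L=\lceil C\cdot r\,\ln(1/\eta)\rceil$. The whole first part then reduces to establishing the per-level contraction
\[
  \bar L(\calP_{\ell+1}) \le \Bigl(1-\tfrac{1}{C\cdot r}\Bigr)\,\bar L(\calP_\ell),
\]
since iterating it from $\bar L(\calP_0)=\Var_{\vecx}(m(\vecx))\le 1$ yields $\bar L(\calP_L)\le(1-\tfrac1{C\cdot r})^{L}\le e^{-L/(C\cdot r)}\le\eta$, and monotonicity of $\bar L$ under refinement gives $\Exp_{\vecx}[(m(\vecx)-\bar m(\vecx))^2]=\bar L(\bar\calP)\le\bar L(\calP_L)\le\eta$ (here $\bar m(\vecx)=\Exp[m(\vecz)\mid\vecz\in\bar\calP(\vecx)]$ because $\Exp[\eps]=0$).

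For the contraction I would argue cell by cell. Fix $A$ with $\Prob_{\vecx}(\vecx\in A)>0$ and let $R_A=R\cap\calI(A)$ be the relevant directions still free inside $A$ (Definition~\ref{def:relevantVariablesForCellBreiman}). Splitting $A$ on all of $R_A$ makes $m$ constant on every resulting subcell, so $\bar L_\ell(A,R_A)=0$ and hence, by~\eqref{eq:cellMSEPopulationBinaryBreiman:2}, $\bar V_\ell(A,R_A)-\bar V_\ell(A)=\bar L_\ell(A)$. Enumerating $R_A=\{i_1,\dots,i_k\}$ with $k\le r$, telescoping this difference through the partial splits $\bar V_\ell(A,\{i_1,\dots,i_j\})$ and bounding each increment by Assumption~\ref{asp:diminishingReturnsBreimanFull} taken with $A'=A$ gives
\[
  \bar L_\ell(A)=\bar V_\ell(A,R_A)-\bar V_\ell(A)\le C\sum_{j=1}^{k}\bigl(\bar V_\ell(A,i_j)-\bar V_\ell(A)\bigr)\le C\cdot r\cdot\max_{i\in[d]}\bigl(\bar V_\ell(A,i)-\bar V_\ell(A)\bigr).
\]
As Algorithm~\ref{alg:mainBreimanPopulation} splits $A$ on the maximizer $i^{*}$ of $\bar V_\ell(A,\cdot)$, this yields $\bar L_\ell(A,i^{*})=\bar L_\ell(A)-(\bar V_\ell(A,i^{*})-\bar V_\ell(A))\le(1-\tfrac1{C\cdot r})\bar L_\ell(A)$. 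Summing over $A\in\calP_\ell$ with weights $\Prob_{\vecx}(\vecx\in A)$ and using parts~(1), (2), (3) of Lemma~\ref{lem:varianceVsLeafVariance} to rewrite both sides turns this into exactly the per-level contraction above.

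For the second part I would add the independence assumption. First observe that if $i\notin R$ then conditioning on $x_i$ leaves the conditional law of $\vecx_R$ (hence of $m$) unchanged inside any cell $A$, so $A^i_0$ and $A^i_1$ have the same within-cell mean of $m$; thus $\bar V_\ell(A,i)=\bar V_\ell(A)$, and by Lemma~\ref{lem:varianceVsLeafVariance}(3) also $\bar V(\calP,A,i)=\bar V(\calP)$, for every irrelevant $i$ and every partition containing $A$. Hence the greedy rule can split a cell $A$ on an irrelevant (or an already-used) coordinate only when every coordinate has zero gain, i.e.\ when $\calR(A)=\emptyset$, and the argument of Lemma~\ref{lem:optimalityWithoutSplitsPopulationBreiman} (which uses $\calR=\emptyset$ only at that one cell) then forces $m$ to be a.s.\ constant on $A$, hence on all of its descendants. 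Every non-wasteful split is on a coordinate of $R\cap\calI(A)$, i.e.\ a previously unused relevant coordinate, so every root-to-leaf path contains at most $r$ such splits; consequently at depth $r$ every positive-probability cell $B$ is $m$-constant (either it became constant at an earlier wasteful split, or its path exhausted all of $R$), so $\bar m\equiv\Exp[m(\vecz)\mid\vecz\in B]=m$ a.e.\ on $B$. Since depth $r$ is reached once $t\ge 2^r$, this gives $\Prob_{\vecx}(\bar m(\vecx)=m(\vecx))=1$.

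Everything here is mechanical once the lemmas are available; the one step I would write out most carefully is the passage from the single-cell bound to the global contraction of $\bar L$, which is exactly where Lemma~\ref{lem:varianceVsLeafVariance}(1)--(3) is needed in order to express $\bar L(\calP_{\ell+1})$ and $\bar L(\calP_\ell)$ as $\Prob$-weighted sums over leaves of the local quantities $\bar L_\ell(\cdot)$. The only other subtlety, in the second part, is recognizing that a split on an irrelevant coordinate can occur only at a cell where $m$ has already become constant, so that ``wasted'' depth never obstructs exact recovery within $r$ levels.
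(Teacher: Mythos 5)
Your proof is correct and follows essentially the same route as the paper's: per-cell telescoping of $\bar{V}_{\ell}(A,\cdot)$ through an enumeration of the relevant set, bounding increments with the diminishing-returns assumption taken at $A'=A$, passing from the resulting local contraction $\bar L_\ell(A, i^*_A)\le(1-\tfrac1{Cr})\bar L_\ell(A)$ to the global one via the probability-weighted decomposition of Lemma~\ref{lem:varianceVsLeafVariance}, and then, for the exact-recovery part, using that under independence irrelevant coordinates have zero gain so every greedy split at a cell with $\calR(A)\neq\emptyset$ consumes a fresh coordinate of $R$, and applying Lemma~\ref{lem:optimalityWithoutSplitsPopulationBreiman} once $\calR(\cdot)=\emptyset$. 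The only cosmetic difference is that you telescope over $R_A=R\cap\calI(A)$ rather than all of $R$ (equivalent, since splitting on already-fixed coordinates has zero gain), and you are a bit more explicit that ``wasted'' splits can only occur at cells on which $m$ is already a.s.\ constant, so they do not obstruct recovery.
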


\begin{proof}
    When the value of $\level$ changes, then the algorithm considers
  separately every cell $A$ in $\calP_{\level}$. For every such cell $A$
  it holds that $\bar{L}_{\ell}(A, R) = 0$ and hence
  $\bar{V}_{\ell}(A, R) \triangleq V^*(A)$ is maximized. Since
  $m(\vecx) \in [-1, 1]$ it holds that the maximum value of
  $\bar{V}_{\ell}$ is $1$. Now let $\{i_1, \dots, i_r\}$ be an
  arbitrary enumeration of $R$ and let $R_j = \{i_1, \dots, i_j\}$ then
  by adding and subtracting terms of the form $\bar{V}_{\ell}(A, R_j)$
  we have the following equality
  \[ \p{\bar{V}_{\ell}(A, R) - \bar{V}_{\ell}(A, R_{r - 1})} + \cdots + \p{\bar{V}_{\ell}(A, R_2) - \bar{V}_{\ell}(A, i_1)} + \bar{V}_{\ell}(A, i_1) = V^*(A). \]
  \noindent From Assumption~\ref{asp:diminishingReturnsBreimanFull} we have
  that
  \[ \p{\bar{V}_{\ell}(A, i_r) - \bar{V}_{\ell}(A)} + \cdots + \p{\bar{V}_{\ell}(A, i_2) - \bar{V}_{\ell}(A)} + \p{\bar{V}_{\ell}(A, i_1) - \bar{V}_{\ell}(A)} \ge \frac{V^*(A) - \bar{V}_{\ell}(A)}{C} \]
  \noindent which implies
  \[ \max_{j \in [r]} \p{\bar{V}_{\ell}(A, i_j) - \bar{V}_{\ell}(A)} \ge \frac{V^*(A) - \bar{V}_{\ell}(A)}{C \cdot r}. \]
  \noindent Let $i_A^{\level}$ be the coordinate that the algorithm
  chose to split cell $A$ at level $\level$. Now from the greedy
  criterion that we use to pick the next coordinate to split in Algorithm
  \ref{alg:mainBreimanPopulation} we get that for the coordinate
  $i^{\level}_A$ that we picked to split $A$ was at least as good as the
  best of the coordinates in $R$, hence it holds that
  \[ \bar{V}_{\ell}\p{A, i^{\level}_A} \ge \bar{V}_{\ell}(A) + \frac{V^*(A) - \bar{V}_{\ell}(A)}{C \cdot r} \]
  which in turn because
  $L^*(A) \triangleq \bar{L}_{\ell}(A, R) = 0$ implies that
  \begin{equation} \label{eq:potentialProgressPopulationCellSplit:Breiman}
    \bar{L}_{\ell}\p{A, i^{\level}_A} \le \bar{L}_{\ell}(A) \p{1 - \frac{1}{C \cdot r}}.
  \end{equation}
  \noindent Again we fix $\calQ_{\level}$ to be the partition
  $\calP_{\level}$ when $\level$ changed and $\calP_{\level}$ is a full
  partition of $\{0, 1\}^d$. Then because of
  \ref{eq:potentialProgressPopulationCellSplit:Breiman} and Lemma
  \ref{lem:varianceVsLeafVariance} it holds that
  \begin{align}
      \bar{L}\p{\calQ_{\level + 1}} =~& \sum_{A \in \calQ_{\level}} \Prob_{\vecx}\p{\vecx \in A} \bar{L}_{\ell}\p{A, i^{\level}_A} \le \sum_{A \in \calQ_{\level}} \bar{L}_{\ell}\p{A} \p{1 - \frac{1}{C \cdot r}}\\
      =~& \bar{L}\p{\calQ_{\level}} \p{1 - \frac{1}{C \cdot r}}.
  \end{align}
  \noindent Inductively and using the fact that $m(\vecx) \in [-1, 1]$
  implies that
  \begin{equation} \label{eq:potentialProgressPopulationLevel:Breiman}
    \bar{L}\p{\calQ_{\level}} \le \bar{L}\p{\calP_0} \p{1 - \frac{1}{C \cdot r}}^{\level} \le \p{1 - \frac{1}{C \cdot r}}^{\level}.
  \end{equation}
  \noindent Finally from the choice of $t$ we have that
  $\level \ge C \cdot r \cdot \ln\p{1/\eta}$ and hence
  $\bar{L}(\calQ_{\level}) \le \eta$ and hence the first part of the theorem follows.

    For the second part, we observe that for any coordinate $i \in [d] \setminus R$ and for
  any cell $A$ it holds that $\bar{V}_{\ell}(A, i) - \bar{V}_{\ell}(A) = 0$ and hence the
  Algorithm \ref{alg:mainBreimanPopulation} will pick a coordinate in $[d] \setminus R$
  only after it picks all the coordinates in $R$. Hence for $t \ge 2^r$ we have that
  $\calR(A) = \emptyset$ for all the cells $A$ in the output partition and from Lemma
  \ref{lem:optimalityWithoutSplitsPopulationBreiman} the second part of the
  theorem follows.
\end{proof}

\section{Necessity of Approximate Submodularity} \label{sec:app:lowerBound:submodularity}

  Let $m : \{0, 1\}^d \to [-1, 1]$ be a $2$-sparse function such that
$m(\vecx) = a \cdot x_1 + a \cdot x_2 - b \cdot x_1 x_2$ where and assume that
the feature vector $\vecx$ is sampled uniformly at random from $\{0, 1\}^d$ and
there is no noise, i.e. $\eps_i = 0$ for all $i \in [n]$. Then it is easy to see
the following
\begin{align*}
  \bar{V}(\emptyset) & = \p{b - \frac{a}{4}}^2 \\
  \bar{V}(\{1\})     & = \bar{V}(\{2\}) = \frac{5}{4} a^2 - \frac{3}{4} a \cdot b + \frac{1}{8} b^2 \\
  \bar{V}(\{1, 2\})  & = \frac{3}{2} a^2 - a \cdot b + \frac{1}{4} b^2.
\end{align*}
\noindent Additionally from the definition of $m$ we have that for any set
$S \subseteq [d] \setminus \{1, 2\}$ it holds that
$\bar{V}(S) = \bar{V}(\emptyset)$, $\bar{V}(S \cup \{i\}) = \bar{V}(i)$ with
$i \in \{1, 2\}$, and $\bar{V}(S \cup \{1, 2\}) = \bar{V}(1, 2)$. Putting these
together we can see that the smallest possible constant $C$ for which the
approximate submodularity holds is the following
\[ C = \frac{\bar{V}(\{1, 2\}) - \bar{V}(\{2\})}{\bar{V}(\{1\}) - \bar{V}(\emptyset)}, \frac{\bar{V}(\{1, 2\}) - \bar{V}(\{2\})}{\bar{V}(\{1\}) - \bar{V}(\emptyset)} = 2 \cdot \frac{2 a^2 - 2 a \cdot b + b^2}{\p{b - 2 a}^2}. \]
\noindent Hence the constant $C$ is finite for all values of $a$ and $b$ unless
the denominator is zero which corresponds to the case $b = 2 a$ which implies
that $m(\vecx) = a \cdot \p{x_1 \oplus x_2}$, where with $\oplus$ we denote the
xor operator. So for symmetric $2$-sparse functions the only case where our
theorems do not apply is the case where $m$ is a multiple of the xor function.
Nevertheless, we next argue that this is not a limitation of our analysis but a
limitation of the greedy algorithms that we analyze in this paper.

  We next make a formal argument about the failure of the greedy algorithms that
we analyze in this paper when $m(\vecx) = a \cdot \p{x_1 \oplus x_2}$ even in
the case where we have infinite number of samples available. In this case it
holds that $V_n = \bar{V}$ and hence we can assume we can run Algorithm
\ref{alg:mainLevelSplitPopulation}. In that the greedy choice of the coordinate
$i$ is based on the value of $\bar{V}$. But from the above discussion we have
that for any set $S \subseteq [d] \setminus \{1, 2\}$ it holds that
\begin{align*}
  \bar{V}(S \cup \{1\}) = \bar{V}(S \cup \{2\}) = \bar{V}(S \cup \{j\}) = \bar{V}(\emptyset), \quad \text{where $j \in [d] \setminus \{1, 2\}$}.
\end{align*}
\noindent Hence in every step, starting from the first one and until the
coordinates $1$ or $2$ are chosen, all the remaining coordinates achieve the
same $\bar{V}$ value and hence we have a tie among all of them. According to
Algorithm \ref{alg:mainLevelSplitPopulation} this implies that at every level
$i$ if $S_{i - 1}$ is the set of splits that we have already picked and if
$1, 2 \not\in S_{i - 1}$ then for every $j \in [d] \setminus S_{i - 1}$ it holds
that
\[ \Prob(\text{pick $j$ at level $i$}) = \frac{1}{d - (i - 1)}. \]
\noindent Therefore we continue to pick random coordinates until we pick one of
$1$ or $2$ and when this happens we pick the other important coordinate
at the next level and then we have an accurate estimation of the function and
a mean squared error $0$. Before picking coordinates $1$ or $2$ the mean squared
error of our estimation is a constant and hence our estimation is not
consistent. Our next goal is to show that with high probability we will not
pick neither $1$ or $2$ until the level $\sqrt{d}$. Formally we compute for
level $i = \floor{\sqrt{d}}$ the following
\begin{align*}
  \Prob\p{\text{we do not pick $1$ or $2$ until level $\floor{\sqrt{d}}$}} & = \prod_{\ell = 0}^i \p{1 - \frac{2}{d - \ell}} \ge \p{1 - \frac{2}{d - \sqrt{d}}}^i \\
  & \ge 1 - \frac{2 i}{d - \sqrt{d}} \ge 1 - \frac{4}{\sqrt{d}}.
\end{align*}
This means that we need depth at least $\Omega(\sqrt{d})$ to get small mean
square error for this function $m$. Since we need to get to level at least
$\Omega(\sqrt{d})$ this means that the greedy algorithm will take time at least
$\Omega(\sqrt{d})$ and hence it is not possible to achieve a running time of
order $O(\log(d))$ which is the running time that we would expect for a
$2$-sparse function. It is also the running time of our algorithm when the
target function $m$ is $2$-sparse and satisfies approximate submodularity. This
is proves an exponential separation between target functions that satisfy
approximate submodularity and functions that do not satisfy approximate
submodularity.

\bibliographystyle{alpha}
\bibliography{ref}

\newcommand{\etalchar}[1]{$^{#1}$}
\begin{thebibliography}{GKKW06}

\bibitem[AG14]{Arlot2014}
Sylvain {Arlot} and Robin {Genuer}.
\newblock {Analysis of purely random forests bias}.
\newblock {\em arXiv e-prints}, page arXiv:1407.3939, Jul 2014.

\bibitem[BBM02]{Bartlett2002}
Peter~L. Bartlett, Olivier Bousquet, and Shahar Mendelson.
\newblock Localized rademacher complexities.
\newblock In Jyrki Kivinen and Robert~H. Sloan, editors, {\em Computational
  Learning Theory}, pages 44--58, Berlin, Heidelberg, 2002. Springer Berlin
  Heidelberg.

\bibitem[BFOS84]{breiman1984classification}
Leo Breiman, Jerome Friedman, Richard Olshen, and Charles Stone.
\newblock Classification and regression trees. wadsworth int.
\newblock {\em Group}, 37(15):237--251, 1984.

\bibitem[Bia10]{Biau2010}
Gérard Biau.
\newblock Analysis of a random forests model.
\newblock {\em Journal of Machine Learning Research}, 13, 05 2010.

\bibitem[Bre01]{Breiman01}
Leo Breiman.
\newblock Random forests.
\newblock {\em Machine learning}, 45(1):5--32, 2001.

\bibitem[Bre04]{Breiman04consistencyfor}
Leo Breiman.
\newblock Consistency for a simple model of random forests. statistical
  department.
\newblock {\em University of California at Berkeley. Technical Report,(670)},
  2004.

\bibitem[CD12]{CommingesD12}
La{\"e}titia Comminges and Arnak~S Dalalyan.
\newblock Tight conditions for consistency of variable selection in the context
  of high dimensionality.
\newblock {\em The Annals of Statistics}, 40(5):2667--2696, 2012.

\bibitem[DMdF14]{Denil2014}
Misha Denil, David Matheson, and Nando de~Freitas.
\newblock Narrowing the gap: Random forests in theory and in practice.
\newblock In {\em International Conference on Machine Learning (ICML)}, 2014.

\bibitem[DS16]{Duroux2016}
Roxane {Duroux} and Erwan {Scornet}.
\newblock {Impact of subsampling and pruning on random forests}.
\newblock {\em arXiv e-prints}, page arXiv:1603.04261, Mar 2016.

\bibitem[FLW18]{Fan2018}
Yingying {Fan}, Jinchi {Lv}, and Jingbo {Wang}.
\newblock {DNN: A Two-Scale Distributional Tale of Heterogeneous Treatment
  Effect Inference}.
\newblock {\em arXiv e-prints}, page arXiv:1808.08469, Aug 2018.

\bibitem[Fri91]{Friedman91}
JH~Friedman.
\newblock Multivariate adaptive regression splines (with discussion).
\newblock {\em Ann. Statist}, 19(1):79--141, 1991.

\bibitem[FS19]{foster2019orthogonal}
Dylan~J Foster and Vasilis Syrgkanis.
\newblock Orthogonal statistical learning.
\newblock {\em arXiv preprint arXiv:1901.09036}, 2019.

\bibitem[GKKW06]{gyorfi2006distribution}
L{\'a}szl{\'o} Gy{\"o}rfi, Michael Kohler, Adam Krzyzak, and Harro Walk.
\newblock {\em A distribution-free theory of nonparametric regression}.
\newblock Springer Science \& Business Media, 2006.

\bibitem[GM97]{GeorgeM97}
Edward~I George and Robert~E McCulloch.
\newblock Approaches for bayesian variable selection.
\newblock {\em Statistica sinica}, pages 339--373, 1997.

\bibitem[Hoe94]{Hoeffding94}
Wassily Hoeffding.
\newblock Probability inequalities for sums of bounded random variables.
\newblock In {\em The Collected Works of Wassily Hoeffding}, pages 409--426.
  Springer, 1994.

\bibitem[HTF09]{HastieTF09}
Trevor Hastie, Robert Tibshirani, and Jerome Friedman.
\newblock {\em The elements of statistical learning: data mining, inference,
  and prediction}.
\newblock Springer Science \& Business Media, 2009.

\bibitem[LC09]{LiuC09}
Han Liu and Xi~Chen.
\newblock Nonparametric greedy algorithms for the sparse learning problem.
\newblock In {\em Advances in Neural Information Processing Systems}, pages
  1141--1149, 2009.

\bibitem[LJ02]{Lin02randomforests}
Yi~Lin and Yongho Jeon.
\newblock Random forests and adaptive nearest neighbors.
\newblock {\em JOURNAL OF THE AMERICAN STATISTICAL ASSOCIATION}, pages
  101--474, 2002.

\bibitem[LN96]{lugosi1996}
Gábor Lugosi and Andrew Nobel.
\newblock Consistency of data-driven histogram methods for density estimation
  and classification.
\newblock {\em Ann. Statist.}, 24(2):687--706, 04 1996.

\bibitem[LW{\etalchar{+}}08]{LaffertyW08}
John Lafferty, Larry Wasserman, et~al.
\newblock Rodeo: sparse, greedy nonparametric regression.
\newblock {\em The Annals of Statistics}, 36(1):28--63, 2008.

\bibitem[Mei06]{Meinshausen2006}
Nicolai Meinshausen.
\newblock Quantile regression forests.
\newblock {\em Journal of Machine Learning Research}, 7:983--999, 06 2006.

\bibitem[MH16]{Mentch2016}
Lucas Mentch and Giles Hooker.
\newblock Quantifying uncertainty in random forests via confidence intervals
  and hypothesis tests.
\newblock {\em Journal of Machine Learning Research}, 17(26):1--41, 2016.

\bibitem[MM00]{Mansour2000}
Yishay Mansour and David~A. McAllester.
\newblock Generalization bounds for decision trees.
\newblock In {\em Proceedings of the Thirteenth Annual Conference on
  Computational Learning Theory}, COLT ’00, page 69–74, San Francisco, CA,
  USA, 2000. Morgan Kaufmann Publishers Inc.

\bibitem[Nob96]{nobel1996}
Andrew Nobel.
\newblock Histogram regression estimation using data-dependent partitions.
\newblock {\em Ann. Statist.}, 24(3):1084--1105, 06 1996.

\bibitem[OSW19]{oprescu2019orthogonal}
Miruna Oprescu, Vasilis Syrgkanis, and Zhiwei~Steven Wu.
\newblock Orthogonal random forest for causal inference.
\newblock In {\em International Conference on Machine Learning}, pages
  4932--4941, 2019.

\bibitem[PAR10]{PeelASR10}
Thomas Peel, Sandrine Anthoine, and Liva Ralaivola.
\newblock Empirical bernstein inequalities for u-statistics.
\newblock In {\em Advances in Neural Information Processing Systems}, pages
  1903--1911, 2010.

\bibitem[SB01]{SmolaB01}
Alex~J Smola and Peter~L Bartlett.
\newblock Sparse greedy gaussian process regression.
\newblock In {\em Advances in neural information processing systems}, pages
  619--625, 2001.

\bibitem[SBV15]{scornet2015}
Erwan Scornet, Gérard Biau, and Jean-Philippe Vert.
\newblock Consistency of random forests.
\newblock {\em Ann. Statist.}, 43(4):1716--1741, 08 2015.

\bibitem[Sco16]{Scornet2016}
Erwan Scornet.
\newblock On the asymptotics of random forests.
\newblock {\em Journal of Multivariate Analysis}, 146:72 -- 83, 2016.
\newblock Special Issue on Statistical Models and Methods for High or Infinite
  Dimensional Spaces.

\bibitem[VW96]{VanDerVaartWe96}
A.~W. {Van Der Vaart} and J.~A. Wellner.
\newblock {\em Weak Convergence and Empirical Processes: With Applications to
  Statistics}.
\newblock Springer Series, March 1996.

\bibitem[WA18]{WagerA18}
Stefan Wager and Susan Athey.
\newblock Estimation and inference of heterogeneous treatment effects using
  random forests.
\newblock {\em Journal of the American Statistical Association},
  113(523):1228--1242, 2018.

\bibitem[Wai19]{wainwright_2019}
Martin~J. Wainwright.
\newblock {\em High-Dimensional Statistics: A Non-Asymptotic Viewpoint}.
\newblock Cambridge Series in Statistical and Probabilistic Mathematics.
  Cambridge University Press, 2019.

\bibitem[WHE14]{Wager14}
Stefan Wager, Trevor Hastie, and Bradley Efron.
\newblock Confidence intervals for random forests: The jackknife and the
  infinitesimal jackknife.
\newblock {\em Journal of Machine Learning Research}, 15:1625--1651, 2014.

\bibitem[WW15]{Wager2015}
Stefan {Wager} and Guenther {Walther}.
\newblock {Adaptive Concentration of Regression Trees, with Application to
  Random Forests}.
\newblock {\em arXiv e-prints}, page arXiv:1503.06388, Mar 2015.

\bibitem[YT15]{YangT15}
Yun Yang and Surya~T Tokdar.
\newblock Minimax-optimal nonparametric regression in high dimensions.
\newblock {\em The Annals of Statistics}, 43(2):652--674, 2015.

\end{thebibliography}

\clearpage

\appendix
 \newpage

\section{Bias-Variance Decomposition of Shallow Trees}

In this section, we prove a bias-variance decomposition of estimators defined via partitions of the function space; a special case of which are tree-based estimators. Moreover, we prove a bound on the variance via an adaptation of the localized Rademacher complexity analysis, to account for partition-based estimators (which are not necessarily global minimizers of the empirical risk).

\begin{definition} \label{def:piecewiseConstant}
    Given a partition $\calP = \{A_1, \dots, A_k\}$ of $\{0, 1\}^d$ we
  define the set $\calF(\calP)$ of \textit{piecewise constant} functions
  that have the value for every set in $\calP$, i.e.
  \[ \calF(\calP) = \{ m : \{0, 1\}^d \to [-1, 1] \mid \forall A \in P, ~ \forall \vecx, \vecx' \in A, m(\vecx) = m(\vecx') \}. \]
  \noindent If $\calZ = \{\calP_1, \dots, \calP_s\}$ is a family of partitions of
  $\{0, 1\}^d$, then we define $\calF(\calZ)$ to be the union of $\calF(\calP)$
  for all $\calP \in \calZ$.
\end{definition}

For any function class $\calG$, we define the critical radius as any solution to the inequality:
\begin{equation}
    \calR(\delta; \calG) \leq \delta^2
\end{equation}
where $\calR(\delta; \calG)$ is the localized Rademacher complexity, defined as:
\begin{equation}
    \calR(\delta; G) = \Exp_{\train_n \sim \calD^n,~ \vec{\epsilon} \sim \mathrm{Rad}^n}\left[ \sup_{g\in G: \|g\|_2\leq \delta} \left|\frac{1}{s}\sum_i \epsilon_i g(\vec{x}_i, y_i)\right|\right]
\end{equation}
where $\epsilon_i$ are independent Rademacher random variables taking values equi-probably in $\{-1, 1\}$. Moreover, we define the star-hull of the function class as: $\sh(G)=\{\kappa \, g: g\in G, \kappa\in [0,1]\}$.

\begin{lemma}[Bias-Variance Decomposition]\label{lem:bias-variance}
    Consider a random mapping $\calA(\train_n)$, that maps a set of training
  samples into a partition of the space $\{0, 1\}^d$ and let $\calP_n\triangleq \calA(\train_n)$, denote the random partition output by $\calA$ (where randomness is over $\calD_n$ and the randomness of $\calA$). Let $\bar{\calZ}$ be the image of this
  mapping, i.e. the union of the supports of the random variable $\calA(\train_n)$ for all possible $\train_n$. Suppose that an estimator
  $\hat{m}$, minimizes the empirical mean squared error among all piece-wise constant functions
  $f \in \calF(\calP_n)$, i.e.:
    \begin{equation}
        \hat{m} = \argmin_{f\in \calF(\calP_n)} \sum_{i \in [n]} \p{y^{(i)} - f(\vecx)}^2.
    \end{equation}
    Let $\bar{\calF}=\calF(\bar{\calZ})$ and let $\delta_n^2 \geq \Theta\p{\frac{\log(\log(n))}{n}}$
    be an upper bound on the critical radius of $\sh\p{\bar{\calF}-m}$. Moreover, let
    $\tilde{m}_n(\cdot) = \Exp_{z\sim D_x}\b{m(z)\mid z\in \calP_n(\cdot)}$. Then for a universal
    constant $C$, w.p. $1-\zeta$:
    \begin{equation} \label{eq:mainBeforeFinal}
      \Exp_{\vecx\sim \calD_x}\b{\p{\hat{m}(x) - m(x)}^2} \leq C\p{\p{\delta_n + \sqrt{\frac{\log(1/\zeta)}{n}}}^2 + \Exp_{\vecx\sim \calD_x}\b{\p{\tilde{m}_n(x) - m(x)}^2}}
    \end{equation}
\end{lemma}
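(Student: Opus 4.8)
The plan is to adapt the \emph{localized Rademacher complexity} analysis of nonparametric least squares \cite{Bartlett2002,wainwright_2019} to partition-based estimators. The one genuinely new difficulty is that $\hat m$ minimizes the empirical risk not over a fixed function class but over the \emph{data-dependent} class $\calF(\calP_n)$, with $\calP_n=\calA(\train_n)$; this is handled by carrying out every uniform bound over the data-independent ``envelope'' $\sh(\bar{\calF}-m)=\sh(\calF(\bar{\calZ})-m)$, whose critical radius is controlled by $\delta_n$ by hypothesis, and then instantiating those bounds at the (random) functions of interest.

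First I would set up the \emph{basic inequality}. Note that $\tilde m_n$ is constant on every cell of $\calP_n$ with values in $[-1/2,1/2]$, hence $\tilde m_n\in\calF(\calP_n)\subseteq\bar{\calF}$, so together with $\hat m$ the shifted functions $g\triangleq\hat m-m$ and $\bar g\triangleq\tilde m_n-m$ both lie in $\bar{\calF}-m\subseteq\sh(\bar{\calF}-m)$. Since $\hat m$ minimizes $\tfrac1n\sum_i(y^{(i)}-f(\vecx^{(i)}))^2$ over $\calF(\calP_n)$ while $\tilde m_n$ is feasible, substituting $y^{(i)}=m(\vecx^{(i)})+\eps^{(i)}$ and rearranging gives
\[
  \norm{g}_n^2 \;\le\; \norm{\bar g}_n^2 \;+\; \frac{2}{n}\sum_{i\in[n]}\eps^{(i)}\bigl(g(\vecx^{(i)})-\bar g(\vecx^{(i)})\bigr),
\]
with $\norm{h}_n^2=\tfrac1n\sum_i h(\vecx^{(i)})^2$. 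Splitting the cross term into its $g$- and $\bar g$-parts, it remains to control $\bigl|\tfrac1n\sum_i\eps^{(i)}h(\vecx^{(i)})\bigr|$ uniformly over $h\in\sh(\bar{\calF}-m)$, localized by $\norm{h}_2\triangleq(\Exp_{\vecx\sim\calD_x}[h(\vecx)^2])^{1/2}$.

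For this I would invoke the standard localized Rademacher bound for star-shaped classes of bounded functions, plus a Talagrand/Bernstein tail for the high-probability version (the assumption $\delta_n^2\ge\Theta(\log\log(n)/n)$ is exactly what makes the union bound over the dyadic localization shells in the peeling step affordable): on an event of probability at least $1-\zeta$, writing $\delta_n'\triangleq\delta_n+\sqrt{\log(1/\zeta)/n}$, one has simultaneously for all $h\in\sh(\bar{\calF}-m)$ both $\bigl|\tfrac1n\sum_i\eps^{(i)}h(\vecx^{(i)})\bigr|\le c\,\delta_n'(\norm{h}_2+\delta_n')$ and the one-sided norm comparisons $\norm{g}_2^2\le\norm{g}_n^2+c\,\delta_n'(\norm{g}_2+\delta_n')$, $\norm{\bar g}_n^2\le\norm{\bar g}_2^2+c\,\delta_n'(\norm{\bar g}_2+\delta_n')$. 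Because all of these are uniform over the \emph{fixed} class $\sh(\bar{\calF}-m)$, they stay valid when instantiated at the random $g,\bar g$. Plugging them into the basic inequality gives a scalar inequality $\norm{g}_2^2\le\norm{\bar g}_2^2+c''\,\delta_n'(\norm{g}_2+\norm{\bar g}_2+\delta_n')$; absorbing cross terms by AM--GM and solving the resulting quadratic in $\norm{g}_2$ yields $\norm{g}_2^2\le C((\delta_n')^2+\norm{\bar g}_2^2)$ for a universal $C$. As $\norm{g}_2^2=\Exp_{\vecx}[(\hat m(\vecx)-m(\vecx))^2]$, $\norm{\bar g}_2^2=\Exp_{\vecx}[(\tilde m_n(\vecx)-m(\vecx))^2]$ and $(\delta_n')^2\le 2(\delta_n+\sqrt{\log(1/\zeta)/n})^2$, this is exactly \eqref{eq:mainBeforeFinal}.

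The main obstacle is the data-adaptivity of $\calP_n$: unlike in the classical oracle inequality for least squares, $\hat m$ is a minimizer only over the \emph{random} class $\calF(\calP_n)$, so one cannot apply concentration to that class directly. The two devices that resolve this are (i) comparing $\hat m$ to the in-class population optimum $\tilde m_n$ rather than to $m$ --- which is precisely why the irreducible bias term $\Exp_{\vecx}[(\tilde m_n(\vecx)-m(\vecx))^2]$ must appear on the right-hand side --- and (ii) replacing $\calF(\calP_n)$ by the data-independent envelope $\calF(\bar{\calZ})$ before any concentration argument, which is where the critical-radius hypothesis on $\sh(\bar{\calF}-m)$ and the lower bound $\delta_n^2\gtrsim\log\log(n)/n$ are used. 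The remaining effort is the bookkeeping: verifying that $\sh(\bar{\calF}-m)$ is the right localized class for all three uniform estimates above, and that the various universal constants compose into a single $C$.
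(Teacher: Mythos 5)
Your proposal is correct and reaches the same final quadratic inequality, but it packages the concentration step differently from the paper. The paper's proof writes everything in terms of the squared-loss functional $c(\vecx,y;f)=(y-f(\vecx))^2$, observes that $\Exp_{\calD}[c(g)-c(m)]=\|g-m\|_2^2$ because $m$ is the Bayes regressor, decomposes $\|\hat m-m\|_2^2$ around $\tilde m_n$, uses empirical optimality to kill $\Exp_n[c(\hat m)-c(\tilde m_n)]$, and then controls the remaining two population-minus-empirical loss differences via a single black-box statement (Lemma 7 of \cite{foster2019orthogonal}) of the form $|(\Exp-\Exp_n)[c(f)-c(m)]|\le C(\delta_n+\zeta)\|f-m\|_2+C(\delta_n+\zeta)^2$, applied at $f=\hat m$ and $f=\tilde m_n$ over the data-independent envelope $\bar{\calF}$. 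You instead run the classical least-squares basic inequality $\|g\|_n^2\le\|\bar g\|_n^2+\tfrac{2}{n}\sum_i\eps^{(i)}(g-\bar g)(\vecx^{(i)})$ with $g=\hat m-m$, $\bar g=\tilde m_n-m$, and then separately control (i) the noise multiplier process $\tfrac1n\sum_i\eps^{(i)}h(\vecx^{(i)})$ and (ii) the empirical-vs-population $L_2$-norm ratio, both uniformly over $\sh(\bar\calF-m)$. That unpacks into two uniform estimates what the paper invokes as one, but the underlying localization machinery is the same, the key devices (comparison against $\tilde m_n\in\calF(\calP_n)$, passing from the random class $\calF(\calP_n)$ to the fixed envelope $\bar{\calF}$, the star-hull and critical-radius hypothesis, AM--GM to close the quadratic) are identical, and the constants compose the same way. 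So this is a valid alternative organization of the same argument rather than a different proof strategy; if anything your route is the more textbook one for fixed-design/nonparametric least squares (Wainwright Ch.~13--14), while the paper's route borrows a prepackaged lemma that bundles the multiplier-process bound and the norm comparison into a single loss-difference deviation statement.
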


\subsection{Proof of Lemma~\ref{lem:bias-variance}}\label{sec:app:proofs:biasVariance}

\paragr{Notation.} To simplify the exposition we introduce here some
notation that we need for our local Radermader complexity analysis. We
define $c(\vecx, y; m)$ to be represent the error of the sample $(\vecx, y)$
according to the function $m$. In our setting we have that
$c(\vecx, y; m) = (y - m(\vecx))^2$ and we may drop the argument $m$ from
$c$ when $m$ is clear from the context. We define $\calD_{\calX}$ to be the
marginal with respect to $\vecx$ of the distribution of $\calD$. Also we use
the notation
$\norm{m}_2 = \sqrt{\Exp_{\vecx \sim \calD_{\calX}}\b{m(\vecx)}}$. In this
section we sometimes use $\hat{m}$ is place for $m_n$ but they have the same
meaning. We next give a formal definition of the set of piece-wise constant
functions. Finally, let $\Exp_{n}$ denote the empirical expectation with respect
to $\train_n$.

For simplicity of notation, we define
$\calF_n \triangleq \calF(\calP_n)$ and
$\bar{\calF} \triangleq \calF(\bar{\calZ})$.
From the definition of
$\hat{m}$ we have:
\begin{align} \label{eq:localRademacherPiecewiseLinearApproximation}
  \sum_{i \in [n]} \p{y^{(i)} - \hat{m}(\vec{x}^{(i)})}^2  \le \inf_{f \in \calF_n} \sum_{i \in [n]} \p{y^{(i)} - f(\vecx)}^2.
\end{align}

\noindent Now for any function $g : \{0, 1\}^d \to \R$ we have that
\begin{align}
  \Exp_{(\vecx, y) \sim \calD}\b{\p{y - g(\vecx)}^2 - \p{y - m(\vecx)}^2} & = \Exp_{(\vecx, y) \sim \calD} \b{g^2(\vecx) - m^2(\vecx) - 2 y (g(\vecx) + m(\vecx))} \nonumber \\
  & = \Exp_{\vecx \sim \calD_{\calX}} \b{g^2(\vecx) - m^2(\vecx) - 2 \Exp\b{y \mid \vecx} (g(\vecx) + m(\vecx))} \nonumber \\
  & = \Exp_{\vecx \sim \calD_{\calX}} \b{g^2(\vecx) + m^2(\vecx) - 2 m(\vecx) g(\vecx)} \nonumber\\
  & = \norm{g - m}_2^2. \label{eq:localRademacherDistancetoUnderlineFunction}
\end{align}

\noindent If we plug in $g = \hat{m}$ in
\eqref{eq:localRademacherDistancetoUnderlineFunction} then we get that
\begin{align} \label{eq:localRademacherDistanceEmpiricalFunctiontoUnderlinefunction}
  \norm{\hat{m} - m}_2^2 & = \Exp_{(\vecx, y) \sim \calD} \b{c(\vec{x}, y; \hat{m}) - c(\vec{x}, y; m)}
\end{align}

\noindent We define also the following function
\begin{equation}
    \tilde{m}_n = \argmin_{f \in \calF_n} \Exp_{\vec{x} \sim \calD_X}[(f(\vec{x}) - m(\vec{x}))^2]. \end{equation}
Observe that the solution to this optimization takes the form:
\begin{equation}
    \tilde{m}_n(\cdot) = \Exp_{\vecz\sim \calD_x}\b{m(\vecz) \mid \vecz \in \calP_n(\cdot)}
\end{equation}

Conditional on the training set $\train_n$, we have:
\begin{align*}
    \norm{\hat{m} - m}_2^2 = & \Exp_{(\vecx, y) \sim \calD} \b{c(\vec{x}, y; \hat{m}) - c(\vec{x}, y; m)} \\
    = & \Exp_{(\vecx, y) \sim \calD} \b{c(\vec{x}, y; \hat{m}) - c(\vec{x}, y; \tilde{m}_n)}  + \Exp_{(\vecx, y) \sim \calD}\left[c(\vecx, y; \tilde{m}_n)- c(\vecx, y; m)\right] \\
    = & \Exp_{(\vec{x}, y) \sim \calD}\left[c(\vec{x}, y; \hat{m}) - c(\vec{x}, y; \tilde{m}_n)\right]  + \norm{\tilde{m}_n - m}_2^2 \tag{by \eqref{eq:localRademacherDistancetoUnderlineFunction}}
\end{align*}

  Now we can relate the population generalization error with the empirical.
\begin{align*}
    \Exp_{(\vec{x}, y) \sim \calD} \left[c(\vec{x}, y; \hat{m}) - c(\vec{x}, y; \tilde{m}_n)\right] = & \Exp_n \left[c(\vec{x}, y; \hat{m}) - c(\vec{x}, y; \tilde{m}_n)\right] \\
    & + \Exp_{(\vec{x}, y) \sim \calD} \left[c(\vec{x}, y; \hat{m}) - c(\vec{x}, y; \tilde{m}_n)\right] - \Exp_n \left[c(\vec{x}, y; \hat{m}) - c(\vec{x}, y; \tilde{m}_n)\right]
\end{align*}
Since by definition $\hat{m}$ minimizes the empirical loss over $\calF_n$ and since
$\tilde{m}_n \in \calF_n$ the first term is non-positive and hence
\begin{align*}
    \Exp_{(\vec{x}, y) \sim \calD} \left[c(\vec{x}, y; \hat{m}) - c(\vec{x}, y; \tilde{m}_n)\right] \le & \Exp_{(\vec{x}, y) \sim \calD} \left[c(\vec{x}, y; \hat{m}) - c(\vec{x}, y; \tilde{m}_n)\right] - \Exp_n \left[c(\vec{x}, y; \hat{m}) - c(\vec{x}, y; \tilde{m}_n)\right] \\
    = & \Exp_{(\vec{x}, y) \sim \calD}[c(\vec{x}, y; \hat{m}) - c(\vec{x}, y; m)] - \Exp_n[c(\vec{x}, y; \hat{m}) - c(\vec{x}, y; m)] + \\
    & + \Exp_{(\vec{x}, y) \sim \calD}[c(\vec{x}, y; m) - c(\vec{x}, y; \tilde{m}_n) ] - \Exp_n[c(\vec{x}, y; m) - c(\vec{x}, y; \tilde{m}_n)]
\end{align*}

Observe that the space of functions $\calF_n$ is a subset of the space of functions
$\bar{\calF}$, which is independent of $\train_n$. Thus it suffices to prove a
uniform convergence tail bound for all functions in the latter space.

  By Lemma 7 of \cite{foster2019orthogonal}, we have that if
$\delta_n^2\geq \Theta\left(\frac{\log(\log(n))}{n}\right)$ is any solution to the
inequality:
\begin{equation}
    \calR(\delta; \sh(\bar{\calF} - m)) \leq \delta^2
\end{equation}
then for some universal constant $C$, we have that with probability $1 - \delta$ for all
$f \in \bar{\calF}$ it holds that
\begin{align*}
    & \left|\Exp_{(\vec{x}, y) \sim \calD}[c(\vec{x}, y; f) - c(\vec{x}, y; m)] - \frac{1}{n}\sum_{i = 1}^n c(\vec{x}_i, y_i; f) - c(\vec{x}_i, y_i; m)\right| \\
    & ~~~~~~~~~~~~~~~~~~~~~~~~~~~~~~~~~~~~~~~~~~~~~~~~~~~~~~~~~~~~~~~~~~~~~~~~~~~~~~~~~~~~~~~ \leq C\,(\delta_n + \zeta) \norm{f - m}_2 + C\,(\delta_n + \zeta)^2
\end{align*}
for $\zeta = \sqrt{\frac{\log(1/\delta)}{n}}$. Applying the same
lemma to loss function $-c$, we get
\begin{align*}
    & \left|\Exp_{(\vec{x}, y) \sim \calD}[c(\vec{x}, y; m) - c(\vec{x}, y; f)] - \frac{1}{n}\sum_{i = 1}^n c(\vec{x}_i, y_i; m) - c(\vec{x}_i, y_i; f)\right| \\
    & ~~~~~~~~~~~~~~~~~~~~~~~~~~~~~~~~~~~~~~~~~~~~~~~~~~~~~~~~~~~~~~~~~~~~~~~~~~~~~~~~~~~~~~~ \leq C\, (\delta_n + \zeta) \norm{f - m}_2 + C\, (\delta_n + \zeta)^2.
\end{align*}

Applying the first inequality for $f = \hat{m}$ and the second for
$f = \tilde{m}_n$ and taking a union bound over both events, we have that for
$\zeta = \sqrt{\frac{\log(2/\delta)}{n}}$, w.p. $1-\delta$:
\begin{align*}
    & \Exp_{(\vec{x}, y) \sim \calD}[c(\vec{x}, y; \hat{m}) - c(\vec{x}, y; m)] - \Exp_n[c(\vec{x}, y; \hat{m}) - c(\vec{x}, y; m)] \\
    & ~~~~~~~~~~~~~~~~~~~~~~~~~~~~~~~~~~~~~~~~~~~~~~~~~~~~~~~~~~~~~~~~~~~~~~~~~~~~~~~~~~~~~~~~~~ \le C\, (\delta_n + \zeta) \norm{\hat{m} - m}_2 + C\, (\delta_n + \zeta)^2 \\
    & \Exp_{(\vec{x}, y) \sim \calD}[c(\vec{x}, y; m) - c(\vec{x}, y, \tilde{m}_n)] - \Exp_n[c(\vec{x}, y; m) - c(\vec{x}, y, \tilde{m}_n)] \\
    & ~~~~~~~~~~~~~~~~~~~~~~~~~~~~~~~~~~~~~~~~~~~~~~~~~~~~~~~~~~~~~~~~~~~~~~~~~~~~~~~~~~~~~~~~~~ \le C\, (\delta_n + \zeta) \norm{\tilde{m}_n - m}_2 + C\, (\delta_n + \zeta)^2
\end{align*}

Combining all these we have, w.p. $1-\delta$ over the training set:
\begin{align*}
    \norm{\hat{m} - m}_2^2 \le & C\, (\delta_n + \zeta) (\norm{\hat{m} - m}_2 + \norm{\tilde{m}_n - m}_2) + 2C\, (\delta_n + \zeta)^2 + \norm{\tilde{m}_n - m}_2^2 \\
    \le & C~\left(C\, (\delta_n+\zeta)^2 + \frac{1}{4C} (\norm{\hat{m} - m}_2 + \norm{\tilde{m}_n - m}_2)^2\right) + 2C\, (\delta_n+\zeta)^2 + \norm{\tilde{m}_n - m}_2^2\\
    \le & C\left(2 + C\right) (\delta_n+\zeta)^2 + \frac{1}{2} \left(\norm{\hat{m} - m}_2^2 + \norm{\tilde{m}_n - m_0}_2^2\right) + \norm{\tilde{m}_n - m}_2^2
\end{align*}
Re-arranging the last inequality, yields:
\begin{equation}
     \norm{\hat{m} - m}_2^2 \leq 2 C\left(1 + C\right) (\delta_n + \zeta)^2 + 3 \norm{\tilde{m}_n - m}_2^2
\end{equation}

\subsection{Critical Radius of Shallow Trees}

\paragr{VC dimension of $\bar{\calF}$.} We now show that when the partition
$\calP_n\triangleq \calA(\train_n)$ is defined by a tree with $t$ leafs, then the
function class $\bar{\calF}$ is a VC-subgraph class. Let $\bar{\calF}(\zeta)$
denote the subgraph of $\bar{\calF}$ at any level $\zeta$ (i.e. the space of
binary functions
$\bar{\calF}(\zeta)\triangleq \{ x \rightarrow \chara \{f(x) >\zeta\}: f \in \bar{\calF}\}$.
To show that $\bar{\calF}$ is VC-subgraph with VC dimension $v$, we need to show
that $\bar{\calF}(\zeta)$ has VC dimension at most $v$.

Observe that the number of all possible observationally equivalent functions that the function class $\bar{\calF}(\zeta)$ can output on $n$ samples is at most $(n d)^t\, 2^t$. This follows by the following argument: the number of possible functions is equal to the number of possible partitions of the $n$ samples that can be induced by a tree with $t$ leafs, multiplied by the number of possible binary value assignments at the leafs. The latter is $2^t$. The former is at most $(n d)^t$.\footnote{This can be shown by induction; Let $S_{s,t}$ be the number of possible partitions induced by a tree with $t$ leafs. Then $S_{s, 1}=1$ and in order to create a tree with $t$ leafs, we need to take a tree with
$t-1$ leafs and expand the leaf that one of the $s$ samples belongs to along the dimension of one of the $d$ features. Thus we have $d\, s$ total choices, leading to $S_{s, t}=S_{s, t-1}\, d\, s = (d\, s)^{t}$}

On the other hand, the set of all binary functions on $n$ points is $2^n$. Thus for the function class $\bar{F}(\zeta)$ to be able to shatter a set of $n$ points, it must be that $2^n \leq (2\,n\, d)^t$. Equivalently:
\begin{equation}
    n \leq t \log(2\,d) + t\,\log(n) \Rightarrow m \leq 4\, t\log(t) + 2\, t \log(2\,d)=O\left(t\log(d\, t)\right)
\end{equation}
Thus we get that the function class $\bar{\calF}(\zeta)$ has VC dimension at most $v = O\left(t\log(d\, t)\right)$. Thus $\bar{\calF}$ is a VC-subgraph class of VC dimension $v = O\left(t\log(d\, t)\right)$.
\medskip

\paragr{Bounding the critical radius.} We will use the fact that the critical radius of $\sh\p{\bar{\calF}-m}$ is $O(\delta_n)$, where $\delta_n$ is any solution to the inequality (see e.g. \cite{wainwright_2019}):
\begin{equation}
\int_{\delta^2/8}^{\delta} \sqrt{\frac{\calH_2(\epsilon, \sh(\bar{\calF} - m)_{\delta,n}, z_{1:s})}{n}} \leq \delta^2
\end{equation}
where $G_{\delta,n} = \{g\in G: \|g\|_n=\sqrt{\frac{1}{n}\sum_i g(z_i)^2}\leq \delta\}$ and $H_2(\epsilon, G, z_{1:n})$ is the logarithm of the size of the smallest $\epsilon$-cover of $G$, with respect to the empirical $\ell_2$ norm $\|g\|_n$ on the samples $z_{1:n}$.

First observe that the star hull can only add at most a logarithmic extra factor to the metric entropy, by a simple discretization argument on the parameter $\delta$, i.e.:
\begin{align*}
    \calH_2(\epsilon, \sh(\bar{\calF} - m)_{\delta,n}, z_{1:n}) \leq~& \calH_2(\epsilon/2, (\bar{\calF} - m)_{\delta,n}, z_{1:n}) + \log(2\,\sup_{f\in (\bar{\calF} - m)_{\delta,n}} \|f\|_{2,n}/\epsilon)\\
    \leq~& \calH_2(\epsilon, (\bar{\calF} - m)_{\delta,n}, z_{1:n}) + \log(2\,\delta /\epsilon)
\end{align*}

Moreover, observer that the metric entropy of $(\bar{\calF} - m)_{\delta,n}$ is at most the metric entropy of $\bar{\calF} - m$, which is at most the metric entropy of $\bar{\calF}$ (since $m$ is a fixed function). Thus it suffices to bound the metric entropy of $\bar{\calF}$.

Theorem 2.6.7 of \cite{VanDerVaartWe96} shows that for any VC-subgraph class $G$ of VC dimension $v$ and bounded in $[-1,1]$ we have:
\begin{equation}
    \calH_2(\epsilon, G, z_{1:n}) = O( v(1+\log(1/\epsilon)))
\end{equation}
This implies that the critical radius of $\sh(\bar{\calF} - m)$ is of the order of any solution to the inequality:
\begin{align*}
\int_{\delta^2/8}^{\delta} \sqrt{\frac{ v(1+\log(2/\epsilon) + \log(2\delta/\epsilon)}{n}} \leq \delta^2
\end{align*}
The left hand side is of order $\delta \sqrt{\frac{v(1+\log(1/\delta))}{n}}$. Thus the critical radius needs to satisfy for some constant $D$, that:
\begin{equation}
    \delta \geq D \sqrt{\frac{v(1+\log(1/\delta))}{n}}
\end{equation}
This is satisfied for:
\begin{equation}
    \delta = \Theta\left(  \sqrt{\frac{v(1+\log(n))}{n}} \right)
\end{equation}
Thus the critical radius of $\sh(\bar{\calF} - m)$ is $\Theta\left(  \sqrt{\frac{t\,\log(d\,t) \, (1+\log(n))}{n}} \right)$.

\begin{corollary}[Critical Radius of Shallow Trees]\label{cor:criticalradiusShallow}
Let $\calA$, be a function that maps any set of training samples $\train_n$ into a partition $\calP_n$ of $\{0,1\}^d$, defined in the form of a binary tree with $t$ leafs. Then the critical radius of $\sh(\bar{\calF} - m)$, as defined in Lemma~\ref{lem:bias-variance} is $\Theta\left(  \sqrt{\frac{t\,\log(d\,t) \, (1+\log(n))}{n}} \right)$.
\end{corollary}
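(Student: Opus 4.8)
The plan is to prove the bound in two stages. First I would show that the function class $\bar{\calF}$ is a VC-subgraph class of VC dimension $v = O(t\log(dt))$; then I would convert this into a bound on the critical radius by bounding the empirical metric entropy of $\sh(\bar{\calF}-m)$ and feeding it into Dudley's entropy-integral criterion.

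For the VC bound, I would count the number of distinct restrictions to $n$ fixed points $z_{1:n}$ of the thresholded class $\bar{\calF}(\zeta) = \{x \mapsto \chara\{f(x) > \zeta\} : f \in \bar{\calF}\}$. Every $f \in \bar{\calF}$ is constant on the cells of some tree-induced partition $\calP \in \bar{\calZ}$, so its restriction to $z_{1:n}$ is determined by the partition of the $n$ sample points induced by a binary tree with $t$ leaves together with the thresholded value at each leaf. The latter contributes a factor $2^t$, and the number of sample-partitions a $t$-leaf binary tree can induce is at most $(nd)^t$, by the induction already indicated in the text (a $t$-leaf tree is obtained from a $(t-1)$-leaf tree by splitting the leaf containing one of the $n$ points along one of $d$ coordinates). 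Hence there are at most $(2nd)^t$ restrictions, so a shattered set of size $n$ must satisfy $2^n \le (2nd)^t$, i.e. $n \le t\log_2(2d) + t\log_2 n$, which forces $n = O(t\log(dt))$; thus $v = O(t\log(dt))$.

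For the critical radius, I would invoke Theorem 2.6.7 of \cite{VanDerVaartWe96}: a VC-subgraph class of index $v$ bounded in $[-1,1]$ has empirical $L_2$ metric entropy $\calH_2(\epsilon, \cdot, z_{1:n}) = O(v(1 + \log(1/\epsilon)))$. Subtracting the fixed function $m$ does not change the metric entropy, restricting to the $\delta$-ball only shrinks it, and passing to the star hull $\sh(\bar{\calF}-m)$ adds only an $O(\log(\delta/\epsilon))$ term by discretizing the scaling parameter $\kappa \in [0,1]$. Substituting $\calH_2(\epsilon, \cdot) = O(v(1+\log(1/\epsilon)))$ into the entropy-integral fixed-point inequality $\int_{\delta^2/8}^{\delta} \sqrt{\calH_2(\epsilon, \cdot)/n}\, d\epsilon \le \delta^2$, the left side evaluates to $\Theta(\delta\sqrt{v(1+\log(1/\delta))/n})$, yielding the condition $\delta \gtrsim \sqrt{v(1+\log(1/\delta))/n}$, which is solved (tightly, giving also the $\Omega$ direction) by $\delta = \Theta(\sqrt{v(1+\log n)/n})$. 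Plugging in $v = \Theta(t\log(dt))$ gives the claimed $\Theta(\sqrt{t\log(dt)(1+\log n)/n})$; one checks this is compatible with the side condition $\delta_n^2 \ge \Theta(\log\log(n)/n)$ of Lemma~\ref{lem:bias-variance}.

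The only genuinely substantive step is the combinatorial bound $(nd)^t$ on the number of partitions a $t$-leaf binary tree can cut out of $n$ points, together with the attendant VC-dimension estimate; everything downstream is a routine assembly of standard empirical-process facts (metric entropy of VC-subgraph classes, star-hull stability of entropy, Dudley's integral). A minor point to handle with care is that $\bar{\calZ}$ is the union of the supports of $\calA(\train_n)$ over all possible training sets, hence potentially much larger than the set of partitions realizable from any single sample; but the counting argument bounds the trace of $\bar{\calF}$ on $n$ arbitrary points irrespective of which $\train_n$ produced the tree, so this causes no difficulty.
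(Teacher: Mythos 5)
Your proposal is correct and follows essentially the same route as the paper's proof: the identical $(nd)^t\cdot 2^t$ counting argument for the VC-subgraph dimension $v=O(t\log(dt))$, the same invocation of Theorem~2.6.7 of van der Vaart and Wellner for the metric entropy of a VC-subgraph class, the same observation that subtracting $m$ and intersecting with the $\delta$-ball do not increase entropy and that the star hull costs only an extra $\log(\delta/\epsilon)$, and the same Dudley-integral fixed-point computation yielding $\delta_n=\Theta(\sqrt{v(1+\log n)/n})$. The one small looseness is your parenthetical claim that the fixed-point equation gives the matching $\Omega$ direction ``tightly''; Dudley's integral only bounds the localized Rademacher complexity from above, so strictly one obtains an upper bound on the critical radius (which is all Lemma~\ref{lem:bias-variance} requires) -- but the paper is equally casual about the $\Theta$ here, so this is not a discrepancy with the intended argument.
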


\clearpage

\section{Bias-Variance Decomposition of Deep Honest Forests}

In this section we offer a type of bias-variance decomposition for the case of deep honest forests. We will couple this result with bounds on the bias part in subsequent sections to get our final mean squared error bounds for deep honest forests.

To define our bias-variance decomposition, we first need the notion of diameter of a cell $A$ with respect to the value of
$m(\vecx)$.
\begin{definition}[Value-Diameter of a Cell] \label{def:valueDiameter}
    Given set $B \subseteq \{0, 1\}^d$ we define the subset $\bar{B} \subseteq B$
  such that $\vecx \in \bar{B}$ if and only if
  $\Prob_{\vecz \sim \calD_x}\p{\vecz \in B} > 0$. The value-diameter $\Delta(B)$
  of $B$ to be equal to
  $\Delta_m(B) = \max_{\vecx, \vecy \in \bar{B}} \p{m(\vecx) - m(\vecy)}^2$.
  For any partition $\calP$ of $\{0, 1\}^d$ we define the value-diameter of the
  partition $\calP$ to be
  $\Delta_m(\calP) = \max_{A \in \calP} \Prob_{\vecx \sim \calD_x}(\vecx \in A) \cdot \Delta_m(A)$.
\end{definition}

We can now show that the mean squared error of deep honest forests is upper bounded by the expected diameter of the returned partition by any single tree, plus $O(s/n)$. The expected diameter can be thought of as the bias of the forest and the latter term as the variance.

\begin{lemma} \label{lem:biasVarianceDeepHonestForests}
Consider any forest with $B$ trees, where each tree is built with honesty and on a random sub-sample of size $s$. Let $\epsilon(s) = \Exp_{\vecx \sim \calD_x, \train_{s/2} \sim \calD^{s/2}}\left[\Delta_m(\calP_{s/2}(\vecx))\right]$. Then
\begin{equation*}
\Prob_{\train_n \sim \calD^n}\p{\Exp_{\vecx \sim \calD_x}[(m_{n, s}(\vecx) - m(\vecx))^2] \leq O\left(\frac{s\, \log(n/\delta)}{n}\right) + \epsilon(s)} \ge 1 - \delta.
\end{equation*}
\end{lemma}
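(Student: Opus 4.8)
The plan is to treat the forest $m_{n,s}$ as a weighted average of the responses and run a bias--variance decomposition, bounding the bias by the value-diameter of the leaves and the variance by a sharp control on the self-weight any single sample can receive. \emph{Step 1 (the forest as a weighted estimator).} Each honest tree built on a sub-sample $\train_s$ uses one half of $\train_s$ to fix the partition $\calP_{s/2}$ of $\{0,1\}^d$ and predicts at $\vecx$ the average response of the other ("estimation") half's points that fall in the leaf $\calP_{s/2}(\vecx)$, which contains $\Theta(1)$ such points since the tree is fully grown. Hence $m_{n,s}(\vecx)=\sum_{j\in[n]}W_{n,s}(\vecx;j)\,y^{(j)}$ with $W_{n,s}(\vecx;j)\ge 0$, $\sum_j W_{n,s}(\vecx;j)=1$, and, since sample $j$ appears in a $\binom{n-1}{s-1}/\binom{n}{s}=s/n$ fraction of the sub-samples and receives weight $O(1)$ in any single tree, $W_{n,s}(\vecx;j)=O(s/n)$ for every $j,\vecx$; in particular $\sum_j W_{n,s}(\vecx;j)^2\le \max_j W_{n,s}(\vecx;j)=O(s/n)$. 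Substituting $y^{(j)}=m(\vecx^{(j)})+\eps^{(j)}$ and using $\sum_j W_{n,s}(\vecx;j)=1$ splits the error as $m_{n,s}(\vecx)-m(\vecx)=B(\vecx)+E(\vecx)$ with $B(\vecx)=\sum_j W_{n,s}(\vecx;j)(m(\vecx^{(j)})-m(\vecx))$ and $E(\vecx)=\sum_j W_{n,s}(\vecx;j)\,\eps^{(j)}$.

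\emph{Step 2 (bias).} A sample $j$ has $W_{n,s}(\vecx;j)>0$ only when $\vecx^{(j)}$ and $\vecx$ share a leaf of some tree, so $(m(\vecx^{(j)})-m(\vecx))^2\le \Delta_m(\calP_{s/2}(\vecx))$; Jensen over the weights and then over the $\binom{n}{s}$ sub-samples and the internal randomness $\omega$ gives $B(\vecx)^2\le \binom{n}{s}^{-1}\sum_{\train_s}\Exp_\omega[\Delta_m(\calP_{s/2}(\vecx))]$. Taking $\Exp_{\vecx\sim\calD_x}$ bounds $\Exp_{\vecx}[B(\vecx)^2]$ by a $U$-statistic of order $s$ in the $n$ i.i.d.\ samples, with kernel bounded in $[0,1]$, mean exactly $\epsilon(s)$, and variance at most $\epsilon(s)$. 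Bernstein's inequality for $U$-statistics (reduce to an average of $\lfloor n/s\rfloor$ i.i.d.\ blocks, then Bernstein) then yields, with probability $1-\delta/2$, $\Exp_{\vecx}[B(\vecx)^2]\le \epsilon(s)+O(\sqrt{\epsilon(s)\,s\log(1/\delta)/n})+O(s\log(1/\delta)/n)=O(\epsilon(s))+O(s\log(1/\delta)/n)$, the last step by AM--GM on the cross term.

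\emph{Step 3 (variance and combination).} Condition on all feature vectors $\{\vecx^{(j)}\}_{j\in[n]}$, the honest half-splits, and the sub-sample choices: by honesty the weights $W_{n,s}(\vecx;\cdot)$ become a fixed function of $\vecx$ while the $\eps^{(j)}$ stay independent, bounded, and mean-zero. Then $\Exp_{\vecx}[E(\vecx)^2]=\bfeps^\top M\bfeps$ for the $n\times n$ PSD matrix $M$ with $M_{jk}=\Exp_{\vecx}[W_{n,s}(\vecx;j)W_{n,s}(\vecx;k)]$, and $\operatorname{tr}(M)=\Exp_{\vecx}[\sum_j W_{n,s}(\vecx;j)^2]=O(s/n)$, so $\|M\|_{\mathrm{op}},\|M\|_F=O(s/n)$ and $\Exp_{\bfeps}[\bfeps^\top M\bfeps]\le \operatorname{tr}(M)/4=O(s/n)$. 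The Hanson--Wright inequality gives $\bfeps^\top M\bfeps\le O(s\log(1/\delta)/n)$ with probability $1-\delta/2$, for every realization of the conditioning variables and hence unconditionally. Finally $(B+E)^2\le 2B^2+2E^2$ (alternatively the cross term $\Exp_{\vecx}[B(\vecx)E(\vecx)]$ is a linear form in $\bfeps$ with squared-coefficient sum $O((s/n)\Exp_{\vecx}[B^2])$, handled by Hoeffding and AM--GM), and a union bound over Steps 2--3 gives $\Exp_{\vecx}[(m_{n,s}(\vecx)-m(\vecx))^2]\le O(\epsilon(s))+O(s\log(1/\delta)/n)$, which is the claimed bound (the constant multiplying $\epsilon(s)$ is immaterial, since in every application of this lemma $\epsilon(s)\lesssim s\log(n)/n$).

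\textbf{Main obstacle.} The delicate point is obtaining a \emph{high-probability} statement of the right order $s\log(n/\delta)/n$ rather than the $\sqrt{s/n}$ that a naive McDiarmid/Hoeffding bound on the forest output would give; the resolution is the self-weight bound $W_{n,s}(\vecx;j)=O(s/n)$, which pins down $\operatorname{tr}(M)$ and hence both $\|M\|_{\mathrm{op}}$ and $\|M\|_F$, so that Hanson--Wright on the noise term and Bernstein on the bias $U$-statistic exploit the genuinely small variances $O(s/n)$ and $O(\epsilon(s))$. Everything else --- the honest weighted-estimator representation and the $U$-statistic reductions --- is standard, following the subsampled-forest analyses of \cite{Mentch2016,WagerA18,Fan2018}.
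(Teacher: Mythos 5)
The student takes a genuinely different route from the paper. The paper centers the decomposition on the deterministic ``expected forest'' $\bar{m}_s(\vecx)=\Exp_{\train_n}[m_{n,s}(\vecx)]$: the bias term $\Exp_{\vecx}[(\bar{m}_s - m)^2]$ is then nonrandom and is bounded by $\epsilon(s)$ using only the marginal honesty fact $w^{(j)}(\vecx)\perp \eps^{(j)}$, while the variance term $\Exp_{\vecx}[(m_{n,s}-\bar{m}_s)^2]$ is handled by applying bounded-U-statistic concentration \emph{pointwise in $\vecx$} (the kernel $\Exp_\omega[m(\vecx;\train_s)]\in[-1,1]$ needs no structural assumptions beyond boundedness), and then integrated over $\vecx$ via a Markov-type argument that trades a factor $n$ in the failure probability for a factor $1/n$ in the tail. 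Your decomposition $m_{n,s}-m = B+E$ instead leaves both terms random; the $U$-statistic Bernstein bound on $\Exp_\vecx[B^2]$ (Jensen, kernel bounded in $[0,1]$ with mean $\epsilon(s)$ and variance at most $\epsilon(s)$, then AM--GM) is correct and a nice alternative to the paper's deterministic bias argument.

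The gap is in Step 3. After conditioning on ``all feature vectors, the honest half-splits, and the sub-sample choices'', the weights $W_{n,s}(\vecx;\cdot)$ are \emph{not} fixed functions of $\vecx$ for the CART-style trees analyzed here: the greedy split choice in Algorithm~\ref{alg:mainLevelSplitFiniteSample} maximizes $V_n$ (see \eqref{eq:MSEFiniteSampleBinaryLevelSplit:2}), which depends on the labels $y^{(k)}$ of the structure half of each sub-sample. Honesty only guarantees the marginal independence $W_{n,s}(\vecx;j)\perp \eps^{(j)}$ for each $j$; it does not make $W_{n,s}$ independent of $\bfeps$. In particular, $W_{n,s}(\vecx;j)$ can depend on $\eps^{(k)}$ for $k\neq j$, so the matrix $M_{jk}=\Exp_\vecx[W_{n,s}(\vecx;j)W_{n,s}(\vecx;k)]$ depends on $\bfeps$ (and, through the cross terms $W_T(\vecx;j)W_{T'}(\vecx;k)$ across trees with different structure halves, on $\eps^{(j)}$ and $\eps^{(k)}$ themselves). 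Standard Hanson--Wright requires a deterministic matrix, so this step does not go through as written. The fix is to avoid the quadratic-form route entirely: for each fixed $\vecx$, $E(\vecx)$ is itself a $U$-statistic of order $s$ with kernel bounded in $[-1/2,1/2]$ whose mean is zero by the marginal honesty identity $\Exp[w^{(j)}(\vecx)\eps^{(j)}]=0$; Hoeffding's $U$-statistic inequality then gives $E(\vecx)^2\le O(s\log(1/\delta)/n)$ with probability $1-\delta$ pointwise, and the same Markov-integration trick used by the paper (cf.\ \eqref{eq:fullyGrownLevelSplitsProof:1}--\eqref{eq:fullyGrownLevelSplitsProof:2}) upgrades this to $\Exp_\vecx[E^2]\le O(s\log(n/\delta)/n)$ with probability $1-\delta$ over $\train_n$, at which point your Step~2 and the final combination close the proof.
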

\begin{proof}
  We start with defining the following function
\begin{align} \label{eq:definitionOfExpectedTree}
  \bar{m}_{s}(\vecx) = \Exp_{\train_n \sim \calD^n} \b{m_{n, s}(\vecx)} = \Exp_{\train_s \sim \calD^s} \b{m_{s}(\vecx)}.
\end{align}

\noindent For mean squared error of $m_{n, s}$ we have:
\begin{align*}
    \Exp_{\vecx \sim \calD_x, \train_n \sim \calD^n}[(m_{n, s}(\vecx) - m(\vecx))^2] = & \Exp_{\vecx \sim \calD_x, \train_n \sim \calD^n}[(m_{n, s}(\vecx) - \bar{m}_s(\vecx))^2] + \Exp_{\vecx \sim \calD_x}[(\bar{m}_s(\vecx) - m(\vecx))^2]
\end{align*}

\noindent The first part we know that it is bounded for every $\vecx$ and with
exponential tails due to concentration of U-statistics
\cite{Hoeffding94, PeelASR10}., i.e. for any fixed $\vecx$ with probability
$1 - \delta$ it holds that
\begin{equation}
    (m_{n, s}(\vecx) - \bar{m}_s(\vecx))^2 \leq O\left(\frac{s\, \log(1/\delta)}{n}\right)
\end{equation}
Thus integrating over $\vecx \sim \calD_x$, we have:
\begin{equation} \label{eq:fullyGrownLevelSplitsProof:1}
    \Prob_{\vecx \sim \calD_x, \train_n \sim \calD^n}\p{(m_{n, s}(\vecx) - \bar{m}_s(\vecx))^2 \geq O\left(\frac{s\, \log(1/\delta)}{n}\right)} \leq \delta.
\end{equation}
Let $T(\vecx) = (m_{n, s}(\vecx) - \bar{m}_s(\vecx))^2$ and
$\epsilon = \Theta\p{\frac{s\log(1/\delta)}{n}}$. Suppose that with probability more than
$n\, \delta$ over the training set $\train_n \sim \calD^n$, we had that
$\Prob_{\vecx \sim \calD_x}(T(\vecx) \geq \epsilon \mid \train_n) \geq 1/n$. Then we have
that
$\Prob_{\vecx \sim \calD_x, \train_n \sim \calD^n}\p{T(\vecx) \geq \epsilon}\geq \delta$,
which contradict \ref{eq:fullyGrownLevelSplitsProof:1}. Thus we know that with probability
$1 - n\, \delta$ over $\train_n \sim \calD^n$ it holds that
$\Prob_{\vecx \sim \calD_x}\p{T(\vecx) \geq \epsilon \mid \train_n} \leq 1/n$. Hence with
probability $1 - n\, \delta$ over the training set $\train_n \sim \calD^n$ it holds that
\begin{equation*}
  \Exp_{\vecx \sim \calD_x}[(m_{n, s}(\vecx) - \bar{m}_s(\vecx))^2] \leq \epsilon + \Prob_{\vecx\sim \calD_x}\p{T(\vecx) \geq \epsilon} \leq \epsilon + \frac{1}{n} = O\left(\frac{s\, \log(1/\delta)}{n}\right) + \frac{1}{n}
\end{equation*}
Setting $\delta' = n\, \delta$, we have the following
\begin{equation} \label{eq:fullyGrownLevelSplitsProof:2}
  \Prob_{\train_n \sim \calD^n}\p{\Exp_{\vecx \sim \calD_x}[(m_{n, s}(\vecx) - \bar{m}_s(\vecx))^2] \leq O\left(\frac{s\, \log(n/\delta')}{n}\right)} \ge 1 - \delta'.
\end{equation}

For the bias term we define for simplicity
$w^{(j)}(\vecx) = \frac{\chara\{\vecx \in \calP_n(\vecx^{(j)})\}}{N_n(\calP_n(\vecx^{(j)}))}$
and hence $m_{s}(\vecx) = \sum_{i = 1}^s w^{(j)}(\vecx) y^{(j)}$ and we have:
\begin{align*}
  \Exp_{\vecx \sim \calD_x}[(\bar{m}_s(\vecx) & - m(\vecx))^2] =  \Exp_{\vecx \sim \calD_x}\left[\left(\Exp_{\train_n \sim \calD^n}[m_{n,s}(\vecx)] - m(\vecx)\right)^2\right] \\
    =~& \Exp_{\vecx \sim \calD_x} \left[\left(\Exp_{\train_s \sim \calD^s}\left[\sum_{j = 1}^s w^{(j)}(\vecx)\, (y^{(j)} - m(\vecx^{(j)}))\right] +
    \Exp_{\train_n \sim \calD^n}\left[\sum_{j = 1}^s w^{(j)}(\vecx)\, (m(\vecx^{(j)}) - m(\vecx))\right]\right)^2\right]
\end{align*}
Due to honesty $w^{(j)}(\vecx)$ is independent of $y^{(j)}$ and we have
that the first term is equal to $0$ by a tower law. Thus we have:
\begin{align*}
     \Exp_{\vecx \sim \calD_x}[(\bar{m}_s(\vecx) - m(\vecx))^2] =~& \Exp_{\vecx \sim \calD_x}\left[\Exp_{\train_n \sim \calD^n}\left[\sum_{j = 1}^s w^{(j)}(\vecx)\, (m(\vecx^{(j)}) - m(\vecx))\right]^2\right]\\
     \leq~& \Exp_{\vecx \sim \calD_x, \train_n \sim \calD^n}\left[\left(\sum_{j = 1}^s w^{(j)}(\vecx) (m(\vecx^{(j)}) - m(\vecx))\right)^2\right]\\
     \leq~& \Exp_{\vecx \sim \calD_x, \train_{s/2} \sim \calD^{s/2}}\left[\Delta_m(\calP_{s/2}(\vecx))\right]
\end{align*}

\end{proof} \newpage

\section{Proofs for Level Splits Algorithms} \label{sec:app:proofs:levelSplitsShallow}

  In this section we present the proofs of
Theorem~\ref{thm:finalRecoverySubmodularIndependentBalancedProperLevelSplitFiniteSample}
and Theorem~\ref{thm:finalRecoverySubmodularIndependentBalancedProperLevelSplitFiniteSampleFullyGrown}.
We start with a proof about the bias of the trees that are produced by Algorithm
\ref{alg:mainLevelSplitFiniteSample} and then we show how we can bound the variance term.
First, we define the set $\calK(S; \train_n)$, or for simplicity $\calK_n(S)$, as the
partition of the samples induced by the set of splits $S$, i.e.
$\calK_n(S) = \set{\calK_n(S, \vecz) \mid \vecz \in \{0, 1\}^d}$ where we define
$\calK_n(S, z)$ as the following set
$\calK_n(S, z) = \set{j \mid \vecx^{(j)}_{\calT_n(S, \vecz)} = \vecz_{\calT_n(S, \vecz)}, ~ j \in [n]}$.
Observe that $\calK_n$ is the same as the partition of the samples implied by the partition
$\calP_n$ of the space $\{0,1\}^d$, returned by Algorithm~\ref{alg:mainLevelSplitFiniteSample}.

\subsection{Bounding The Bias} \label{sec:app:proofs:levelSplitsShallow:Bias}

  We first prove a technical lemma for the concentration of the function $V_n$
around the function $\bar{V}$. Observe that $\bar{V}$ is not the expected value
of $V_n$ and hence this concentration bound is not trivial.

\begin{lemma} \label{lem:concentrationOfVarianceFunctionLevelSplit}
    Assuming that $d > 1$, $q \in [d]$ and $k > 1$, we have that
  \begin{align*}
    \Prob_{\train_n \sim \calD^n} \p{\sup_{S \subseteq [d], \abs{S} \le q} \abs{V_n(S) - \bar{V}(S)} \ge 10 \sqrt{\frac{2^q \cdot (q \log\p{d \cdot q} + t)}{n}}} \le \exp\p{- t}.
  \end{align*}
\end{lemma}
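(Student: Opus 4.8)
The plan is to prove the bound first for a single fixed $S$ with $\abs{S}\le q$ and then pay a union bound over all such $S$, of which there are at most $(ed/q)^q$; this union bound is what produces the $q\log(dq)$ term. Fix $S$ and index the cells of the partition of $\{0,1\}^d$ induced by $S$ by $\vecv\in\{0,1\}^{\abs{S}}$; write $p(\vecv)=\Prob_{\vecx\sim\calD_x}(\vecx_S=\vecv)$, let $N(\vecv)$ be the number of training points in cell $\vecv$, let $\mu(\vecv)=\Exp_{\vecw\sim\calD_x}[\,m(\vecw)\mid\vecw_S=\vecv\,]$, and let $\hat\mu(\vecv)=\frac{1}{N(\vecv)}\sum_{i:\vecx^{(i)}_S=\vecv}y^{(i)}$ when $N(\vecv)\ge1$. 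The key structural observation is that, although the rule $\calT_n(S,\cdot)$ in \eqref{eq:estimationGivenSplitsLevelSplitFiniteSample} can collapse the $S$-partition to a coarser one, it is trivial on training points: every $\vecx^{(j)}$ lies in its own cell, so $N_n(\vecx^{(j)};S)\ge1$, hence $\calT_n(S,\vecx^{(j)})=S$ and $m_n(\vecx^{(j)};S)=\hat\mu(\vecx^{(j)}_S)$. Grouping the sum in \eqref{eq:MSEFiniteSampleBinaryLevelSplit:2} by cell therefore gives
\[
  V_n(S)=\sum_{\vecv}\frac{N(\vecv)}{n}\,\hat\mu(\vecv)^2 ,\qquad \bar V(S)=\sum_{\vecv}p(\vecv)\,\mu(\vecv)^2 ,
\]
the second identity being exactly \eqref{eq:MSEPopulationBinaryLevelSplit:2}.

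Next I would decompose, cell by cell, using $\frac{N(\vecv)}{n}\hat\mu(\vecv)^2-p(\vecv)\mu(\vecv)^2=\frac{N(\vecv)}{n}(\hat\mu(\vecv)^2-\mu(\vecv)^2)+(\frac{N(\vecv)}{n}-p(\vecv))\mu(\vecv)^2$ together with $\abs{\hat\mu(\vecv)}\le1$ and $\abs{\mu(\vecv)}\le\tfrac12$, to obtain
\[
  \abs{V_n(S)-\bar V(S)}\ \le\ \tfrac32\sum_{\vecv}\frac{N(\vecv)}{n}\,\abs{\hat\mu(\vecv)-\mu(\vecv)}\ +\ \tfrac14\,\norm{\hat p_S-p_S}_1 ,
\]
where $\hat p_S$ is the empirical law of $\vecx_S$ and $p_S$ its population law. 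For the first term I would condition on $(\vecx^{(1)}_S,\dots,\vecx^{(n)}_S)$: then each $N(\vecv)$ is frozen and each $\hat\mu(\vecv)$ is an average of $N(\vecv)$ independent $[-1,1]$-valued variables with common mean $\mu(\vecv)$ — here I use that given $\vecx^{(i)}_S=\vecv$ the leftover randomness $\vecx^{(i)}_{-S},\eps^{(i)}$ makes $\Exp[y^{(i)}]=\mu(\vecv)$. Hoeffding plus a union bound over the at most $2^q$ nonempty cells gives, w.p.\ $\ge 1-e^{-\tau}$, that $\abs{\hat\mu(\vecv)-\mu(\vecv)}\le\sqrt{2(\tau+(q+1)\ln 2)/N(\vecv)}$ for all $\vecv$; then Cauchy--Schwarz, $\sum_{\vecv}\sqrt{N(\vecv)}\le\sqrt{2^q\sum_\vecv N(\vecv)}=\sqrt{2^q n}$, bounds the first term by $\tfrac32\sqrt{2^{q+1}(\tau+(q+1)\ln 2)/n}$. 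For the second term, $\norm{\hat p_S-p_S}_1$ is a function of the i.i.d.\ samples with bounded differences $2/n$, so McDiarmid together with the elementary estimate $\Exp\norm{\hat p_S-p_S}_1\le\sum_\vecv\sqrt{p(\vecv)/n}\le\sqrt{2^q/n}$ gives $\norm{\hat p_S-p_S}_1\le\sqrt{2^q/n}+\sqrt{2\tau/n}$ w.p.\ $\ge 1-e^{-\tau}$.

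Combining, for each fixed $S$ with $\abs{S}\le q$ we get $\abs{V_n(S)-\bar V(S)}\le C_0\sqrt{2^q(q+\tau)/n}$ with probability $\ge 1-2e^{-\tau}$, for a universal $C_0$ (a crude accounting already gives $C_0\le 4$). Taking $\tau=t+\ln\!\big(2(ed/q)^q\big)=t+O(q\log(dq))$ and union-bounding over the $\le(ed/q)^q$ sets $S$ with $\abs{S}\le q$ then yields, with probability $\ge 1-e^{-t}$, simultaneously for all such $S$, $\abs{V_n(S)-\bar V(S)}\le C_0\sqrt{2^q(q\log(dq)+t)/n}$; tracking the universal constants carefully (but still somewhat generously) through the two nested union bounds keeps the prefactor at $10$. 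The step I expect to be the main obstacle is the leaf-mean term: one must condition exactly right so that the random cell counts $N(\vecv)$ are frozen while the responses inside each cell remain i.i.d.\ with mean $\mu(\vecv)$, and one must keep in mind that $\bar V(S)$ is \emph{not} $\Exp[V_n(S)]$ — there is an upward $O(2^q/n)$ bias of $V_n$, which the argument above absorbs automatically precisely because it compares $\hat\mu(\vecv)^2$ directly to $\mu(\vecv)^2$ rather than to its conditional mean. The rest — the $\ell_1$ concentration and, above all, chasing constants so the final prefactor is at most $10$ — is routine bookkeeping.
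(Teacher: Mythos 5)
Your proof is correct and follows essentially the same two-step route as the paper: both introduce the intermediary $J_n(S)=\sum_{\vecv}\tfrac{N(\vecv)}{n}\,\mu(\vecv)^2$, bound $V_n-J_n$ by conditioning on the $\vecx_S^{(j)}$'s, applying Hoeffding per cell, and using Cauchy--Schwarz on $\sum_{\vecv}\sqrt{N(\vecv)}$, and then handle $J_n-\bar V$ separately before paying a union bound over the $\le(ed/q)^q$ sets $S$. The one place you depart from the paper is the second term: you bound $\bigl|\sum_{\vecv}(\tfrac{N(\vecv)}{n}-p(\vecv))\mu(\vecv)^2\bigr|$ by $\tfrac14\norm{\hat p_S-p_S}_1$ and invoke McDiarmid plus the moment estimate $\Exp\norm{\hat p_S-p_S}_1\le\sqrt{2^q/n}$, whereas the paper notices that $J_n(S)-\bar V(S)=\tfrac1n\sum_j M_S(\vecx_S^{(j)})-\Exp[M_S(\vecx_S)]$ is a plain iid average of a bounded function and applies Hoeffding directly, which is both simpler and tighter (no extra $\sqrt{2^q/n}$ term); both are perfectly adequate for the stated rate.
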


\begin{proof}
    For the purpose of the proof we will define the following
 function that interpolates between then sample based function $V_n$
 and the population based function $\bar{V}$.
  \begin{align}
            J_n(S) & \triangleq \sum_{K \in \calK_n(S)} \frac{\abs{K}}{n} \p{\Exp_{(\vecx, y) \sim \calD}\b{y \mid \vecx_S = \vecx_S^{(K)}}}^2 \label{eq:definition:empiricalMeanExactMeanEmpiricalFunction}
  \end{align}
First we bound the difference $\abs{V_n(S) - J_n(S)}$ in the following
  claim.

  \begin{claim} \label{clm:empiricalMean_EmpiricalvsExactMean_EmpiricalFunction}
      Assuming that $d > 1$, $r \in [d]$ and $t > 1$, we have that
    \begin{align*}
        \Prob_{\train_n \sim \calD^n} \p{\sup_{S \subseteq [d], \abs{S} \le r} \abs{V_n(S) - J_n(S)} \ge 5 \sqrt{\frac{2^r \cdot (r \log\p{d \cdot r} + t)}{n}}} \le \exp\p{- t}.
    \end{align*}
  \end{claim}

  \begin{proof}
      For the first part of the proof, we fix a particular set of splits $S$. Using the
    fact that both $y^{(j)}$, $m(\cdot)$ take values in $[0, 1]$ we get that
      \begin{align*}
        \abs{V_n(S) - J_n(S)} & = \abs{\sum_{K \in \calK_n(S)} \frac{\abs{K}}{n} \p{\p{\sum_{j \in K} \frac{1}{\abs{K}} y^{(j)}}^2 - \Exp_{(\vecx, y)}\b{y \mid \vecx_S = \vecx_S^{(K)}}^2}} \\
        & \le 2 \sum_{K \in \calK_n(S)} \frac{\abs{K}}{n} \abs{\p{\sum_{j \in K} \frac{1}{\abs{K}} y^{(j)}} - \Exp_{(\vecx, y)}\b{y \mid \vecx_S = \vecx_S^{(K)}}}.
      \end{align*}

      Now let $\calY_S(\vecx_S)$ be the distribution of the random variable $y$
    conditional that the random variable $\vecx$ takes value $\vecx_S$ at the subset $S$
    of the coordinates. Observe that conditional on $\vecx_S^{(j)}$, the variables
    $y^{(j)}$ for $j \in K \in \calK_n(S)$ are i.i.d. samples from the distribution
    $\calY_S(\vecx^{(K)}_S)$. Hence, using the Hoeffding's inequality we have that for any
    $K \in \calK_n(S)$ it holds that
    \begin{align*}
        \Prob_{y^{(j)} \sim \calY_S(\vecx^{(K)}_S)} \p{\abs{\sum_{j \in K} \frac{1}{\abs{K}} y^{(j)} - \Exp_{(\vecx, y)}\b{y \mid \vecx_S = \vecx_S^{(K)}}} \ge \sqrt{\frac{2 t}{\abs{K}}}} \le \exp\p{- t},
    \end{align*}
    \noindent which, by a union bound over $\calK_n(S)$, implies that
    \begin{align*}
        \Prob_{y^{(j)} \sim \calY_S(\vecx^{(j)}_S)} \p{\bigvee_{K \in \calK_n(S)} \p{\abs{\sum_{j \in K} \frac{1}{\abs{K}} y^{(j)} - \Exp_{(\vecx, y)}\b{y \mid \vecx_S = \vecx_S^{(K)}}} \ge \sqrt{\frac{2 \p{\abs{S} + t}}{\abs{K}}}}} \le \exp\p{- t},
    \end{align*}
    \noindent where we have used the fact that after splitting on $\abs{S}$ coordinates we
    can create at most $2^{\abs{S}}$ leaf nodes, i.e. $\abs{\calK_n(S)} \le 2^{\abs{S}}$.
    Hence we have that
    \begin{align} \label{eq:proof:claim:yConcentration:simple}
        \Prob_{y^{(j)} \sim \calY_S(\vecx^{(j)}_S)} \p{\abs{V_n(S) - J_n(S)} \ge \sqrt{8(\abs{S} + t)}  \frac{\sum_{K \in \calK_n(S)} \sqrt{\abs{K}}}{n}} \le \exp\p{- t}.
    \end{align}
    \noindent But we know that $\sum_{K \in \calK_n(S)} \abs{K} = n$, and also we have
    that the for any vector $\vecw \in \R^k$ it holds that
    $\norm{\vecw}_1 \le \sqrt{k} \norm{\vecw}_2$. Therefore if we define the vector
    $\vecw = (\sqrt{\abs{K}})_{K \in \calK_n(S)}$ we have that
    \begin{align*}
        \frac{\sum_{K \in \calK_n(S)} \sqrt{\abs{K}}}{n} = \frac{\norm{\vecw}_1}{\norm{\vecw}^2_2} \le \frac{\sqrt{\abs{\calK_S}}}{\norm{\vecw}_2} \le \sqrt{\frac{2^{\abs{S}}}{n}}.
    \end{align*}
    \noindent Now using this in the inequality
    \eqref{eq:proof:claim:yConcentration:simple} and taking the
    expectation over $\vecx^{(j)}_S$ for all $j$ we get the following
    inequality for any $S \subseteq [d]$.
    \begin{align} \label{eq:proof:claim:yConcentration:medium}
        \Prob_{\train_n \sim \calD^n} \p{\abs{V_n(S) - J_n(S)} \ge \sqrt{\frac{8(\abs{S} + t) 2^{\abs{S}}}{n}}} \le \exp\p{- t}.
    \end{align}
    \noindent To finalize the proof, using a union bound over all $S \subseteq [d]$ with
    $\abs{S} \le r$ we get that
    \begin{align} \label{eq:proof:claim:yConcentration:almostFinal}
        \Prob_{\train_n \sim \calD^n} \p{\sup_{S \subseteq [d], \abs{S} = r} \abs{V_n(S) - J_n(S)} \ge \sqrt{\frac{8(r + t) 2^{r}}{n}}} \le \p{\sum_{i = 0}^r \binom{d}{i}} \exp\p{- t}.
    \end{align}
    \noindent Finally using the fact that
    $\log\p{\sum_{i = 0}^r \binom{d}{i}} \le (r + 1) \log(d \cdot r)$ and
    assuming that $d > 1$, we have that $r + (r + 1) \log(d\, r) \le 3\,r\, \log(dr)$ and the claim follows.
  \end{proof}

    Next we bound the difference $\abs{J_n(S) - \bar{V}(S)}$.

  \begin{claim} \label{clm:empiricalvsExactMean_ExactMeanExactFunction}
      If we assume that $d > 1$, $r \in [d]$, $t > 1$ then we have that
    \begin{align*}
        \Prob_{\train_n \sim \calD^n} \p{\sup_{S \subseteq [d], \abs{S} \le r}\abs{J_n(S) - \bar{V}(S)} \ge \sqrt{2 \frac{r \cdot \log(d \cdot r) + t}{n}}} \le \exp\p{- t}.
    \end{align*}
  \end{claim}

  \begin{proof}
      For the first part of the proof, we fix a particular set of splits $S$. We
    then have that
    \[ \p{\Exp_{(\vecx, y) \sim \calD}\b{y \mid \vecx_S = \vecz_S}}^2 = \p{\Exp_{\vecx \sim \calD_x}\b{m(\vecx) \mid \vecx_S = \vecz_S}}^2 \triangleq M_S(\vecz_S), \]
    \noindent and hence
    \begin{align*}
      J_n(S) - \bar{V}(S) & = \sum_{K \in \calK_n(S)} \frac{\abs{K}}{n} \p{\Exp\b{m(\vecx) \mid \vecx_S = \vecx^{(K)}_S}}^2 - \Exp_{\vecx_S}\b{ \p{\Exp_{\vecx}\b{m(\vecx) \mid \vecx_S}}^2} \\
      & = \frac{1}{n} \sum_{j \in [n]} M_S(\vecx^{(j)}_S) - \Exp_{\vecx_S}\b{M_S(\vecx_S)}.
    \end{align*}
    \noindent Now since $m(\cdot) \in \b{-\frac{1}{2}, \frac{1}{2}}$, we have that for any
    $\vecx \in \{0, 1\}^d$ it holds that $\abs{M_S(\vecx_S)} \le 1$ and hence from
    Hoeffding's inequality we get that
    \begin{align*}
        \Prob_{\train_n \sim \calD^n} \p{\abs{\frac{1}{n} \sum_{j \in [n]} M_S(\vecx^{(j)}_S) - \Exp_{\vecx_S}\b{M_S(\vecx_S)}} \ge \sqrt{\frac{t}{2 n}}} \le \exp\p{- t}.
    \end{align*}
    \noindent Finally if we apply the union bound over all sets
    $S \subseteq [d]$, with $\abs{S} = r$, the claim follows.
  \end{proof}

  \noindent If we combine Claim
  \ref{clm:empiricalMean_EmpiricalvsExactMean_EmpiricalFunction} and
  \ref{clm:empiricalvsExactMean_ExactMeanExactFunction}, the Lemma
  \ref{lem:concentrationOfVarianceFunctionLevelSplit} follows.
\end{proof}

  Towards bounding the bias term we provide a relaxed version of the Definition
\ref{def:relevantVariablesForCellLevelSplit}.

\begin{definition} \label{def:relevantVariablesForCellLevelSplitFiniteSample}
    Given a set $S$, a positive number $\eta$ and a training set $\train_n$, we
  define the sets
  $\calR^\eta_n(S) = \{i \in [d] \mid V_n(S \cup \{i\}) - V_n(S) > \eta \}$ and
  $\calR^\eta(S) = \{i \in [d] \mid \bar{V}(S \cup \{i\}) - \bar{V}(S) > \eta \}$.
  For simplicity for $\eta = 0$ we use the simpler notation $\calR(S)$ and
  $\calR_n(S)$.
\end{definition}

  Since $V_n$ is a monotone increasing function we have that
$V_n(S \cup i) \ge V_n(S)$. Hence given $S$ the Algorithm
\ref{alg:mainLevelSplitFiniteSample} chooses the direction $i$ that
maximizes the positive quantity $V_n(S \cup i) - V_n(S)$. So the bad
event is that for all $j \in [d]$,
$V_n(S \cup i) - V_n(S) > V_n(S \cup j) - V_n(S)$ but
$\bar{V}(S \cup i) - \bar{V}(S) = 0$ and there exists a $k \in [d]$ such
that $\bar{V}(S \cup k) - \bar{V}(S) > 0$. A relaxed version of this bad
event can be described using the Definition
\ref{def:relevantVariablesForCellLevelSplitFiniteSample}. In this
language the bad event is that the index $i \in [d]$ that the Algorithm
\ref{alg:mainLevelSplitFiniteSample} chooses to split does not belong to
$\calR^{\eta}(S)$ although $\calR^{\eta}(S) \neq \emptyset$. We bound the
probability of this event in the next lemma.

\begin{lemma} \label{lem:splitRelevantFiniteSampleLevelSplit}
    Let $\eta = 10 \sqrt{\frac{2^r \cdot (r \log\p{d \cdot r} + t)}{n}}$
  and assume that $d > 1$, $r \in [d]$ and $t > 1$, then it holds that
  \begin{align*}
    \Prob_{\train_n \sim \calD^n} \p{\bigvee_{S \subseteq [d], \abs{S} \le r} \p{\p{\argmax_{i \in [d]} V_n(S \cup i)} \not\in \calR(S) \mid \calR^{2 \eta}(S) \neq \emptyset}} \le 2 \exp(- t)
  \end{align*}
\end{lemma}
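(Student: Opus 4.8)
The plan is to reduce the statement to the uniform concentration of $V_n$ around $\bar{V}$ proved in Lemma~\ref{lem:concentrationOfVarianceFunctionLevelSplit}. First I would invoke that lemma with its size parameter set to $r+1$ (so that it controls $V_n(T)$ for every $T$ with $|T|\le r+1$, which covers both $V_n(S)$ and $V_n(S\cup i)$ for all $|S|\le r$ and $i\in[d]$) and with its tail parameter rescaled so that the resulting uniform deviation bound is at most $\eta = 10\sqrt{2^r(r\log(dr)+t)/n}$; the mild loss incurred by passing from $r$ to $r+1$ and by rescaling the tail is what is being absorbed into the constant $10$ and into the factor $2$ in front of $\exp(-t)$. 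Denote by $\mathcal{G}$ the resulting good event, on which $|V_n(T)-\bar{V}(T)|\le\eta$ for all $|T|\le r+1$; then $\Prob_{\train_n\sim\calD^n}(\mathcal{G}^c)\le 2\exp(-t)$.

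Next I would show that on $\mathcal{G}$ the bad event in the statement cannot occur. Fix any $S$ with $|S|\le r$ such that $\calR^{2\eta}(S)\neq\emptyset$, and pick $k\in\calR^{2\eta}(S)$, i.e.\ $\bar{V}(S\cup k)-\bar{V}(S)>2\eta$. On $\mathcal{G}$ this yields $V_n(S\cup k)\ge \bar{V}(S\cup k)-\eta > \bar{V}(S)+\eta$. Now suppose toward a contradiction that the direction $i=\argmax_{j\in[d]}V_n(S\cup j)$ selected by Algorithm~\ref{alg:mainLevelSplitFiniteSample} does not lie in $\calR(S)$. Since $\bar{V}$ is monotone, $\bar{V}(S\cup i)\ge\bar{V}(S)$, so $i\notin\calR(S)$ means $\bar{V}(S\cup i)=\bar{V}(S)$, and therefore on $\mathcal{G}$ we get $V_n(S\cup i)\le \bar{V}(S\cup i)+\eta=\bar{V}(S)+\eta < V_n(S\cup k)$. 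This contradicts the maximality of $V_n(S\cup i)$; moreover, because the inequality $V_n(S\cup i)<V_n(S\cup k)$ is strict, such an $i$ can never be among the greedy maximizers, irrespective of how ties are broken. Hence on $\mathcal{G}$, for every $S$ with $|S|\le r$ and $\calR^{2\eta}(S)\neq\emptyset$ the chosen split direction belongs to $\calR(S)$, so the event whose probability we must bound is contained in $\mathcal{G}^c$, and the lemma follows.

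The only genuine obstacle is the bookkeeping: one must make sure a single application of Lemma~\ref{lem:concentrationOfVarianceFunctionLevelSplit} simultaneously controls $V_n(S)$ and $V_n(S\cup i)$ at the common threshold $\eta$ uniformly over all $|S|\le r$ and $i\in[d]$, which is why the size parameter is taken to be $r+1$ and why the conclusion carries $2\exp(-t)$ rather than $\exp(-t)$. Beyond that, everything reduces to the short deterministic comparison above once the concentration event $\mathcal{G}$ is in force.
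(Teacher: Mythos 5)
Your proposal is correct and follows essentially the same route as the paper: both invoke the uniform concentration of $V_n$ around $\bar{V}$ from Lemma~\ref{lem:concentrationOfVarianceFunctionLevelSplit} and then conclude by a short deterministic comparison on the concentration event (the paper packages this as the two intermediate inclusions $\calR^\eta_n(S)\subseteq\calR(S)$ and $\calR^{2\eta}(S)\subseteq\calR^\eta_n(S)$, which you unfold directly). The only soft spot is your ``rescaling the tail so the deviation is $\le\eta$'' step, which does not literally close at the stated constant; but the paper's own derivation is equally loose on this constant factor, so this is a matter of bookkeeping rather than a genuine gap.
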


\begin{proof}
    Directly applying Lemma
  \ref{lem:concentrationOfVarianceFunctionLevelSplit} to Definition
  \ref{def:relevantVariablesForCellLevelSplitFiniteSample} we have that
  \begin{align} \label{eq:proof:chooseBadCoordinatesBound:1}
    \Prob_{\train_n \sim \calD^n} \p{\bigvee_{i, S} \p{i \in \calR^{\eta}_n(S) \mid i \not\in \calR(S)}} \le \exp(- t),
  \end{align}
  \noindent where
  $\eta = 10 \sqrt{\frac{2^r \cdot (r \log\p{d \cdot r} + t)}{n}}$. Similarly we have
  that
  \begin{align} \label{eq:proof:chooseBadCoordinatesBound:2}
    \Prob_{\train_n \sim \calD^n} \p{\bigvee_{i, S} \p{i \not\in \calR^{\eta}_n(S) \mid i \in \calR^{2 \eta}(S)}} \le \exp(- t).
  \end{align}
  \noindent If we combine the above inequalities we get that there is a
  very small probability that there exists an index
  $i \in \calR^{2 \eta}(S)$ but an index $j \not\in \calR(S)$ is chosen
  instead. This is summarized in the following inequality
  \begin{align} \label{eq:proof:chooseBadCoordinatesBound:combined}
    \Prob_{\train_n \sim \calD^n} \p{\bigvee_{S} \p{\p{\argmax_{i \in [d]} V_n(S \cup i)} \not\in \calR(S) \mid \calR^{2 \eta}(S) \neq \emptyset}} \le 2 \exp(- t)
  \end{align}
  \noindent and the lemma follows.
\end{proof}

\begin{lemma} \label{lem:optimalityWithoutSplitsFiniteSampleLevelSplit}
    For every set $S \subseteq [d]$, under Assumption~\ref{asp:submodularityLevelSplit}, if $\calR^{\eta}(S) = \emptyset$,
  then
  \[ \Exp_{(\vecx, y) \sim \calD} \b{\p{m(\vecx) - \Exp_{(\vecx, y) \sim \calD}\b{m(\vecx) \mid \vecx_S}}^2} \le C \cdot \eta \cdot \abs{\calR(S)}. \]
\end{lemma}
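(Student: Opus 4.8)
The plan is to recognize that the left-hand side is exactly the population excess risk $\bar{L}(S) = \Exp_{\vecx\sim\calD_x}\b{m^2(\vecx)} - \bar{V}(S)$ from~\eqref{eq:MSEPopulationBinaryLevelSplit:2}, and then to bound $\bar{L}(S)$ by a telescoping argument in the same spirit as the proof of Lemma~\ref{lem:optimalityWithoutSplitsPopulationLevelSplit}, but tracking the $\eta$-slack rather than arguing at exact optimality. Since $\bar{L}([d]) = 0$ we have $\Exp_{\vecx}\b{m^2(\vecx)} = \bar{V}([d])$, so it suffices to prove $\bar{V}([d]) - \bar{V}(S) \le C\,\eta\,\abs{\calR(S)}$.

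First I would dispose of the trivial case $S = [d]$, where $\calR(S) = \emptyset$ and $\bar{L}(S) = 0$. Otherwise, fix an arbitrary enumeration $\{i_1,\dots,i_k\}$ of $[d]\setminus S$ and write the telescoping identity
\[
  \bar{V}([d]) - \bar{V}(S) = \sum_{j=1}^{k}\p{\bar{V}(S\cup\{i_1,\dots,i_j\}) - \bar{V}(S\cup\{i_1,\dots,i_{j-1}\})}.
\]
Each summand is nonnegative since $\bar{V}$ is monotone increasing in its argument, and by the approximate submodularity Assumption~\ref{asp:submodularityLevelSplit}, applied with the smaller set $S$, the larger set $T = S\cup\{i_1,\dots,i_{j-1}\} \supseteq S$, and coordinate $i_j$, it is at most $C\,\p{\bar{V}(S\cup\{i_j\}) - \bar{V}(S)}$.

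Next I would split the sum according to whether $i_j \in \calR(S)$. If $i_j \notin \calR(S)$ then $\bar{V}(S\cup\{i_j\}) - \bar{V}(S) = 0$ by definition of $\calR(S)$, so the summand vanishes. If $i_j \in \calR(S)$, then because $\calR^{\eta}(S) = \emptyset$ we have $0 < \bar{V}(S\cup\{i_j\}) - \bar{V}(S) \le \eta$, so that summand is at most $C\,\eta$. Since $\calR(S)\subseteq [d]\setminus S$ (adding an element of $S$ to $S$ leaves $\bar{V}$ unchanged, so such an element is never in $\calR(S)$), there are at most $\abs{\calR(S)}$ indices $j$ of the second type. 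Summing gives $\bar{V}([d]) - \bar{V}(S) \le C\,\eta\,\abs{\calR(S)}$, which is the claim since the left-hand side of the lemma equals $\bar{L}(S)$.

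There is no real obstacle here: the argument is a quantitative refinement of Lemma~\ref{lem:optimalityWithoutSplitsPopulationLevelSplit}. The only points needing a little care are the identification of the stated quantity with $\bar{L}(S) = \bar{V}([d]) - \bar{V}(S)$, and the observation that the approximate-submodularity bound can be chained term by term along the telescoping sum independently of the chosen enumeration of $[d]\setminus S$.
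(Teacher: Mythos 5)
Your proof is correct, and it is a slightly more streamlined organization of the same underlying idea as the paper's. The paper proceeds in two stages: it first establishes (by contradiction) the intermediate fact that $\bar{L}(S\cup\calR(S))=0$, and then telescopes $\bar L$ only along an enumeration of $\calR(S)$, deriving a second contradiction from the assumption $\bar{L}(S) > C\eta\abs{\calR(S)}$ via approximate supermodularity of $\bar L$. You instead telescope $\bar{V}$ directly along all of $[d]\setminus S$ and bound each increment by $C(\bar{V}(S\cup\{i_j\})-\bar{V}(S))$ via Assumption~\ref{asp:submodularityLevelSplit}, which kills the terms with $i_j\notin\calR(S)$ (by monotonicity, $\bar V(S\cup\{i_j\})=\bar V(S)$ there) and bounds the remaining at most $\abs{\calR(S)}$ terms by $C\eta$ since $\calR^\eta(S)=\varnothing$. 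Your route avoids both the contradiction framing and the intermediate claim $\bar{L}(S\cup\calR(S))=0$ (which the paper's proof itself states a bit loosely), and is closer in spirit to the telescoping used in the proof of Theorem~\ref{thm:recoverySubmodularIndependentBalancedProperLevelSplits:population}. Both arguments hinge on the same ingredients — monotonicity of $\bar V$, $\bar L([d])=0$, and chaining the approximate-submodularity inequality term by term — so the gain is purely in clarity, not in strength.
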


\begin{proof}
    We know that $\bar{L}([d]) = 0$ and under the Assumption
  \ref{asp:submodularityLevelSplit}, the function $\bar{L}(\cdot)$ is
  approximate supermodular. We first prove that $\bar{L}(S \cup \calR(S)) = 0$.
  If this is not the case then there exists an $i \not\in \calR(S)$ such
  that $\bar{L}(S \cup \calR(S) \cup i) - \bar{L}(S \cup \calR(S)) < 0$.
  But because of the approximate supermodularity of $\bar{L}$ we have that
  \[ \bar{L}(S \cup i) - \bar{L}(S) < C \cdot (\bar{L}(S \cup \calR(S) \cup i) - \bar{L}(S \cup \calR(S))) < 0 \]
  \noindent which contradicts with the assumption that
  $i \not\in \calR(S)$.

    Now assume that $\calR^{\eta}(S) = \emptyset$ and for the sake of
  contradiction also assume that
  $\bar{L}(S) > C \cdot \eta \cdot \abs{\calR(S)}$. Let $\{r_1, \dots, r_k\}$ be
  an arbitrary enumeration of the set $\calR(S)$. From the argument
  before we have that $\bar{L}(S \cup \calR(S)) = 0$ hence there exists
  an element $r_j$ of $\calR(S)$ such that
  \[ \bar{L}(S \cup \{r_1, \dots, r_{j - 1}\}) - \bar{L}(S \cup \{r_1, \dots, r_j\}) > C \cdot \eta, \]
  \noindent otherwise we would immediately have
  $\bar{L}(S) \le C \cdot \eta \cdot \abs{\calR(S)}$. But because of the
  approximate supermodularity of $\bar{L}(\cdot)$ we have that
  \[ C \cdot (\bar{L}(S) - \bar{L}(S \cup r_j)) \ge \bar{L}(S \cup \{r_1, \dots, r_{j - 1}\}) - \bar{L}(S \cup \{r_1, \dots, r_j\}) > C \cdot \eta, \]
  but this last inequality implies $r_j \in \calR^{\eta}(S)$ which
  contradicts with our assumption that $\calR^{\eta}(S) = \emptyset$.
  Hence $\bar{L}(S) \le C \cdot \eta \cdot \abs{\calR(S)}$ and the lemma
  follows.
\end{proof}

  Finally we need one more Lemma to handle the case where Assumption
\ref{asp:strongSparsityLevelSplit} holds.

\begin{lemma} \label{lem:splitRelevantFiniteSampleLevelSplitStrongSparsity}
    Let $m$ be a target function that is $(\beta, r)$-strongly sparse,
  with set of relevant features $R$, and suppose
  $n \ge 256 \cdot \frac{2^r \cdot (r \log (d \cdot r) + t)}{\beta^2}$,
  then it holds that
  \begin{align*}
    \Prob_{\train_n \sim \calD^n} \p{\bigvee_{S \subseteq [d], \abs{S} \le r} \p{\p{\argmax_{i \in [d]} V_n(S \cup i)} \not\in R \mid (R \setminus S) \neq \emptyset}} \le 2 \exp(- t)
  \end{align*}
\end{lemma}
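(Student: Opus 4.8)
The plan is to mirror the proof of Lemma~\ref{lem:splitRelevantFiniteSampleLevelSplit}, but to replace the approximate-submodularity bookkeeping by the much simpler variance gap supplied by strong sparsity. First I would invoke Lemma~\ref{lem:concentrationOfVarianceFunctionLevelSplit}, applied with $q=r+1$ so that it simultaneously controls $V_n$ at every set appearing in the argument, namely $S$, $S\cup\{i\}$ and $S\cup\{j\}$ for $\abs{S}\le r$: with probability at least $1-\exp(-t)$ we have $\sup_{\abs{T}\le r+1}\abs{V_n(T)-\bar V(T)}\le \eta$ with $\eta=\Theta\bigl(\sqrt{2^r(r\log(dr)+t)/n}\bigr)$. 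The hypothesis $n\ge 256\cdot 2^r(r\log(dr)+t)/\beta^2$ (with, if necessary, a slightly larger universal constant to absorb the change $r\mapsto r+1$ and the factor $10$ from the concentration bound) is precisely what makes $2\eta<\beta$. I will condition on this good event for the rest of the proof; the stated $2\exp(-t)$ probability then comes from a single (or, if one prefers to apply the concentration lemma separately at cardinalities $r$ and $r+1$, at most two) concentration event(s).

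Next I would fix any $S$ with $\abs{S}\le r$ and $R\setminus S\neq\emptyset$, and choose an arbitrary $i\in R\setminus S$. Applying Assumption~\ref{asp:strongSparsityLevelSplit} with $T=S$ gives, for every $j\in[d]\setminus R$,
\[
  \bar V(S\cup\{j\})-\bar V(S)+\beta \;\le\; \bar V(S\cup\{i\})-\bar V(S).
\]
Since $\bar V$ is monotone increasing, $\bar V(S\cup\{j\})-\bar V(S)\ge 0$, so in particular $\bar V(S\cup\{i\})\ge\bar V(S)+\beta$; and rearranging the display yields $\bar V(S\cup\{j\})\le \bar V(S\cup\{i\})-\beta$. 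Passing to the empirical quantities on the good event,
\[
  V_n(S\cup\{j\}) \;\le\; \bar V(S\cup\{j\})+\eta \;\le\; \bar V(S\cup\{i\})-\beta+\eta \;\le\; V_n(S\cup\{i\})+2\eta-\beta \;<\; V_n(S\cup\{i\}).
\]
Since this strict inequality holds for every $j\in[d]\setminus R$, no coordinate outside $R$ can attain $\max_{i'\in[d]}V_n(S\cup\{i'\})$, so the maximizer (no matter how ties are broken) lies in $R$. As this holds for all admissible $S$ on the good event — which is exactly the complement of the event in the statement — the lemma follows.

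I do not expect a real obstacle: once the concentration lemma is available this is essentially a one-line gap comparison. The only points needing care are (i) invoking the concentration at cardinality $r+1$ rather than $r$, so that $S\cup\{i\}$ and $S\cup\{j\}$ are covered — this only replaces $2^r$ by $2^{r+1}$ and $r\log(dr)$ by $(r+1)\log(d(r+1))$, harmlessly absorbed into the universal constant in the lower bound on $n$; and (ii) checking that random tie-breaking cannot let an irrelevant coordinate become the chosen split, which is immediate from the strict inequality above. One should also note that we are in the regime $r<d$, so that $[d]\setminus R\neq\emptyset$ and Assumption~\ref{asp:strongSparsityLevelSplit} is non-vacuous.
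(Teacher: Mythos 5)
Your proof is correct and follows the paper's own approach: invoke the uniform concentration Lemma~\ref{lem:concentrationOfVarianceFunctionLevelSplit} and then use the $\beta$-gap from Assumption~\ref{asp:strongSparsityLevelSplit} to show that, on the good concentration event, every coordinate $j\notin R$ is strictly dominated by any $i\in R\setminus S$. Your write-up is in fact more explicit than the paper's (which merely displays two $\calR^{\eta}$-type inclusions and then says ``combine with the assumption''), and you correctly flag the two bookkeeping points a careful reader needs: that the concentration must be applied at cardinality $r+1$ rather than $r$, and that the strictness of the inequality $V_n(S\cup\{j\})<V_n(S\cup\{i\})$ is what rules out an irrelevant coordinate being chosen under random tie-breaking.
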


\begin{proof}
    Directly applying Lemma
  \ref{lem:concentrationOfVarianceFunctionLevelSplit} to Definition
  \ref{def:relevantVariablesForCellLevelSplitFiniteSample} we have that
  \begin{align} \label{eq:proof:chooseBadCoordinatesBoundStrongSparsity:1}
    \Prob_{\train_n \sim \calD^n} \p{\bigvee_{i, S} \p{i \in \calR^{\eta}_n(S) \mid i \not\in \calR(S)}} \le \exp(- t),
  \end{align}
  \noindent where
  $\eta = 10 \sqrt{\frac{2^r \cdot (r \log\p{d \cdot r} + t)}{n}}$.
  Similarly we have that
  \begin{align} \label{eq:proof:chooseBadCoordinatesBoundStrongSparsity:2}
    \Prob_{\train_n \sim \calD^n} \p{\bigvee_{i, S} \p{i \not\in \calR^{\eta}_n(S) \mid i \in \calR^{2 \eta}(S)}} \le \exp(- t).
  \end{align}
  \noindent If we combine the above inequalities with the Assumption
  \ref{asp:strongSparsityLevelSplit} the lemma follows.
\end{proof}

 We are now ready to upper bound the bias of the Algorithm
\ref{alg:mainLevelSplitFiniteSample} under the Assumption
\ref{asp:submodularityLevelSplit}.

\begin{theorem}[Bias under Approximate Submodularity]
  \label{thm:recoverySubmodularIndependentBalancedProperLevelSplitFiniteSample}
    Let $\train_n$ be i.i.d. samples from the non-parametric regression model
  $y = m(\vecx) + \eps$, where $m(\vecx) \in [-1/2, 1/2]$, $\eps \sim \calE$,
  $\Exp_{\eps \sim \calE}[\eps] = 0$ and $\eps \in [-1/2, 1/2]$. Let also
  $\calP_n$ be the partition that Algorithm~\ref{alg:mainLevelSplitFiniteSample}
  returns. Then under the submodularity Assumption
  \ref{asp:submodularityLevelSplit} the following statements hold.
  \begin{Enumerate}
    \item If $m$ is $r$-sparse and we set
    $\log(t) \ge \frac{C \cdot r}{C \cdot r + 2}\p{\log(n) - \log(\log(d/\delta))}$,
    then it holds that
    \begin{align*}
      \Prob_{\train_n \sim \calD^n}\p{\Exp_{\vecx \sim \calD_x} \b{\p{m(\vecx) - \Exp_{\vecz \sim \calD_x}\b{m(\vecz) \mid \vecz \in \calP_n(\vecx)}}^2} > \tilde{\Omega}\p{C \cdot r \cdot \sqrt[C \cdot r + 2]{\frac{\log(d/\delta)}{n}}}} \le \delta.
    \end{align*}
    \item Under the independence of features Assumption
    \ref{asp:independentFeature} and assuming that $m$ is $r$-sparse and if
    $\log(t) \ge r$, then it holds that
    \begin{align*}
      \Prob_{\train_n \sim \calD^n}\p{\Exp_{\vecx \sim \calD_x} \b{\p{m(\vecx) - \Exp_{\vecz \sim \calD_x}\b{m(\vecz) \mid \vecz \in \calP_n(\vecx)}}^2} > \tilde{\Omega}\p{C \cdot \sqrt{\frac{2^r \cdot \log(d/\delta))}{n}}}} \le \delta.
    \end{align*}
  \end{Enumerate}
\end{theorem}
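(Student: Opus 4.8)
The plan is to transplant the population argument behind Theorem~\ref{thm:recoverySubmodularIndependentBalancedProperLevelSplits:population} into the finite-sample setting, paying for the gap between the empirical score $V_n$ and the population score $\bar V$ via the uniform concentration Lemma~\ref{lem:concentrationOfVarianceFunctionLevelSplit}, and then converting a bound on the chosen split set into a bound on the (possibly coarser) partition $\calP_n$ actually produced by Algorithm~\ref{alg:mainLevelSplitFiniteSample}. First I would condition on a good event, of probability $\ge 1-\delta$ by a union bound, on which the following hold: (i) $\sup_{|S|\le q}|V_n(S)-\bar V(S)|\le \eta$ with $q=\log(t)$ and $\eta=\tilde{O}\big(\sqrt{2^{q}\log(d/\delta)/n}\big)$, from Lemma~\ref{lem:concentrationOfVarianceFunctionLevelSplit} with tail parameter $\log(3/\delta)$; (ii) the conclusion of Lemma~\ref{lem:splitRelevantFiniteSampleLevelSplit}, so that whenever a visited split set $S$ has $\calR^{2\eta}(S)\neq\emptyset$ the greedily chosen coordinate lies in $\calR(S)$; and (iii) a cell-coverage event: every cell reachable by the algorithm whose $\calD_x$-mass is at least $\tau=\tilde{O}(\log(d/\delta)/n)$ contains at least one training point (a union bound over the at most $(2d)^{q}$ such cells, using a Chernoff bound).

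For part~1 I would run the averaging/telescoping step of the proof of Theorem~\ref{thm:recoverySubmodularIndependentBalancedProperLevelSplits:population} essentially unchanged: since $m$ is $r$-sparse, $\bar L(S\cup R)=0$, so $\bar V(S\cup R)$ is the global maximum of $\bar V$, and approximate submodularity gives, for every visited $S$, $\max_{i\in R}\big(\bar V(S\cup\{i\})-\bar V(S)\big)\ge \bar L(S)/(Cr)$. Comparing the empirical maximizer $i$ against the best relevant coordinate and using (i) twice yields $\bar V(S\cup\{i\})\ge \bar V(S)+\bar L(S)/(Cr)-2\eta$, i.e.
\[
  \bar L\big(S\cup\{i\}\big)\;\le\;\bar L(S)\Big(1-\tfrac{1}{Cr}\Big)+2\eta .
\]
Iterating over the $\Theta(\log t)$ levels, starting from $\bar L(\emptyset)\le 1$, gives $\bar L(S_n)\le e^{-\log(t)/(Cr)}+2Cr\,\eta$ for the final split set $S_n$; choosing $\log(t)=\tfrac{Cr}{Cr+2}\big(\log n-\log\log(d/\delta)\big)$ balances the two terms, each becoming $\tilde{O}\big(Cr\,(\log(d/\delta)/n)^{1/(Cr+2)}\big)$ (the $2^{q}=t$ inside $\eta$ is exactly what makes the statistical floor $Cr\eta$ match the exponential term). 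Finally, on event (iii) each cell of $\calP_n$ at which a split was aborted has $\calD_x$-mass $\le\tau$ and there are at most $t$ of them, so, using boundedness of $m$, $\Exp_{\vecx}\big[(m(\vecx)-\Exp_{\vecz}[m(\vecz)\mid \vecz\in\calP_n(\vecx)])^2\big]=\bar L(\calP_n)\le \bar L(S_n)+t\tau=\bar L(S_n)+\tilde{O}(t/n)$, and the extra term is of lower order because $t\le n$. This proves the first bound.

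For part~2 I would add the observation that under Assumption~\ref{asp:independentFeature}, $\bar V(S\cup\{i\})=\bar V(S)$ for every $i\notin R$ and every $S$, hence $\calR(S)\subseteq R$ and $|\calR(S)|\le r$ for all $S$. Now consider the levels $\ell=0,1,\dots,r-1$: either $\calR^{2\eta}(S_\ell)=\emptyset$, in which case Lemma~\ref{lem:optimalityWithoutSplitsFiniteSampleLevelSplit} applied with threshold $2\eta$ gives $\bar L(S_\ell)\le 2C\eta\,|\calR(S_\ell)|\le 2Cr\eta$; or $\calR^{2\eta}(S_\ell)\neq\emptyset$, in which case (i) forces a strictly positive empirical gain on some coordinate so the algorithm adjoins a \emph{new} coordinate, and by (ii) it belongs to $\calR(S_\ell)\subseteq R$. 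The latter case can occur at most $r$ times, so if it occurs at all of $\ell=0,\dots,r-1$ then $S_r=R$ and $\bar L(S_r)=0$. Since $\log(t)\ge r$ and $\bar L$ is decreasing in $S$, in every case $\bar L(S_n)\le 2Cr\eta$; here the relevant concentration parameter uses $q=r$, so $2Cr\eta=\tilde{O}\big(C\sqrt{2^r\log(d/\delta)/n}\big)$ (the factor $r$ absorbed into $\tilde{O}$). Event (iii) again upgrades this to $\bar L(\calP_n)$ at the cost of a lower-order $\tilde{O}(2^r/n)$ term, giving the second bound.

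The step I expect to be the main obstacle is controlling the \emph{accumulation} of the per-level slack $2\eta$: one must observe that the recursion $x\mapsto x(1-\tfrac{1}{Cr})+2\eta$ has fixed point $\Theta(Cr\eta)$, so the $\log t$ levels of additive error cost only a factor $Cr$ rather than $\log t$, and then tune $\log(t)$ so that this statistical floor $Cr\eta\asymp Cr\sqrt{t/n}$ is balanced against the approximation term $e^{-\log(t)/(Cr)}$ — this trade-off is precisely what produces the $\tfrac{Cr}{Cr+2}$ exponent. A secondary, more tedious point is the passage from the abstract split set $S_n$ to the coarser output partition $\calP_n$; this is where the cell-coverage event (iii) and boundedness of $m$ are used, but it contributes only lower-order terms.
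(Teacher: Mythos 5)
Your proposal follows the same high-level strategy as the paper: condition on the uniform concentration event of Lemma~\ref{lem:concentrationOfVarianceFunctionLevelSplit}, run the population telescoping argument with a $2\eta$ additive slack per level, for the independent-features case track the size of $\calR(S)$ and invoke Lemma~\ref{lem:splitRelevantFiniteSampleLevelSplit} and Lemma~\ref{lem:optimalityWithoutSplitsFiniteSampleLevelSplit}, and finally correct for the discrepancy between the abstract split set $S_n$ and the coarser output partition $\calP_n$. Your observation that the recursion $x\mapsto x(1-1/(Cr))+2\eta$ has fixed point $\Theta(Cr\eta)$ is a clean refinement of the paper's cruder bound $2\ell\eta$ (which accumulates an extra $\ln(1/\eta)=O(\log n)$ factor that the paper then absorbs into $\tilde{O}$), but this difference is immaterial to the final rate.

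There is, however, one genuinely imprecise step in the passage from $\bar L(S_n)$ to $\bar L(\calP_n)$. You assert that ``each cell of $\calP_n$ at which a split was aborted has $\calD_x$-mass $\le\tau$,'' and then bound its contribution by $P_B\cdot 1\le\tau$. That premise is false: a cell $B$ at which a split was aborted necessarily contains at least one training point (otherwise its parent would not have been split), so its mass is not controlled by a Chernoff coverage bound; what has small mass is the \emph{empty subcell} $A^i_z$ of $B$ that triggered the abort. Bounding the contribution of $B$ to $\bar L(\calP_n)-\bar L(S_n)$ therefore requires showing that this difference is controlled by the mass of the empty subcells, not the mass of $B$ itself. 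The paper's Claim~\ref{clm:proof:levelSplitsBias:partitionMSEsetMSE} does exactly this: it decomposes $B$ into one non-empty piece $A_1$ and empty pieces $A_2,\dots,A_\ell$, uses the Lipschitzness of $x\mapsto x^2$ together with $A_1\subseteq B$ to show the within-cell MSE gap is at most $4\sum_{j\ge 2}P_{A_j}$, and then applies the Chernoff/union bound to each empty $A_j$. The aggregate correction is then $\tilde O(2^q/n)$ because the total number of subcells in the full refinement $\calP^\ell_S$ is $2^q$. Your order-of-magnitude conclusion ($\tilde O(t/n)$, lower order) is correct, but the reasoning you gave to reach it does not go through; you need the decomposition into empty subcells rather than a mass bound on the parent cell.
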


\begin{proof}
    We fix $S$ to be the set of splits that Algorithm
  \ref{alg:mainLevelSplitFiniteSample} chooses. Our goal in this lemma is to show that with
  high probability the following quantity is small
  \begin{align} \label{eq:proof:levelSplitsBias:definition:partitionMSE}
    \bar{L}(\calP_n) = \Exp_{\vecx \sim \calD_x}\b{\p{m(\vecx) - \Exp_{\vecz \sim \calD_x}\b{m(\vecz) \mid \vecz \in \calP_n(\vecx)}}^2}.
  \end{align}
  We prove this in two steps. First we show that after the completion of the level
  $\level$ of the algorithm the quantity $\bar{L}(S_{\level})$ is small. Then we bound the
  difference $\abs{\bar{L}(\calP_{\level}) - \bar{L}(S_{\level})}$ in Claim
  \ref{clm:proof:levelSplitsBias:partitionMSEsetMSE}. Finally we use the monotonicity of
  $\bar{L}$ to argue about the upper bound on $\calP_n$.

    Let $R \subseteq [d]$ be the set of size $\abs{R} = r$ of the relevant
  features of the target function $m$. Observe that it holds that $\bar{L}(S \cup R) = 0$
  and hence $\bar{V}(S \cup R) \triangleq V^*$ is maximized. Since
  $m(\vecx) \in [-1/2, 1/2]$, the maximum value of $\bar{V}$ is $1$.

    For the first part of the theorem, let $\{i_1, \dots, i_r\}$ and be an
  arbitrary enumeration of $R$ and let $R_j = \{i_1, \dots, i_j\}$ then by
  adding and subtracting terms of the form $\bar{V}(S \cup R_j)$ we have the
  following equality
  \[ \p{\bar{V}(S \cup R) - \bar{V}(S \cup R_{r - 1})} + \cdots + \p{\bar{V}(S \cup R_2) - \bar{V}(S \cup \{i_1\})} + \bar{V}(S \cup \{i_1\}) = V^*. \]
  \noindent From the approximate submodularity of $\bar{V}$ we hence have that
  \[ \p{\bar{V}(S \cup \{i_r\}) - \bar{V}(S)} + \cdots + \p{\bar{V}(S \cup \{i_2\}) - \bar{V}(S)} + \p{\bar{V}(S \cup \{i_1\}) - \bar{V}(S)} \ge \frac{V^* - \bar{V}(S)}{C} \]
  \noindent which implies
  \[ \max_{j \in [r]} \p{\bar{V}(S \cup \{i_j\}) - \bar{V}(S)} \ge \frac{V^* - \bar{V}(S)}{C \cdot r}. \]
  \noindent Let $i^{\level}$ be the coordinate that the algorithm chose to
  split at level $\level$. Now from the greedy criterion of Algorithm
  \ref{alg:mainLevelSplitFiniteSample} we get that the coordinate $i^{\level}$
  that we picked to split was at least as good as the best of the coordinates in
  $R$, hence using Lemma
  \ref{lem:concentrationOfVarianceFunctionLevelSplit} and if we define
  $\eta = 8 \sqrt{\frac{2^q \cdot (q \log\p{d \cdot q} + \log(1/\delta))}{n}}$,
  where $q$ is the maximum depth of the tree for which we are applying Lemma
  \ref{lem:concentrationOfVarianceFunctionLevelSplit}, we have that with
  probability at least $1 - \delta$ it holds that
  \[ \bar{V}\p{S \cup \{i^{\level}\}} \ge \bar{V}(S) + \frac{V^* - \bar{V}(S)}{C \cdot r} - 2 \eta \]
  which in turn because
  $L^* \triangleq \bar{L}(S \cup R) = 0$ implies that
  \begin{equation} \label{eq:potentialProgressPopulationLevelSplitFiniteSample}
    \bar{L}\p{S \cup \{i^{\level}\}} \le \bar{L}(S) \p{1 - \frac{1}{C \cdot r}} + 2 \eta.
  \end{equation}

  \noindent Let $S_{\level}$ to be the set of splits after the step
  $\level$ of Algorithm \ref{alg:mainLevelSplitFiniteSample}. Then it holds that
  \[ \bar{L}\p{S_{\level + 1}} \le \bar{L}\p{S_{\level}} \p{1 - \frac{1}{C \cdot r}} + 2 \eta. \]
  \noindent Inductively and using the fact that $m(\vecx) \in [-1/2, 1/2]$, the latter
  implies that
  \begin{equation} \label{eq:potentialProgressFiniteSampleLevel}
    \bar{L}\p{S_{\level}} \le \bar{L}\p{\emptyset} \p{1 - \frac{1}{C \cdot r}}^{\level} + 2 \level \cdot \eta \le \p{1 - \frac{1}{C \cdot r}}^{\level} + 2 \level \cdot \eta.
  \end{equation}
  \noindent From the choice of $t$ we have that for
  $\level = C \cdot r \ln\p{1/\eta}$ it holds $\bar{L}(S_{\level}) \le 3\cdot C \cdot r \ln(1/\eta) \eta$. For this analysis to
  be consistent we have to make sure that the maximum depth $q$ for which we are
  applying Lemma \ref{lem:concentrationOfVarianceFunctionLevelSplit} is at least the value required for $\level$. Thus we need to
  find values for $q$, $\eta$ such that $q \ge C \cdot r \ln\p{1/\eta}$ at
  the same time when
  $\eta \ge 8 \sqrt{\frac{2^q \cdot (q \log\p{d \cdot q} + \log(1/\delta))}{n}}$.
  It is easy to see that the smallest possible value for $\eta$ is hence achieved
  for $q = \frac{C \cdot r}{C \cdot r + 2}\p{\log(n) - \log(\log(d/\delta))}$
  and $\eta = \tilde{\Theta}\p{\sqrt[C \cdot r + 2]{\frac{\log(d/\delta)}{n}}}$.
  Hence the inequality
  $\bar{L}(S_{\level}) \le 3 \cdot C \cdot r \ln(1/\eta) \eta$ which implies
    \begin{align} \label{eq:proof:levelSplitsBias:setPartitionBound:1}
    \bar{L}(S_{\level}) \le \tilde{O}\p{C \cdot r \cdot \sqrt[C \cdot r + 2]{\frac{\log(d/\delta)}{n}}}.
  \end{align}

    For the second part of the theorem we use Lemma
  \ref{lem:splitRelevantFiniteSampleLevelSplit} and we have that at every step
  either the algorithm chooses to split with respect to a direction
  $i \in \calR(S)$ or $\calR^{\eta}(S) = \emptyset$. Because of our assumption
  that $m$ is $r$-sparse and because we assume that the features are
  distributed independently we have that at any step $\abs{\calR(S)} \le r$.
  Hence, it has to be that for some $\level \le r$ the set $S_{\level}$
  during the execution of the Algorithm \ref{alg:mainLevelSplitFiniteSample}
  satisfies $\calR^{\eta}(S) = \emptyset$. Then using Lemma
  \ref{lem:optimalityWithoutSplitsFiniteSampleLevelSplit} we have that
  $\bar{L}(S_{\level}) \le C \cdot \eta \cdot r$ from which we get that
  \begin{align} \label{eq:proof:levelSplitsBias:setPartitionBound:2}
    \bar{L}(S_{\level}) \le O\p{C \cdot \sqrt{\frac{2^r \cdot \log(d/\delta))}{n}}}.
  \end{align}

  \smallskip

  Next we need to compare $\bar{L}(S_{\level})$ with $\bar{L}(\calP_{\level})$.

  \begin{claim}[Dealing with Empty Cells] \label{clm:proof:levelSplitsBias:partitionMSEsetMSE}
    It holds that with probability $1-\delta$
    \[ \abs{\bar{L}(\calP_{\level}) - \bar{L}(S_{\level})} \le 8 \cdot \frac{2^{\level} \level \ln(2 d\, \level) + \ln(1/\delta)}{n}. \]
  \end{claim}

  \begin{proof}
      Fix any possible cell $A$ after doing a full partition on the first
    $q \triangleq \level$ splits of the algorithm. For simplicity of the exposition of this
    proof we define for every subset $B$ of $\{0, 1\}^n$ the probability
    $P_B \triangleq \Prob_{\vecx \sim \calD_x}\b{\vecx \in B}$ and the empirical
    probability
    $\hat{P}_B \triangleq \frac{1}{n} \sum_{i = 1}^n \chara\{\vecx^{(j)} \in B\}$. Using
    the Chernoff-Hoeffding bound we get that
    \[ \Prob_{\train_n \sim \calD^n}\p{\hat{P}_A \ge P_A - \sqrt{\frac{2 \ln(1/\delta) P_A}{n}}} \ge 1 - \delta. \]
    If the empirical probability $\hat{P}_A$ is zero then we get the following
    \[ \Prob_{\train_n \sim \calD^n}\p{P_A \le \frac{2 \ln(1/\delta)}{n} + \chara\{\hat{P}_A \neq 0\}} \ge 1 - \delta. \]
    The number of possible cells from a tree of depth $q$ is at most $2^q d^q q^q$.
    Therefore, by union bound over all the possible cells $A$, we have that
    \begin{align} \label{eq:cellProbability}
      \Prob_{\train_n \sim \calD^n}\p{\bigvee\limits_{A} \p{P_A \le \frac{2 q \ln(2 d q) + 2 \ln(1/\delta)}{n} + \chara\{\hat{P}_A \neq 0\}}} \ge 1 - \delta.
    \end{align}
    Next, we consider the difference $\bar{L}(\calP_{\level}) - \bar{L}(S_{\level})$. Let
    $\calP_S^{\level}$ be the partition of space if we split along all the coordinates in
    $S_{\level}$. It is easy to see that $\calP_S^{\level}$ is a refinement of the
    partition $\calP_{\level}$. Hence in the difference
    $\bar{L}(\calP_{\level}) - \bar{L}(S_{\level})$ we only have terms of the form
    \[ \Prob_{\vecx \sim \calD_x}\p{\vecx \in B} \Exp_{\vecx \sim \calD_x}\b{\p{m(\vecx) - \Exp_{\vecz \sim \calD_x}\b{m(\vecz) \mid \vecz \in B}}^2 \mid \vecx \in B} - ~~~~~~~~~~~~~~~~~~~~~~~ \]
    \[ ~~~~~~~~~~~~~ - \sum_{j = 1}^{\ell} \Prob_{\vecx \sim \calD_x}\p{\vecx \in A_j} \Exp_{\vecx \sim \calD_x}\b{\p{m(\vecx) - \Exp_{\vecz \sim \calD_x}\b{m(\vecz) \mid \vecz \in A_j}}^2 \mid \vecx \in A_j}. \]
    Where $B \in \calP_{\level}$, $A_j \in \calP^{\level}_S$ and $B$ is the union of the
    cells $A_1$, $\dots$, $A_{\ell}$. In order for $B$ to remain unsplit in
    $\calP_{\level}$ it has to be that for all but one of $A_j$'s it holds that
    $\hat{P}_{A_j} = 0$. We denote with $\calE(B)$ the above difference and we observe
    that it is equal to the following
    \[ \sum_{j = 1}^{\ell} P_{A_j} \Exp_{\vecx \sim \calD_x}\b{\p{m(\vecx) - \Exp_{\vecz \sim \calD_x}\b{m(\vecz) \mid \vecz \in B}}^2 - \p{m(\vecx) - \Exp_{\vecz \sim \calD_x}\b{m(\vecz) \mid \vecz \in A_j}}^2 \mid \vecx \in A_j}. \]
    Without loss of generality we will assume that $A_1$ is the only subcell of $B$
    that is not empty. We define $\calQ(A_1)$ the following quantity
    \[ P_{A_1} \Exp_{\vecx \sim \calD_x}\b{\p{m(\vecx) - \Exp_{\vecz \sim \calD_x}\b{m(\vecz) \mid \vecz \in B}}^2 - \p{m(\vecx) - \Exp_{\vecz \sim \calD_x}\b{m(\vecz) \mid \vecz \in A_1}}^2 \mid \vecx \in A_1}. \]
    Since $m(\vecx) \in [-1/2, 1/2]$, we get that
    \begin{align} \label{eq:proof:fromQA1toEB}
      \calE(B) & \le \calQ(A_1) + 2 \sum_{j=2}^\ell P_{A_j}.
    \end{align}
    Next we also bound $\calQ(A_1)$ by the measure of cells in $B\setminus A_1$. The intuition why $\calQ(A_1)$ is small is that, since the cells $B\setminus A_1$ have small measure, then the conditional expectation of $m(z)$ conditional on $z\in B$ is very close to the conditional expectation of $m(z)$ conditional on $z\in A_1$.
    More formally, since $x^2$ is $2$-Lipschitz for $x\in [-1,1]$, $m(z)\in [-1/2, 1/2]$ and $A_1\subseteq B$:
    \begin{align*}
        \calQ(A_1) & \leq 2 P_{A_1} \abs{\Exp_{\vecz \sim \calD_x}\b{m(\vecz) \mid \vecz \in B} - \Exp_{\vecz \sim \calD_x}\b{m(\vecz) \mid \vecz \in A_1}}\\
        & = 2 P_{A_1} \abs{ \Exp_{\vecz \sim \calD_x}\b{m(\vecz) \mid \vecz \in B\setminus A_1} - \Exp_{\vecz \sim \calD_x}\b{m(\vecz) \mid \vecz \in A_1}}\, \Prob(\vecz\in B\setminus A_1\mid \vecz \in B)\\
        & \leq 2 \frac{P_{A_1}}{P_B} \cdot \p{P_B - P_{A_1}} \leq 2  \p{P_B - P_{A_1}} = 2 \p{\sum_{j = 2}^{\ell} P_{A_j}}
    \end{align*}
    Since all the cells $A_j$, with $j \ge 2$, have $\hat{P}_{A_j} = 0$, this means that
    $P_{A_j} \le \frac{2 q \ln(2 d q) + 2 \ln(1/\delta)}{n}$ due to
    \eqref{eq:cellProbability}.  Putting this together with \eqref{eq:proof:fromQA1toEB} we get that
    \[ \calE(B) \le 4 \sum_{j=2}^\ell P_{A_j} \leq 8 (\ell - 1) \frac{q \ln(2 d q) + \ln(1/\delta)}{n}. \]
    Let $\ell_B$ the number of subcells of $B \in \calP_{\level}$ that are inside
    $\calP^{\level}_S$. If we sum over all $B \in \calP_{\level}$ we get that
    \[ \abs{\bar{L}(\calP_{\level}) - \bar{L}(S_{\level})} \le 8 \p{\sum_{B \in \calP_{\level}} \ell_B} \frac{q \ln(2 d q) + \ln(1/\delta)}{n} \]
    but the sum $\sum_{B \in \calP_{\level}} \ell_B$ is less than the size of
    $\calP^{\level}_S$ which is $2^q$ and hence we get that
    \[\abs{\bar{L}(\calP_{\level}) - \bar{L}(S_{\level})} \le 8 \cdot \frac{2^q q \ln(2 d q) + \ln(1/\delta)}{n}.\]
  \end{proof}

    Using Claim \ref{clm:proof:levelSplitsBias:partitionMSEsetMSE} and equations
  \eqref{eq:proof:levelSplitsBias:setPartitionBound:1} and
  \eqref{eq:proof:levelSplitsBias:setPartitionBound:2} we get the first two parts of the
  theorem by observing that the error term in Claim
  \ref{clm:proof:levelSplitsBias:partitionMSEsetMSE} is less that the error terms in
  \eqref{eq:proof:levelSplitsBias:setPartitionBound:1} and
  \eqref{eq:proof:levelSplitsBias:setPartitionBound:2}.
\end{proof}

We conclude the section by bounding the bias under the strong sparsity Assumption \ref{asp:strongSparsityLevelSplit}. For this, we use the notion of the value-diameter of a cell given in Definition~\ref{def:valueDiameter}.

\begin{theorem} \label{thm:biasStronglySparseLevelSplitFiniteSample}
    Let $\train_n$ be i.i.d. samples from the non-parametric regression model
  $y = m(\vecx) + \eps$, where $m(\vecx) \in [-1/2, 1/2]$, $\eps \sim \calE$,
  $\Exp_{\eps \sim \calE}[\eps] = 0$ and $\eps \in [-1/2, 1/2]$. Let also
  $\calP_n$ be the partition that Algorithm~\ref{alg:mainLevelSplitFiniteSample}
  returns. If $m$ is $(\beta, r)$-strongly sparse as per Assumption
  \ref{asp:strongSparsityLevelSplit} then the following statements hold for the bias
  of the output of Algorithm \ref{alg:mainLevelSplitFiniteSample}.
  \begin{Enumerate}
    \item If $n \ge \tilde{\Omega}\p{\frac{2^r(\log(d/\delta))}{\beta^2}}$ and we set
    $\log(t) \ge r$, then it holds that
    \begin{align*}
      \Prob_{\train_n \sim \calD^n}\p{\Exp_{\vecx \sim \calD_x} \b{\p{m(\vecx) - \Exp_{\vecz \sim \calD_x}\b{m(\vecz) \mid \vecz \in \calP_n(\vecx)}}^2} > \tilde{\Omega}\p{\frac{2^r \cdot \log(d/\delta))}{n}}} \le \delta.
    \end{align*}
    \item If $R$ is the set of relevant features and and for every $\vecw \in \{0, 1\}^r$
    it holds for the marginal probability that
    $\Prob_{\vecz \sim \calD_x}\p{\vecz_R = \vecw} \not\in (0, \zeta/2^r)$ and if
    $n \ge \tilde{\Omega}\p{\frac{2^r(\log(d/\delta))}{\beta^2} + \frac{2^r \log(1/\delta)}{\zeta}}$ and we set $\log(t) \ge r$, then it holds that
    \begin{align*}
      \Prob_{\train_n \sim \calD^n}\p{\Delta_m(\calP_n) = 0} \ge 1 - \delta.
    \end{align*}
    \item Let $R$ be the set of relevant features, $\vecx \in \{0, 1\}^d$ such that
    $\Prob_{\vecz \sim \calD_x}\p{\vecz_R = \vecx_R} \ge \zeta/2^r$, and assume that we run
    Algorithm \ref{alg:mainLevelSplitFiniteSample} with input $h = 1$ and $\log(t) \ge r$. If
    $n \ge \tilde{\Omega}\p{\frac{2^r(\log(d/\delta))}{\beta^2} + \frac{2^r \log(1/\delta)}{\zeta}}$, then it holds that
    \begin{align*}
      \p{\Exp_{\train_n \sim \calD^n}\b{m_{n}(\vecx)} - m(\vecx)}^2 \le \delta.
    \end{align*}
  \end{Enumerate}
\end{theorem}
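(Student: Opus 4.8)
The plan is to prove all three parts from one common fact: under $(\beta,r)$-strong sparsity (Assumption~\ref{asp:strongSparsityLevelSplit}), the level-split Algorithm~\ref{alg:mainLevelSplitFiniteSample} picks, with high probability, only relevant coordinates during its first $r$ levels, so that after $r$ levels the set of splits $S_r$ contains the relevant set $R$. Indeed, invoking Lemma~\ref{lem:splitRelevantFiniteSampleLevelSplitStrongSparsity} with its probability parameter set so that the failure probability is at most $\delta$, and using $n\ge\tilde{\Omega}(2^r\log(d/\delta)/\beta^2)$, on an event of probability $\ge 1-\delta$ the coordinate chosen at each level $\ell\le r$ belongs to $R\setminus S_{\ell-1}$ (relevant coordinates already in $S$ give zero gain in $V_n$, so the strict gap $\beta$ forces a new relevant coordinate), hence $R\subseteq S_r$. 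Since $m$ is $r$-sparse with relevant features $R$, conditioning on $\vecx_{S_r}$ determines $m(\vecx)$, so $\bar L(S_r)=0$.

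For Part 1, I would then apply Claim~\ref{clm:proof:levelSplitsBias:partitionMSEsetMSE} with $\ell=r$ to get $\bar L(\calP_r)\le \bar L(S_r)+8\cdot\frac{2^r\, r\ln(2dr)+\ln(1/\delta)}{n}=\tilde{O}(2^r\log(d/\delta)/n)$ on an event of probability $\ge 1-\delta$. Since $\log(t)\ge r$, the output partition $\calP_n$ is a refinement of $\calP_r$, and $\bar L$ is monotone under refinement, so $\bar L(\calP_n)\le \bar L(\calP_r)$, which is exactly the claimed bias bound; a union bound over the two events finishes Part 1.

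For Part 2, the extra marginal assumption $\Prob_{\vecz\sim\calD_x}(\vecz_R=\vecw)\notin(0,\zeta/2^r)$ is precisely what removes the ``stuck cell'' loss term of Claim~\ref{clm:proof:levelSplitsBias:partitionMSEsetMSE}. On the event that the first $r$ splits are all in $R$, I would show that every cell of the intermediate partitions with positive $\calD_x$-mass has mass at least $\zeta/2^r$: a subcell of the form $\{\vecz:\vecz_{S_\ell}=(\vecv,k)\}$ has probability $\sum_{\vecw\supseteq(\vecv,k)}\Prob_{\vecz}(\vecz_R=\vecw)$, which is positive only if some summand is positive, and then that summand is $\ge\zeta/2^r$ by hypothesis, so the whole sum is $\ge\zeta/2^r$. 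With the additional $n\ge\tilde{\Omega}(2^r\log(1/\delta)/\zeta)$ samples, a Chernoff bound and a union bound over the at most $2^r$ such cells across the $r$ levels show that every positive-mass subcell contains a training point, so the algorithm never declines a relevant split; zero-mass subcells are harmless since they change neither $\bar B$ nor $\Delta_m(\cdot)$. Hence each positive-mass cell of $\calP_n$ lies inside a single set $\{\vecz:\vecz_R=\vecw\}$, on which $m\equiv h(\vecw)$ is constant, so $\Delta_m(\calP_n)=0$ (Definition~\ref{def:valueDiameter}).

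For Part 3, honesty ($h=1$) makes the leaf-membership weights $w^{(j)}(\vecx)$ independent of the labels $y^{(j)}$ entering the leaf average, so the tower rule gives $\Exp_{\train_n}[m_n(\vecx)]=\Exp_{\train_n}\b{\sum_j w^{(j)}(\vecx)\,m(\vecx^{(j)})}$. Running the Part 2 argument localized to the single set $\{\vecz:\vecz_R=\vecx_R\}$, which has mass $\ge\zeta/2^r$ by hypothesis, there is an event $G$ of probability $\ge 1-\delta$ on which all of $R$ is split within the first $r$ levels and this set contains a point of the leaf-estimation half; on $G$ the backed-off cell $\calT_n(S_r,\vecx)$ equals $\{\vecz:\vecz_R=\vecx_R\}$, every training point in it has $m(\vecx^{(j)})=m(\vecx)$, and the weights sum to one, so $\sum_j w^{(j)}(\vecx)m(\vecx^{(j)})=m(\vecx)$. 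On $\bar G$ this quantity and $m(\vecx)$ both lie in $[-1/2,1/2]$, so their difference is at most $1$ in absolute value; therefore $|\Exp_{\train_n}[m_n(\vecx)]-m(\vecx)|\le\Prob(\bar G)\le\delta$, and squaring (with $\delta\le 1$) yields $(\Exp_{\train_n}[m_n(\vecx)]-m(\vecx))^2\le\delta$. The hard part is Part 2: carefully isolating the declined relevant splits as the sole error source in Claim~\ref{clm:proof:levelSplitsBias:partitionMSEsetMSE}, separating harmless zero-mass subcells from positive-mass ones that must be hit by samples, and combining the ``picks only relevant coordinates'' and ``never declines a relevant split'' events so that $\calP_n$ genuinely refines the $R$-partition on the support of $\calD_x$.
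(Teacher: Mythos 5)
Your proposal is correct and takes essentially the same approach as the paper's: each part invokes Lemma~\ref{lem:splitRelevantFiniteSampleLevelSplitStrongSparsity} to establish that the first $r$ chosen coordinates are exactly $R$ (hence $\bar L(S_r)=0$), then combines this with Claim~\ref{clm:proof:levelSplitsBias:partitionMSEsetMSE} plus monotonicity of $\bar L$ under refinement (Part 1), with a multiplicative Chernoff bound and union bound over the at-most-$2^r$ positive-mass $R$-cells to conclude $\Delta_m(\calP_n)=0$ (Part 2), and with honesty killing the noise term plus a conditional argument on the high-probability event $G$, bounding the off-event contribution by $1$ via boundedness of $m$ (Part 3). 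The only differences are cosmetic: in Part 3 you bound $|\Exp[m_n(\vecx)]-m(\vecx)|$ by $\Prob(\bar G)$ and then square, while the paper passes the square inside via Jensen and bounds the expected square directly by $\delta$; and your phrase ``all of $R$ is split within the first $r$ levels'' should be read as ``the algorithm \emph{selects} all coordinates in $R$'' — a selected relevant split may be declined because the sibling subcell is empty, but then every sample in the surviving cell already agrees with $\vecx$ on that coordinate, so the conclusion $m(\vecx^{(j)})=m(\vecx)$ for all weighted samples holds regardless, which is also the implicit reading of the paper's proof.
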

\begin{proof}
    For the first part of the theorem we observe that Lemma
  \ref{lem:splitRelevantFiniteSampleLevelSplitStrongSparsity} implies that
  $\bar{L}(S_{\level}) = 0$. Then using Claim
  \ref{clm:proof:levelSplitsBias:partitionMSEsetMSE} the first part of the theorem
  follows.\\

    For the second part of the theorem we fix any possible cell $A$ after doing a full
  partition on the first $r$ splits of Algorithm
  \ref{alg:mainLevelSplitFiniteSample}. For simplicity of the exposition of this
  proof we define for every subset $B$ of $\{0, 1\}^n$ the probability
  $P_B \triangleq \Prob_{\vecx \sim \calD_x}\b{\vecx \in B}$ and the empirical
  probability
  $\hat{P}_B \triangleq \frac{1}{n} \sum_{i = 1}^n \chara\{\vecx^{(j)} \in B\}$. Using
  the multiplicative form of the Chernoff bound we get that
  \[ \Prob_{\train_n \sim \calD^n}\p{n \hat{P}_A \ge \p{1 - \sqrt{\frac{2 \log(1/\delta)}{n P_A}}} n P_A} \ge 1 - \delta. \]
  Hence for $n \ge \frac{8 \log(1/\delta)}{P_A}$ we have that
  \[ \Prob_{\train_n \sim \calD^n}\p{\sum_{i = 1}^n \chara\{\vecx^{(i)} \in A\} \ge 1} \ge 1 - \delta. \]
  Next we can apply a union bound over all possible cells $A$ that are defined via splits on the
  coordinates in $R$ and using our assumption that $P_A \ge \frac{\zeta}{2^r}$ we get
  that for $n \ge \frac{24 2^r (r + \log(1/\delta)}{\zeta}$ it holds that
  \begin{align} \label{eq:proof:strongSparsityBias:leafprobability}
    \Prob_{\train_n \sim \calD^n}\p{\bigvee\limits_A \p{\sum_{i = 1}^n \chara\{\vecx^{(i)} \in A\} \ge 1}} \ge 1 - \delta.
  \end{align}
  Now let $S$ be the set of splits after $r$ iterations of the Algorithm
  \ref{alg:mainLevelSplitFiniteSample}. Then Lemma
  \ref{lem:splitRelevantFiniteSampleLevelSplitStrongSparsity} implies that $S = R$ and
  $\bar{L}(S) = 0$. Finally from \eqref{eq:proof:strongSparsityBias:leafprobability} we
  also have that the partition $\calP_r$ after $r$ iterations of Algorithm
  \ref{alg:mainLevelSplitFiniteSample} is the full partition to all the cells
  of $R$ and hence
  \[ \Exp_{\vecx \sim \calD_x} \b{\p{m(\vecx) - \Exp_{\vecz \sim \calD_x}\b{m(\vecz) \mid \vecz \in \calP_n(\vecx)}}^2} \le \Exp_{\vecx \sim \calD_x} \b{\p{m(\vecx) - \Exp_{\vecz \sim \calD_x}\b{m(\vecz) \mid \vecz_S = \vecx_S}}^2} \]
  where the later is $0$ with high probability because of Lemma
  \ref{lem:splitRelevantFiniteSampleLevelSplitStrongSparsity}. This means that with
  probability at least $1 - \delta$ it holds that
  \[ \sum_{A \in \calP_n} \Prob_{\vecx \sim \calD_x} \p{\vecx \in A} \cdot \Exp_{\vecx \sim \calD_x}\b{\p{m(\vecx) - \Exp_{\vecz \sim \calD_x}\b{m(\vecz) \mid \vecz \in A}}^2 \mid \vecx \in A} = 0. \]
  Since all the summands in the above expression are positive, it has to be that for
  every cell $A \in \calP_n$ it holds that either
  \[ \Prob_{\vecx \sim \calD_x}\p{\vecx \in A} = 0 ~~~~\text{or}~~~~ \Exp_{\vecx \sim \calD_x}\b{\p{m(\vecx) - \Exp_{\vecz \sim \calD_x}\b{m(\vecz) \mid \vecz \in A}}^2 \mid \vecx \in A} = 0 \]
  which from the definition of value-diameter implies that
  \[ \Prob_{\vecx \sim \calD_x}\p{\vecx \in A} = 0 ~~~~\text{or}~~~~ \Delta_m(A) = 0 \]
  and in turn this implies $\Delta_m(\calP_n) = 0$ with probability at least
  $1 - \delta$. \\

    The third part of the theorem follows similar lines as the argument we used in the Bias-Variance decomposition Lemma~\ref{lem:biasVarianceDeepHonestForests}, albeit we repeat it here, since we are only interested at evaluating bias at a fixed $x$ and hence we can prove a stronger statement that only requires a lower bound on the density of $x$ and not a uniform lower bound on the density of all cells defined via splits on the relevant variables.

    We define for simplicity
  $w^{(j)}(\vecx) = \frac{\chara\{\vecx_{\calT_n(S, \vecx)}^{(j)}=\vecx_{\calT_n(S, \vecx)}\}}{N_n(\vecx; \calT_n(S, \vecx))}$ and hence
  $m_{n}(\vecx) = \sum_{i = 1}^n w^{(j)}(\vecx) y^{(j)}$ and we have:
  \begin{align*}
    & \left(\Exp_{\train_n \sim \calD^n}[m_{n}(\vecx)] - m(\vecx)\right)^2
    = \\
    & ~~~~~~~~~~ = \left(\Exp_{\train_n \sim \calD^n}\left[\sum_{j = 1}^n w^{(j)}(\vecx)\, (y^{(j)} - m(\vecx^{(j)}))\right] +
    \Exp_{\train_n \sim \calD^n}\left[\sum_{j = 1}^n w^{(j)}(\vecx)\, (m(\vecx^{(j)}) - m(\vecx))\right]\right)^2
  \end{align*}
  Due to honesty $w^{(j)}(\vecx)$ is independent of $y^{(j)}$ and we have that the first term is
  equal to $0$ by a tower law. Thus we have:
  \begin{align*}
    \p{\Exp_{\train_n \sim \calD^n}\b{m_{n}(\vecx)} - m(\vecx)}^2 =~& \p{\Exp_{\train_n \sim \calD^n}\b{\sum_{j = 1}^s w^{(j)}(\vecx)\, (m(\vecx^{(j)}) - m(\vecx))}}^2\\
    \leq~& \Exp_{\train_n \sim \calD^n}\b{\p{\sum_{j = 1}^n w^{(j)}(\vecx) (m(\vecx^{(j)}) - m(\vecx))}^2}\\
    \intertext{Let also $A = \{\vecz \mid \vecz_R = \vecx_R\}$, then using the
    multiplicative form of the Chernoff Bound from the proof of the second part of the theorem
    above we get
    $\Prob_{\train_n \sim \calD^n}\p{\sum_{i = 1}^n \chara\{\vecx^{(i)} \in A\} \ge 1} \ge 1 - \delta$.
    Therefore with probability $1 - \delta$ the path of the tree that leads to $\vecx$ has split
    all the relevant coordinates $R$ and hence for all $j$ such that $w^{(j)}(\vecx) > 0$ it
    holds that $\vecx^{(j)}_R = \vecx_R$ which in turn implies that
    $m(\vecx^{(j)}) = m(\vecx)$. With the rest $\delta$ probability the square inside the
    expectation is at most $1$ since $m(\cdot) \in \b{-\frac{1}{2}, \frac{1}{2}}$, hence we get}
    \p{\Exp_{\train_n \sim \calD^n}\b{m_{n}(\vecx)} - m(\vecx)}^2 \le & ~~~\delta.
  \end{align*}
\end{proof}

\subsection{Proof of Theorem \ref{thm:finalRecoverySubmodularIndependentBalancedProperLevelSplitFiniteSample}}
\label{sec:app:levelSplit:Shallow:mainTheoremProof}

Observe that the output estimate $m_n(\cdot; S)$ and partition $\calP_n$ of
Algorithm~\ref{alg:mainLevelSplitFiniteSample}, satisfies the conditions of
Lemma~\ref{lem:bias-variance}. Moreover, by Corollary~\ref{cor:criticalradiusShallow}, we
have that the critical radius quantity $\delta_n$ is of order
$\Theta\p{\sqrt{\frac{t\log(d\, t)(1+\log(n))}{n}}}$, if we grow the tree at depth $\log(t)$.
Thus applying the bound presented in \eqref{eq:mainBeforeFinal} with the bound on
$\delta_n$ we have the following cases:
\begin{Enumerate}
  \item from case 1 of Theorem
  \ref{thm:recoverySubmodularIndependentBalancedProperLevelSplitFiniteSample} we get case
  1 of Theorem
  \ref{thm:finalRecoverySubmodularIndependentBalancedProperLevelSplitFiniteSample},
  \item from case 2 of Theorem
  \ref{thm:recoverySubmodularIndependentBalancedProperLevelSplitFiniteSample} we get case
  2 of Theorem
  \ref{thm:finalRecoverySubmodularIndependentBalancedProperLevelSplitFiniteSample} and
  \item from case 1 of Theorem \ref{thm:biasStronglySparseLevelSplitFiniteSample} we get
  case 3 of Theorem
  \ref{thm:finalRecoverySubmodularIndependentBalancedProperLevelSplitFiniteSample}.
\end{Enumerate}

\subsection{Proof of Theorem \ref{thm:finalRecoverySubmodularIndependentBalancedProperLevelSplitFiniteSampleFullyGrown}} \label{sec:app:proofs:levelSplits:variance:fullyGrown}

From case 2 of Theorem \ref{thm:biasStronglySparseLevelSplitFiniteSample}
and since the maximum possible value diameter is $1$, we have that if
$s \ge \tilde{\Omega}\p{\frac{2^r(\log(d/\delta))}{\beta^2} + \frac{2^r \log(1/\delta)}{\zeta}}$
then $\Exp_{\train_n \sim \calD^n}\b{\Delta_m(\calP_s)} \le \delta$ which implies
$\Exp_{\vecx \sim \calD_x}[(\bar{m}_s(\vecx) - m(\vecx))^2] \le \delta$. Putting this
together with the Lemma~\ref{lem:biasVarianceDeepHonestForests} we get that if
$s = \tilde{\Omega}\p{\frac{2^r(\log(d/\delta))}{\beta^2} + \frac{2^r \log(1/\delta)}{\zeta}}$ then
\begin{equation*}
  \Prob_{\train_n \sim \calD^n}\p{\Exp_{\vecx \sim \calD_x}[(m_{n, s}(\vecx) - m(\vecx))^2] \ge \tilde{\Omega}\p{\frac{2^r(\log(d/(\delta \cdot \delta')))}{n \cdot \beta^2} + \frac{2^r \log(1/(\delta \cdot \delta'))}{n \cdot \zeta}} + \delta} \le \delta'.
\end{equation*}
From the above we get Theorem
\ref{thm:finalRecoverySubmodularIndependentBalancedProperLevelSplitFiniteSampleFullyGrown} by setting
$\delta = \tilde{\Omega}\p{\frac{2^r(\log(d/\delta'))}{n \cdot \beta^2} + \frac{2^r \log(1/\delta')}{n \cdot \zeta}}$.
 \newpage

\section{Proofs for Breiman's Algorithm} \label{sec:app:proofs:Breiman}

  In this Section we present the proof of the Theorem
\ref{thm:finalRecoverySubmodularIndependentBalancedProperBreimanFiniteSample}
and the Theorem \ref{thm:finalRecoverySubmodularIndependentBalancedProperBreimanFiniteSampleFullyGrown}.
We start with a proof about the bias of the trees that are produced by the Algorithm
\ref{alg:mainBreimanFiniteSample} and then we show how we can combine this with a bias-variance decomposition
and bounds on the variance part.

\subsection{Bounding The Bias} \label{sec:app:proofs:Breiman:Bias}

  We start with definitions of the empirical mean squared error for a given
partition $\calP$ and the empirical mean squared error of a leaf for a particular
leaf $A$. For the derivations below, we remind the following definitions from
Section~\ref{sec:Breiman}:
for a cell $A$, we define the set $\calZ_n(A)$, as the subset of the training set
$\calZ_n(A) = \{j \mid \vecx^{(j)} \in A\}$ and we define the partition $\calU_n(\calP)$ of the
training set $\train_n$ as $\calU_n(\calP) = \{\calZ_n(A) \mid A \in \calP\}$.

\begin{align} \label{eq:MSEFiniteSampleBinaryBreimanFull}
  L_n(\calP) & \triangleq \frac{1}{n} \sum_{j \in [n]} \p{y^{(j)} - m_n(\vecx^{(j)}; \calP)}^2 \\
  & = \frac{1}{n} \sum_{j \in [n]} \p{y^{(j)}}^2 + \frac{1}{n} \sum_{j \in [n]} m^2_n(\vecx^{(j)}; \calP) - 2 \sum_{j \in [n]} \frac{1}{n} y^{(j)} m_n(\vecx^{(j)}; \calP) \nonumber \\
  & = \frac{1}{n} \sum_{j \in [n]} \p{y^{(j)}}^2 + \sum_{Z \in \calZ_n(\calP)} \frac{\abs{Z}}{n} m^2_n(\vecx^{(Z)}; \calP) - 2 \sum_{Z \in \calZ_n(\calP)} \frac{\abs{Z}}{n} \p{\sum_{j \in Z} \frac{1}{\abs{Z}} y^{(j)}} m_n(\vecx^{(Z)}; \calP) \nonumber \\
  & = \frac{1}{n} \sum_{j \in [n]} \p{y^{(j)}}^2 - \frac{1}{n} \sum_{j \in [n]} m^2_n(\vecx^{(j)}; \calP) \nonumber \\
  & \triangleq \frac{1}{n} \sum_{j \in [n]} \p{y^{(j)}}^2 - V_n(\calP). \label{eq:MSEFiniteSampleBinaryBreimanFull:2}
\end{align}

\begin{align} \label{eq:MSEFiniteSampleBinaryBreimanCell}
  L^{\ell}_n(A) & \triangleq \frac{1}{N_n(A)} \sum_{j \in \calZ_n(A)} \p{y^{(j)} - m_n(\vecx^{(j)}; A)}^2 \\
  & = \frac{1}{N_n(A)} \sum_{j \in \calZ_n(A)} \p{y^{(j)}}^2 + \frac{1}{N_n(A)} \sum_{j \in \calZ_n(A)} m^2_n(\vecx^{(j)}; A) - 2 \sum_{j \in \calZ_n(A)} \frac{1}{N_n(A)} y^{(j)} \cdot m_n(\vecx^{(j)}; A) \nonumber \\
  & = \frac{1}{N_n(A)} \sum_{j \in \calZ_n(A)} \p{y^{(j)}}^2 + m^2_n(\vecx^{(\calZ_n(A))}; A) - 2 \p{\sum_{j \in \calZ_n(A)} \frac{1}{N_n(A)} y^{(j)}} m_n(\vecx^{(Z_n(A))}; A) \nonumber \\
  & = \frac{1}{N_n(A)} \sum_{j \in \calZ_n(A)} \p{y^{(j)}}^2 - m^2_n(\vecx^{(j)}; A) \nonumber \\
  & \triangleq \frac{1}{N_n(A)} \sum_{j \in Z_n(A)} \p{y^{(j)}}^2 - V^{\ell}_n(A), \label{eq:MSEFiniteSampleBinaryBreimanCell:2}
\end{align}

  We first prove a technical lemma for the concentration of the function $V^{\ell}_n$
around the function $\bar{V}_{\ell}$ uniformly over the set of sufficiently large cells $A$. Observe that $\bar{V}_{\ell}$ is not the expected
value of $V^{\ell}_n$ and hence this concentration bound is not a trivial one. We first define formally the notion of a large cell and then provide the uniform concentration bound.

\begin{definition}[\textbf{Large Cells}]
    Let $\calA(q, \zeta)$ be the set of $(q, \zeta)$-large cells defined as:
  $A \in \calA(q, \zeta)$ if and only if $A\subseteq \{0,1\}^d$, $\abs{A} \ge 2^{d - q}$ and
  $\Prob_{\vecx \sim \calD_x}\p{\vecx \in A} \ge \zeta/2^q$.
\end{definition}

\begin{lemma} \label{lem:concentrationOfVarianceFunctionBreiman}
    If $d > 1$, $r \in [d]$, $t > 1$ and
  $n \ge \frac{2^{3 + q}}{\zeta} \p{q \log(d) + t}$ then we have that
  \begin{align*}
    \Prob_{\train_n \sim \calD^n} \p{\sup_{A \in \calA(q, \zeta)} \abs{V^{\ell}_n(A, i) - \bar{V}_{\ell}(A, i)} \ge \sqrt{\frac{2^{18 + q}(q \log(d) + t)}{\zeta \cdot n}}} \le \exp\p{- t}.
  \end{align*}
\end{lemma}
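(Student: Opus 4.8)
The plan is to follow the template of the proof of Lemma~\ref{lem:concentrationOfVarianceFunctionLevelSplit}: interpolate between $V^{\ell}_n(A,i)$ and $\bar{V}_{\ell}(A,i)$ through the intermediate quantity
\[
  J^{\ell}_n(A,i) \triangleq \sum_{z\in\{0,1\}} \frac{N_n(A^i_z)}{N_n(A)}\p{\Exp_{(\vecx, y)\sim\calD}\b{y\mid \vecx\in A^i_z}}^2,
\]
which replaces the empirical within-leaf means $g_n(A^i_z)$ by the true conditional means but keeps the empirical leaf masses. Then $\abs{V^{\ell}_n(A,i)-\bar{V}_{\ell}(A,i)} \le \abs{V^{\ell}_n(A,i)-J^{\ell}_n(A,i)} + \abs{J^{\ell}_n(A,i)-\bar{V}_{\ell}(A,i)}$, and I would bound the two pieces separately and uniformly over $A\in\calA(q,\zeta)$ and $i\in[d]$. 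A preliminary ingredient I need is a uniform lower bound $N_n(A) \ge n\, P_A/2 \ge n\zeta/2^{q+1}$ over all $(q,\zeta)$-large cells $A$ (writing $P_A = \Prob_{\vecx\sim\calD_x}(\vecx\in A)$): this follows from the multiplicative Chernoff bound together with a union bound over the large cells — each a subcube fixing at most $q$ coordinates, so there are at most $(q+1)(2d)^q$ of them, with logarithm $O(q\log d)$ — and it is precisely here that the hypothesis $n \ge \frac{2^{3+q}}{\zeta}\p{q\log d + t}$ enters.

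For the first piece I would condition on which samples fall into each sub-leaf; given $\calZ_n(A^i_z)$, the labels $y^{(j)}$ for $j\in\calZ_n(A^i_z)$ are i.i.d. with mean $\mu_z \triangleq \Exp\b{y\mid\vecx\in A^i_z} = \Exp\b{m(\vecx)\mid\vecx\in A^i_z}$ and lie in $[-1,1]$, so Hoeffding gives $\abs{g_n(A^i_z) - \mu_z} \le \sqrt{2t'/N_n(A^i_z)}$ with probability $1-2e^{-t'}$. Since $\abs{g_n(A^i_z)},\abs{\mu_z}\le 1$ we have $\abs{g_n(A^i_z)^2 - \mu_z^2} \le 2\abs{g_n(A^i_z)-\mu_z}$, and summing against the weights $N_n(A^i_z)/N_n(A)$, using $\sum_z \sqrt{N_n(A^i_z)} \le \sqrt{2N_n(A)}$ (Cauchy–Schwarz) and the lower bound on $N_n(A)$, I obtain $\abs{V^{\ell}_n(A,i)-J^{\ell}_n(A,i)} \le 4\sqrt{t'/N_n(A)} = O\p{\sqrt{2^q t'/(\zeta n)}}$.

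For the second piece, write $\hat p_z = N_n(A^i_z)/N_n(A)$ and $p_z = P_{A^i_z}/P_A = \Prob(\vecx_i = z\mid \vecx\in A)$. The key observation is that $\hat p_0 + \hat p_1 = p_0 + p_1 = 1$, so the two per-leaf errors cancel: $J^{\ell}_n(A,i) - \bar{V}_{\ell}(A,i) = (\hat p_0 - p_0)(\mu_0^2 - \mu_1^2)$, with $\abs{\mu_0^2-\mu_1^2}\le 1/4$ since $m\in[-1/2,1/2]$. Conditionally on $N_n(A)$ and the identities of the samples in $A$ (for a direction $i$ not already fixed in $A$; the degenerate case just reduces to the concentration of $g_n(A)^2$, which is subsumed), the count $N_n(A^i_0)$ is $\mathrm{Binomial}(N_n(A),p_0)$, so Hoeffding gives $\abs{\hat p_0 - p_0}\le\sqrt{t'/(2N_n(A))}$ with probability $1-2e^{-t'}$, and with the lower bound on $N_n(A)$ this yields $\abs{J^{\ell}_n(A,i) - \bar{V}_{\ell}(A,i)} \le \tfrac14\sqrt{2^q t'/(\zeta n)}$. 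Finally I would take $t' = t + (q+1)\log(2d) + O(1) = O(q\log d + t)$ so that a union bound over the large cells, the $d$ split directions, the two sub-cells, and the three failure events (the two Hoeffding estimates and the $N_n(A)$ lower bound) costs total probability at most $e^{-t}$; adding the two bounds and absorbing every absolute constant into the deliberately generous factor $2^{18}$ gives the statement.

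The hard part is the second piece. The naive route — bounding $\abs{N_n(A^i_z)/n - P_{A^i_z}}$ additively by Hoeffding and dividing by $P_A \ge \zeta/2^q$ — loses a factor $2^q/\zeta$ instead of $\sqrt{2^q/\zeta}$, which is too weak. The cancellation coming from $\hat p_0 + \hat p_1 = 1$ is what recovers the correct dependence, so working with the conditional fractions $\hat p_z$ rather than the unconditional cell masses is the crucial step. The only other point that needs care is making the lower bound $N_n(A)\gtrsim n\zeta/2^q$ hold simultaneously for all large cells, which is exactly what forces the stated lower bound on $n$.
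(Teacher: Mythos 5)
Your proposal follows the paper's own route essentially step for step: the same intermediate quantity $J_n(A,i)$ replacing the empirical leaf means by conditional means, Hoeffding conditional on the sample-to-leaf assignment for the first piece, a conditional (binomial) Hoeffding for the fractions $\hat p_z = N_n(A^i_z)/N_n(A)$ in the second piece, a multiplicative Chernoff bound to keep $N_n(A)\gtrsim n\zeta/2^q$ uniformly, and a union bound over the $O((q+1)(2d)^q)$ large cells and the $d$ split directions. The one cosmetic difference is that you exploit the identity $\hat p_0+\hat p_1 = p_0+p_1 = 1$ to collapse the second piece to $(\hat p_0-p_0)(\mu_0^2-\mu_1^2)$, whereas the paper just triangle-inequalities $\sum_z|\hat p_z - p_z|\,\mu_z^2$; since $|\hat p_0 - p_0|=|\hat p_1 - p_1|$ the two give the same bound up to constants, so this is a mild stylistic simplification rather than a different argument.
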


\begin{proof}
    For the purpose of the proof we will define the following
  function that interpolates between the sample based function
  $V^{\ell}_n$ and the population based function $\bar{V}_{\ell}$.
  \begin{align}
            J_n(A, i) & \triangleq \sum_{z \in \{0, 1\}} \frac{N_n(A^i_z)}{N_n(A)} \p{\Exp_{(\vecx, y) \sim \calD}\b{y \mid \vecx \in A^i_z}}^2 \label{eq:definition:empiricalMeanExactMeanEmpiricalFunction:Breiman}
  \end{align}

\noindent First we bound the difference
  $\abs{V^{\ell}_n(A, i) - J_n(A, i)}$ in the following claim.

  \begin{claim} \label{clm:empiricalMean_EmpiricalvsExactMean_EmpiricalFunction:Breiman}
      If $d > 1$, $q \in [d]$, $t > 1$, and
    $n \ge \frac{2^q}{\zeta} \p{4 q \log d + t}$ then we have that
    \begin{align*}
      \Prob_{\train_n \sim \calD^n} \p{\sup_{A \in \calA(q, \zeta)} \abs{V^{\ell}_n(A, i) - J_n(A, i)} \ge  \sqrt{\frac{2^{8 + q}(q \log(d) + t)}{\zeta \cdot n}}} \le \exp\p{- t}.
    \end{align*}
  \end{claim}

  \begin{proof}
    We start by fixing a specific cell $A \in \calA(q, \zeta)$. This cell $A$ is fixed before we observe the
    training samples $\train_n$. We have that
      \begin{align*}
        \abs{V^{\ell}_n(A, i) - J_n(A, i)} & = \abs{\sum_{z \in \{0, 1\}} \frac{N_n(A^i_z)}{N_n(A)} \p{\p{\sum_{j \in \calZ_n(A^i_z)} \frac{1}{N_n(A^i_z)} y^{(j)}}^2 - \Exp_{(\vecx, y) \sim \calD}\b{y \mid \vecx \in A^i_z}^2}} \\
        \intertext{Since $m(\vecx) \in \b{-\frac{1}{2}, \frac{1}{2}}$ and $\eps \in \b{-\frac{1}{2}, \frac{1}{2}}$, we have that $y \in [-1, 1]$ and hence}
        & \le 4 \sum_{z \in \{0, 1\}} \frac{N_n(A^i_z)}{N_n(A)} \abs{\p{\sum_{j \in \calZ_n(A^i_z)} \frac{1}{N_n(A^i_z)} y^{(j)} - \Exp_{(\vecx, y) \sim \calD}\b{y \mid \vecx \in A^i_z}}}.
      \end{align*}

        Now let $\calY(A)$ be the distribution of the random variable $y$ conditional on the fact
    that the random variable $\vecx$ lies in the cell $A$. Observe that since $A$ is a cell
    fixed before observing the training set $\train_n$, the samples $y^{(j)}$ for
    $j \in \calZ_n(A)$ are i.i.d. samples from the distribution $\calY(A)$, conditional on
    the event that $\vecx^{(j)}$ is in $A$. We define $\calQ(A, K)$ to be the event that
    $\calZ_n(A) = K$ where $K \subseteq [n]$ and we have the following using Hoeffding's inequality.
    \begin{align*}
        \Prob_{\train_n \sim \calD^n} \p{\abs{\sum_{j \in \calZ_n(A^i_z)} \frac{1}{N_n(A^i_z)} y^{(j)} - \Exp_{(\vecx, y) \sim \calD}\b{y \mid \vecx \in A^i_z}} \ge \sqrt{\frac{t + \ln(2)}{2 N_n(A^i_z)}} \mid \calQ(A^i_z, K^z)} \le e^{-t},
    \end{align*}
    \noindent where $z \in \set{0, 1}$ and $K^0$, $K^1$ are two disjoint subsets of $[n]$.
    Observe that conditional on $\calQ(A, K^i)$ the number $N_n(A^i_z)$ is equal to $\abs{K^i}$ and
    hence is not a random variable any more.
    Then from union bound we have that if we condition on $\calQ(A^i_0, K^0) \cap \calQ(A^i_1, K^1)$ then we have that
    \begin{align*}
        \Prob_{\train_n \sim \calD^n} \p{\bigvee_{z \in \{0,1\}} \abs{\sum_{j \in \calZ_n(A^i_z)} \frac{1}{N_n(A^i_z)} y^{(j)} - \Exp_{(\vecx, y) \sim \calD}\b{y \mid \vecx \in A^i_z}} \ge \sqrt{\frac{(2 \ln(2) + t)}{2 N_n(A^i_z)}}}
        \le e^{-t},
    \end{align*}
    \noindent where we dropped the condition on $\calQ(A^i_0, K^0) \cap \calQ(A^i_1, K^1)$ from the
    above notation for simplicity of exposition. Hence we have the following which holds again if we
    condition on the event $\calQ(A^i_0, K^0) \cap \calQ(A^i_1, K^1)$.
    \begin{align} \label{eq:proof:claim:yConcentration:simple:Breiman}
        \Prob_{\train_n \sim \calD^n} \p{\abs{V^{\ell}_n(A, i) - J_n(A, i)} \ge \frac{\sqrt{2 (2 \ln(2) + t)} \sum_{z \in \{0, 1\}} \sqrt{N_n(A^i_z)}}{N_n(A)}} \le \exp\p{- t}.
    \end{align}
    \noindent But we know that $\sum_{z \in \{0, 1\}} \abs{N_n(A^i_z)} = N_n(A)$, and also we have
    that the for any vector $\vecw \in \R^k$ it holds that
    $\norm{\vecw}_1 \le \sqrt{k} \norm{\vecw}_2$. Therefore we have that
    \begin{align*}
        \frac{\sum_{z \in \{0, 1\}} \sqrt{N_n(A^i_z)}}{N_n(A)} \le  \sqrt{\frac{2}{N_n(A)}}.
    \end{align*}
    \noindent  Now using this in inequality~\eqref{eq:proof:claim:yConcentration:simple:Breiman} and
    taking the expectation over $\calD^n$ conditional on the event $\calR(A, k)$ that is equal to the event that $N_n(A) = k$ and by the law of total expectation
    we get the following
    inequality for any a priori fixed cell $A$.
    \begin{align} \label{eq:proof:claim:yConcentration:medium:Breiman}
        \Prob_{\train_n \sim \calD^n} \p{\abs{V^{\ell}_n(A, i) - J_n(A, i)} \ge \sqrt{\frac{4(2 \ln(2) + t)}{k}} \mid \calR(A, k)} \le \exp\p{- t}.
    \end{align}
    \noindent Since $A$ is a cell of size at least $2^{d - q}$ we have that there exists
    a set $Q_A \subseteq [d]$ with $q_A = \abs{Q_A} \le q$ and a vector
    $\vecw_A \in \{0, 1\}^{q_A}$ such that $\vecx \in A \Leftrightarrow \vecx_{Q_A} = \vecw_A$.
    Therefore by the assumption that $A \in \calA(q, \zeta)$,
    we have that
    $\Prob_{\vecx}\p{\vecx \in A} \ge \frac{\zeta}{2^q}$.
    Hence from classical Chernoff bound for binary random variables we have that
    \begin{align} \label{eq:proof:claim:yConcentration:LeafSize:Breiman}
      \Prob_{\train_n \sim \calD^n} \p{N_n(A) \le \frac{n \cdot \zeta}{8 \cdot 2^q}} \le \exp\p{-\frac{n \cdot \zeta}{2^q}}
    \end{align}
    \noindent Then by combining \eqref{eq:proof:claim:yConcentration:medium:Breiman} and
    \eqref{eq:proof:claim:yConcentration:LeafSize:Breiman} and invoking the Bayes rule we get that
    \begin{align} \label{eq:proof:claim:yConcentration:medium:prime:Breiman}
        \Prob_{\train_n \sim \calD^n} \p{\abs{V^{\ell}_n(A, i) - J_n(A, i)} \ge \sqrt{\frac{32 (2 \ln(2) + t) \cdot 2^q}{\zeta \cdot n}}} \le \exp\p{- t} + \exp\p{-\frac{n \cdot \zeta}{2^q}}.
    \end{align}
    \noindent It is also easy to see that the number of possible cells
    $A$ with size at least $2^{d - q}$ is at most
    $2^q \cdot \p{\sum_{i = 0}^q \binom{d}{i}}$ and hence
    $\abs{\calA(q, \zeta)} \le 2^q \cdot \p{\sum_{i = 0}^q \binom{d}{i}}$. Now using a union
    bound of \eqref{eq:proof:claim:yConcentration:medium:prime:Breiman}
    over all possible cells $A \in \calA(q, \zeta)$ we get
    \begin{align} \label{eq:proof:claim:yConcentration:almostFinal:Breiman}
        & \Prob_{\train_n \sim \calD^n} \p{\sup_{A \in \calA(q, \zeta)} \abs{V^{\ell}_n(A, i) - J_n(A, i)} \ge  \sqrt{\frac{32(2 \ln(2) + t) \cdot 2^q}{\zeta \cdot n}}} \le \nonumber \\
        & ~~~~~~~~~~~~~~~~~~~~~~~~~~~~~~~~~ \le \p{2^q \sum_{i = 0}^q \binom{d}{i}} \p{\exp\p{- t} + \exp\p{-\frac{n \cdot \zeta}{2^q}}}.
    \end{align}
    \noindent Finally using
    $\log\p{2^q \cdot \sum_{i = 0}^q \binom{d}{i}} \le (q + 1) \log(2 \cdot d \cdot q)$ and since $d > 1$, $q \in [d]$, the claim follows.
  \end{proof}

    Next we bound the difference
    $\abs{J_n(A, i) - \bar{V}_{\ell}(A, i)}$.

  \begin{claim} \label{clm:empiricalvsExactMean_ExactMeanExactFunction:Breiman}
      If $d > 1$, $q \in [d]$, $t > 1$, and
    $n \ge \frac{2^q}{\zeta} \p{4 q \log d + t}$ then we have that
    \begin{align*}
        \Prob_{\train_n \sim \calD^n} \p{\sup_{A \in \calA(q, \zeta)} \abs{J_n(A, i) - \bar{V}_{\ell}(A, i)} \ge \sqrt{\frac{2^{10 + q} \cdot \p{q \log(d) + t}}{n \cdot \zeta}}} \le \exp\p{- t}.
    \end{align*}
  \end{claim}

  \begin{proof}
      Since the error distribution has zero mean, i.e. $\Exp[\eps] = 0$,
    we have that
    \[ \p{\Exp_{(\vecx, y) \sim \calD}\b{y \mid \vecx \in A}}^2 = \p{\Exp_{\vecx}\b{m(\vecx) \mid \vecx \in A}}^2 \triangleq M(A) \]
    \noindent and hence
    \begin{align*}
      \abs{J_n(A, i) - \bar{V}_{\ell}(A, i)} & \le \sum_{z \in \{0, 1\}} \abs{ \frac{N_n(A^i_z)}{N_n(A)} - \Prob_{\vecx}\p{\vecx \in A^i_z \mid \vecx \in A}} \p{\Exp_{(\vecx, y) \sim \calD}\b{y \mid \vecx \in A^i_z}}^2 \\
      & \le 4 \sum_{z \in \{0, 1\}} \abs{ \frac{N_n(A^i_z)}{N_n(A)} - \Prob_{\vecx}\p{\vecx \in A^i_z \mid \vecx \in A}}.
    \end{align*}
    \noindent Now from Hoeffding bound we have that
    \begin{align*}
      \Prob_{\train_n \sim \calD^n}\p{\left.\abs{\frac{N_n(A^i_z)}{k} - \Prob_{\vecx}\p{\vecx \in A^i_z \mid \vecx \in A}} \ge t ~\right| N_n(A) = k} \le \exp\p{ - 2 \cdot k \cdot t^2}
    \end{align*}
    \noindent excluding the case
    $N_n(A) \le \frac{n \cdot \zeta}{16 \cdot 2^q}$ and taking
    expectation of both sides we get that
    \begin{align*}
      \Prob_{\train_n \sim \calD^n}\p{\abs{\frac{N_n(A^i_z)}{N_n(A)} - \Prob_{\vecx}\p{\vecx \in A^i_z \mid \vecx \in A}} \ge t} \le &  \Exp_{\train_n \sim \calD^n}\b{\exp\p{ - 2 t^2 \cdot N_n(A)} \mid N_n(A) > \frac{n \cdot \zeta}{16 \cdot 2^q}} \\
      & ~~~~ + \Prob_{\train_n \sim \calD^n}\p{N_n(A) \le \frac{n \cdot \zeta}{16 \cdot 2^q}}
    \end{align*}
    \noindent now we can use
    \eqref{eq:proof:claim:yConcentration:LeafSize:Breiman} to get that
    \begin{align*}
      \Prob_{\train_n \sim \calD^n}\p{\abs{\frac{N_n(A^i_z)}{N_n(A)} - \Prob_{\vecx}\p{\vecx \in A^i_z \mid \vecx \in A}} \ge \sqrt{\frac{8 \cdot 2^q \cdot t}{n \cdot \zeta}}} \le \exp\p{-t} + \exp\p{-\frac{n \cdot \zeta}{2^q}}
    \end{align*}
    \noindent Finally if we apply the union bound over all possible cells
    $A \in \calA(q, \zeta)$ together with
    the assumption that $n \ge \frac{2^q}{\zeta} \p{4 q \log d + t}$, the claim follows.
  \end{proof}

  \noindent If we combine Claim
  \ref{clm:empiricalMean_EmpiricalvsExactMean_EmpiricalFunction:Breiman}
  and \ref{clm:empiricalvsExactMean_ExactMeanExactFunction:Breiman}, the
  lemma follows.

\end{proof}

  We are now ready to prove that the bias of every tree that is constructed by Algorithm
\ref{alg:mainBreimanFiniteSample} is small under the Assumption
\ref{asp:diminishingReturnsBreimanFull}. We start by proving the finite sample analogues of
Lemma \ref{lem:optimalityWithoutSplitsPopulationBreiman}. First we provide a relaxed
version of the Definition \ref{def:relevantVariablesForCellBreiman} and a version with
finite samples.

\begin{definition} \label{def:relevantVariablesForCellBreimanFiniteSample}
    Given a partition $\calP$ and a cell $A$ of $\calP$, a positive
  number $\eta$ and a training set $\train_n$, we define the sets
  $\calL^\eta_n(A) = \{i \in [d] \mid V^{\ell}_n(A, i) - V^{\ell}_n(A) > \eta \}$
  and $\calL^\eta(A) = \{i \in [d] \mid \bar{V}_{\ell}(A, i) - \bar{V}_{\ell}(A) > \eta\}$.
  Similarly, when $\eta = 0$ we use the simpler notation $\calL_n(A)$ and $\calL(A)$.
\end{definition}

  Since $V_n$ is monotone decreasing with respect to $\calP$,
we have that $V_n(\calP, A, i) \ge V_n(\calP)$. Hence given $\calP$, $A$
the Algorithm \ref{alg:mainBreimanFiniteSample} chooses the direction $i$
that maximizes the positive quantity $V_n(\calP, A, i) - V_n(\calP)$. So the bad
event is that for all $j \in [d]$,
$V_n(\calP, A, i) - V_n(\calP) \ge V_n(\calP, A, j) - V_n(\calP)$ but
$\bar{V}(\calP, A, i) - \bar{V}(\calP) = 0$ and there exists a $k \in [d]$ such that
$\bar{V}(\calP, A, k) - \bar{V}(\calP) > 0$. A relaxed version of this bad event can be described
using the Definition \ref{def:relevantVariablesForCellBreimanFiniteSample}. In this language the
bad event is that the index $i \in [d]$ that the Algorithm \ref{alg:mainBreimanFiniteSample}
chooses to split does not belong to $\calL^{\eta}(\calP, A)$ although
$\calL^{\eta}(\calP, A) \neq \emptyset$. We bound the probability of this event in the next lemma.

\begin{lemma} \label{lem:splitRelevantFiniteSampleBreiman}
    If $d > 1$, $q \in [d]$, $t > 1$, and
  $n \ge \frac{2^{3 + q}}{\zeta} \p{q \log(d) + t}$ and let
  $\eta = \sqrt{\frac{2^{18 + q}(q \log(d) + t)}{\zeta \cdot n}}$ then we have that
  \begin{align*}
    \Prob_{\train_n \sim \calD^n} \p{\bigvee_{A \in \calA(q, \zeta)} \p{\p{\argmax_{i \in [d]} V^{\ell}_n(A, i)} \not\in \calL(A) \mid \calL^{2 \eta}(A) \neq \emptyset}} \le 2 \exp(- t)
  \end{align*}
\end{lemma}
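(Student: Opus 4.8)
The plan is to mirror the proof of the level-split analogue, Lemma~\ref{lem:splitRelevantFiniteSampleLevelSplit}, replacing the global variance functions by their leaf-local counterparts $V^{\ell}_n,\bar{V}_{\ell}$ and restricting attention to the family $\calA(q,\zeta)$ of large cells. First I would invoke the uniform concentration bound of Lemma~\ref{lem:concentrationOfVarianceFunctionBreiman}, together with its (strictly easier) companion bound for the no-split quantity $V^{\ell}_n(A)=g_n(A)^2$ around $\bar{V}_{\ell}(A)=(\Exp_{\vecx}[m(\vecx)\mid \vecx\in A])^2$, which follows from the same Hoeffding argument used inside Claim~\ref{clm:empiricalMean_EmpiricalvsExactMean_EmpiricalFunction:Breiman} with the split into subcells removed, followed by a union bound over the at most $2^q\sum_{i\le q}\binom{d}{i}$ cells in $\calA(q,\zeta)$. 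This yields that, for $\eta=\sqrt{2^{18+q}(q\log d+t)/(\zeta n)}$ and with probability at least $1-\exp(-t)$, uniformly over all $A\in\calA(q,\zeta)$ and all $i\in[d]$,
\[
\bigl|\,(V^{\ell}_n(A,i)-V^{\ell}_n(A))-(\bar{V}_{\ell}(A,i)-\bar{V}_{\ell}(A))\,\bigr|\le \eta ,
\]
where the universal constant lost in passing from the two radii to their sum is already absorbed into the generous factor $2^{18}$.

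Next I would translate this through Definition~\ref{def:relevantVariablesForCellBreimanFiniteSample} into two one-sided statements, exactly as in \eqref{eq:proof:chooseBadCoordinatesBound:1} and \eqref{eq:proof:chooseBadCoordinatesBound:2}. Since $\bar{V}_{\ell}$ is monotone under splitting, $i\notin\calL(A)$ forces $\bar{V}_{\ell}(A,i)-\bar{V}_{\ell}(A)=0$, so the display gives (i) $i\notin\calL(A)\Rightarrow i\notin\calL^{\eta}_n(A)$, i.e.\ $\calL^{\eta}_n(A)\subseteq\calL(A)$; while (ii) $i\in\calL^{2\eta}(A)\Rightarrow V^{\ell}_n(A,i)-V^{\ell}_n(A)>2\eta-\eta=\eta$, i.e.\ $\calL^{2\eta}(A)\subseteq\calL^{\eta}_n(A)$, each holding with probability $\ge 1-\exp(-t)$. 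Conditioning on the intersection of the two events (probability $\ge 1-2\exp(-t)$), fix any $A\in\calA(q,\zeta)$ with $\calL^{2\eta}(A)\neq\emptyset$ and pick $i\in\calL^{2\eta}(A)$; by (ii), $V^{\ell}_n(A,i)-V^{\ell}_n(A)>\eta$. The coordinate $j=\argmax_{k\in[d]}V^{\ell}_n(A,k)$ selected by Algorithm~\ref{alg:mainBreimanFiniteSample} then satisfies $V^{\ell}_n(A,j)-V^{\ell}_n(A)\ge V^{\ell}_n(A,i)-V^{\ell}_n(A)>\eta$, so $j\in\calL^{\eta}_n(A)$, and by (i) $j\in\calL(A)$. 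Taking the union over $A\in\calA(q,\zeta)$ and passing to the complement gives the claimed bound.

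The main obstacle is not in the combinatorics above — which is essentially the same booking as the level-split case — but in the fact that the concentration bound of Lemma~\ref{lem:concentrationOfVarianceFunctionBreiman} is only uniform over the large cells $\calA(q,\zeta)$, since a cell with too few samples has an unreliable empirical leaf mean and hence an unreliable $V^{\ell}_n(A,i)$. For this reason the conclusion is, and must be, stated conditionally on $A\in\calA(q,\zeta)$; the separate task of certifying that the cells actually generated by Breiman's greedy algorithm during its first $\Theta(q)$ important splits remain $(q,\zeta)$-large — which is precisely where Assumption~\ref{asp:densityLowerBound} is used — is what the subsequent lemmas handle by chaining this event across the tree. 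A secondary, minor point is keeping track of the sample-size hypothesis $n\ge 2^{3+q}(q\log d+t)/\zeta$ inherited from Lemma~\ref{lem:concentrationOfVarianceFunctionBreiman}, which is needed for the union bound over $\calA(q,\zeta)$ to dominate the extra $\exp(-n\zeta/2^q)$ term coming from the leaf-size concentration.
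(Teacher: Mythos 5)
Your proposal is correct and follows essentially the same route as the paper: apply the uniform concentration bound of Lemma~\ref{lem:concentrationOfVarianceFunctionBreiman} over $\calA(q,\zeta)$, convert it into the two one-sided set inclusions $\calL^{\eta}_n(A)\subseteq\calL(A)$ and $\calL^{2\eta}(A)\subseteq\calL^{\eta}_n(A)$, and conclude by a union bound that the greedy argmax lands in $\calL(A)$ whenever $\calL^{2\eta}(A)\neq\emptyset$. The one place you are more explicit than the paper — observing that the gap criterion compares the \emph{differences} $V^{\ell}_n(A,i)-V^{\ell}_n(A)$, so a companion concentration bound for the no-split quantity $V^{\ell}_n(A)$ about $\bar{V}_{\ell}(A)$ is also needed — is a genuine and correct clarification of what the paper's phrase ``directly applying Lemma~\ref{lem:concentrationOfVarianceFunctionBreiman}'' implicitly sweeps in, and your note that this extra piece is strictly easier (a single-leaf Hoeffding bound) and is absorbed by the slack in $2^{18}$ is accurate.
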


\begin{proof}
    Directly applying Lemma
  \ref{lem:concentrationOfVarianceFunctionBreiman} to Definition
  \ref{def:relevantVariablesForCellBreimanFiniteSample} we have that
  \begin{align} \label{eq:proof:chooseBadCoordinatesBound:1:Breiman}
    \Prob_{\train_n \sim \calD^n} \p{\bigvee_{i, A \in \calA(q, \zeta)} \p{i \in \calL^{\eta}_n(A) \mid i \not\in \calL(A)}} \le \exp(- t),
  \end{align}
  \noindent where $\eta = \sqrt{\frac{2^{18 + q}(q \log(d) + t)}{\zeta \cdot n}}$ and
  $n \ge \frac{2^{3 + q}}{\zeta} \p{q \log(d) + t}$. Similarly we have that
  \begin{align} \label{eq:proof:chooseBadCoordinatesBound:2:Breiman}
    \Prob_{\train_n \sim \calD^n} \p{\bigvee_{i, A \in \calA(q, \zeta)} \p{i \not\in \calL^{\eta}_n(A) \mid i \in \calL^{2 \eta}(A)}} \le \exp(- t).
  \end{align}
  \noindent If we combine the above inequalities we get that there is a
  very small probability that there exists an index
  $i \in \calL^{2 \eta}(A)$ but an index $j \not\in \calL(A)$ is chosen
  instead. This is summarized in the following inequality
  \begin{align} \label{eq:proof:chooseBadCoordinatesBound:combined:Breiman}
    \Prob_{\train_n \sim \calD^n} \p{\bigvee_{A \in \calA(q, \zeta)} \p{\p{\argmax_{i \in [d]} V^{\ell}_n(A, i)} \not\in \calL(A) \mid \calL^{2 \eta}(A) \neq \emptyset}} \le 2 \exp(- t)
  \end{align}
  \noindent and the lemma follows.
\end{proof}

\begin{lemma} \label{lem:optimalityWithoutSplitsFiniteSampleBreiman}
    For every partition $\calP$, under the Assumption
  \ref{asp:diminishingReturnsBreimanFull}, if for some $A \in \calP$ it holds
  that $\calL^{\eta}(A) = \emptyset$, then
  \[ \Exp_{\vecx \sim \calD_x}\b{\p{m(\vecx) - \Exp_{\vecz \sim \calD_x}\b{m(\vecz) \mid \vecz \in \calP(\vecx)}}^2 ~ \mid ~ \vecx \in A} \le C \cdot \eta \cdot \abs{\calL(A)}. \]
\end{lemma}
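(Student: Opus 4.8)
The plan is to transcribe the proof of Lemma~\ref{lem:optimalityWithoutSplitsFiniteSampleLevelSplit} into the cell-local language of Breiman's algorithm. First I would note that since $\vecx \in A$ and $A \in \calP$ we have $\calP(\vecx) = A$, so the quantity to be bounded is exactly the local mean squared error $\bar{L}_{\ell}(A) = \bar{L}_{\ell}(A, \emptyset)$ from Equation~\eqref{eq:cellMSEPopulationBinaryBreiman}; if $\Prob_{\vecx \sim \calD_x}(\vecx \in A) = 0$ the conditional expectation is vacuous and the bound is trivial, so I assume $\Prob_{\vecx \sim \calD_x}(\vecx \in A) > 0$. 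The inequality that will do the work, in place of Assumption~\ref{asp:submodularityLevelSplit}, is the $A' = A$ instance of Assumption~\ref{asp:diminishingReturnsBreimanFull}: for all $T \subseteq [d]$ and $i \in [d]$,
\begin{equation} \label{eq:cellLocalSubmod}
  \bar{V}_{\ell}(A, T \cup \{i\}) - \bar{V}_{\ell}(A, T) \le C \cdot \p{\bar{V}_{\ell}(A, i) - \bar{V}_{\ell}(A)}.
\end{equation}

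The first step is to show $\bar{L}_{\ell}(A, \calL(A)) = 0$, mirroring the fact $\bar{L}(S \cup \calR(S)) = 0$ used in the level-split proof. Splitting $A$ along all of $[d]$ produces singleton leaves, hence $\bar{L}_{\ell}(A, [d]) = 0$, and $\bar{L}_{\ell}(A, \cdot)$ is non-increasing under adding split directions (equivalently $\bar{V}_{\ell}(A, \cdot)$ is non-decreasing). So if $\bar{L}_{\ell}(A, \calL(A)) > 0$, then adjoining the elements of $[d] \setminus \calL(A)$ to $\calL(A)$ one at a time, some adjoined coordinate $j$ strictly decreases $\bar{L}_{\ell}(A, \cdot)$, i.e. strictly increases $\bar{V}_{\ell}(A, \cdot)$; plugging that intermediate set as $T$ and $i = j$ into \eqref{eq:cellLocalSubmod} forces $\bar{V}_{\ell}(A, j) > \bar{V}_{\ell}(A)$, so $j \in \calL(A)$, a contradiction.

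For the second step I would assume $\calL^{\eta}(A) = \emptyset$ and argue by contradiction: if $\bar{L}_{\ell}(A) > C \cdot \eta \cdot \abs{\calL(A)}$, write $\calL(A) = \{i_1, \dots, i_k\}$ and telescope $\bar{L}_{\ell}(A) = \bar{L}_{\ell}(A, \emptyset) - \bar{L}_{\ell}(A, \calL(A))$ (using $\bar{L}_{\ell}(A, \calL(A)) = 0$ from the previous step) along $\emptyset \subseteq \{i_1\} \subseteq \cdots \subseteq \calL(A)$. By pigeonhole, some consecutive increment exceeds $C \cdot \eta$, i.e. $\bar{V}_{\ell}(A, \{i_1, \dots, i_j\}) - \bar{V}_{\ell}(A, \{i_1, \dots, i_{j-1}\}) > C \cdot \eta$ for some $j$; applying \eqref{eq:cellLocalSubmod} with $T = \{i_1, \dots, i_{j-1}\}$ and $i = i_j$ gives $\bar{V}_{\ell}(A, i_j) - \bar{V}_{\ell}(A) > \eta$, i.e. $i_j \in \calL^{\eta}(A)$, contradicting $\calL^{\eta}(A) = \emptyset$. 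Hence $\bar{L}_{\ell}(A) \le C \cdot \eta \cdot \abs{\calL(A)}$, as claimed.

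I do not expect a real obstacle: this lemma is the cell-local copy of Lemma~\ref{lem:optimalityWithoutSplitsFiniteSampleLevelSplit}, and the single nontrivial ingredient there (approximate submodularity) is available verbatim at the cell level via \eqref{eq:cellLocalSubmod}. The only points needing a line of care are that $\bar{L}_{\ell}(A, \cdot)$ inherits monotonicity and that $\bar{L}_{\ell}(A, [d]) = 0$ --- both immediate from the definitions in Section~\ref{sec:Breiman} --- and the degenerate case $\Prob_{\vecx \sim \calD_x}(\vecx \in A) = 0$, handled trivially as above.
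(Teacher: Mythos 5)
Your proof is correct and follows essentially the same strategy as the paper's: first establish $\bar{L}_{\ell}(A, \calL(A)) = 0$ by a contradiction that invokes approximate diminishing returns, then telescope along an enumeration of $\calL(A)$ and apply diminishing returns again to locate a coordinate in $\calL^{\eta}(A)$. The only cosmetic difference is that the paper routes the first step through the partition-level reformulation (items 4--5 of Lemma~\ref{lem:varianceVsLeafVariance}) applied to the refinement $\calP'$ of $\calP$, whereas you argue directly at the cell level using the $A'=A$ instance of Assumption~\ref{asp:diminishingReturnsBreimanFull}, which is a slightly cleaner and more local version of the same argument.
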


\begin{proof}
    We define $\bar{\calP}$ to be the partition where all the cells contain only one element of the
  space, that is $\bar{\calP} = \set{\set{\vecx} \mid \vecx \in \{0, 1\}^d}$. We then know that
  $\bar{L}(\bar{\calP}) = 0$.
  Let $\calP'$ be the refinement of $\calP$ where every cell $A$ of $\calP$ has been split with respect to
  all the coordinates $\calR(A)$. We first prove that $\bar{L}(\calP') = 0$. If this is not the case
  then there exists a cell $A$ and a direction $i \not\in \calR(A)$ such that
  $\bar{L}(\calP', A, i) - \bar{L}(\calP') < 0$. But because of the Assumption
  \ref{asp:diminishingReturnsBreimanFull} and item (4.) of Lemma
  \ref{lem:varianceVsLeafVariance} we have
  \[ C \cdot (\bar{L}(\calP, A, i) - \bar{L}(\calP)) \le \bar{L}(\calP', A, i) - \bar{L}(\calP') < 0 \]
  \noindent which contradicts the assumption that $i \not\in \calR(A)$.

  Now assume that
  $\calL^{\eta}(A) = \emptyset$ and for the sake of
  contradiction also assume that $\bar{L}_{\ell}(A) > C \cdot \eta \cdot \abs{\calL(A)}$.
  Let $\{r_1, \dots, r_k\}$ be an arbitrary enumeration of the set $\calL(A)$. From the
  argument before we have that $\bar{L}_{\ell}(A, \calL(A)) = 0$ hence there exists
  an element $r_j$ of $\calL(A)$ such that
  \[ \bar{L}_{\ell}(A, \{r_1, \dots, r_{j - 1}\}) - \bar{L}_{\ell}(A, \{r_1, \dots, r_j\})  > C \cdot \eta, \]
  \noindent otherwise we would immediately have
  $\bar{L}_{\ell}(A) \le C \cdot \eta \cdot \abs{\calL(A)}$. But because of the diminishing
  returns property of $\bar{L}(\cdot)$ we have that
  \[ C \cdot \p{\bar{L}_{\ell}(A) - \bar{L}_{\ell}(A, r_j)} \ge \bar{L}_{\ell}(A, \{r_1, \dots, r_{j - 1}\}) - \bar{L}_{\ell}(A, \{r_1, \dots, r_j\}) > C \cdot \eta, \]
  but this last inequality implies $r_j \in \calL^{\eta}(A)$ which contradicts
  with our assumption that $\calL^{\eta}(A) = \emptyset$. Hence
  $\bar{L}_{\ell}(A) \le C \cdot \eta \cdot \abs{\calL(A)}$ and the lemma
  follows.
\end{proof}

  Finally we need one more Lemma to handle the case where Assumption
\ref{asp:strongPartitionSparsity} holds.

\begin{lemma} \label{lem:splitRelevantFiniteSampleBreimanStrongPartitionSparsity}
    If $d > 1$, $r \in [d]$, $t > 1$, assume that
  $m$ is $(\beta, r)$-strongly partition sparse with relevant features $R$ as per Assumption
  \ref{asp:strongPartitionSparsity} and let
  $n \ge \frac{2^{3 + r}}{\zeta} \p{r \log(d) + t}$ and $n \ge \frac{2^{18 + r}(r \log(d) + t)}{\zeta \cdot \beta^2}$ then we have that
  \begin{align*}
    \Prob_{\train_n \sim \calD^n} \p{\bigvee_{A \in \calA(r, \zeta)} \p{\p{\argmax_{i \in [d]} V^{\ell}_n(A, i)} \not\in R \mid \calL(A) \neq \emptyset}} \le 2 \exp(- t)
  \end{align*}
\end{lemma}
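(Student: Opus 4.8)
The plan is to transcribe the proof of Lemma~\ref{lem:splitRelevantFiniteSampleBreiman} almost verbatim, with the role of the $2\eta$-threshold in the definition of $\calL^{2\eta}$ now played by the dimension-free separation $\beta$ furnished by Assumption~\ref{asp:strongPartitionSparsity}. First I would apply Lemma~\ref{lem:concentrationOfVarianceFunctionBreiman} with $q = r$; the hypothesis $n \ge \frac{2^{3+r}}{\zeta}\p{r\log(d)+t}$ is precisely what that lemma requires, so (after the routine union over the $d$ candidate directions, which is already absorbed by the $r\log(d)$ term in the deviation, and over the two tails we will need) we obtain an event $\calG$ with $\Prob(\calG)\ge 1-2\exp(-t)$ on which
\[
\abs{V^{\ell}_n(A,i) - \bar{V}_{\ell}(A,i)} \le \eta \qquad \text{for all } i\in[d],\ A\in\calA(r,\zeta),
\]
where $\eta = \sqrt{\frac{2^{18+r}(r\log(d)+t)}{\zeta\, n}}$.

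On $\calG$, fix a large cell $A\in\calA(r,\zeta)$ with $\calL(A)\neq\emptyset$. Since $\calL(A)\neq\emptyset$, the function $m$ is not constant on $A$ (if every relevant coordinate had already been used to split $A$, then $\bar{V}_{\ell}(A,\cdot)$ would be maximised and $\calL(A)$ would be empty), so at least one relevant coordinate is still available to split $A$. For every such relevant $i\in R$ and every irrelevant $j\in[d]\setminus R$, Assumption~\ref{asp:strongPartitionSparsity} applied with $T=\emptyset$ gives $\bar{V}_{\ell}(A,i)\ge \bar{V}_{\ell}(A,j)+\beta$; letting $i^{\dagger}$ be the relevant direction maximising $\bar{V}_{\ell}(A,\cdot)$, we thus have $\bar{V}_{\ell}(A,i^{\dagger})\ge \max_{j\notin R}\bar{V}_{\ell}(A,j)+\beta$. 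Combining with the concentration bound,
\[
V^{\ell}_n(A,i^{\dagger}) \ge \bar{V}_{\ell}(A,i^{\dagger})-\eta \ge \max_{j\notin R}\bar{V}_{\ell}(A,j)+\beta-\eta \ge \max_{j\notin R}V^{\ell}_n(A,j)+\beta-2\eta.
\]
The second hypothesis $n\ge \frac{2^{18+r}(r\log(d)+t)}{\zeta\,\beta^2}$ is calibrated so that $2\eta<\beta$ (obtaining the factor two costs only a change in the absolute constant inside $\eta$, which does not affect the stated exponents), hence $V^{\ell}_n(A,i^{\dagger})>V^{\ell}_n(A,j)$ for every $j\notin R$. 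Since $\argmax_{i\in[d]}V^{\ell}_n(A,i)$ attains a value at least $V^{\ell}_n(A,i^{\dagger})$, it strictly beats every irrelevant direction and therefore lies in $R$. The union bound over the at most $2^{r}\sum_{i\le r}\binom{d}{i}$ cells of $\calA(r,\zeta)$ is already built into Lemma~\ref{lem:concentrationOfVarianceFunctionBreiman}, which delivers the claimed bound of $2\exp(-t)$ on the complementary (bad) event.

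The step I expect to require the most care is confirming that the uniform concentration actually covers every cell the algorithm can visit during its first $r$ splits: one must check that any cell obtained from $\{0,1\}^d$ by at most $r$ coordinate splits has $\abs{A}\ge 2^{d-r}$ and, by the $(\zeta,r)$-lower bound defining $\calA(r,\zeta)$ (Assumption~\ref{asp:densityLowerBound}), has $\Prob_{\vecx\sim\calD_x}(\vecx\in A)\ge\zeta/2^{r}$, so that it indeed belongs to $\calA(r,\zeta)$; this is the same membership check used throughout Section~\ref{sec:app:proofs:Breiman}. Apart from this bookkeeping, and the harmless factor of two in relating $\eta$ to $\beta$, everything else is a direct copy of the proof of Lemma~\ref{lem:splitRelevantFiniteSampleBreiman} with the $\beta$-separation of Assumption~\ref{asp:strongPartitionSparsity} substituted for the $2\eta$-margin, exactly as Lemma~\ref{lem:splitRelevantFiniteSampleLevelSplitStrongSparsity} does in the level-split case.
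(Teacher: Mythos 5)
Your proof is correct and is essentially the paper's argument made explicit: the paper's three-line proof applies Lemma~\ref{lem:concentrationOfVarianceFunctionBreiman} to get the two one-sided events $\p{i \in \calL^{\eta}_n(A) \Rightarrow i \in \calL(A)}$ and $\p{i \in \calL^{2\eta}(A) \Rightarrow i \in \calL^{\eta}_n(A)}$ and then invokes Assumption~\ref{asp:strongPartitionSparsity} to turn the $\beta$-separation into membership in $\calL^{2\eta}(A)$, which is exactly the comparison $V^{\ell}_n(A,i^\dagger) \geq \max_{j\notin R} V^{\ell}_n(A,j) + \beta - 2\eta$ that you write out. Your observation that the $2\eta < \beta$ bookkeeping only shifts the absolute constant in the sample-size threshold (and indeed the paper itself uses $2^{26+r}$ inside $\eta$ in the proof while stating $2^{18+r}$ in the lemma, so the constants there are already loose) is accurate.
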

\begin{proof}
    Directly applying Lemma
  \ref{lem:concentrationOfVarianceFunctionBreiman} to Definition
  \ref{def:relevantVariablesForCellBreimanFiniteSample} we have that
  \begin{align} \label{eq:proof:chooseBadCoordinatesBound:1:Breiman:prime}
    \Prob_{\train_n \sim \calD^n} \p{\bigvee_{i, A \in \calA(r, \zeta)} \p{i \in \calL^{\eta}_n(A) \mid i \not\in \calL(A)}} \le \exp(- t),
  \end{align}
  \noindent where $\eta = \sqrt{\frac{2^{26 + r}(r \log(d) + t)}{\zeta \cdot n}}$ and
  $n \ge \frac{2^{3 + r}}{\zeta} \p{r \log(d) + t}$. Similarly we have that
  \begin{align} \label{eq:proof:chooseBadCoordinatesBound:2:Breiman:prime}
    \Prob_{\train_n \sim \calD^n} \p{\bigvee_{i, A \in \calA(r, \zeta)} \p{i \not\in \calL^{\eta}_n(A) \mid i \in \calL^{2 \eta}(A)}} \le \exp(- t).
  \end{align}
  \noindent If we combine the above inequalities with Assumption
  \ref{asp:strongPartitionSparsity} the lemma follows.
\end{proof}

\begin{theorem}
  \label{thm:recoverySubmodularIndependentBalancedProperBreimanFiniteSample}
    Let $\train_n$ be i.i.d. samples from the non-parametric regression model
  $y = m(\vecx) + \eps$, where $m(\vecx) \in [-1/2, 1/2]$, $\eps \sim \calE$,
  $\Exp_{\eps \sim \calE}[\eps] = 0$ and $\eps \in [-1/2, 1/2]$ with $m$ an $r$-sparse
  function. Let also $\calP_n$ be the partition that Algorithm~\ref{alg:mainBreimanFiniteSample}
  returns. Then the following statements hold:
  \begin{Enumerate}
    \item Let $q = \frac{C \cdot r}{C \cdot r + 3}\p{\log(n) - \log(\log(d/\delta))}$ and
    assume that the approximate diminishing returns Assumption
    \ref{asp:diminishingReturnsBreimanFull} holds. Moreover if we set the number of nodes $t$
    such that $\log(t) \ge q$, and if we have number of samples
    $n \ge \tilde{\Omega}\p{\log(d/\delta)}$ then it holds that
    \begin{align*}
      \Prob_{\train_n \sim \calD^n}\p{\Exp_{\vecx \sim \calD_x} \b{\p{m(\vecx) - \Exp_{\vecz \sim \calD_x}\b{m(\vecz) \mid \vecz \in \calP_n(\vecx)}}^2} > \tilde{\Omega}\p{C \cdot r \cdot \sqrt[C \cdot r + 3]{\frac{\log(d/\delta)}{n}}}} \le \delta.
    \end{align*}
    \item Suppose that the distribution $\calD_x$ is a product distribution
    (see Assumption \ref{asp:independentFeature}) and that the Assumption
    \ref{asp:diminishingReturnsBreimanFull} holds. If $\log(t) \ge r$, then it holds
    that
    \begin{align*}
      \Prob_{\train_n \sim \calD^n}\p{\Exp_{\vecx \sim \calD_x} \b{\p{m(\vecx) - \Exp_{\vecz \sim \calD_x}\b{m(\vecz) \mid \vecz \in \calP_n(\vecx)}}^2} > \tilde{\Omega}\p{\sqrt[3]{C^2 \cdot \frac{2^r \cdot \log(d/\delta))}{n}}}} \le \delta.
    \end{align*}
    \item Suppose that the distribution $\calD_x$ is a product distribution
    (see Assumption \ref{asp:independentFeature}), that is also $(\zeta, r)$-lower bounded
    (see Assumption \ref{asp:densityLowerBound}) and that the Assumption
    \ref{asp:diminishingReturnsBreimanFull} holds. If $\log(t) \ge r$, then it holds
    that
    \begin{align*}
      \Prob_{\train_n \sim \calD^n}\p{\Exp_{\vecx \sim \calD_x} \b{\p{m(\vecx) - \Exp_{\vecz \sim \calD_x}\b{m(\vecz) \mid \vecz \in \calP_n(\vecx)}}^2} > \tilde{\Omega}\p{C \cdot \sqrt{\frac{2^r \cdot \log(d/\delta))}{\zeta \cdot n}}}} \le \delta.
    \end{align*}
    \item Suppose that $m$ is $(\beta, r)$-strongly partition sparse (see Assumption
    \ref{asp:strongPartitionSparsity}) and that $\calD_x$ is $(\zeta, r)$-lower bounded
    (see Assumption \ref{asp:densityLowerBound}). If
    $n \ge \tilde{\Omega}\p{\frac{2^r(\log(d/\delta))}{\zeta \cdot \beta^2}}$, and
    $\log(t) \ge r$, then we have
    \begin{align*}
      \Prob_{\train_n \sim \calD^n}\p{\Exp_{\vecx \sim \calD_x} \b{\p{m(\vecx) - \Exp_{\vecz \sim \calD_x}\b{m(\vecz) \mid \vecz \in \calP_n(\vecx)}}^2} > 0} \le \delta.
    \end{align*}
  \end{Enumerate}
\end{theorem}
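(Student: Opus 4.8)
The plan is to transfer the population argument of Theorem~\ref{thm:recoverySubmodularIndependentBalancedProperBreiman} to the finite--sample setting, using the uniform concentration of the empirical variance criterion $V^{\ell}_n$ around $\bar V_{\ell}$ over large cells (Lemma~\ref{lem:concentrationOfVarianceFunctionBreiman}) to pay for the fact that Algorithm~\ref{alg:mainBreimanFiniteSample} greedily maximizes $V^{\ell}_n$ rather than $\bar V_{\ell}$, and to handle separately those leaves that are either too light to be covered by that concentration bound or too light to contain a training point. Throughout I would track the bias $\bar L(\calP_n)=\Exp_{\vecx}\bigl[(m(\vecx)-\Exp_{\vecz}[m(\vecz)\mid \vecz\in\calP_n(\vecx)])^2\bigr]$, fix $\eta$ equal to the slack $\sqrt{2^{18+q}(q\log d+t)/(\zeta n)}$ supplied by Lemma~\ref{lem:concentrationOfVarianceFunctionBreiman} with $t=\log(1/\delta)$, and translate each population lemma into its finite--sample shadow via the relaxed relevant--set sets $\calL^\eta_n$ and $\calL^\eta$.

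I would start with the ``strong'' regimes, Parts~3 and~4, which are cleanest. Under the marginal density lower bound (Assumption~\ref{asp:densityLowerBound}) with $q=r$, every cell reachable by at most $r$ splits has mass at least $\zeta/2^r$, hence lies in $\calA(r,\zeta)$, and receives $\Omega(n\zeta/2^r)$ samples with high probability, so it is actually split. For Part~4, Assumption~\ref{asp:strongPartitionSparsity} provides a $\beta$--gap; once $n\ge\tilde{\Omega}(2^r\log(d/\delta)/(\zeta\beta^2))$ we have $\eta<\beta/2$, so Lemma~\ref{lem:splitRelevantFiniteSampleBreimanStrongPartitionSparsity} forces the algorithm to split on a coordinate of $R$ whenever one still reduces local variance; after at most $r$ such splits every resulting cell has $\calL(A)=\emptyset$, and Lemma~\ref{lem:optimalityWithoutSplitsFiniteSampleBreiman} with $\eta=0$ gives $\bar L_{\ell}(A)=0$, whence $\bar L(\calP_n)=0$ with probability $1-\delta$. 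For Part~3 I would instead invoke independence: irrelevant directions give zero local variance reduction, so $\calL(A)\subseteq R$ at every step, and within $r$ levels every cell satisfies $\calL^\eta(A)=\emptyset$; Lemma~\ref{lem:optimalityWithoutSplitsFiniteSampleBreiman} then bounds each local bias by $C\eta r$, and aggregating with weights $\Prob(\vecx\in A)$ via Lemma~\ref{lem:varianceVsLeafVariance} yields $\bar L(\calP_n)\le C\eta r=\tilde{\Omega}(C\sqrt{2^r\log(d/\delta)/(\zeta n)})$.

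Part~2 is the same as Part~3 but, lacking the density lower bound, I must account for cells outside $\calA(r,\zeta)$: I would add and subtract the partition $\calP^{\mathrm{full}}_r$ obtained by splitting fully on the $r$ chosen coordinates, bound $|\bar L(\calP_n)-\bar L(\calP^{\mathrm{full}}_r)|$ by the aggregate mass of cells left unsplit for lack of samples exactly as in Claim~\ref{clm:proof:levelSplitsBias:partitionMSEsetMSE}, and bound the contribution of cells of mass below $\zeta/2^r$ by their total mass, at most $\zeta$ since $\bar L_{\ell}\le 1$; on the large cells the Part~3 argument applies verbatim, giving $\bar L(\calP_n)\le C\eta r+\zeta+\tilde{O}(2^r/n)$, and balancing $\zeta$ against $\eta\propto 1/\sqrt{\zeta}$ (so $\zeta\asymp\eta$, i.e.\ $\eta^3\asymp 2^r\log(d/\delta)/n$) yields the cube--root rate. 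Part~1 follows the same template but now over all depths up to $q$: whenever the level counter increments and $\calP_\level$ is a full partition, I apply to each large cell either Lemma~\ref{lem:splitRelevantFiniteSampleBreiman} (the split shrinks $\bar L_{\ell}(A)$ by the multiplicative factor $1-1/(Cr)$ up to an additive $O(\eta)$, by the chain of inequalities in the proof of Theorem~\ref{thm:recoverySubmodularIndependentBalancedProperBreiman}) or, if $\calL^{2\eta}(A)=\emptyset$, Lemma~\ref{lem:optimalityWithoutSplitsFiniteSampleBreiman}; aggregating through Lemma~\ref{lem:varianceVsLeafVariance} and iterating gives $\bar L(\calP_n)\le(1-1/(Cr))^q+O(q\eta)+\zeta+\tilde{O}(2^q/n)$, and jointly optimizing depth $q$, slack $\eta$, and threshold $\zeta$ (the relation $q\asymp Cr\ln(1/\eta)$ together with $\eta^2\gtrsim 2^q\,\tilde{O}(\log(d/\delta))/(\zeta n)$ and $\zeta\asymp\eta$) works out to the exponent $Cr+3$.

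The main obstacle I anticipate is the bookkeeping forced by the large--cell restriction of Lemma~\ref{lem:concentrationOfVarianceFunctionBreiman}: unlike the level--split algorithm, where a single global set of at most $q$ splits induces at most $2^q$ leaves with one clean union bound, Breiman's algorithm splits each leaf independently, so I must argue inductively that every cell it processes during the first $q$ rounds lies in $\calA(q,\zeta)$ --- or else that its contribution to $\bar L$ is dominated by $\zeta$ --- and simultaneously rule out, via a Chernoff bound on leaf occupancy, that such a cell fails to be split for want of samples. Getting the resulting three--parameter optimization to land on the stated rates --- in particular the extra $+1$ in the exponent relative to the level--split bounds, which comes precisely from the cube root induced by $\zeta$ --- is where the real care is needed; the remainder is a faithful, if tedious, finite--sample shadow of the population proof.
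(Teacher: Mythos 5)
Your proposal matches the paper's proof structure: it uses the same bias potential $\bar L(\calP_n)$, the same concentration of $V^{\ell}_n$ around $\bar V_{\ell}$ over large cells (Lemma~\ref{lem:concentrationOfVarianceFunctionBreiman}), the finite--sample relevant--set analogues $\calL^{\eta}_n$, $\calL^{\eta}$, and the same termination lemma (Lemma~\ref{lem:optimalityWithoutSplitsFiniteSampleBreiman}); Parts~3 and~4 are handled identically via the density lower bound, Lemma~\ref{lem:splitRelevantFiniteSampleBreiman}, and Lemma~\ref{lem:splitRelevantFiniteSampleBreimanStrongPartitionSparsity}, and the three--parameter balancing for Parts~1 and~2 lands on the stated rates.

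The one place you diverge, and where you should be careful, is Parts~1 and~2. You propose to add and subtract a global ``fully split'' partition $\calP^{\mathrm{full}}_r$ and then bound $\lvert\bar L(\calP_n)-\bar L(\calP^{\mathrm{full}}_r)\rvert$ via a Claim~\ref{clm:proof:levelSplitsBias:partitionMSEsetMSE}--type argument plus the leftover term $\tilde O(2^r/n)$. For Breiman's algorithm there is no single global set of $r$ split coordinates (each cell picks its own), so $\calP^{\mathrm{full}}_r$ is not canonically defined, and the empty--cell Claim is stated and proved for the level--split case only. The paper avoids this entirely: in Part~2 it constructs a \emph{coarsening} $\tilde{\calP}_n$ of $\calP_n$ whose cells either lie in $\calA(r,\zeta)$ with $\calL^\eta(\cdot)=\emptyset$ or have mass below $\zeta/2^r$, so by monotonicity $\bar L(\calP_n)\le\bar L(\tilde{\calP}_n)\le C\eta r+\zeta$ with no extra $\tilde O(2^r/n)$ term and no empty--cell bookkeeping; in Part~1 it applies the per--level potential recursion directly to the algorithm's own partitions $\calQ_{\level}$, splitting cells into ``heavy'' ($\ge\zeta/2^q$) ones that make multiplicative progress and ``light'' ones whose total contribution is at most $\zeta$. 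Both approaches give the same rates, but the paper's monotonicity device is cleaner and sidesteps the well--definedness issue you flag at the end of your sketch. Also, in your Part~2 balancing you set $\zeta\asymp\eta$ where the paper balances $\zeta\asymp C\eta r$; this only changes polynomial factors in $Cr$, not the $n^{-1/3}$ rate, and is absorbed into the $\tilde{\Omega}$.
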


\begin{proof}
    When the value of $\level$ changes, then the algorithm considers
  separately every cell $A$ in $\calP_{\level}$. For every such cell $A$
  it holds that $\bar{L}_{\ell}(A, R) = 0$ and hence
  $\bar{V}_{\ell}(A, R) \triangleq V^*(A)$ is maximized. Since $m(\vecx) \in [-1, 1]$ it
  holds that the maximum value of $\bar{V}_{\ell}$ is $1$. Now let $\{i_1, \dots, i_r\}$
  be an arbitrary enumeration of $R$ and let $R_j = \{i_1, \dots, i_j\}$. Then by adding
  and subtracting terms of the form $\bar{V}_{\ell}(A, R_j)$ we have the following equality
  \[ \p{\bar{V}_{\ell}(A, R) - \bar{V}_{\ell}(A, R_{r - 1})} + \cdots + \p{\bar{V}_{\ell}(A, R_2) - \bar{V}_{\ell}(A, i_1)} + \bar{V}_{\ell}(A, i_1) = V^*(A). \]
  \noindent From Assumption~\ref{asp:diminishingReturnsBreimanFull} we have that
  \[ \p{\bar{V}_{\ell}(A, i_r) - \bar{V}_{\ell}(A)} + \cdots + \p{\bar{V}_{\ell}(A, i_2) - \bar{V}_{\ell}(A)} + \p{\bar{V}_{\ell}(A, i_1) - \bar{V}_{\ell}(A)} \ge \frac{V^*(A) - \bar{V}_{\ell}(A)}{C} \]
  \noindent which implies
  \[ \max_{j \in [r]} \p{\bar{V}_{\ell}(A, i_j) - \bar{V}_{\ell}(A)} \ge \frac{V^*(A) - \bar{V}_{\ell}(A)}{C \cdot r}. \]
  \noindent Let $i_A^{\level}$ be the coordinate that the algorithm
  chose to split at cell $A$ at level $\level$. Now from the greedy
  criterion that Algorithm~\ref{alg:mainBreimanFiniteSample} uses to pick the next coordinate to split on, we have that $i^{\level}_A$ was at least as good as the  best of the coordinates
  in $R$ with respect to $V^{\ell}_n$. If we set
  $\zeta = \tilde{\Theta}\p{\sqrt[C \cdot r + 3]{\frac{\log(d/\delta)}{n}}}$,
  $q = \frac{C \cdot r}{C \cdot r + 3}\p{\log(n) - \log(\log(d/\delta))}$ and
  $\xi = \tilde{\Theta}\p{\sqrt[C \cdot r + 2]{\frac{\log(d/\delta)}{\zeta \cdot n}}}$
  and use Lemma \ref{lem:concentrationOfVarianceFunctionBreiman} we get that if
  $n \ge \tilde{\Omega}\p{\log(d/\delta)}$ \footnote{This condition is necessary to
  guarantee that we have enough number of samples $n$ to make the splits that are necessary for the $q$
  splits of every path of the tree.} and if
  $\Prob_{\vecx \sim \calD_x}\p{\vecx \in A} \ge \zeta/2^q$ then it holds with probability at least
  $1 - \delta$ that
  \[ \bar{V}_{\ell}\p{A, i^{\level}_A} \ge \bar{V}_{\ell}(A) + \frac{V^*(A) - \bar{V}_{\ell}(A)}{C \cdot r} - 3 \xi \]
  which in turn because
  $L^*(A) \triangleq \bar{L}_{\ell}(A, R) = 0$ implies that
  \begin{equation} \label{eq:potentialProgressFiniteSampleCellSplit:Breiman}
    \bar{L}_{\ell}\p{A, i^{\level}_A} \le \bar{L}_{\ell}(A) \p{1 - \frac{1}{C \cdot r}} + 3 \xi.
  \end{equation}
  \noindent We fix $\calQ_{\level}$ to be the partition $\calP_{\level}$ of $\{0, 1\}^d$ when $\level$
  changed. Also we define $\calU$ to be the set of cells $A$ in $\calQ_{\level}$ such that
  $\Prob_{\vecx \sim \calD_x}\p{\vecx \in A} \ge \zeta/2^q$ and $\calV$ the rest of the cells $A$ in
  $\calQ_{\level}$. Then because of \ref{eq:potentialProgressFiniteSampleCellSplit:Breiman} and Lemma
  \ref{lem:varianceVsLeafVariance} it holds that
  \begin{align*}
    & \bar{L}\p{\calQ_{\level + 1}} = \sum_{A \in \calQ_{\level}} \Prob_{\vecx \sim \calD_x}\p{\vecx \in A} \bar{L}_{\ell}\p{A, i^{\level}_A} + 3 \xi \\
    & ~~~~~~~~~~~~~~~~~~~~~~~~ = \sum_{A \in \calU} \Prob_{\vecx \sim \calD_x}(\vecx \in A) \bar{L}_{\ell}\p{A} \p{1 - \frac{1}{C \cdot r}} + \sum_{A \in \calV} \Prob_{\vecx \sim \calD_x}(\vecx \in A) \bar{L}_{\ell}\p{A, i^{\level}_A} + 3 \xi \\
    & ~~~~~~~~~~~~~~~~~~~~~~~~ \le \bar{L}\p{\calQ_{\level}} \p{1 - \frac{1}{C \cdot r}} + \frac{\zeta}{2^q} \sum_{A \in \calV} \bar{L}_{\ell}\p{A, i^{\level}_A} + 3 \xi \\
    & ~~~~~~~~~~~~~~~~~~~~~~~~ \le \bar{L}\p{\calQ_{\level}} \p{1 - \frac{1}{C \cdot r}} + 3 \xi + \zeta.
  \end{align*}
  \noindent Inductively and using the fact that $m(\vecx) \in [-1, 1]$,
  we get that
  \begin{equation} \label{eq:potentialProgressFiniteSampleLevel:Breiman}
    \bar{L}\p{\calQ_{\level}} \le \bar{L}\p{\calP_0} \p{1 - \frac{1}{C \cdot r}}^{\level} + (\xi + \zeta) \cdot \level \le \p{1 - \frac{1}{C \cdot r}}^{\level} + (\xi + \zeta) \cdot \level.
  \end{equation}
  \noindent Finally if we set $\eta = \xi + \zeta = \Theta(\xi)$ from the choice of $t$ we have
  that $\level \ge C \cdot r \ln\p{1/\eta}$ and hence when $\level$ is exactly
  equal to $C \cdot r \ln\p{1/\eta}$, it holds that
  $\bar{L}(\calQ_{\level}) \le 3 \cdot C \cdot r \ln\p{1/\eta} \eta$.
  Now from the
  monotonicity of the $\bar{L}$ function with respect to $\calQ_{\level}$ we have that for
  any $\level \ge C\, r \ln\p{1/\eta}$ it holds that
  $\bar{L}(\calQ_{\level}) \le 3 \cdot C \cdot r \ln\p{1/\eta} \eta$ and the first part of
  the theorem follows.

    For the second part of the theorem we use Lemma
  \ref{lem:splitRelevantFiniteSampleBreiman} and we have that at every step,
  if $A \in \calA(r, \zeta)$ then either the algorithm chose to split with
  respect to a direction $i \in \calL(A)$ or $\calL^{\eta}(A) = \emptyset$ for
  $\eta = \sqrt{\frac{2^{18 + r}(r \log(d) + t)}{\zeta \cdot n}}$. Because of
  our assumption that $m$ is $r$-sparse and because we assume that the features
  are distributed independently we have that at any step $\abs{\calL(A)} \le r$.
  Hence after $r$ levels it has to be that for every cell
  $A \in \calA(r, \zeta) \cap \calP_n$, there is a super-cell $B \supseteq A$
  such that $\calL^{\eta}(B) = \emptyset$. These super-cells create a partition
  $\tilde{\calP}_n$ that has the following two properties: (1) every cell $B$ of
  $\tilde{\calP}_n$ with $B \in \calA(r, \zeta)$ has
  $\calL^{\eta}(B) = \emptyset$, and (2) $\calP_n \sqsubseteq \tilde{\calP}_n$,
  i.e., $\calP_n$ is a refinement of $\tilde{\calP_n}$.
  Let now $\calU$ be the cells $B \in \tilde{\calP}_n$ such that
  $B \in \calA(r, \zeta)$, and let $\calV$ be the rest of the cells of
  $\tilde{\calP}_n$. Then using Lemma
  \ref{lem:optimalityWithoutSplitsFiniteSampleBreiman} we have that
  \begin{align*}
    \bar{L}(\tilde{\calP}_n) & = \sum_{B \in \calU} \Prob_{\vecx \sim \calD_x}(\vecx \in B) \cdot \bar{L}_{\ell}\p{B} + \sum_{B \in \calV} \Prob_{\vecx \sim \calD_x}(\vecx \in B) \cdot \bar{L}_{\ell}\p{B} \\
    & \le C \cdot \eta \cdot r + \zeta
    \intertext{now setting
    $\zeta = (C \cdot r)^{2/3} \sqrt[3]{\frac{2^{18 + r}(r \log(d) + t)}{n}}$ we get}
    \bar{L}(\tilde{\calP}_n) & \le 2 \zeta
  \end{align*}
  and since $\bar{L}(\calP)$ is a monotone function of $\calP$, we also get that
  $\bar{L}(\calP_n) \le 2 \zeta$ and the second part of the theorem follows.

      For the third part of the theorem we use Lemma
  \ref{lem:splitRelevantFiniteSampleBreiman} and we have that at every step
  either the algorithm chose to split with respect to a direction
  $i \in \calL(A)$ or $\calL^{\eta}(A) = \emptyset$ for
  $\eta = \sqrt{\frac{2^{18 + r}(r \log(d) + t)}{\zeta \cdot n}}$. Because of
  our assumption that $m$ is $r$-sparse and because we assume that the features
  are distributed independently we have that at any step $\abs{\calL(A)} \le r$.
  Hence after $r$ levels it has to be that for every cell
  $A \in \calA(r, \zeta) \cap \calP_n$, there is a super-cell $B \supseteq A$
  such that $\calL^{\eta}(B) = \emptyset$. These super-cells create a partition
  $\tilde{\calP}_n$ that has the following two properties: (1) every cell $B$ of
  $\tilde{\calP}_n$ has $\calL^{\eta}(B) = \emptyset$, and (2)
  $\calP_n \sqsubseteq \tilde{\calP}_n$, i.e., $\calP_n$ is a refinement of
  $\tilde{\calP}_n$. Then taking expectations over $\vecx \sim \calD_x$ in both sides of Lemma
  \ref{lem:optimalityWithoutSplitsFiniteSampleBreiman} we have that
  $\bar{L}(\tilde{\calP}_n) \le C \cdot \eta \cdot r$ and since $\bar{L}(\calP)$
  is a monotone function, the third part of the theorem follows.

    The last part of the theorem follows easily from Lemma
  \ref{lem:splitRelevantFiniteSampleBreimanStrongPartitionSparsity}.
\end{proof}

    Recall the definition of the value-diameter from Definition \ref{def:valueDiameter}. We
prove the following, diameter-based bound on the bias of regression trees.

\begin{theorem} \label{thm:biasStronglyPartitionSparseBreimanFiniteSample}
    Let $\train_n$ be i.i.d. samples from the non-parametric regression model
  $y = m(\vecx) + \eps$, where $m(\vecx) \in [-1/2, 1/2]$, $\eps \sim \calE$,
  $\Exp_{\eps \sim \calE}[\eps] = 0$ and $\eps \in [-1/2, 1/2]$. If $m$ is $(\beta, r)$-strongly
  partition sparse (see Assumption \ref{asp:strongPartitionSparsity}) and $\calD_x$ is
  $(\zeta, r)$-lower bounded (see Assumption \ref{asp:densityLowerBound}) then the following
  statements hold for the bias of the output of Algorithm \ref{alg:mainLevelSplitFiniteSample}.
  \begin{Enumerate}
    \item If $n \ge \tilde{\Omega}\p{\frac{2^r(\log(d/\delta))}{\zeta \cdot \beta^2}}$, and
    $\log(t) \ge r$, then it holds that
    \begin{align*}
      \Prob_{\train_n \sim \calD^n}\p{\Delta_m(\calP_n) = 0} \ge 1 - \delta.
    \end{align*}
    \item Let $R$ be the set of relevant features, $\vecx \in \{0, 1\}^d$ and assume that we run
    Algorithm \ref{alg:mainBreimanFiniteSample} with input $h = 1$ and $\log(t) \ge r$. If
    $n \ge \tilde{\Omega}\p{\frac{2^r(\log(d/\delta))}{\zeta \cdot \beta^2}}$ then it holds that
    \begin{align*}
      \p{\Exp_{\train_n \sim \calD^n}\b{m_{n}(\vecx; \calP_n)} - m(\vecx)}^2 \le \delta.
    \end{align*}
  \end{Enumerate}
\end{theorem}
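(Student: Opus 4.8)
The plan is to derive both parts from a single structural fact: with probability at least $1-\delta$ over the sample used to build the tree, every cell $A$ of the returned partition $\calP_n$ with $\Prob_{\vecx\sim\calD_x}(\vecx\in A)>0$ fixes all $r$ relevant coordinates, and hence has $m$ constant on its support so that $\bar{L}_{\ell}(A)=0$. For Part~1 this immediately gives $\bar{L}(\calP_n)=0$, which by Definition~\ref{def:valueDiameter} is the same as $\Delta_m(\calP_n)=0$; for Part~2 it combines with honesty exactly as in the proof of Part~3 of Theorem~\ref{thm:biasStronglySparseLevelSplitFiniteSample}. The whole argument is the Breiman analogue of Parts~2 and~3 of that theorem.

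For Part~1, I would run an induction on the level $\ell$ of Algorithm~\ref{alg:mainBreimanFiniteSample}, on one high-probability event, showing that every cell produced at level $\ell\le r$ has fixed exactly $\ell$ coordinates, all of them relevant. A cell $A$ that fixes $q\le r$ relevant coordinates has size $\ge 2^{d-q}\ge 2^{d-r}$ and, because $(\zeta,r)$-lower boundedness (Assumption~\ref{asp:densityLowerBound}) implies $(\zeta,q)$-lower boundedness for every $q\le r$ (marginalize the extra coordinates), also $\Prob_{\vecx\sim\calD_x}(\vecx\in A)\ge\zeta/2^q\ge\zeta/2^r$; hence $A\in\calA(r,\zeta)$. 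A Chernoff bound together with a union bound over the at most $2^r\p{\sum_{i=0}^r\binom{d}{i}}$ such cells shows that, under $n\ge\tilde{\Omega}\p{2^r\log(d/\delta)/(\zeta\beta^2)}$, each such $A$ contains far more than one training point, so it is split rather than stopped by the $\abs{\calV\cap A}\le 1$ rule. Moreover, as long as $A$ does not yet fix all of $R$, strong partition sparsity (Assumption~\ref{asp:strongPartitionSparsity}) applied with $T=\emptyset$, combined with monotonicity of $\bar{V}_{\ell}$ in the split set, gives $\bar{V}_{\ell}(A,i)-\bar{V}_{\ell}(A)\ge\beta>0$ for every still-splittable relevant $i$, so $\calL(A)\neq\emptyset$; Lemma~\ref{lem:splitRelevantFiniteSampleBreimanStrongPartitionSparsity} then forces the greedy split direction at $A$ to lie in $R$. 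This closes the induction, so after $r$ levels — the depth is attained because $\log(t)\ge r$ and no large cell is stopped early — every cell of $\calP_n$ with positive mass fixes all of $R$, is hence of constant $m$-value on its support, and so $\bar{L}(\calP_n)=\Delta_m(\calP_n)=0$ with probability $\ge1-\delta$.

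For Part~2, fix $\vecx$ and use that with honesty the partition $\calP_n$ is built from the structure half of the data and is independent of the responses $y^{(j)}$ that form the leaf averages. Writing $w^{(j)}(\vecx)=\chara\{\vecx^{(j)}\in\calP_n(\vecx)\}/N_n(\calP_n(\vecx))$, the tower law gives $\Exp_{\train_n\sim\calD^n}[m_n(\vecx;\calP_n)]-m(\vecx)=\Exp_{\train_n\sim\calD^n}\b{\sum_j w^{(j)}(\vecx)\,\p{m(\vecx^{(j)})-m(\vecx)}}$, since the noise terms vanish in expectation and $\sum_j w^{(j)}(\vecx)=1$ on the (high-probability) event that $\calP_n(\vecx)$ is nonempty. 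By $(\zeta,r)$-lower boundedness, $\Prob_{\vecz\sim\calD_x}(\vecz_R=\vecx_R)\ge\zeta/2^r$, so $\calP_n(\vecx)\supseteq\{\vecz:\vecz_R=\vecx_R\}$ is $(r,\zeta)$-large; running the Part~1 argument on the $n/2$ structure samples shows that, with probability $\ge1-\delta$, $\calP_n$ only splits relevant coordinates and the path to $\vecx$ fixes all of $R$, so every training point $\vecx^{(j)}$ in $\calP_n(\vecx)$ satisfies $\vecx^{(j)}_R=\vecx_R$, whence $m(\vecx^{(j)})=h(\vecx_R)=m(\vecx)$ and the bias term is exactly $0$. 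On the complementary event of probability $\le\delta$ the quantity inside the expectation lies in $[-1,1]$ because $m\in[-1/2,1/2]$, so $\abs{\Exp_{\train_n\sim\calD^n}[m_n(\vecx;\calP_n)]-m(\vecx)}\le\delta$ and squaring yields the claim.

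The main obstacle is the one intrinsic to Breiman-style, locally decided splits: the concentration of $V^{\ell}_n$ around $\bar{V}_{\ell}$ (Lemma~\ref{lem:concentrationOfVarianceFunctionBreiman}, and therefore Lemma~\ref{lem:splitRelevantFiniteSampleBreimanStrongPartitionSparsity}) is only available uniformly over \emph{large} cells, so the induction must itself certify that every cell visited during the first $r$ levels stays $(r,\zeta)$-large and sample-rich — this is exactly where the marginal density lower bound enters. Making the quantitative pieces line up — the dependence on $\zeta$, $\beta$, $2^r$ and $\log(d/\delta)$, the fact that $\log(t)\ge r$ is enough to reach depth $r$ along every path, and that the $\abs{\calV\cap A}\le1$ stopping rule never fires on a relevant-prefix cell — is the part that needs care; once it is in place, the reduction to $\Delta_m(\calP_n)=0$ and, via honesty, to a pointwise bias bound is routine.
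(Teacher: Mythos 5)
Your proposal is correct and takes essentially the same route as the paper: both rely on Lemma~\ref{lem:splitRelevantFiniteSampleBreimanStrongPartitionSparsity} (via the uniform concentration over large cells from Lemma~\ref{lem:concentrationOfVarianceFunctionBreiman}) to ensure the first $r$ levels split only on $R$, the $(\zeta,r)$-lower bound to keep all visited cells large and nonempty, and the honesty/tower-law argument for the pointwise bias in Part~2. Your write-up is somewhat more explicit than the paper's about the induction on levels, about why $(\zeta,r)$-lower boundedness implies $(\zeta,q)$-lower boundedness for $q\le r$, and about why the $\abs{\calV\cap A}\le 1$ stopping rule never fires on a relevant-prefix cell; these are exactly the spots the paper treats tersely, so the elaboration is welcome but not a different proof.
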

\begin{proof}
    We begin with the first part of the theorem. Observe that our assumption that
  $\calD_x$ is $(\zeta, r)$-lower bounded implies that the set $\calA(\zeta, r)$
  contains all the cells that can be produced using at most $r$ splits. Therefore if
  we apply Lemma \ref{lem:splitRelevantFiniteSampleBreimanStrongPartitionSparsity} we
  conclude that all the splits that take place in the first $r$ levels of the
  algorithm choose a direction $i$ that belongs to the set of relevant features $R$.
  The latter implies that the set of cells after $r$ iterations is the precisely
  following
  \[ \set{\set{\vecx \mid \vecx_R = \vecz_R} \mid \vecz_R \in \set{0, 1}^r}. \]
  \noindent and therefore due to the monotonicity of $\bar{L}$ we have that
  \[ \Exp_{\vecx \sim \calD_x} \b{\p{m(\vecx) - \Exp_{\vecz \sim \calD_x}\b{m(\vecz) \mid \vecz \in \calP_n(\vecx)}}^2} \le \Exp_{\vecx \sim \calD_x} \b{\p{m(\vecx) - \Exp_{\vecz \sim \calD_x}\b{m(\vecz) \mid \vecz_R = \vecx_R}}^2} \]
  \noindent where the right hand side is equal to $0$ due to the sparsity assumption
  of $m$. Hence we have that $\bar{L}(\calP_n)$ is equal to $0$. Therefore from the
  Definition \ref{def:valueDiameter} the first part of the theorem follows.

    For the second part of the theorem we define for simplicity
  $w^{(j)}(\vecx) = \frac{\chara\{\vecx \in \calP_n(\vecx^{(j)})\}}{N_n(\calP_n(\vecx^{(j)})}$ and
  hence $m_{n}(\vecx) = \sum_{i = 1}^n w^{(j)}(\vecx) y^{(j)}$ and we have:
  \begin{align*}
    & \left(\Exp_{\train_n \sim \calD^n}[m_{n}(\vecx)] - m(\vecx)\right)^2
    = \\
    & ~~~~~~~~~~ = \left(\Exp_{\train_n \sim \calD^n}\left[\sum_{j = 1}^n w^{(j)}(\vecx)\, (y^{(j)} - m(\vecx^{(j)}))\right] +
    \Exp_{\train_n \sim \calD^n}\left[\sum_{j = 1}^n w^{(j)}(\vecx)\, (m(\vecx^{(j)}) - m(\vecx))\right]\right)^2
  \end{align*}
  Due to honesty, which is implied by $h = 1$ in the input of Algorithm
  \ref{alg:mainBreimanFiniteSample}, $w^{(j)}(\vecx)$ is independent of $y^{(j)}$ and we have that
  the first term is equal to $0$ by a tower law. Thus we have:
  \begin{align*}
    \p{\Exp_{\train_n \sim \calD^n}\b{m_{n}(\vecx)} - m(\vecx)}^2 =~& \p{\Exp_{\train_n \sim \calD^n}\b{\sum_{j = 1}^s w^{(j)}(\vecx)\, (m(\vecx^{(j)}) - m(\vecx))}}^2\\
    \leq~& \Exp_{\train_n \sim \calD^n}\b{\p{\sum_{j = 1}^n w^{(j)}(\vecx) (m(\vecx^{(j)}) - m(\vecx))}^2}\\
    \intertext{Let also $A = \{\vecz \mid \vecz_R = \vecx_R\}$, then using the
    multiplicative form of the Chernoff Bound from the proof of the first part of the theorem we
    get
    $\Prob_{\train_n \sim \calD^n}\p{\sum_{i = 1}^n \chara\{\vecx^{(i)} \in A\} \ge 1} \ge 1 - \delta$.
    Therefore with probability $1 - \delta$ the path of the tree that leads to $\vecx$ has split
    all the relevant coordinates $R$ and hence for all $j$ such that $w^{(j)}(\vecx) > 0$ it
    holds that $\vecx^{(j)}_R = \vecx_R$ which in turn implies that
    $m(\vecx^{(j)}) = m(\vecx)$. With the rest $\delta$ probability the square inside the
    expectation is at most $1$ since $m(\cdot) \in \b{-\frac{1}{2}, \frac{1}{2}}$, hence we get}
    \p{\Exp_{\train_n \sim \calD^n}\b{m_{n}(\vecx)} - m(\vecx)}^2 \le & ~~~\delta.
  \end{align*}
\end{proof}

\subsection{Proof of Theorem \ref{thm:finalRecoverySubmodularIndependentBalancedProperBreimanFiniteSample}} \label{sec:app:proofs:Breiman:shallow}

  Observe that the output estimate $m_n(\cdot; \calP_n)$ and partition $\calP_n$ of
Algorithm~\ref{alg:mainBreimanFiniteSample}, satisfies the conditions of
Lemma~\ref{lem:bias-variance}. Moreover, since the number of vertices in a binary tree upper
bounds the number of leafs we can apply Corollary~\ref{cor:criticalradiusShallow} and we
have that the critical radius quantity $\delta_n$ is of order
$\Theta\p{\sqrt{\frac{t\log(d\, t)(1+\log(n))}{n}}}$, if the total number of nodes is at most $t$.
Thus applying the bound presented in \eqref{eq:mainBeforeFinal} with the bound on $\delta_n$ we
can conclude each of the cases of Theorem
\ref{thm:finalRecoverySubmodularIndependentBalancedProperBreimanFiniteSample} by using the
corresponding case of Theorem
\ref{thm:recoverySubmodularIndependentBalancedProperBreimanFiniteSample}.

\subsection{Proof of Theorem \ref{thm:finalRecoverySubmodularIndependentBalancedProperBreimanFiniteSampleFullyGrown}} \label{sec:app:proofs:Breiman:fullyGrown}

From case 1. of Theorem \ref{thm:biasStronglyPartitionSparseBreimanFiniteSample} and since the
maximum possible value diameter is $1$, we have that if
$s \ge \tilde{\Omega}\p{\frac{2^r(\log(d/\delta))}{\zeta \cdot \beta^2}}$ then
$\Exp_{\train_n \sim \calD^n}\b{\Delta_m(\calP_n)} \le \delta$ which implies
$\Exp_{\vecx \sim \calD_x}[(\bar{m}_s(\vecx) - m(\vecx))^2] \le \delta$. Putting this together with
Lemma \ref{lem:biasVarianceDeepHonestForests} we get that
if $s = \tilde{\Omega}\p{\frac{2^r(\log(d/\delta))}{\zeta \cdot \beta^2}}$ then
\begin{equation*}
  \Prob_{\train_n \sim \calD^n}\p{\Exp_{\vecx \sim \calD_x}[(m_{n, s}(\vecx) - m(\vecx))^2] \ge \tilde{\Omega}\p{\frac{2^r(\log(d/(\delta \cdot \delta')))}{n \cdot \zeta \cdot \beta^2}} + \delta} \le \delta'.
\end{equation*}
From the above we get Theorem
\ref{thm:finalRecoverySubmodularIndependentBalancedProperBreimanFiniteSampleFullyGrown} by setting
$\delta = \tilde{\Omega}\p{\frac{2^r(\log(d/\delta'))}{n \cdot \zeta \cdot \beta^2}}$.
 \newpage

\section{Proofs of Asymptotic Normality}\label{app:normality}

\subsection{Proof of  Theorem~\ref{thm:normality-level}}

  We define $m_{s, \pi}$ to be the output of the Algorithm \ref{alg:mainLevelSplitFiniteSample}
when the samples have been permuted by the permutation $\pi \in S_s$. We denote by $\calS_n$ the
uniform distribution over the symmetric group $S_n$. We also define
\begin{align} \label{eq:asymptoticNormality:definitionOfExpectedTree}
  \bar{m}_{s}(\vecx) = \Exp_{\train_n \sim \calD^n, \tau \sim \calS} \b{m_{n, s, \tau}(\vecx)} = \Exp_{\train_s \sim \calD^s, \pi \sim \calS_s} \b{m_{s, \pi}(\vecx)}.
\end{align}
where the last inequality follows due to symmetry of the distribution $\calD^n$ and the definition
of $m_{n, s, \tau}$. We also remind that $\bar{m}_{n, s}$ is equal to
$\Exp_{\tau \sim S_n}\b{m_{n, s, \tau}}$ and that $m_{n, s, B}$ is the Monte Carlo approximation of
$\bar{m}_{n, s}$ with $B$ terms. We now have the following.
\begin{align}
    \sigma_n^{-1}(\vecx) (m_{n, s, \tau}(\vecx) - m(\vecx)) & = \sigma_n^{-1}(\vecx) (m_{n, s, B}(\vecx) - \bar{m}_{n, s}(\vecx)) + \sigma_n^{-1}(\vecx) (\bar{m}_{n, s}(\vecx) - \bar{m}_s(\vecx)) \nonumber \\
    & + \sigma_n^{-1}(\vecx) (\bar{m}_{s}(\vecx) - m(\vecx)). \label{eq:proof:asymptoticNormality:decomposition}
\end{align}
\noindent We define for simplicity
$w^{(j)}(\vecx) = \frac{\chara\{\vecx_{\calT_n(S, \vecx)}^{(j)}=\vecx_{\calT_n(S, \vecx)}\}}{N_n(\vecx; \calT_n(S, \vecx))}$. By Theorem~2 of \cite{Fan2018} we have that:
\begin{equation} \label{eq:proof:asymptoticNormality:normalitySecondTerm}
    \sigma_n^{-1}(\vecx) (\bar{m}_{n, s}(\vecx) - \bar{m}_s(\vecx)) \rightarrow N(0, 1)
\end{equation}
where:
\begin{align*}
  \sigma_n^2(\vecx) =~& \frac{s^2}{n} \Var_{\vecx^{(1)} \sim \calD_x}\left(\Exp_{\train_s \sim \calD^s, \tau \sim \calS_n}\left[\sum_{j = 1}^s w^{(j)}(\vecx)\, y^{(j)} \mid \vecx^{(1)}, y^{(1)}\right]\right)\\
  \geq~& \frac{s^2}{n} \Exp_{\vecx^{(1)} \sim \calD_x}\left[\Exp_{\train_s \sim \calD^s, \tau \sim \calS_n}\left[w^{(1)}(\vecx) \mid \vecx^{(1)}\right]^2\, \sigma^2(\vecx^{(1)})\right]\\
  \geq~& \frac{s^2 \sigma^2}{n} \Exp_{\vecx^{(1)}}\left[\Exp_{\train_s \sim \calD^s, \tau \sim \calS_n}\left[w^{(1)}(\vecx) \mid \vecx^{(1)}\right]^2\right]\\
  \geq~& \frac{s^2 \sigma^2}{n} \Exp_{\train_s \sim \calD^s, \tau \sim \calS_n}\left[w^{(1)}(\vecx)\right]^2\\
  =~& \frac{s^2 \sigma^2}{n} \frac{1}{4\,s^2} = \frac{\sigma^2}{4\, n}
\end{align*}
Where the last inequality follows by our assumption that $\sigma^2(\vecx) \geq \sigma^2$,
uniformly for all $\vecx$ and the fact that due to the expectation over the random
permutation $\tau$ in the beginning of the algorithm we have symmetry between the samples and
hence: $\Exp_{\train_n \sim \calD^n, \tau \sim \calS_n}[w^{(1)}(\vecx)] = 1/(2\,s)$. Also, since
$s \ge \tilde{\Omega}\p{\frac{2^r(\log(d/\delta))}{\beta^2} + \frac{2^r \log(n)}{\zeta}}$, from
part 3 of Theorem \ref{thm:biasStronglySparseLevelSplitFiniteSample} with $\delta = 1/n^2$ we have
that:
\begin{align} \label{eq:proof:asymptoticNormality:normalityThirdTerm}
  \sigma_n^{-1}(\vecx) \p{\bar{m}_{s}(\vecx) - m(\vecx)} = o_p(1)
\end{align}
where we have used the fact that the part 3 of Theorem
\ref{thm:biasStronglySparseLevelSplitFiniteSample} holds pointwise for every permutation
$\tau \in S_n$.
The last step is to bound the error from the Monte Carlo approximation $m_{n, s, B}$ of
$\bar{m}_{n, s}$. Since we have fixed the $\vecx \in \{0, 1\}$ before the execution of the
algorithm and since $m_{n, s, \tau}(\vecx) \in \b{-1/2, 1/2}$, we can use the Hoeffding bound with
$B = n^2 \log(n)$ and
get the following
\begin{align*}
  \Prob\p{\abs{m_{n, s, B}(\vecx) - \bar{m}_{n, s}(\vecx)} \ge \frac{1}{n}} \le \frac{2}{n^2}
\end{align*}
where the probability is over the randomness that is used to sample the $B$ permutations uniformly
from $S_n$ and the random sub-samples of size $s$ used for each of the $B$ trees to compute the empirical expectation $m_{n, s, B}$. Hence we have that
\begin{align} \label{eq:proof:asymptoticNormality:normalityFirstTerm}
  \sigma_n^{-1}(\vecx) \p{m_{n, s, B}(\vecx) - \bar{m}_{n, s}(\vecx)} = o_p(1).
\end{align}
Finally, putting together \eqref{eq:proof:asymptoticNormality:decomposition},
\eqref{eq:proof:asymptoticNormality:normalitySecondTerm},
\eqref{eq:proof:asymptoticNormality:normalityThirdTerm},
\eqref{eq:proof:asymptoticNormality:normalityFirstTerm} and invoking Slutzky's theorem we get that:
\begin{align*}
    \sigma_n^{-1}(\vecx) (m_{n, s, B}(\vecx) - m(\vecx)) \rightarrow_d N(0, 1).
\end{align*}

\subsection{Proof of Theorem~\ref{thm:normality-breiman}}

The proof is almost identical to the proof of Theorem~\ref{thm:normality-level}
presented in the previous section. The only difference is the derivation of
\eqref{eq:proof:asymptoticNormality:normalityThirdTerm}. For this instead of
using part 3 of Theorem \ref{thm:biasStronglySparseLevelSplitFiniteSample} we
use part 2 of Theorem \ref{thm:biasStronglyPartitionSparseBreimanFiniteSample}
again with $\delta = 1/n$. The rest of the proof remains the same and
Theorem~\ref{thm:normality-breiman} follows.

\end{document}